\let\oldtocsection=\tocsection
\let\oldtocsubsection=\tocsubsection
\renewcommand{\tocsection}[2]{\hspace{0em}\oldtocsection{#1}{#2}}
\renewcommand{\tocsubsection}[2]{\hspace{1em}\oldtocsubsection{#1}{#2}}
\theoremstyle{definition}
\newtheorem{theorem}{Theorem}[section]
\newtheorem{prop}[theorem]{Proposition}
\newtheorem{lemma}[theorem]{Lemma}
\newtheorem{cor}[theorem]{Corollary}
\newtheorem{ex}[theorem]{Example}
\theoremstyle{remark}
\newtheorem{dfn}[theorem]{Definition}
\newtheorem{remark}[theorem]{Remark}
\def\co{\colon\thinspace}
\def\ep{\epsilon}
\def\R{\mathbb{R}}
\def\Z{\mathbb{Z}}
\def\N{\mathbb{N}}
\def\Q{\mathbb{Q}}
\def\C{\mathbb{C}}
\DeclareMathOperator{\Img}{Im}
\title{Symplectic Banach-Mazur distances between subsets of $\C^n$}
\author{Michael Usher}
\begin{document}
\maketitle
\begin{abstract}
Following proposals of Ostrover and Polterovich, we introduce and study ``coarse'' and ``fine'' versions of a symplectic Banach-Mazur distance on certain open subsets of $\mathbb{C}^n$ and other open Liouville domains.  The coarse version declares two such domains to be close to each other if each domain admits a Liouville embedding into a slight dilate of the other; the fine version, which is similar to the distance on subsets of cotangent bundles of surfaces recently studied by Stojisavljevi\'c and Zhang, imposes an additional requirement on the images of these embeddings that is motivated by the definition of the classical Banach-Mazur distance on convex bodies.  Our first main result is that the coarse and fine distances are quite different from each other, in that there are sequences that converge coarsely to an ellipsoid but diverge to infinity with respect to the fine distance.  Our other main result is that, with respect to the fine distance, the space of star-shaped domains in $\mathbb{C}^n$ admits quasi-isometric embeddings of $\R^D$ for every finite dimension $D$.  Our constructions are obtained from a general method of constructing $(2n+2)$-dimensional Liouville domains whose boundaries have Reeb dynamics determined by certain autonomous Hamiltonian flows on a given $2n$-dimensional Liouville domain.  The bounds underlying our main results are proven using filtered equivariant symplectic homology via methods from \cite{GU}. 
\end{abstract}
\tableofcontents
\section{Introduction}

The question of when one region in $\mathbb{C}^{n}$ symplectically embeds into another has motivated a great deal of work in symplectic topology, beginning in earnest with the non-squeezing theorem from \cite{Gr}.  The development of a wide variety of symplectic capacities, such as those surveyed in \cite{qsg}, has made it possible to obstruct many putative symplectic embeddings, while a broad array of both constructive and indirect methods have emerged to show that some quite non-obvious symplectic embeddings do exist.  

Given two open subsets $U,V\subset\mathbb{C}^{n}$, one can refine somewhat the question of whether $U$ symplectically embeds into $V$ by considering the real vector space structure of $\mathbb{C}^{n}$ and asking for what values $a\in\mathbb{R}_+$ it is the case that $U$ symplectically embeds into $a V$.  Following the standard convention in symplectic geometry that $a V=\{\sqrt{a}x|x\in V\}$, if $c$ is any symplectic capacity on open subsets of $\mathbb{C}^{n}$ a symplectic embedding of $U$ into $a V$ will imply that $c(U)\leq c(a V)=a c(V)$, and so a computation of $c(U)$ and $c(V)$ gives a lower bound on the infimal such $a$.

Motivated by the Banach-Mazur distance between convex bodies in $\mathbb{R}^n$, Ostrover and Polterovich  have proposed (see \cite{pvid},\cite{ptalk}) a notion of distance between star-shaped subsets of $\mathbb{C}^n$, loosely based on the idea that $U$ and $V$ should be considered close to each other if, for some $a$ close to $1$, there are symplectic embeddings both from $U$ into $a^{1/2} V$ and from $V$ into $a^{1/2} U$, perhaps satisfying compatibility requirements on the resulting compositions $U\hookrightarrow aU$ and $V\hookrightarrow a V$.  This paper will study such distances; one of our main theorems will imply that it in fact matters a great deal whether one imposes such compatibility requirements, affecting not just the precise values of the distance but even the topology that the distance defines.  Evidently any capacity will give rise to a lower bound for such a distance, but building on ideas from \cite{GU} we will see that one can often derive stronger lower bounds using information from the persistence module structure of equivariant symplectic homology that goes beyond capacities.

Although our main results concern star-shaped open subsets of $\C^n$, both the general problem and our methods are more naturally phrased in terms of a more general framework which we introduce now.  Let $(U,\lambda)$ be an exact symplectic manifold, \emph{i.e.} $U$ is a smooth manifold (without boundary) and $\lambda$ is a $1$-form on $U$ such that $d\lambda$ is nondegenerate. The choice of primitive $\lambda$ for the symplectic form $d\lambda$ on $U$ determines a Liouville vector field $\mathcal{L}_{\lambda}$ (usually just denoted $\mathcal{L}$ when $\lambda$ can be understood from context) via the prescription that $d\lambda(\mathcal{L}_{\lambda},\cdot)=\lambda$.  By the Cartan formula, if the time-$t$ flow $\mathcal{L}^{t}_{\lambda}\co U\to U$ of $\mathcal{L}_{\lambda}$ exists, then it will satisfy $\mathcal{L}^{t*}_{\lambda}\lambda=e^t\lambda$.  

\begin{dfn}\label{opendef}
An \textbf{open Liouville domain} is an exact symplectic manifold $(U,\lambda)$ with the properties that:
\begin{itemize} \item[(i)] For all $t\leq 0$, the time-$t$ flow $\mathcal{L}_{\lambda}^{t}\co U\to U$ of the Liouville field $\mathcal{L}_{\lambda}$ exists.
\item[(ii)] For all $t<0$, the image $\mathcal{L}_{\lambda}^{t}(U)$ of $U$ under the time-$t$ Liouville flow has compact closure inside of $U$. 
\end{itemize} In this case, if $0<a\leq 1$ we write \[ a U=\mathcal{L}^{\log a}_{\lambda}(U).\]  
\end{dfn}

\begin{ex}
The standard way of making any open subset of $\C^n=\{(x_1+iy_1,\ldots,x_n+iy_n)|x_j,y_j\in\R\}$ into an exact symplectic manifold is by using the one-form \[ \lambda_0=\frac{1}{2}\sum_{j=1}^{n}(x_jdy_j-y_jdx_j).\]  The corresponding Liouville vector field is then $\sum_j\mathcal{L}_{\lambda_0}=\frac{1}{2}(x_j\partial_{x_j}+y_j\partial_{y_j})$ and so for an open $U\subset\C^n$ the  definition of $a U$ in Definition \ref{opendef} coincides with the standard definition $a U=\{\sqrt{a}x|x\in U\}$.  Moreover $(U,\lambda_0)$ is an open Liouville domain if and only if $U$ is bounded with $\overline{aU}\subset U$ for all $a<1$.
\end{ex}

Because our definition of an open Liouville domain $(U,\lambda)$ is intrinsic, we obtain ``scalings'' $a U$ only for $a\leq 1$.  If $(U,\lambda)$ happens to be an open subset of an exact symplectic manifold $(\tilde{U},\lambda)$ with a complete Liouville flow\footnote{In fact such a $\tilde{U}$ can always be constructed---for any $b<1$ Proposition \ref{sstame} provides a Liouville domain $(W_{b},\lambda)$ with $b U\subset W_{b}\subset U$ and one could take $\tilde{U}$ equal to the Liouville completion of $W_{b}$---but the choice of this manifold is not entirely canonical and we do not use it.} then we could use the positive-time flow of the Liouville vector field on $\tilde{U}$ to make sense of $a U$ for $a>1$.  However we prefer to work inside $U$ itself; in particular instead of looking for embeddings $U\hookrightarrow a^{1/2}V$ when $a>1$ we will look for embeddings $a^{-1/2}U\hookrightarrow V$.

A \textbf{Liouville embedding} from an exact symplectic manifold $(U,\lambda)$ to another exact symplectic manifold $(V,\mu)$ is by definition an embedding $\phi\co U\hookrightarrow V$ such that $\phi^*\mu-\lambda$ is exact.  Thus Liouville embeddings are symplectic embeddings, and in the case that $H^1(U;\R)=\{0\}$ (in particular, in the case of star-shaped domains in $\C^n$) the two notions coincide.  Throughout the paper we use the symbol $\xhookrightarrow{L}$ to denote a Liouville embedding.

With this preparation we can now define the analogues of the Banach-Mazur distance that will be considered in this paper.  For context, 
recall (see, \emph{e.g.}, \cite{Rud})
that the \emph{Banach-Mazur distance} between two convex bodies $K,L\subset \R^n$ is defined to be \begin{equation}\label{bmdef} d_{BM}(K,L)=\inf\{a\geq 1|(\exists h\in GL(n), v,w\in \R^n)(a^{-1}(K+v)\subset h(L+w)\subset K+v)\}.\end{equation}   Thus $\log d_{BM}$ is a pseudometric on the set of convex bodies in $\mathbb{R}^n$, with $K$ and $L$ lying at distance zero away from each other iff they are equivalent under an affine transformation.

\begin{dfn}\label{maindef} Fix an even dimension $2n$ and let $\mathcal{OL}_{2n}$ denote the class of open Liouville domains of dimension $2n$.  We define the \textbf{coarse symplectic Banach-Mazur distance} $d_c$, the \textbf{fine symplectic Banach-Mazur hemidistance} $\delta_f$, and the \textbf{fine symplectic Banach-Mazur distance} $d_f$ on $\mathcal{OL}_{2n}$ by setting, for $(U,\lambda),(V,\mu)\in\mathcal{OL}_{2n}$:
\begin{itemize} \item[(i)]
\[ d_{c}\left((U,\lambda),(V,\mu)\right)=\inf\{a\geq 1|\exists \mbox{ embeddings }a^{-1/2}U\xhookrightarrow{L} V,\,a^{-1/2}V\xhookrightarrow{L} U\} \]
\item[(ii)] \[ \delta_f\left((U,\lambda),(V,\mu)\right) = \inf\left\{a\geq 1\left|\begin{array}{c}(\exists\mbox{ embedding } h\co a^{-1/2}U\xhookrightarrow{L} V)\\ (a^{-1}V\subset h(a^{-1/2}U)\subset V)\end{array}\right.\right\}.\]
\item[(iii)] \[ d_f\left((U,\lambda),(V,\mu)\right) = \max\left\{\delta_f\left((U,\lambda),(V,\mu)\right),\delta_f\left((V,\mu),(U,\lambda)\right)\right\} \]
\end{itemize}
\end{dfn}
Although the definition of $\delta_f$ bears the closest resemblance to the definition of the usual Banach-Mazur distance $d_{BM}$, we will see that unlike $d_{BM}$ (and also unlike $d_c$) $\delta_f$ is not symmetric (hence the term hemidistance rather than distance), necessitating the introduction of $d_f$ to obtain something that acts more like a distance.  Though it is not obvious, it does turn out to be true (see Proposition \ref{triangle}) that $\delta_f$ obeys a multiplicative triangle inequality $\delta_f((U,\lambda),(W,\nu))\leq \delta_f((U,\lambda),(V,\mu))\delta_f((V,\mu),(W,\nu))$; likewise $d_c$ and $d_f$ obey such inequalities.   Throughout the paper we conform to the convention standard in the Banach-Mazur distance literature of describing quantities like $d_{BM},d_c,d_f$ as distances when it is in fact their logarithms that are  pseudometrics in the usual sense.

Embedded within the class $\mathcal{OL}_{2n}$ is the set $\mathcal{S}_{2n}$ of bounded open subsets $U\subset \C^n$ that are strongly star-shaped with respect to the origin in the sense that for all $a<1$ it holds that $\overline{aU}\subset U$, equipped with the standard primitive $\lambda_0=\frac{1}{2}\sum_{j=1}^{n}(x_jdy_j-y_jdx_j)$.  Let $\mathcal{EL}_{2n}\subset \mathcal{S}_{2n}$ denote the subset consisting of interiors $E^{\circ}$ of ellipsoids \[E= E(a_1,\ldots,a_n)=\left\{(z_1,\ldots,z_n)\in\C^n\left|\sum_{j=1}^{n}\frac{\pi|z_j|^2}{a_j}\leq 1\right.\right\}.\]  Our two main results give examples of families of elements of strongly star-shaped open subsets of $\mathbb{C}^n$ that behave in interesting ways with respect to the symplectic Banach-Mazur distances. The first of these is the following:

\begin{theorem}\label{elldist} Let $2n\geq 4$ and fix any open ellipsoid $U\in\mathcal{EL}_{2n}$.  Then there is a sequence $\{U_m\}_{m=1}^{\infty}$ in $\mathcal{S}_{2n}$ such that $\delta_f((U_m,\lambda_0),(U,\lambda_0))\to 1$ but \[ \inf_{V\in\mathcal{EL}_{2n}}\delta_f((V,\lambda_0),(U_m,\lambda_0))\to\infty.\]   
\end{theorem}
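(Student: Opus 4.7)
The plan is to build $U_m$ using the paper's general construction of $2n$-dimensional open Liouville domains from $(2n-2)$-dimensional data (the hypothesis $2n\geq 4$ is precisely what makes the base dimension nonnegative). We arrange the construction so that each $U_m$ has essentially the same shape as $U$ in $\C^n$—concretely, $(1-1/m)U\subset U_m\subset (1+1/m)U$—while its boundary Reeb flow, determined by an autonomous Hamiltonian flow on the $(2n-2)$-dimensional base, produces a filtered $S^1$-equivariant symplectic homology persistence module that contains a finite bar of length $\ell_m\to\infty$. The existence of such base Hamiltonians is the key freedom granted by the general construction.

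The upper bound $\delta_f((U_m,\lambda_0),(U,\lambda_0))\to 1$ follows from shape control. Since $U_m\subset (1+1/m)U$ and $U_m$ is strongly star-shaped, the inclusion $U_m\hookrightarrow (1+1/m)U$ is a Liouville embedding whose image contains $(1-1/m)U$. Choosing $a=(1+1/m)^2\to 1$ and composing with the appropriate Liouville rescaling $\mathcal{L}_{\lambda_0}^{t}$ produces a Liouville embedding $h\co a^{-1/2}U_m\xhookrightarrow{L} U$ with $a^{-1}U\subset h(a^{-1/2}U_m)\subset U$, as required by the definition of $\delta_f$.

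For the lower bound, fix any ellipsoid $V\in\mathcal{EL}_{2n}$ and suppose there is a Liouville embedding $h\co a^{-1/2}V\xhookrightarrow{L} U_m$ realizing $a^{-1}U_m\subset h(a^{-1/2}V)\subset U_m$. Following the methods of \cite{GU}, the two inclusions in the sandwich together with $h$ induce commuting transfer maps on filtered $S^1$-equivariant symplectic homology that realize a $(\tfrac{1}{2}\log a)$-interleaving, in the log-action persistence-module sense, between the modules of $V$ and of $U_m$. Equivariant symplectic homology of an ellipsoid has the especially rigid structure of a direct sum of interval modules $I_{[c,\infty)}$, one in each relevant even degree, and in particular has no finite bars of positive length. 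The stability theorem for persistence modules then forces the interleaving parameter to dominate half the bar length, giving $\tfrac{1}{2}\log a\geq \ell_m/2$ and thus $\delta_f((V,\lambda_0),(U_m,\lambda_0))\geq e^{\ell_m}$ \emph{uniformly} in the ellipsoid parameters of $V$.

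The main obstacle will be the persistence module computation for $U_m$: one needs to identify, within the framework of \cite{GU}, a specific long finite bar produced by the autonomous Hamiltonian on the base and verify that it genuinely survives in the positive $S^1$-equivariant symplectic homology of $U_m$ (i.e.\ is not killed by some differential or connecting homomorphism). This will require a Morse--Bott analysis of the Reeb dynamics on $\partial U_m$ inherited from the base Hamiltonian, a choice of admissible defining Hamiltonians adapted to that dynamics, and a check that the continuation maps built from the sandwich inclusions are compatible with the resulting bar decomposition. A subsidiary step is to state precisely, and verify, the assertion that ellipsoid persistence modules contain no finite bars of positive length in the exact framework used here; this should reduce to the classical Morse--Bott computation of equivariant symplectic homology for ellipsoids.
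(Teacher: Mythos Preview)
Your shape control hypothesis $(1-1/m)U\subset U_m\subset (1+1/m)U$ is fatal. Since $U_m$ is star-shaped, the very same inclusion argument you use for $\delta_f((U_m,\lambda_0),(U,\lambda_0))$ works symmetrically: the inclusion $a^{-1/2}U\hookrightarrow U_m$ (with $a=(1-1/m)^{-2}$) has image containing $a^{-1}U_m$ because $U_m\subset (1+1/m)U$. Hence $\delta_f((U,\lambda_0),(U_m,\lambda_0))\to 1$ as well, and in particular $\inf_{V}\delta_f((V,\lambda_0),(U_m,\lambda_0))\leq \delta_f((U,\lambda_0),(U_m,\lambda_0))\to 1$, contradicting the conclusion you are trying to prove. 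More conceptually: a two-sided sandwich between rescalings of $U$ gives a genuine $a$-interleaving (not just an implantation) between the persistence modules of $U_m$ and $U$ with $a\to 1$, and then stability forbids $U_m$ from having long finite bars that $U$ lacks. So the two halves of your plan are mutually inconsistent.

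The paper's construction avoids exactly this trap. The truncated ellipsoids $E_{H_{\ep,\beta}}^{\circ}$ satisfy $E_{H_{\ep,\beta}}^{\circ}\subset\hat E^{\circ}$ but are \emph{not} contained between nearby rescalings of $\hat E^{\circ}$: a large piece of $\hat E^{\circ}$ near the $z_{n+1}$-axis has been excised. The upper bound $\delta_f((E_{H_{\ep,\beta}}^{\circ},\lambda_0),(\hat E^{\circ},\lambda_0))\leq(1+1/\beta)^2$ is obtained not by inclusion but by a nontrivial Hamiltonian isotopy (a Traynor-type translation, Lemma~\ref{translate}) that pushes $\frac{\beta}{1+\beta}\hat E$ off the excised region. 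The lower bound mechanism is also different from what you describe: rather than a long finite bar versus ``no finite bars,'' the truncated ellipsoid has a Reeb orbit of Conley--Zehnder index $k_\beta\leq 1$ giving a rank-one persistence interval in grading $k_\beta$, while every ellipsoid has $CH_{k}^{L}=0$ for all $k\leq 1$ (all Reeb indices on an ellipsoid boundary are at least $n+2$). The implantation triangle then forces a nontrivial map to factor through zero, yielding the bound. Note too that $\delta_f((V,\lambda_0),(U_m,\lambda_0))<b$ only produces an \emph{implantation} of $\mathring{\mathbb{CH}}(U_m)$ into $\mathring{\mathbb{CH}}(V)$, not an interleaving; the paper's argument is tailored to this one-sided structure.
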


Theorem \ref{elldist} is proven in Section \ref{trunc}; see Section \ref{examples} for an outline of the argument.
This result confirms in a rather strong way that $\delta_f$ is not symmetric. (This fact can also be inferred from some examples in \cite{GU}, though the failure of symmetry there is less dramatic.)  It is easy to check (see Proposition \ref{easyineq}) that one has  inequalities \begin{equation}\label{obvineq} d_c((U,\lambda),(V,\mu))\leq \min\{\delta_f((U,\lambda),(V,\mu)),\delta_f((V,\mu),(U,\lambda))\}\leq d_f((U,\lambda),(V,\mu));\end{equation} the asymmetry of $\delta_f$ can be phrased as saying that the second inequality here is sometimes strict.  In view of the first inequality, Theorem \ref{elldist} shows that any open ellipsoid $E^{\circ}$ arises as the limit with respect to the topology induced by the coarse distance $d_c$ of a sequence in $\mathcal{S}_{2n}$ which, with respect to fine distance, diverges arbitrarily far from $E^{\circ}$ (and indeed from the entire set of ellipsoids). In particular the pseudometrics $\log d_c,\log d_f$ induce different topologies on $\mathcal{S}_{2n}$, and are not quasi-isometric.

We also use the constructions of Section \ref{trunc} to show in Corollary \ref{v34} that the first inequality in (\ref{obvineq}) is sometimes strict.

Our second main result shows that the fine symplectic Banach-Mazur distance makes the space of star-shaped domains in $\R^{2n}$ into a large space from a coarse geometric viewpoint.  A similar result in the context of symplectic Banach-Mazur distances between subsets of cotangent bundles of surfaces has recently been established in \cite[Theorem 1.10]{SZ} via rather different constructions.

\begin{theorem}\label{quasiembed}
Let $2n\geq 4$, $D\in \N$, and write $\triangle_D=\{(x_1,\ldots,x_D)\in[0,\infty)^D|x_1\geq x_2\geq\cdots\geq x_D\}$.  Then there is a map $\mathcal{G}\co \triangle_D\to\mathcal{S}_{2n}$ such that, for $\vec{x},\vec{y}\in \triangle_D$, \begin{equation}\label{quasiineq}\|\vec{x}-\vec{y}\|_{\infty}\leq \log d_f\left((\mathcal{G}(\vec{x}),\lambda_0),(\mathcal{G}(\vec{y}),\lambda_0)\right)\leq 2\|\vec{x}-\vec{y}\|_{\infty}.\end{equation}
\end{theorem}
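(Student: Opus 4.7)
The plan is to apply the general construction announced in the abstract, which from an autonomous Hamiltonian on a $2n$-dimensional open Liouville domain produces a $(2n+2)$-dimensional one whose boundary Reeb dynamics mirror the given Hamiltonian flow. I would fix a base domain in $\C^{n-1}$ (for instance a round ball) and, for $\vec{x}\in\triangle_D$, define an autonomous Hamiltonian $H_{\vec{x}}$ by superimposing $D$ well-separated, disjointly-supported perturbations of controlled heights $e^{x_1},\ldots,e^{x_D}$ onto a fixed reference profile. The resulting $\mathcal{G}(\vec{x})\in\mathcal{S}_{2n}$ will have boundary Reeb flow carrying $D$ distinguished families of closed orbits whose actions encode an affine function of $x_1,\ldots,x_D$; the ordering $x_1\geq\cdots\geq x_D$ built into $\triangle_D$ is a normalization reflecting the fact that only the unordered multiset of these actions is a symplectic invariant.

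For the upper bound $\log d_f(\mathcal{G}(\vec{x}),\mathcal{G}(\vec{y}))\leq 2\|\vec{x}-\vec{y}\|_\infty$, I would proceed by explicitly constructing the Liouville sandwich embeddings demanded by Definition \ref{maindef}(ii). Setting $\epsilon=\|\vec{x}-\vec{y}\|_\infty$, the Hamiltonians $H_{\vec{x}}$ and $H_{\vec{y}}$ satisfy pointwise multiplicative estimates with ratio $e^\epsilon$ (after a suitable normalization), and the functoriality of the construction translates such an estimate into a Liouville embedding of a slight dilation of $\mathcal{G}(\vec{x})$ into $\mathcal{G}(\vec{y})$ whose image is caught between the required scalings of $\mathcal{G}(\vec{y})$. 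Performing this construction both ways and tallying the value of $a$ needed on each side should yield the factor of $2$ in the exponent: the dilation producing the inner containment in the sandwich condition combines multiplicatively with the dilation used to compare the Hamiltonians themselves.

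For the lower bound $\|\vec{x}-\vec{y}\|_\infty\leq \log d_f$, I would appeal to filtered $S^1$-equivariant symplectic homology, following the methods of \cite{GU}. The construction is arranged so that in a suitable range of Conley--Zehnder degrees the barcode of the equivariant symplectic homology of $\mathcal{G}(\vec{x})$ consists of exactly $D$ bars whose endpoints read off $x_1,\ldots,x_D$. A Liouville sandwich embedding witnessing $\delta_f\leq a$ induces, via persistence functoriality, an interleaving of the two barcodes whose parameter is proportional to $\log a$; bottleneck stability in the $\ell^\infty$-sense then bounds $\|\vec{x}-\vec{y}\|_\infty$ in terms of $\log d_f$ once one takes the maximum implicit in the definition of $d_f$.

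The main obstacle is the lower bound: one must verify that the $D$ targeted bars actually appear in the barcode of the equivariant symplectic homology of $\mathcal{G}(\vec{x})$, are not shadowed by bars arising from Reeb orbits of the unperturbed base, and have endpoints genuinely equal to an affine function of $x_1,\ldots,x_D$ rather than being polluted by lower-order correction terms. This amounts to a careful Morse--Bott computation of the equivariant Floer complex attached to $H_{\vec{x}}$, extending the single-parameter computations of \cite{GU} to a $D$-parameter family with well-separated action spectra. The design of the perturbations in $H_{\vec{x}}$---both the separations between their supports and the relationships among their heights---must be chosen to guarantee this action separation and to rule out spurious orbits in the relevant Conley--Zehnder degree range.
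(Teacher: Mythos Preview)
Your outline matches the paper's approach closely: the tube construction over a base in $\C^{n-1}$ with $D$ localized perturbations, an upper bound from explicit sandwich inclusions coming from pointwise comparisons of the Hamiltonians, and a lower bound from filtered equivariant symplectic homology in a fixed Conley--Zehnder degree. Two points deserve comment.

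First, there is a genuine gap in your assertion that ``the resulting $\mathcal{G}(\vec{x})\in\mathcal{S}_{2n}$.'' In the paper the $D$ perturbations (``sinkholes'') are implemented via Liouville embeddings $\phi_m$ of balls into the base; these embeddings pull back $\lambda_0$ to $\lambda_0+df_m$, so the Liouville form $\hat\lambda$ on the tube differs from the standard $\lambda_0$. Consequently $W_{H_{\vec\ep}}^{\circ}$ is \emph{not} star-shaped with respect to the radial field. The paper handles this with a separate Moser-type argument (Lemma~\ref{symplecto} and Corollary~\ref{makess}) producing a global symplectomorphism $F$ of $\C^{n+1}$ such that $F(W_{H_{\vec\ep}}^{\circ})$ is genuinely star-shaped; one then invokes Corollary~\ref{sympinv} to transport the $d_f$ estimates. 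If instead you try to keep $\lambda_0$ throughout by using, say, translated balls, you lose control of the Reeb dynamics near the sinkholes and the index/period computation breaks. This step is not hard once isolated, but it is not automatic and your proposal does not mention it.

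Second, a minor correction on the persistence side: a bound on $\delta_f$ yields only an \emph{implantation} (Proposition~\ref{implantdelta}), which is an asymmetric half of an interleaving, so ``bottleneck stability'' does not apply directly. The paper instead argues by a rank/dimension count: in degree $2-3n$ one has $\dim CH^{s}_{2-3n}(W_{H_{\vec\ep}}^{\circ})=\#\{m:\ep_m<s\}$, and the implantation forces a factorization of an injective structure map through a space of strictly smaller dimension, a contradiction (Corollary~\ref{quasicor}). Your barcode heuristic is morally equivalent and would go through once rephrased this way. Also note the paper's parametrization uses depths $\ep_m=\tfrac{1}{2}e^{-x_m}$ (so the perturbations go \emph{down}, creating short Reeb orbits of period $\ep_m$), rather than heights $e^{x_m}$; this is what makes the relevant CZ index negative and isolates the $D$ orbits from the ellipsoid's own spectrum.
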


We outline the proof of this theorem at the end of Section \ref{examples}, based on the constructions and arguments of Section \ref{sink}.

\begin{remark}
As is observed in \cite[Lemma 5.8]{SZ}, for all $D$ the Euclidean space $\R^D$ quasi-isometrically embeds into $\triangle_{2D}$.  Thus Theorem \ref{quasiembed} implies that $\mathbb{R}^D$ likewise quasi-isometrically embeds into $(\mathcal{S}_{2n},\log d_f)$.  
\end{remark}

Let us make a couple of comparisons to the standard Banach-Mazur distance $d_{BM}$ on convex bodies in $\R^{n}$. Rather in contrast to Theorem \ref{quasiembed}, $d_{BM}$ is bounded, for a combination of two reasons: first,  modulo translation,  by \cite[Theorem III]{J} any convex body $K\subset \R^n$ has an associated John ellipsoid $E_K$ which obeys\footnote{We are continuing to use the symplectic convention for scaling a domain; in the standard convex geometry notation the second inclusion would read $K\subset nE_K$.}   $E_K\subset K\subset n^2 E_K$;second, the group from which $h$ is drawn in (\ref{bmdef}) is $GL(n)$ (rather than, say, $O(n)$ or, if $n$ is even, $Sp(n)$), which acts transitively on the set of ellipsoids.   In our symplectic context, it is no longer true that all ellipsoids are equivalent---in particular they can often be distinguished by their volumes or their capacities, which makes it possible to quasi-isometrically embed the half-plane  $\{x_1\leq x_2\}\subset \mathbb{R}^2$ into the space $\mathcal{EL}_{2n}$ of ellipsoids with either pseudometric $\log d_c$ or $\log d_f$ provided that $n\geq 2$, for instance via the map $(x_1,x_2)\mapsto E(e^{x_1},e^{x_2},\ldots,e^{x_2})^{\circ}$.  The properties of the John ellipsoid (together with Williamson's theorem on standard forms for symplectic ellipsoids) show that any convex body lies at distance at most $(2n)^2$ from $\mathcal{EL}_{2n}$ with respect to either $d_c$ or $d_f$; it would be interesting to know if this upper bound  can be lowered.   Since elements $E(a_1,\ldots,a_n)^{\circ}$ of $\mathcal{EL}_{2n}$ are Lipschitz-parametrized by just $n$ parameters, Theorem \ref{quasiembed} shows that the space $\mathcal{S}_{2n}$ of strongly star-shaped domains is much larger, coarse-geometrically speaking, than its subset consisting of open convex domains, at least with respect to the fine distance $d_f$.

We do not know whether an analogue of Theorem \ref{quasiembed} holds for the coarse distance $d_c$.  \cite[Example 8]{qsg} (due to D. Hermann) does imply that there are star-shaped domains that lie arbitrarily far from $\mathcal{EL}_{2n}$ with respect to $d_c$: indeed on $\mathcal{EL}_{2n}$ the Gromov width $c_B$ and the cylindrical capacity $c^Z$ coincide, so an upper bound on $\inf_{V\in\mathcal{EL}_{2n}}d_c((V,\lambda_0),(U,\lambda_0))$ for general $U\in \mathcal{S}_{2n}$ would give an upper bound on $\frac{c^Z(U)}{c_B(U)}$, which by Hermann's example does not exist.  Evidently Theorem \ref{elldist} shows that there are ways of going arbitrarily far from $\mathcal{EL}_{2n}$ in $\mathcal{S}_{2n}$ with respect to the fine distance $d_f$ that behave differently than Hermann's example.

\subsection{Open Liouville domains versus Liouville domains}  Some readers might prefer to work with Liouville domains in the usual sense, instead of what we are calling open Liouville domains; let us say a bit to relate these notions and motivate our choice. Recall that a Liouville domain is typically defined to be a compact exact symplectic manifold with boundary whose Liouville field points outward along the boundary.  Thus one class of examples of what we are calling open Liouville domains consists of interiors of Liouville domains.  We prefer to work with open sets instead of compact sets mainly in order to avoid worrying about smoothness of boundaries.  For example with our definition a product of open Liouville domains is clearly an open Liouville domain, but the analogous statement does not apply to Liouville domains with the standard definition because the product of two smooth manifolds with nonempty boundary is not canonically a smooth manifold with boundary but rather a manifold with corners.  

That said, Proposition \ref{sstame} implies that any open Liouville domain is arbitrarily well-approximated with respect to the fine distance $d_f$ by interiors of Liouville domains.  By passing to such approximations, it is straightforward to obtain versions of Theorems \ref{elldist} and \ref{quasiembed} for the obvious analogues of $d_c,\delta_f,d_f$ defined using Liouville domains instead of open Liouville domains.  Indeed  such approximations play a significant role in the way that these theorems are proved, cf. Lemmas \ref{exhaust} and \ref{betaorbits} and Proposition \ref{hepper}.

Corollary \ref{sympinv} implies that $\log d_c$ and $\log d_f$ descend to  extended (\emph{i.e.} valued in $[0,\infty]$) pseudometrics on the set of equivalence classes of open Liouville domains modulo Liouville diffeomorphism.\footnote{By definition, a ``Liouville diffeomorphism'' is simply a Liouville embedding that is also a diffeomorphism.}  It would be interesting to know whether one or both of these pseudometrics are nondegenerate when restricted to the equivalence classes of elements of $\mathcal{S}_{2n}$, \emph{i.e.} whether two strongly star-shaped domains $U$, $V$ having $d_c((U,\lambda_0),(V,\lambda_0))=1$ and/or $d_f((U,\lambda_0),(V,\lambda_0))=1$ are necessarily symplectomorphic. The analogous question for (compact) Liouville domains is readily seen to have a negative answer in view of the construction in \cite{EH} of non-symplectomorphic convex domains having symplectomorphic interiors.  It would also be interesting to know whether $\log    d_f$ is complete.  If we had defined $d_f$ on Liouville domains instead of open Liouville domains it certainly would not be complete, due to the existence of sequences of Liouville domains that approximate sets with non-smooth boundary. 

\begin{remark}
Our main results concern the collection $\mathcal{S}_{2n}$ of strongly star-shaped open sets in $\mathbb{R}^{2n}$ for $2n\geq 4$; the case of $\mathcal{S}_2$ is rather different.  Indeed any two elements of $\mathcal{S}_{2}$ having the same area are symplectomorphic: this can be seen by combining the fact that they are diffeomorphic (this is a nontrivial folk theorem; a proof appears in \cite[Theorem 2.6]{nlab}) with the extension of the Moser method to the noncompact setting in \cite{GS}.  Thus the quotient of $\mathcal{S}_2$ by symplectomorphisms can be identified with the space of open disks around the origin, and either $d_c$ or $d_f$ just measures (the square of) the ratio of the areas. In particular one cannot quasi-isometrically embed $\triangle_D$ into $(\mathcal{S}_2,d_f)$ unless $D=1$. 
\end{remark}

\subsection{$\delta_f$ and unknottedness}

Essentially following \cite{GU}, if $(M,\lambda)$ is an exact symplectic manifold and $U\subset M$ is an open subset, we say that a Liouville embedding $\psi\co U\xhookrightarrow{L}M$ is \emph{unknotted} provided that there is a Liouville diffeomorphism $\Psi\co M\to M$ such that $\Psi(U)=\psi(U)$.  Then the hemidistance $\delta_f$ can be equivalently defined as \begin{equation}\label{du} \delta_f\left((U,\lambda),(V,\mu)\right)=\inf\left\{a\geq 1\left|\begin{array}{c}(\exists f\co a^{-1}V\xhookrightarrow{L} a^{-1/2}U,\,g\co a^{-1/2}U\xhookrightarrow{L}V)\\ (g\circ f\co a^{-1}V\to V\mbox{ is unknotted})\end{array}\right.\right\}.\end{equation} Indeed if  $f\co a^{-1}V\xhookrightarrow{L} a^{-1/2}U$ and $g\co a^{-1/2}U\xhookrightarrow{L}V$ with $g\circ f(a^{-1}V)=\Psi(a^{-1}V)$ for a Liouville diffeomorphism $\Psi\co V\to V$, then $h=\Psi^{-1}\circ g$ has $a^{-1}V\subset h(a^{-1/2}U)\subset V$.  Conversely if $a^{-1}V\subset h(a^{-1/2}U)\subset V$ then $f=h^{-1}|_{a^{-1}V}$ is a Liouville embedding from $a^{-1}V$ to $a^{-1/2}U$ such that $h\circ f$ is the inclusion of $a^{-1}V$ into $V$ and hence is unknotted.

Clearly the coarse distance $d_c$ can be defined by a formula that differs from (\ref{du}) only in not requiring $g\circ f$ to be unknotted.  In this language, \cite{GU} finds many knotted (\emph{i.e.} not unknotted) Liouville (equivalently symplectic) embeddings between various star-shaped domains in $\mathbb{R}^4$ by showing that $d_c$ is strictly smaller than $\delta_f$ in many examples.  Evidently Theorem \ref{elldist} provides further
such examples.  Also Corollary \ref{v34} yields examples of embeddings $a^{-1/2}U\xhookrightarrow{L} V$ and $a^{-1/2}V\xhookrightarrow{L} U$ such that \emph{both} resulting compositions $a^{-1}U\xhookrightarrow{L} U$ and $a^{-1}V\xhookrightarrow{L} V$ are knotted. 

The strategy in \cite{GU} is to consider quantities $\delta_{\mathrm{ell}}(U)$ and $\delta_{\mathrm{ell}}^{\mathrm{u}}(U)$ associated to any $U\in\mathcal{S}_{2n}$ that can be written\footnote{Strictly speaking \cite{GU} sometimes uses closed sets in place of open sets here, but in view of Proposition \ref{sstame} this does not affect the conclusions.} in our notation as \[ \delta_{\mathrm{ell}}(U)=\inf_{V\in\mathcal{EL}_{2n}}d_c((V,\lambda_0),(U,\lambda_0)) \qquad \delta_{\mathrm{ell}}^{\mathrm{u}}(U)=\inf_{V\in\mathcal{EL}_{2n}}\delta_f((V,\lambda_0),(U,\lambda_0)).\] Inequalities $\delta_{\mathrm{ell}}(U)<a<\delta_{\mathrm{ell}}^{\mathrm{u}}(U)$ imply the existence of a knotted embedding $U\xhookrightarrow{L} aU$.  All of the examples in \cite{GU} had $\delta_{\mathrm{ell}}^{\mathrm{u}}(U)\leq 2$, which led to \cite[Question 1.9]{GU} asking whether one could ever have a knotted embedding $U\xhookrightarrow{L} aU$ with $a>2$.  Since Theorem \ref{elldist} produces examples with $\delta_{\mathrm{ell}}^{\mathrm{u}}$ arbitrarily large and $\delta_{\mathrm{ell}}$ small, it implies an affirmative answer to this question.

\subsection{Sketch of the proofs of the main theorems}\label{examples}


Throughout this section we shift the value of $n$ by $1$ with respect to the statements of Theorems \ref{elldist} and \ref{quasiembed}, so that our domains are subsets of $\mathbb{C}^{n+1}$ rather than $\mathbb{C}^n$.

In the proof of Theorem \ref{elldist}, carried out in Section \ref{trunc}, we associate to an open ellipsoid $\hat{E}^{\circ}=E(a_1,\ldots,a_{n+1})^{\circ}$ a family of open Liouville domains $E_{H_{\ep,\beta}}^{\circ}$.  When $\beta$ is large these are close to $\hat{E}^{\circ}$ with respect to the coarse distance $d_c$, but, if $\ep$ is small and the large number $\beta$ is chosen carefully,  $E_{H_{\ep,\beta}}^{\circ}$ is far from $\hat{E}^{\circ}$ (and indeed far from all ellipsoids) with respect to the fine distance.  We call the domains $E_{H_{\ep,\beta}}^{\circ}$ ``truncated ellipsoids.''  To describe them, note that we have \[ \hat{E}^{\circ}=\left\{(z_1,\ldots,z_{n+1})\left|\frac{\pi|z_{n+1}|^2}{a_{n+1}}<1-u\right.\right\}\quad\mbox{where }u=\sum_{j=1}^{n}\frac{\pi |z_{j}|^2}{a_j}.\]  Then $E_{H_{\ep,\beta}}^{\circ}$ is defined by replacing the expression $1-u$ on the right-hand side of the above inequality by $\min\{\ep+\beta u,1-u\}$.  See Figure \ref{truncfig} for the case $a_1=\cdots =a_{n+1}=1$.  Thus if $\beta$ is large, $E_{H_{\ep,\beta}}^{\circ}$  coincides with the original ellipsoid $\hat{E}^{\circ}$ everywhere that $|z_1|,\ldots,|z_n|$ are not all small, but a large proportion of the intersection of $\hat{E}^{\circ}$ with a small neighborhood of $\{\vec{0}\}\times \C\subset \C^{n+1}$ has been cut away to obtain $E_{H_{\ep,\beta}}^{\circ}$.

Roughly because a large part of the region in Figure \ref{truncfig} is filled by a right triangle, it is possible to adapt an argument from \cite[Section 5]{T} to show that $\alpha\hat{E}^{\circ}$ symplectically embeds in $E_{H_{\ep,\beta}}^{\circ}$  for a value of $\alpha$ that is not much smaller than one; this is done in Lemma \ref{translate}, leading to Corollary \ref{coarsecvg} which shows that $\delta_f\left((E_{H_{\ep,\beta}}^{\circ},\lambda_0),(\hat{E}^{\circ},\lambda_0)\right)<(1+1/\beta)^2$.  

\begin{figure}
\includegraphics[height=3.2 in]{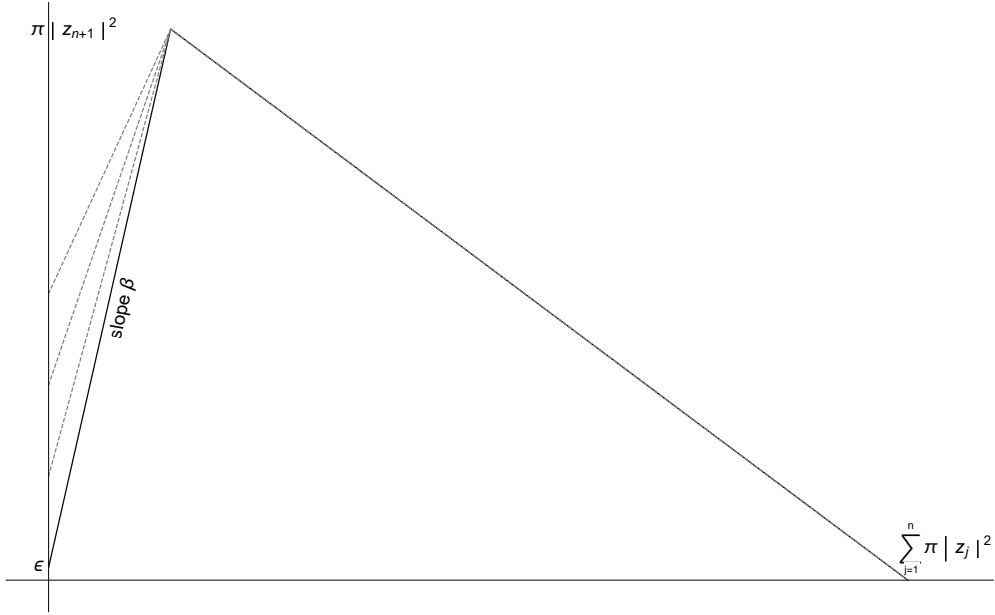}
\caption{The image under a map to $\R^2$ of the boundary of the domain $E_{H_{\ep,\beta}}$ in the case that $a_1=\cdots=a_{n+1}=1$, $\beta\approx 5.9$, and $\ep=0.02$.  The dashed lines have integer slope, and their intersections with the vertical axis correspond to periods of Reeb orbits that wind once around $\{z_{n+1}=0\}$. These periods are large compared to $\epsilon$, which is the period of the orbit where $z_1=\cdots=z_n=0$.}
\label{truncfig}
\end{figure}

The proof that $\delta_f\left((\hat{E}^{\circ},\lambda_0),(E_{H_{\ep,\beta}}^{\circ},\lambda_0)\right)$ can be arranged to be large is based on filtered equivariant symplectic homology, using an analysis of the periods and Conley--Zehnder indices of the closed Reeb orbits on the boundary of (smooth Liouville approximations to) $E_{H_{\ep,\beta}}^{\circ}$.  The key is to find a closed Reeb orbit $\gamma_0$ of index $k\leq 1$ such that every closed Reeb orbit $\gamma$ having index $k\pm 1$ has period which is a large multiple of the period of $\gamma_0$.  The orbit $\gamma_0$ that we use is the one contained in the $z_{n+1}$-plane, whose period is the parameter $\ep$ that we are taking very small.  We then establish (for suitably chosen large $\beta$) that Reeb orbits of index $k\pm 1$ have periods bounded below by $M_{\beta}\ep$ where $M_{\beta}\gg 1$, and this leads to a large lower bound on $\delta_f\left((\hat{E}^{\circ},\lambda_0),(E_{H_{\ep,\beta}}^{\circ},\lambda_0)\right)$.

Part of the basis for this lower bound on periods, at least in the case that $a_1=\cdots=a_{n+1}=1$, is suggested in Figure \ref{truncfig}.  The relevant orbits have positive winding numbers around the hyperplane $\{z_{n+1}=0\}$.  Those orbits with winding number one have period equal to the vertical intercept of a line with \emph{integer} slope $m<\beta$ that passes through the  vertex of the graph in Figure \ref{truncfig}. If $\beta$ has been chosen to be slightly less than an integer, then these intercepts will be bounded below by a multiple of $\frac{1}{\beta}$ independently of $\ep$.  More generally, one must consider orbits of arbitrary winding number $N$, which have periods equal to $N$ \emph{times} the intercept of a line with slope $\frac{p}{N}<\beta$; a more careful analysis  (a more general version of which is done in the proof of Lemma \ref{strongTbound}) shows that these periods are  bounded below by a multiple of $\beta\ep$ provided that $\beta$ is slightly less than an integer and $\ep<\beta^{-2}$. 

In the case that the $a_j$ are not all equal the argument is somewhat more delicate, as it requires $\beta$ to be chosen to be \emph{simultaneously} slightly smaller than integer multiples of all of the values $\frac{a_j}{a_{n+1}}$ for $j=1,\ldots,n$.  However by using Dirichlet's theorem on simultaneous Diophantine approximations we are able to find an unbounded open set of values $\beta$ for which  the required type of period bound holds, see Lemma \ref{strongTbound}. From this we complete the proof of Theorem \ref{elldist} using properties of filtered equivariant symplectic homology. More specifically,\footnote{These results require the assumption that each $\frac{a_{n+1}}{a_j}\notin \mathbb{Q}$, but continuity considerations show that it is sufficient to prove the theorem in this case.} Proposition \ref{eme} and Corollary \ref{dellu} show that there is an unbounded open set $B\subset (1,\infty)$ such that if $\beta$ is chosen from $B$ and has each $\frac{\beta a_n}{a_j}\notin\Q$, and if $0<\ep<\beta^{-2}$, then $\delta_f((V,\lambda_0),(E_{H_{\ep,\beta}}^{\circ},\lambda_0))$ is bounded below by a positive constant times a positive power of $\beta$ for every ellipsoid $V$ (with the constants independent of the choices of $V,\ep$).  So for the sequence $U_m$ whose existence is asserted by Theorem \ref{elldist} we may take the domains $E_{H_{\beta^{-3}_{m},\beta_m}}^{\circ}$ where $\beta_m$ is a sequence in $B$ such that $\beta_m\to\infty$ and each $\frac{\beta_ma_n}{a_j}$ is irrational.

The examples giving rise to the quasi-isometric embedding of $\Delta_D$ from Theorem \ref{quasiembed} are constructed in Section \ref{sink} by arranging for a somewhat similar picture to the one just described (with $a_1=\cdots=a_{n+1}=1$ and $\beta$ slightly less than $2$) to occur in $D$ distinct locations (``sinkholes'') in the domain.  The resulting domains in $\C^{n+1}$ are not star-shaped, but we adapt a Moser-type argument from \cite{CE} to show that they are symplectomorphic to star-shaped domains.  Associated to each of the sinkholes is a depth parameter $\ep_j$ ($j=1,\ldots,D$), and we associate to $(x_1,\ldots,x_D)\in \Delta_D$ the domain with depth parameters $\ep_j=\frac{1}{2}e^{-x_j}$.  Filtered equivariant symplectic homology is used to give lower bounds on the fine symplectic Banach-Mazur distance between the resulting domains in terms of the $\ell^{\infty}$ distance on $\Delta_D$.  This is somewhat similar to---and partially influenced by---the approach used in \cite{SZ} in the context of cotangent bundles of surfaces, though the constructions of the domains and the ingredients in the proofs of the necessary constraints on periods of Reeb orbits are different in the two contexts.  

To outline the proof of Theorem \ref{quasiembed} in more detail, in Section \ref{sink} we associate to each $\vec{\ep}=(\ep_1,\ldots,\ep_D)\in (0,2]^D$ a $(2n+2)$-dimensional open Liouville domain $(W_{H_{\vec{\ep}}}^{\circ},\hat{\lambda})$ where $W_{H_{\vec{\ep}}}^{\circ}\subset \C^{n+1}$ and $d\hat{\lambda}$ is the standard symplectic form on $\mathbb{C}^{n+1}$.   Corollary \ref{quasicor} gives lower bounds for the fine Banach-Mazur distance restricted to those $W_{H_{\vec{\ep}}}^{\circ}$ with $0<\ep_1\leq\cdots\leq\ep_D\leq\frac{1}{2}$.  Although $(W_{H_{\vec{\ep}}}^{\circ},\hat{\lambda})$ does not belong to $\mathcal{S}_{2n+2}$ (since $\hat{\lambda}$ differs from $\lambda_0$),  Corollary \ref{makess} shows that there is a symplectomorphism $F_{\vec{\ep}}\co \C^{n+1}\to \C^{n+1}$ such that $F_{\vec{\ep}}(W_{H_{\vec{\ep}}}^{\circ})\in \mathcal{S}_{2n+2}$. 

Given $\vec{x}=(x_1,\ldots,x_D)\in \triangle_D$ define $\vec{\ep}(\vec{x})=\left(\frac{1}{2}e^{-x_1},\ldots,\frac{1}{2}e^{-x_D}\right)$.  Thus the condition that $\vec{x}\in\triangle_D$ translates to the condition that the coordinates $\ep_1,\ldots,\ep_D$ of $\vec{\ep}(\vec{x})$ obey $0<\ep_1\leq\cdots\leq \ep_D\leq\frac{1}{2}$.  The map $\mathcal{G}$ promised in the theorem is then given by \[ \mathcal{G}(\vec{x})=F_{\vec{\ep}(\vec{x})}(W_{H_{\vec{\ep}(\vec{x})}}^{\circ}).\]  To prove the desired inequalities first note that Corollary \ref{sympinv} shows that \[ d_f\left((\mathcal{G}(\vec{x}),\lambda_0),(\mathcal{G}(\vec{y}),\lambda_0)\right) = d_f\left((W_{H_{\vec{\ep}(\vec{x})}}^{\circ},\hat{\lambda}),(W_{H_{\vec{\ep}(\vec{y})}}^{\circ},\hat{\lambda})\right).\]  Then Corollary \ref{uppersink} implies the second inequality in (\ref{quasiineq}) and Corollary \ref{quasicor} (together with the fact that $\frac{1}{\ep_m}>\frac{\zeta_m}{\ep_m}$ when $0<\ep_m,\zeta_m\leq \frac{1}{2}$) implies the first inequality.

\subsection{Organization of the paper}
The upcoming Section \ref{genl} establishes a number of mostly elementary---though in some cases not so straightforward---results about open Liouville domains and applies these to establish basic results about our Banach-Mazur-type distances.  As has already been mentioned, Proposition \ref{sstame} plays a key role throughout the paper in that it shows that any open Liouville domain is exhausted by (compact) Liouville subdomains, allowing us to apply results about Liouville domains in the usual sense to open Liouville domains.   Proposition \ref{triangle} establishes multiplicative triangle inequalities for $d_c,\delta_f,$ and $d_f$.  For $d_c$ this is straightforward once one notices (see Lemma \ref{rescale}) that a Liouville embedding $\phi\co U\xhookrightarrow{s} V$ gives rise, for any $a\geq 1$, to a Liouville embedding $\phi_a\co a^{-1}U\xhookrightarrow{s} a^{-1}V$ by conjugating by the flows of the respective Liouville vector fields.  That $\delta_f$ (and therefore also $d_f$) obeys a triangle inequality is more delicate, and depends on the non-obvious Proposition \ref{opencon}; roughly speaking the subtlety here in comparison to the classical Banach-Mazur distance $d_{BM}$ lies in the fact that Liouville embeddings do not commute with Liouville scalings, whereas the linear maps that are used to define $d_{BM}$ do of course commute with linear scalings.  Section \ref{genl} concludes with Lemma \ref{symplecto}, which is used to prove that the domains constructed in Section \ref{sink} are symplectomorphic to star-shaped domains, allowing them to be used in the proof of Theorem \ref{quasiembed}

Section \ref{CHSect} explains how filtered equivariant symplectic homology can be used to provide lower bounds for $\delta_f$ and hence for $d_f$, using results from \cite{G},\cite{GH},\cite{GU}.  To each open Liouville domain $(U,\lambda)$ this theory associates a persistence module consisting of $\mathbb{Q}[T]$-modules $CH^{L}(U,\lambda)$ as $L$ varies through $\R$; under topological hypotheses on $U$ these are naturally $\Z$-graded.  (In this paper just their structure as graded $\Q$-vector spaces is used.)  Moreover this assignment is contravariant with respect to Liouville embeddings.  We show in Proposition \ref{implantdelta} that an upper bound on $\delta_f$ implies the existence of what we call an \emph{implantation} between the corresponding persistence modules.  For readers familiar with persistence theory we remark that an implantation is a partial, asymmetric version of an interleaving; whereas an interleaving gives rise to an approximate matching between the corresponding barcodes, an implantation should be expected to give rise to an approximate injection.  However we do not use the theory of barcodes in the rest of the paper and instead work directly with our notion of implantations.  Lemma \ref{exhaust} gives some information about the structure of the persistence module associated to $(U,\lambda)$ under certain hypotheses that hold in our later examples; together with Proposition \ref{implantdelta} this becomes our main tool for bounding $\delta_f$ from below.

Section \ref{tubesect} describes a general mechanism for constructing Liouville domains of dimension $2n+2$ from certain autonomous Hamiltonian flows on Liouville domains of dimension $2n$.  We relate the Reeb dynamics on the boundary of the new domain $W_H$ to the discrete dynamics of the time-one map of the Hamiltonian flow, in particular deriving expressions for the periods (Corollary \ref{neworbits}) and Conley--Zehnder indices (Proposition \ref{czwn}) of the closed Reeb orbits on $\partial W_H$ in terms of lower-dimensional information.  The domains that serve as a basis for the proofs of both of our main theorems are constructed and analyzed  using this technique. This approach was motivated in part by the construction in \cite[Section 3]{ABHS} of contact forms on open books having monodromy equal to a prescribed area-preserving disk map.

Sections \ref{trunc} and \ref{sink} provide the proofs of Theorems \ref{elldist} and \ref{quasiembed}, respectively, using the examples that have already been introduced in Section \ref{examples}.  A variation on the main family of examples from Section \ref{trunc} is also used in Corollary \ref{v34} to show that the first inequality in (\ref{obvineq}) can be strict.

\subsection*{Acknowledgements}
I am very grateful to J. Gutt, R. Hind, Y. Ostrover, L. Polterovich, V. Stojisavljevi\'c, and J. Zhang for stimulating discussions, to L. Polterovich and J. Zhang for comments on a preliminary version, and to an anonymous referee for an exceptionally thorough and thoughtful report.  This work was partially supported by NSF grant DMS-1509213.

\section{(Open) Liouville domains} \label{genl}
This section will be concerned with some basic properties of Liouville domains and open Liouville domains, with implications for our Banach-Mazur type (hemi)distances $d_c,\delta_f,d_f$.  We begin by proving the simple inequalities (\ref{obvineq}) that appeared in the introduction. Recall that for an open Liouville domain $(U,\lambda)$ the Liouville flow $\mathcal{L}_{\lambda}^{t}\co U\to U$ is defined for all $t\leq 0$, and that when $a\geq 1$ we write $a^{-1}U$ for the image of $U$ under $\mathcal{L}_{\lambda}^{-\log a}$.   Evidently if $a>1$ then $\mathcal{L}_{\lambda}^{t}$ is well-defined on $a^{-1}U$ (as a map to $U$) for all $t\leq \log a$.

\begin{prop}\label{easyineq}
For any $2n$-dimensional open Liouville domains $(U,\lambda),(V,\mu)$ we have \[ d_c\left((U,\lambda),(V,\mu)\right)\leq \delta_f\left((U,\lambda),(V,\mu)\right)\leq d_f\left((U,\lambda),(V,\mu)\right).\]
\end{prop}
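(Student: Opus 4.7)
The second inequality $\delta_f((U,\lambda),(V,\mu))\le d_f((U,\lambda),(V,\mu))$ is immediate from the definition of $d_f$ as the maximum of two $\delta_f$-values, one of which is $\delta_f((U,\lambda),(V,\mu))$ itself.

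For the first inequality I plan to show that any $a\ge 1$ lying in the feasible set for $\delta_f((U,\lambda),(V,\mu))$ also lies in the feasible set for $d_c((U,\lambda),(V,\mu))$. Fix such an $a$ together with a Liouville embedding $h\co a^{-1/2}U\xhookrightarrow{L}V$ satisfying $a^{-1}V\subset h(a^{-1/2}U)\subset V$. The map $h$ immediately supplies the first of the two embeddings demanded by $d_c$. For the second one $a^{-1/2}V\xhookrightarrow{L}U$, restricting $h^{-1}$ to the subset $a^{-1}V$ of its domain gives a Liouville embedding $f:=h^{-1}|_{a^{-1}V}\co a^{-1}V\xhookrightarrow{L}a^{-1/2}U$. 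Both the source and the target here are shrunk by an extra factor relative to what we want, so $f$ combined with the inclusion $a^{-1/2}U\subset U$ only yields the weaker bound $d_c\le\delta_f^2$ directly.

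To close this gap by a factor of $a^{1/2}$ on each side I conjugate $f$ by Liouville flows, setting
\[\psi:=\mathcal{L}_\lambda^{(1/2)\log a}\circ f\circ\mathcal{L}_\mu^{-(1/2)\log a}\co a^{-1/2}V\to U.\]
The intermediate diffeomorphisms $\mathcal{L}_\mu^{-(1/2)\log a}\co a^{-1/2}V\to a^{-1}V$ and $\mathcal{L}_\lambda^{(1/2)\log a}\co a^{-1/2}U\to U$ are well-defined because the $a^{-1/2}$-shrunk sets still have enough ``room'' to flow forward by time $(1/2)\log a$. The main (and essentially only) computational step is the Liouville verification for $\psi$: using $(\mathcal{L}_\lambda^t)^*\lambda=e^t\lambda$, $(\mathcal{L}_\mu^t)^*\mu=e^t\mu$ and the exactness $f^*\lambda-\mu=d\alpha$ on $a^{-1}V$, a short pullback calculation will show that the factor $a^{1/2}$ produced by the outer flow and the factor $a^{-1/2}$ produced by the inner flow cancel, leaving $\psi^*\lambda-\mu=d(a^{1/2}\tilde\alpha)$ exact. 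Conceptually this is Lemma~\ref{rescale} run ``outward'' rather than ``inward''. With $\psi$ constructed, the pair $(h,\psi)$ witnesses $d_c((U,\lambda),(V,\mu))\le a$, and taking the infimum over admissible $a$ yields $d_c\le\delta_f$.
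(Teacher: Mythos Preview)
Your proof is correct and follows essentially the same route as the paper: the second inequality is by definition, and for the first you restrict $h^{-1}$ to $a^{-1}V$ and then conjugate by the time-$\tfrac{1}{2}\log a$ Liouville flows on each side to produce the required embedding $a^{-1/2}V\xhookrightarrow{L}U$, with the same pullback computation verifying exactness.
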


\begin{proof}
The second inequality holds by definition.  For the first, if $h\co a^{-1/2}U\xhookrightarrow{L} V$ has $a^{-1}V\subset h(a^{-1/2}U)$, then $h^{-1}$ gives a Liouville embedding of $a^{-1}V$ into $a^{-1/2}U$, and conjugating this embedding by the respective Liouville flows gives a Liouville embedding $\mathcal{L}_{\lambda}^{\log(a^{1/2})}\circ h^{-1}\circ \mathcal{L}_{\mu}^{-\log(a^{1/2})}\co a^{-1/2}V\xhookrightarrow{L} U$.  (Indeed, writing $h^{-1*}\lambda=\mu+df$, the map  $\mathcal{L}_{\lambda}^{\log(a^{1/2})}\circ h^{-1}\circ \mathcal{L}_{\mu}^{-\log(a^{1/2})}$ pulls $\lambda$ back to \[ \mathcal{L}_{\mu}^{-\log(a^{1/2})*}h^{-1*} (a^{1/2}\lambda)=\mathcal{L}_{\mu}^{-\log(a^{1/2})*} (a^{1/2}\mu+a^{1/2}df) \] which does indeed differ from $\mu$ by an exact one-form.)
\end{proof}
\begin{lemma}\label{rescale}
If $\phi\co (U,\lambda)\xhookrightarrow{L}(V,\mu)$ is a Liouville embedding between two open Liouville domains and $a\geq 1$ then there is a Liouville embedding $\phi_a\co (a^{-1}U,\lambda)\xhookrightarrow{L}(a^{-1}V,\mu)$.  Moreover $\phi_a$ can be chosen such that, for any $b>1$ such that $b^{-1}V\subset \phi(U)$, we have $a^{-1}b^{-1}V\subset \phi_a(a^{-1}U)$.
\end{lemma}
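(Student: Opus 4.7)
The plan is to define $\phi_a$ by conjugating $\phi$ with the respective Liouville flows: since negative-time Liouville flow is defined on any open Liouville domain, and positive-time flow is defined at least on the image $\mathcal{L}^{-\log a}(U)=a^{-1}U$ up to time $\log a$, the composition
\[\phi_a\;=\;\mathcal{L}_{\mu}^{-\log a}\circ\phi\circ\mathcal{L}_{\lambda}^{\log a}\]
makes sense as a smooth embedding from $a^{-1}U$ into $a^{-1}V$. I would begin by carefully recording the domains of definition: $\mathcal{L}_{\lambda}^{\log a}$ maps $a^{-1}U$ diffeomorphically onto $U$, $\phi$ then sends $U$ into $V$, and finally $\mathcal{L}_{\mu}^{-\log a}$ sends $V$ into $a^{-1}V\subset V$. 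Hence $\phi_a$ is a well-defined smooth embedding $a^{-1}U\hookrightarrow a^{-1}V$.

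Next I would verify that $\phi_a$ is a Liouville embedding. Using the Cartan-formula identity $\mathcal{L}^{t*}\lambda=e^{t}\lambda$ for Liouville flows, together with the assumption that $\phi^{*}\mu-\lambda=df$ for some $f\in C^{\infty}(U)$, I compute
\[\phi_a^{*}\mu \;=\; \mathcal{L}_{\lambda}^{\log a\,*}\,\phi^{*}\bigl(\mathcal{L}_{\mu}^{-\log a\,*}\mu\bigr)\;=\;a^{-1}\,\mathcal{L}_{\lambda}^{\log a\,*}\bigl(\lambda+df\bigr)\;=\;\lambda\;+\;d\bigl(a^{-1}\,f\circ\mathcal{L}_{\lambda}^{\log a}\bigr),\]
so $\phi_a^{*}\mu-\lambda$ is exact on $a^{-1}U$, as required.

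For the moreover statement, observe that
\[\phi_a(a^{-1}U)\;=\;\mathcal{L}_{\mu}^{-\log a}\!\bigl(\phi(\mathcal{L}_{\lambda}^{\log a}(a^{-1}U))\bigr)\;=\;\mathcal{L}_{\mu}^{-\log a}\bigl(\phi(U)\bigr).\]
If $b^{-1}V\subset\phi(U)$, apply $\mathcal{L}_{\mu}^{-\log a}$ (which is a diffeomorphism from $V$ onto $a^{-1}V$ and in particular is injective and order-preserving on subsets of $V$) to get
\[\mathcal{L}_{\mu}^{-\log a}(b^{-1}V)\;=\;\mathcal{L}_{\mu}^{-\log(ab)}(V)\;=\;a^{-1}b^{-1}V\;\subset\;\mathcal{L}_{\mu}^{-\log a}(\phi(U))\;=\;\phi_a(a^{-1}U),\]
which is the desired containment.

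The only subtlety worth double-checking is that all of the flow maps appearing above are defined on the claimed sets: this is precisely guaranteed by Definition \ref{opendef}, since negative-time Liouville flow is everywhere defined on an open Liouville domain, while the positive-time flow $\mathcal{L}_{\lambda}^{\log a}$ is defined on $\mathcal{L}_{\lambda}^{-\log a}(U)=a^{-1}U$ as the inverse of $\mathcal{L}_{\lambda}^{-\log a}$. No further obstacles are anticipated; the lemma is essentially a formal calculation once the domains are correctly tracked.
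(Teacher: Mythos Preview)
Your proof is correct and follows exactly the same approach as the paper: define $\phi_a$ by conjugating $\phi$ with the time-$(\pm\log a)$ Liouville flows. The paper's proof is terser (it simply writes $\phi_a=g_V\circ\phi\circ g_U^{-1}$ and asserts the required properties are ``easily seen''), while you have spelled out the domain checks, the exactness computation, and the image containment explicitly; but the underlying argument is identical.
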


\begin{proof}
Let $g_U$, $g_V$ denote the time-$(\log a^{-1})$ flows of the Liouville vector fields of $U$ and $V$ respectively, so that $g_U\co U\to a^{-1}U$ is a diffeomorphism with $g_{U}^{*}\lambda=a^{-1}\lambda$ and similarly for $g_V$.  Then $\phi_a=g_{V}\circ\phi\circ g_{U}^{-1}$ is easily seen to satisfy the required properties.  
\end{proof}

\begin{cor}\label{infint} If $(U,\lambda),(V,\mu)$ are open Liouville domains and $a>\delta_f\left((U,\lambda),(V,\mu)\right)$ then there is a Liouville embedding $h\co a^{-1/2}U\xhookrightarrow{L} V$ with $a^{-1}V\subset h(a^{-1/2}U)\subset V$.
\end{cor}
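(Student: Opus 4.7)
The plan is to exploit the rescaling procedure of Lemma \ref{rescale} to promote an embedding realized just below $a$ into one realized at the scale $a$ itself. By the definition of $\delta_f$ as an infimum, since $a>\delta_f((U,\lambda),(V,\mu))$ we may choose $a'\in[1,a)$ together with a Liouville embedding $h'\co a'^{-1/2}U\xhookrightarrow{L}V$ satisfying $a'^{-1}V\subset h'(a'^{-1/2}U)\subset V$. The task is to convert this into an embedding with source $a^{-1/2}U$ (which is strictly smaller than $a'^{-1/2}U$) while preserving the lower containment $a^{-1}V\subset h(a^{-1/2}U)$.

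First I would note that $(a'^{-1/2}U,\lambda)$ is itself an open Liouville domain: both Liouville conditions (i) and (ii) transfer from $(U,\lambda)$ by conjugating with the flow $\mathcal{L}_\lambda^{-(\log a')/2}$, which is a Liouville diffeomorphism from $(U,\lambda)$ to $(a'^{-1/2}U,\lambda)$. Hence Lemma \ref{rescale} is applicable to $h'\co(a'^{-1/2}U,\lambda)\xhookrightarrow{L}(V,\mu)$. Set $c=(a/a')^{1/2}$, which satisfies $c>1$ since $a>a'$, and apply the lemma with scaling parameter $c$ to obtain a Liouville embedding $\phi_c\co c^{-1}(a'^{-1/2}U)\xhookrightarrow{L}c^{-1}V$. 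Since Liouville flows compose additively in time, $c^{-1}(a'^{-1/2}U)=\mathcal{L}_\lambda^{-\log c-(\log a')/2}(U)=\mathcal{L}_\lambda^{-(\log a)/2}(U)=a^{-1/2}U$, so $\phi_c$ is a Liouville embedding of $a^{-1/2}U$ into $c^{-1}V\subset V$. Compose with the inclusion $c^{-1}V\hookrightarrow V$ to define the desired $h\co a^{-1/2}U\xhookrightarrow{L}V$.

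It remains to verify the lower containment. The ``moreover'' clause of Lemma \ref{rescale}, applied with $b=a'$ (which is allowed because $a'^{-1}V\subset h'(a'^{-1/2}U)$), yields $(ca')^{-1}V\subset \phi_c(a^{-1/2}U)=h(a^{-1/2}U)$. Since $ca'=(a/a')^{1/2}a'=(aa')^{1/2}<a$ (using $a>a'\geq 1$), we have $-\log a<-\log(aa')/2\leq 0$, and so monotonicity of the Liouville flow on $V$ (property (ii) applied along the path of flow times) gives $a^{-1}V\subset(aa')^{-1/2}V\subset h(a^{-1/2}U)$, as required.

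The main obstacle is purely bookkeeping: picking the correct rescaling factor $c$ so that $c^{-1}(a'^{-1/2}U)$ matches $a^{-1/2}U$ on the nose, and then tracking the two resulting lower bounds $(ca')^{-1}V=(aa')^{-1/2}V$ and $a^{-1}V$ to confirm the former dominates the latter. No substantive analytic difficulty arises; the corollary is a direct consequence of the rescaling lemma together with the openness of the defining condition of $\delta_f$.
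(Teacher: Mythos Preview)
Your proof is correct and follows essentially the same route as the paper's own proof: pick $a'<a$ realizing the defining condition of $\delta_f$, rescale the resulting embedding by the factor $(a/a')^{1/2}$ via Lemma~\ref{rescale}, and then use monotonicity of the Liouville flow to pass from the containment $(aa')^{-1/2}V\subset h(a^{-1/2}U)$ to $a^{-1}V\subset h(a^{-1/2}U)$. The only difference is notational (the paper writes $b$ for your $a'$) together with your extra remark that $(a'^{-1/2}U,\lambda)$ is itself an open Liouville domain, which the paper leaves implicit.
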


\begin{proof}
The immediate implication of the definition of $\delta_f$ is that there is some $b<a$ and a Liouville embedding $\phi\co b^{-1/2}U\xhookrightarrow{L} V$ with $b^{-1}V\subset \phi(b^{-1/2}U)\subset V$. With notation as in Lemma \ref{rescale}, take $h=\phi_{\sqrt{a/b}}$, so $h$ is a Liouville embedding of $a^{-1/2}U$ into $(a/b)^{-1/2}V\subset V$ and we have\[ a^{-1} V\subset a^{-1/2}b^{-1/2} V\subset h(a^{-1/2}U)\subset (a/b)^{-1/2}V\subset V.\]
\end{proof}

Before we can prove some other properties of $d_c$ and $\delta_f$ (such as triangle inequalities and Liouville diffeomorphism invariance) we will need to consider a couple other types of domains.
Recall the standard definition of a Liouville domain (without the adjective ``open'') as a pair $(W,\lambda)$ where $W$ is a compact manifold with boundary and $\lambda\in\Omega^1(W)$ has the properties that $d\lambda$ is symplectic and the Liouville vector field $\mathcal{L}_{\lambda}$ points outward along $\partial W$.  We also consider the following notion, generalized from \cite{GU} in which a similar condition was imposed in order to have a suitable setting for considering filtered symplectic homology on certain open subsets of $\mathbb{R}^{2n}$.
\begin{dfn}\label{tamedef}
An exact symplectic manifold $(U,\lambda)$ is said to be \textbf{tamely exhausted} if for every compact subset $K\subset U$ there is a closed subset $X\subset U$ such that $(X,\lambda|_X)$ is a Liouville domain, $K\subset X$, and $X$ is a deformation retract of  $U$.
\end{dfn}

Note that there is no loss of generality in assuming that $K\subset X^{\circ}$ in Definition \ref{tamedef}, since one could always replace $X$ by $\mathcal{L}_{\lambda}^{\ep}(X)$ for some $\ep>0$ that is small enough for $\mathcal{L}_{\lambda}^{\ep}|_X$ to be defined.

The following fact will be crucial:

\begin{prop}\label{sstame}
Any open Liouville domain is tamely exhausted.
\end{prop}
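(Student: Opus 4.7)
The plan is to produce a smooth function $\rho: U \to [0,\infty)$ whose regular sublevel sets, courtesy of Sard's theorem, furnish the desired Liouville subdomain $X \supset K$. The idea is to smooth out the candidate subdomain $\overline{\mathcal{L}_\lambda^{s_0}(U)}$, whose boundary may be irregular, by averaging a defining function for it along the Liouville flow.

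First I would use both hypotheses of Definition \ref{opendef} to arrange nested open sets $U_s := \mathcal{L}_\lambda^s(U)$ for $s \leq 0$, satisfying $U_s \subset U_t$ when $s \leq t \leq 0$ and $\bigcup_{s<0}U_s = U$ (by local existence of the flow at each point). Compactness of $K$ then gives $s_0 < 0$ with $K \subset U_{s_0}$, and applying compactness a second time to $\overline{U_{s_0}}$ (each point of $\partial U_{s_0}$ lies in some $U_{s'}$ with $s' > s_0$ by the same local-flow argument) yields $T \in (0,-s_0)$ with $\overline{U_{s_0}} \subset U_{s_0+T}$. Since $\mathcal{L}_\lambda^{-T}$ is continuous with $\mathcal{L}_\lambda^{-T}(U_{s_0+T}) = U_{s_0}$ and sends the compact set $\overline{U_{s_0+T}}$ to a compact subset of $U$, we obtain the key identity $\mathcal{L}_\lambda^{-T}(\overline{U_{s_0+T}}) = \overline{U_{s_0}}$.

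Next I would select, via a partition of unity on the manifold $U$, a smooth $\sigma: U \to [0,\infty)$ with $\sigma^{-1}(0) = \overline{U_{s_0}}$, and set
\[ \rho(p) = \int_{-T}^{0} \sigma\!\left(\mathcal{L}_\lambda^t(p)\right)\, dt. \]
Then $\rho$ is smooth with $\rho^{-1}(0) = \overline{U_{s_0}}$, and a change of variables gives the Liouville derivative $\mathcal{L}_\lambda(\rho)(p) = \sigma(p) - \sigma(\mathcal{L}_\lambda^{-T}(p))$. By the identity above the subtracted term vanishes throughout $\overline{U_{s_0+T}}$, so $\mathcal{L}_\lambda(\rho) = \sigma > 0$ on the shell $\overline{U_{s_0+T}} \setminus \overline{U_{s_0}}$. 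A compactness argument on $\partial \overline{U_{s_0+T}}$, using that for such a boundary point $p$ the trajectory $\mathcal{L}_\lambda^t(p)$ remains outside $\overline{U_{s_0}}$ on a nontrivial interval about $t=0$ (so $\sigma > 0$ there), yields a uniform lower bound $\rho \geq m > 0$ on $\partial\overline{U_{s_0+T}}$.

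Finally, for a regular value $c \in (0, m)$ of $\rho$ (supplied by Sard), I would let $X$ be the union of those connected components of $\rho^{-1}([0,c])$ that meet $\overline{U_{s_0}}$. Each such component is confined to $\overline{U_{s_0+T}}$ (a path escaping $\overline{U_{s_0+T}}$ from a point of $\overline{U_{s_0}}$ must cross $\partial \overline{U_{s_0+T}}$, where $\rho \geq m > c$), so $X$ is a compact manifold with smooth boundary $\partial X \subset \overline{U_{s_0+T}} \setminus \overline{U_{s_0}}$, and $\mathcal{L}_\lambda(\rho) > 0$ along $\partial X$ makes $\mathcal{L}_\lambda$ outward-pointing; thus $(X,\lambda|_X)$ is a Liouville domain with $K \subset X$. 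The deformation-retract condition then follows from the continuous first-hitting time $\tau(p) = \inf\{t \geq 0 : \mathcal{L}_\lambda^{-t}(p) \in X\}$, which is bounded by $|s_0|$ (since $\mathcal{L}_\lambda^{s_0}(U) \subset X$) and for which $(p,u) \mapsto \mathcal{L}_\lambda^{-u\tau(p)}(p)$ is a strong deformation retraction of $U$ onto $X$. The main obstacle is the potential non-smoothness of $\partial U_{s_0}$, which rules out directly taking $\overline{U_{s_0}}$ itself as the subdomain; the averaging integral is the essential device overcoming this while preserving transversality of $\mathcal{L}_\lambda$ to the level sets of $\rho$.
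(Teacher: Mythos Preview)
Your proof is correct and follows the same overall architecture as the paper's---find $s_0<0$ with $K\subset U_{s_0}$, build a compact Liouville subdomain $X$ sandwiched between $\overline{U_{s_0}}$ and $\overline{U_{s_0+T}}$, then deformation retract via the first-hitting-time function---but the construction of $X$ is genuinely different.  The paper defines a continuous ``escape time'' function $g(x)=\min\{\ep,t_x\}$ with $t_x=\sup\{t:x\in\mathcal{L}^{-t}(U)\}$, proves its continuity by hand, and then smooths it in flow-box charts using a partition of unity, checking that the smoothing preserves the condition $\mathcal{L}\tilde g<-\tfrac12$ on a fixed shell; this makes $\ep/2$ automatically a regular value and no component selection is needed.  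Your averaging construction $\rho(p)=\int_{-T}^{0}\sigma(\mathcal{L}_\lambda^t(p))\,dt$ is slicker: smoothness of $\rho$ is immediate, and the Leibniz formula $\mathcal{L}_\lambda(\rho)=\sigma-\sigma\circ\mathcal{L}_\lambda^{-T}$ yields transversality on the shell in one line, with the identity $\mathcal{L}_\lambda^{-T}(\overline{U_{s_0+T}})=\overline{U_{s_0}}$ doing the work of killing the subtracted term.  The price you pay is that outside the shell $\rho$ may have critical values in $(0,m)$, so you need Sard's theorem and the component-selection step, whereas the paper's smoothed function is regular at the specified level by construction.  Both arguments close with the same hitting-time deformation retraction.
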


\begin{proof} Let $(U,\lambda)$ be a $2n$-dimensional open Liouville domain with Liouville vector field $\mathcal{L}=\mathcal{L}_{\lambda}$.  While the flow of $\mathcal{L}$ does not exist globally on $U$ for any positive time, standard ODE existence results show that given any $x\in U$ there is a neighborhood $V_x$ of $x$ and a value $\ep_x>0$ such that for $0\leq \ep \leq \ep_x$ we have a well-defined flow $\mathcal{L}^{\ep}\co V_x\to U$, \emph{i.e.} such that $V_x\subset \mathcal{L}^{-\ep}(U)$.  So if $K\subset U$ is a compact subset, covering $K$ by finitely many such $V_x$ shows that $K\subset \mathcal{L}^{-\ep}(U)$ for some $\ep>0$.

We will show that for all $\ep>0$ there is a deformation retract $X\subset U$ such that $(X,\lambda|_X)$ is a Liouville domain and $\mathcal{L}^{-\ep}(U)\subset X$. In view of the previous paragraph this will  be sufficient to prove the proposition. So let $\ep>0$ be fixed.

If $x\in U$ let us consider the set $I_x=\{t\in [0,\infty)|x\in \mathcal{L}^{-t}(U)\}$.  Clearly if $t\in I_x$ and $0\leq s<t$ then also $s\in I_x$ because $\mathcal{L}^{-t}=\mathcal{L}^{-s}\circ \mathcal{L}^{s-t}$.  Also if $t\in I_x$, say $x=\mathcal{L}^{-t}(y)$, then letting $\ep_y>0$ be as in the first paragraph of the proof we see that $x=\mathcal{L}^{-(t+\ep_y)}(\mathcal{L}^{\ep_y}(y))$, so we have $t+\delta\in I_x$ for all sufficiently small $\delta>0$.  This suffices to show that $I_x$ is an interval of the form $[0,t_x)$ for some $t_x\in (0,\infty]$.  

Accordingly define $g\co U\to \R$ by \[ g(x)=\min\{\ep,t_x\}=\sup\{t\in (0,\ep]|x\in\mathcal{L}^{-t}(U)\}.\]
Naively we would like to set $X=g^{-1}([\frac{\ep}{2},\infty))$, which certainly contains $\mathcal{L}^{-\ep}(U)$, but this choice of $X$ typically will not be a smooth manifold. We work around this difficulty as follows.  

First,  we claim that the function $g$ is continuous.  For this purpose it suffices to show that, for any $a\in \R$, the set $g^{-1}((a,\infty))$ is open and the set $g^{-1}([a,\infty))$ is closed (as these two statements respectively imply the lower semicontinuity and the upper semicontinuity of $g$).  Now if $a\geq \ep$, then $g^{-1}((a,\infty))$ is empty and if $a<0$ then $g^{-1}((a,\infty))=U$. For the remaining case that $0\leq a<\ep$ we have \[ g^{-1}((a,\infty))=\{x\in U|t_x>a\}=\{x\in U|a\in I_x\}=\mathcal{L}^{-a}(U) \] since as noted earlier $I_x=[0,t_x)$.  So since the time-$(-a)$ flow $\mathcal{L}^{-a}\co U\to U$ is a local diffeomorphism $g^{-1}((a,\infty))$ is indeed open.

As for $g^{-1}([a,\infty))$, if $a>\ep$ this is the empty set and if $a\leq 0$ it is all of $U$, so in both of these cases it is closed in $U$. For the other cases we claim that \begin{equation}\label{upset} g^{-1}([a,\infty))=\overline{\mathcal{L}^{-a}(U)} \mbox{ for }0<a\leq \ep \end{equation} (here and elsewhere closures are taken relative to $U$).  To see this, observe that for $0<a\leq\ep$ we have $g^{-1}([a,\infty))=\{x\in U|t_x\geq a\}$, so (\ref{upset}) is equivalent to the statement that $t_x\geq a$ if and only if $x\in \overline{\mathcal{L}^{-a}(U)}$.  The forward implication is clear since if $t_x\geq a$ then we can find $s_n\nearrow a$ and $y_n\in U$  with $x=\mathcal{L}^{-s_n}(y_n)$, and then the sequence $\mathcal{L}^{-a}(y_n)=\mathcal{L}^{-(a-s_n)}(x)$ converges to $x$ since $s_n\nearrow a$.  As for the reverse implication, if $y_n\in U$ with $\mathcal{L}^{-a}(y_n)\to x$ and if $0<\delta<a$, then by the hypothesis that $\mathcal{L}^{-\delta}(U)$ has compact closure in $U$ we can pass to a subsequence (still denoted $\{y_n\}$) such that $\mathcal{L}^{-\delta}(y_n)$ converges in $U$, say $\mathcal{L}^{-\delta}(y_n)\to z$.  But then \[ \mathcal{L}^{-(a-\delta)}(z)=\lim_{n\to\infty}\mathcal{L}^{-(a-\delta)}(\mathcal{L}^{-\delta}(y_n))=\lim_{n\to\infty}\mathcal{L}^{-a}(y_n)=x,\] whence $t_x>a-\delta$.  This holds for all $\ep>0$, so $t_x\geq a$.  This completes the proof of (\ref{upset}), thus establishing the continuity of $g$.

We now consider the restriction of $g$ to $A:=g^{-1}([\frac{\ep}{4},\frac{3\ep}{4}])$, which is (by (\ref{upset})) a closed subset of the compact set $\overline{\mathcal{L}^{-\ep/4}(U)}$ and so is compact.  Our intention is to perturb this function to a smooth function $\tilde{g}$ whose level set at $\ep/2$ can serve as the boundary of our desired Liouville domain $X$.  Note that, everywhere on $A$, even though $g$ might not be differentiable we have an identity $g(\mathcal{L}^tx)-g(x)=-t$ for all sufficiently small $t$, so that the directional derivative $\mathcal{L}g$ of $g$ along $\mathcal{L}$ does exist and is equal to $-1$ throughout $A$. By the flow box theorem, we may cover $A$ by finitely many coordinate charts $\psi_\alpha\co V_{\alpha}\to I_{\alpha}\times W_{\alpha}$ where $I_{\alpha}\subset \R$ is an open interval, $W_{\alpha}\subset \R^{2n-1}$ is open, and $\psi_{\alpha*}\mathcal{L}=\frac{\partial}{\partial x_1}$.  In each such coordinate chart, the identity $g(\mathcal{L}^tx)-g(x)=-t$ implies that $g\circ\psi_{\alpha}^{-1}(x_1,\ldots,x_n)=-x_1+h_{\alpha}(x_2,\ldots,x_n)$ for some continuous function $h_{\alpha}\co W_{\alpha}\to\R$.  

Given $\delta>0$ let $g_{\alpha}^{\delta}\co V_{\alpha}\to \R$ be a function obeying $g_{\alpha}^{\delta}\circ \psi_{\alpha}^{-1}(x_1,\ldots,x_n)=-x_1+h_{\alpha}^{\delta}(x_2,\ldots,x_n)$ where $h_{\alpha}^{\delta}$ is smooth and $\|h_{\alpha}^{\delta}-h_{\alpha}\|_{L^{\infty}}<\delta$.  Thus $g_{\alpha}^{\delta}$ is smooth, $\|g_{\alpha}^{\delta}-g\|_{L^{\infty}(V_{\alpha})}<\delta$, and $g_{\alpha}^{\delta}-g$ has directional derivative zero along $\mathcal{L}$.  

Choose (independently of $\delta$) a partition of unity $\{\chi_{\alpha}\}$ subordinate to our finite cover $\{V_{\alpha}\}$ and let $ \tilde{g}^{\delta}=\sum_{\alpha}\chi_{\alpha}g_{\alpha}^{\delta}$.  Thus $\tilde{g}^{\delta}$ is a smooth function on a neighborhood of $A$, and everywhere on $A$ we compute the directional derivative \[ \mathcal{L}\tilde{g}^{\delta}=\mathcal{L}g+\sum_{\alpha}\mathcal{L}\left(\chi_{\alpha}(g_{\alpha}^{\delta}-g)\right) = -1+\sum_{\alpha}(\mathcal{L}\chi_{\alpha})(g_{\alpha}-g) \] since $\mathcal{L}g=-1$ and $\mathcal{L}(g_{\alpha}^{\delta}-g)=0$. If $\delta$ is chosen smaller than $\frac{1}{2\sum_{\alpha}\max_A|\mathcal{L}\chi_{\alpha}|}$ we will thus have \begin{equation}\label{Lgtilde}\mathcal{L}\tilde{g}^{\delta}<-\frac{1}{2} \mbox{ everywhere on }A=g^{-1}\left(\left[\frac{\ep}{4},\frac{3\ep}{4}\right]\right). \end{equation} 

For the rest of the proof let $\tilde{g}=\tilde{g}^{\delta}$ for some value of $\delta$ that has been chosen so that (\ref{Lgtilde}) holds and that also obeys $\delta<\frac{\ep}{8}$.  By construction $\|\tilde{g}-g\|_{L^{\infty}(A)}<\delta$.  Let \[ X=\left\{x\in A\left|\tilde{g}(x)\geq \frac{\ep}{2} \right.\right\}\cup \left\{x\in M\left|g(x)>\frac{5\ep}{8}\right.\right\}.\]  This is a closed subset of $U$ since if a convergent sequence $x_n\in X$ has $g(x_n)\to \frac{5\ep}{8}$ then its limit $x$ has $\tilde{g}(x)>\frac{5\ep}{8}-\delta>\frac{\ep}{2}$.  Also since $\|\tilde{g}-g\|_{L^{\infty}(A)}<\frac{\ep}{8}$ we have $X\subset g^{-1}\left(\left[\frac{3\ep}{8},\infty\right)\right)=\overline{\mathcal{L}^{-\frac{3\ep}{8}}(U)}$; thus $X$ is a closed subset of a compact space and is compact. The (topological) boundary of $X$ is evidently $\tilde{g}^{-1}(\{\ep/2\})$ (which is entirely contained in $A$ and hence in $X$ since $\|\tilde{g}-g\|_{L^{\infty}(A)}<\frac{\ep}{4}$).  Moreover the fact that $\mathcal{L}\tilde{g}<-\frac{1}{2}$ on $A$ and in particular on $\tilde{g}^{-1}(\{\ep/2\})$  implies that this boundary is a regular level set, and hence that $X$ is a submanifold with boundary of $U$, with the Liouville vector field $\mathcal{L}$ pointing outward along $\partial X$.  Also $X$ obviously contains $\mathcal{L}^{-\ep}(U)\subset g^{-1}(\{\ep\})$.

It remains to check that $X$ is a deformation retract of $U$.  For this purpose observe that the function $r\co U\to [0,\ep)$ defined by \[ r(x)=\inf\{t\geq 0|\mathcal{L}^{-t}(x)\in X\} \] is continuous (for essentially the same reason that our function $g$ from earlier in the proof is continuous; in particular we are using that $\mathcal{L}$ points outward along $\partial X$), vanishes identically on $X$ and has $\mathcal{L}^{-r(x)}(x)\in X$ for all $x\in U$.  So $(s,x)\mapsto \mathcal{L}^{-sr(x)}(x)$ gives a homotopy from the identity to a retraction $U\to X$.
\end{proof}

Proposition \ref{sstame} allows us to apply results about Liouville domains to obtain results about open Liouville domains.  For example the following will play a role in proving the multiplicative triangle inequality for $\delta_f$ (and hence also $d_f$).
 
\begin{lemma}\label{outin}
Let $(X,\lambda)$ be a Liouville domain and let $Y\subset X^{\circ}$ and $\mu\in \Omega^1(Y)$ be such that $(Y,\mu)$ is a Liouville domain and $\lambda|_Y-\mu$ is exact.  Choose $c>1$.  Then there is a Liouville diffeomorphism $\zeta\co X\to X$ such that  $\zeta(c^{-1}Y)\subset c^{-1}X^{\circ}$ and $\zeta$ is equal to the identity on a neighborhood of $\partial X$.
\end{lemma}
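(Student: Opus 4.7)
The plan is to exhibit $\zeta$ as the time-$\log c$ map of a Hamiltonian isotopy of $X$ compactly supported in $X^{\circ}$.  My starting point: because $\lambda|_Y-\mu$ is exact, the inclusion $\iota\co (Y,\mu)\hookrightarrow(X,\lambda)$ is a Liouville embedding, so Lemma~\ref{rescale} (applied with $a=c$) yields a Liouville embedding
\[\iota_c=\mathcal{L}_\lambda^{-\log c}\circ\iota\circ\mathcal{L}_\mu^{\log c}\co (c^{-1}Y,\mu)\xhookrightarrow{L}(c^{-1}X,\lambda),\]
well-defined because $\mathcal{L}_\mu^{\log c}$ makes sense on $c^{-1}Y=\mathcal{L}_\mu^{-\log c}(Y)$.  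Its image $\mathcal{L}_\lambda^{-\log c}(Y)$ sits inside $c^{-1}X^{\circ}$ because $Y\subset X^{\circ}$ and $\mathcal{L}_\lambda^{-\log c}$ is a diffeomorphism onto $c^{-1}X$.  Hence it will suffice to find a Hamiltonian diffeomorphism of $X$, with compact support in $X^{\circ}$, whose restriction to $c^{-1}Y$ equals $\iota_c$.

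Next I would interpolate from the inclusion $c^{-1}Y\hookrightarrow X^{\circ}$ to $\iota_c$ through an explicit isotopy of Liouville embeddings.  For $s\in[0,\log c]$ set
\[ \phi_s\co c^{-1}Y\to X^{\circ},\qquad \phi_s(x)=\mathcal{L}_\lambda^{-s}\bigl(\mathcal{L}_\mu^s(x)\bigr).\]
This is well-defined because $\mathcal{L}_\mu^s$ sends $c^{-1}Y$ into $Y$ when $s\leq\log c$, and then $\mathcal{L}_\lambda^{-s}$ sends the result into $\mathcal{L}_\lambda^{-s}(Y)\subset X^{\circ}$.  By construction $\phi_0$ is the inclusion and $\phi_{\log c}=\iota_c$.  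The standard identities $\mathcal{L}_\lambda^{-s*}\lambda=e^{-s}\lambda$ and $\mathcal{L}_\mu^{s*}\mu=e^s\mu$, combined with $\lambda|_Y=\mu+df$, give the clean formula $\phi_s^*\lambda=\mu+d\bigl(e^{-s}(f\circ\mathcal{L}_\mu^s)\bigr)$, so each $\phi_s$ is a Liouville embedding of $(c^{-1}Y,\mu)$ into $(X,\lambda)$.

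The final step is to extend this isotopy of embeddings to an ambient Hamiltonian isotopy of $X$ via the standard cutoff construction.  Differentiating $\phi_s^*\lambda-\mu=dH_s$ in $s$ (with $H_s=e^{-s}(f\circ\mathcal{L}_\mu^s)$) shows via Cartan's formula that the generating vector field $V_s=\dot\phi_s\circ\phi_s^{-1}$ of the isotopy is Hamiltonian on $\phi_s(c^{-1}Y)$, with a smoothly varying Hamiltonian $G_s$.  The trace $\bigcup_{s\in[0,\log c]}\phi_s(c^{-1}Y)\subset\bigcup_{s\in[0,\log c]}\mathcal{L}_\lambda^{-s}(Y)$ is a compact subset of $X^{\circ}$, so I can choose smooth extensions $\tilde G_s$ of $G_s$ to $X$ that agree with $G_s$ in an open neighborhood of $\phi_s(c^{-1}Y)$ for each $s$ and have support in a fixed compact subset of $X^{\circ}$.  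If $\Phi_s\co X\to X$ denotes the Hamiltonian flow of $\tilde G_s$, then ODE uniqueness forces $\Phi_s|_{c^{-1}Y}=\phi_s$, and $\Phi_s$ is the identity outside a compact subset of $X^{\circ}$.  Because any Hamiltonian diffeomorphism is automatically a Liouville diffeomorphism, $\zeta:=\Phi_{\log c}$ is a Liouville diffeomorphism of $(X,\lambda)$, equals the identity near $\partial X$, and satisfies $\zeta(c^{-1}Y)=\iota_c(c^{-1}Y)\subset c^{-1}X^{\circ}$.

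The main technical subtlety will be in the cutoff extension: one must ensure that $\tilde G_s$ coincides with $G_s$ in a genuine open neighborhood of $\phi_s(c^{-1}Y)$ (rather than merely on that set), so that the corresponding Hamiltonian vector fields agree there and ODE uniqueness really does yield $\Phi_s|_{c^{-1}Y}=\phi_s$.  Smoothness of $G_s$ jointly in $(s,x)$ and compactness of the trace in $X^{\circ}$ make this straightforward but require a little care.
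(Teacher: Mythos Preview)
Your argument is correct, and it shares with the paper's proof the same core observation: the map $\mathcal{L}_{\lambda}^{-\log c}\circ \mathcal{L}_{\mu}^{\log c}$ carries $c^{-1}Y$ into $c^{-1}X^{\circ}$ and differs from the identity by something exact. Where the two approaches diverge is in how this locally defined map is promoted to a Liouville diffeomorphism of all of $X$. You run the isotopy $\phi_s=\mathcal{L}_{\lambda}^{-s}\circ\mathcal{L}_{\mu}^{s}$ on $c^{-1}Y$, show its generating vector field is Hamiltonian, and then extend the Hamiltonian to a compactly supported function on $X$. The paper instead extends the \emph{primitive}: writing $\mu=\lambda|_Y+df_Y$, it extends $f_Y$ to a function $f$ on $X$ supported in $X^{\circ}$, sets $\tilde{\mu}=\lambda+df$, passes to the Liouville completion $\hat{X}$ so that both $\mathcal{L}_{\hat{\lambda}}^{t}$ and $\mathcal{L}_{\hat{\mu}}^{t}$ exist for all $t$, and simply defines $\zeta=\mathcal{L}_{\hat{\lambda}}^{-\log c}\circ\mathcal{L}_{\hat{\mu}}^{\log c}$ globally. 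Because $\tilde{\mu}=\lambda$ near $\partial X$, this $\zeta$ is automatically the identity there, and the exactness check is a two-line computation. The paper's route thus bypasses the isotopy-extension step entirely and produces an explicit $\zeta$; your route is more in the spirit of ambient isotopy extension and works just as well, at the cost of having to produce a jointly smooth extension $\tilde{G}_s$.

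One comment on your final paragraph: the ``subtlety'' you flag is slightly misdiagnosed. You do \emph{not} need $\tilde{G}_s$ to agree with $G_s$ on an open neighborhood of $\phi_s(c^{-1}Y)$, because $\phi_s(c^{-1}Y)$ is already a codimension-zero submanifold of $X$; any smooth extension of $G_s$ automatically has $d\tilde{G}_s=dG_s$ (hence $X_{\tilde{G}_s}=V_s$) at every point of $\phi_s(c^{-1}Y)$, including boundary points. Once that is noted, $\phi_s(x)$ is an integral curve of the globally smooth vector field $X_{\tilde{G}_s}$ and ODE uniqueness gives $\Phi_s|_{c^{-1}Y}=\phi_s$ directly. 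The genuine care is in arranging the extension $\tilde{G}_s$ to be jointly smooth in $(s,x)$ with uniformly compact support, which, as you say, follows from compactness of the trace by standard arguments.
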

(Here $c^{-1}Y$ is the image of $Y$ under the time-$(\log c^{-1})$ flow of the Liouville vector field of $\mu$ whereas $c^{-1}X^{\circ}$ is the image of $X^{\circ}$ under the time-$(\log c^{-1})$ flow of the Liouville vector field of $\lambda$. Since these vector fields are different there is no reason to expect that $c^{-1}Y\subset c^{-1}X^{\circ}$, so the lemma is not completely trivial.)

\begin{proof}
Write $\mu=\lambda|_Y+df_Y$ where $f_Y\co Y\to \R$ is smooth, and (using a collar for $\partial Y$ in $X$ and a cutoff function) extend $f_Y$ to a function $f\co X\to \R$ which has support contained in $X^{\circ}$ and which coincides with $f_Y$ on $Y$.  This results in a one-form $\tilde{\mu}=\lambda+df\in \Omega^1(X)$ such that $\tilde{\mu}|_Y=\mu$ and such that $\tilde{\mu}$ coincides with $\lambda$ on a neighborhood of $\partial X$.  

Now form the Liouville completion $\hat{X}$ of $X$, so that $\hat{X}=X\cup_{\partial X}\left(\partial X\times [1,\infty)\right)$ and $\lambda$ extends to $\hat{X}$ as a one-form $\hat{\lambda}$ that is equal to $s\lambda|_{\partial X}$ on $\partial X\times[1,\infty)$ where $s$ is the $[1,\infty)$ variable. Let us extend the function $f\co X\to \R$ (which has support in $X^{\circ}$) to a smooth function on $\hat{X}$ (still denoted $f$) by setting it equal to zero on $\hat{X}\setminus X$.  Then $\tilde{\mu}$ likewise extends to a one-form $\hat{\mu}=\hat{\lambda}+df$ on $\hat{X}$, and we have $\hat{\mu}|_{\hat{X}\setminus X}=\hat{\lambda}|_{\hat{X}\setminus X}$. 

On $\hat{X}$ the Liouville flows $\mathcal{L}_{\hat{\lambda}}^{t},\mathcal{L}_{\hat{\mu}}^{t}$ associated to $\hat{\lambda}$ and $\hat{\mu}$ are defined for all $t\in\R$.  On $\partial X\times[1,\infty)$ we have, for all $t\geq 0$ and $(x,s)\in \partial X\times [1,\infty)$, $\mathcal{L}^{t}_{\hat{\lambda}}(x,s)=\mathcal{L}^{t}_{\hat{\mu}}(x,s)=(x,se^t)$.  

Now, letting $c>1$ as in the statement of the proposition, define \[ \zeta=\mathcal{L}^{-\log c}_{\hat{\lambda}}\circ \mathcal{L}^{\log c}_{\hat{\mu}}\co \hat{X}\to\hat{X}.\]  Obviously $\zeta$ acts as the identity on $\partial X\times [1,\infty)$, so $\zeta$ restricts to a diffeomorphism of $X$. Moreover since on a neighborhood of $\partial X$ the vector fields $\mathcal{L}_{\hat{\lambda}}$ and $\mathcal{L}_{\hat{\mu}}$ coincide and point outward toward $\partial X\times[1,\infty)$, $\zeta$ will be equal to the identity throughout this neighborhood of $\partial X$.

By definition, $\mathcal{L}^{\log c}_{\hat{\mu}}(c^{-1}Y)=Y$, so since $Y\subset X^{\circ}$ we have \[ \zeta(c^{-1}Y)\subset \mathcal{L}^{-\log c}_{\hat{\lambda}}(X^{\circ})=c^{-1}X^{\circ}.\]  Finally since $\hat{\mu}=\hat{\lambda}+df$ we find \begin{align*} \zeta^{*}\hat{\lambda}&=\mathcal{L}_{\hat{\mu}}^{\log c*}\mathcal{L}_{\hat{\lambda}}^{\log c^{-1}*}\hat{\lambda}=c^{-1}\mathcal{L}_{\hat{\mu}}^{\log c*}\left(\hat{\mu}-df\right)
\\ &= c^{-1}\left(c\hat{\mu}-d(f\circ\mathcal{L}_{\hat{\mu}}^{\log c})\right)=\hat{\lambda}+d\left(f-c^{-1}f\circ\mathcal{L}_{\hat{\mu}}^{\log c}\right).\end{align*} 
So upon restricting from $\hat{X}$ to $X$ we see that $\zeta^*\lambda-\lambda$ is indeed exact.
\end{proof}

\begin{prop}\label{opencon}
Suppose that $(U,\lambda)$ and $(V,\mu)$ are open Liouville domains and suppose that $\alpha>1$ and that there is a Liouville embedding $\phi\co V\xhookrightarrow{L} U$ such that $\alpha^{-1}U\subset \phi(V)$.  Then if $C>C'>1$ there is also a Liouville embedding $\hat{\phi}\co V\xhookrightarrow{L} U$ such that both $\alpha^{-1}U\subset \hat{\phi}(V)$ and $C^{-1}\alpha^{-1} U\subset \hat{\phi}(C'^{-1}V)$.
\end{prop}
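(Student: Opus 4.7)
The plan is to produce $\hat{\phi}$ in the form $\phi\circ\zeta^{-1}$ where $\zeta\co V\to V$ is a Liouville self-diffeomorphism that satisfies $\zeta(\phi^{-1}(C^{-1}\alpha^{-1}U))\subset C'^{-1}V$. Granted such a $\zeta$, the identity $\hat{\phi}(V)=\phi(V)\supset\alpha^{-1}U$ is automatic, while the inclusion $C^{-1}\alpha^{-1}U\subset\alpha^{-1}U\subset\phi(V)$ gives
\[ \hat{\phi}(C'^{-1}V)=\phi(\zeta^{-1}(C'^{-1}V))\supset\phi\bigl(\phi^{-1}(C^{-1}\alpha^{-1}U)\bigr)=C^{-1}\alpha^{-1}U.\]
That $\hat{\phi}$ is a Liouville embedding is immediate since $\phi$ is and $\zeta^{-1*}\mu-\mu$ is exact.

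To build $\zeta$ I would fix an intermediate scale $c\in(C',C)$, write $\phi^*\lambda=\mu+df$, and assemble two nested Liouville subdomains. First, apply Proposition \ref{sstame} to the open Liouville domain $(\alpha^{-1}U,\lambda)$ (which is itself easily verified to be open Liouville) to obtain a compact Liouville subdomain $\tilde{Y}_0$ of $\alpha^{-1}U\subset\phi(V)$ with $\overline{(C\alpha/c)^{-1}U}\subset\tilde{Y}_0^{\circ}$; this is possible because $C\alpha/c>\alpha$ makes $\overline{(C\alpha/c)^{-1}U}$ a compact subset of $\alpha^{-1}U$. Set $Y=\phi^{-1}(\tilde{Y}_0)$, equipped with the form $\phi^*\lambda|_Y=(\mu+df)|_Y$, so that $(Y,(\mu+df)|_Y)$ is a compact Liouville domain Liouville-diffeomorphic via $\phi$ to $(\tilde{Y}_0,\lambda|_{\tilde{Y}_0})$. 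Then apply Proposition \ref{sstame} to $(V,\mu)$ to find a compact Liouville subdomain $X$ of $V$ with $Y\cup\overline{C'^{-1}V}\subset X^{\circ}$.

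Now apply Lemma \ref{outin} with outer domain $(X,\mu|_X)$, inner domain $(Y,(\mu+df)|_Y)$, and parameter $c$; the exactness hypothesis holds since $\mu|_Y-(\mu+df)|_Y=-df|_Y$. This yields a Liouville self-diffeomorphism $\zeta\co X\to X$ equal to the identity near $\partial X$ (and thus extendable by the identity to a Liouville self-diffeomorphism of $V$) with $\zeta(c^{-1}Y)\subset c^{-1}X^{\circ}$, where $c^{-1}Y$ scales via the Liouville field of $\mu+df$ while $c^{-1}X^{\circ}$ scales via that of $\mu$. Because $\phi$ intertwines the Liouville field of $\phi^*\lambda=\mu+df$ on $Y$ with $\mathcal{L}_\lambda$ on $\tilde{Y}_0$, one has $c^{-1}Y=\phi^{-1}(\mathcal{L}_\lambda^{-\log c}(\tilde{Y}_0))$; flowing the inclusion $\overline{(C\alpha/c)^{-1}U}\subset\tilde{Y}_0^{\circ}$ by $\mathcal{L}_\lambda^{-\log c}$ gives $\overline{C^{-1}\alpha^{-1}U}\subset\mathcal{L}_\lambda^{-\log c}(\tilde{Y}_0)$, whence $c^{-1}Y\supset\phi^{-1}(C^{-1}\alpha^{-1}U)$. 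On the other hand $X\subset V$ and $c>C'$ give $c^{-1}X^{\circ}\subset c^{-1}V\subset C'^{-1}V$ by a direct Liouville flow argument. Chaining these,
\[ \zeta\bigl(\phi^{-1}(C^{-1}\alpha^{-1}U)\bigr)\subset\zeta(c^{-1}Y)\subset c^{-1}X^{\circ}\subset C'^{-1}V,\]
as required.

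The main subtlety is the tension between two distinct Liouville scalings on the region $\phi(V)\subset U$: the $\lambda$-scaling coming from $(U,\lambda)$ and the $\phi_*\mu$-scaling inherited through $\phi$. Lemma \ref{outin} is the precise tool for producing a compactly supported Liouville diffeomorphism that interchanges these two kinds of scalings, and the room provided by the strict inequality $C'<C$ is what allows an intermediate scale $c$ to simultaneously accommodate both the target inclusion $c^{-1}X^{\circ}\subset C'^{-1}V$ (which requires $c\geq C'$) and the source inclusion $\phi^{-1}(C^{-1}\alpha^{-1}U)\subset c^{-1}Y$ (which requires $c\leq C$).
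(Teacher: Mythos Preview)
Your proof is correct and follows essentially the same approach as the paper: both build $\hat{\phi}=\phi\circ\zeta^{-1}$ from a compactly supported Liouville self-diffeomorphism $\zeta$ of $V$ obtained via Lemma~\ref{outin}, applied to an inner domain $\phi^{-1}(\tilde{Y}_0)$ (with the $\phi^*\lambda$-Liouville structure) inside an outer $(X,\mu)$. The only differences are cosmetic---the paper applies Lemma~\ref{outin} with parameter $C'$ directly (your intermediate $c\in(C',C)$ is unnecessary since $c=C'$ already works), and your extra requirement $\overline{C'^{-1}V}\subset X^\circ$ is never used.
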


\begin{proof}
By Proposition \ref{sstame} there is a compact subset $Y\subset \alpha^{-1}U$ such that $(Y,\lambda)$ is a Liouville domain and $\frac{C'}{C}\alpha^{-1}U\subset Y$.  So since $\alpha^{-1}U\subset \phi(V)$ we obtain a compact subset $\phi^{-1}(Y)\subset V$, which is a Liouville domain with respect to the form $\phi^*\lambda$.  Again by Proposition \ref{sstame}, there is a compact subset $X\subset V$ with $(X,\mu)$ a Liouville domain such that $\phi^{-1}(Y)\subset X^{\circ}$.   Applying Lemma \ref{outin} gives a Liouville diffeomorphism $\zeta\co X\to X$ which is equal to the identity near $\partial X$, such that $\zeta(C'^{-1}\phi^{-1}(Y))\subset C'^{-1}X$.  Here $C'^{-1}\phi^{-1}(Y)$ is defined using the Liouville flow of $\phi^*\lambda$; since this flow pushes forward via $\phi$ to the Liouville flow of $\lambda$ on $Y$ we have $C'^{-1}\phi^{-1}(Y)=\phi^{-1}(C'^{-1}Y)$. On the other hand $C'^{-1}X$ is defined using the Liouville flow of $\mu\in \Omega^1(V)$, so in particular $C'^{-1}X\subset C'^{-1}V$.

Because $\zeta$ is the identity near $\partial X$, $\zeta$ extends to a Liouville diffeomorphism $\zeta\co V\to V$. Now let $\hat{\phi}=\phi\circ\zeta^{-1}$.  Then $\hat{\phi}$ is a Liouville embedding of $V$ into $U$ having the same image as $\phi$; in particular this image contains $\alpha^{-1}U$.  Moreover we have \begin{align*}
\hat{\phi}(C'^{-1}V)&\supset \hat{\phi}(C'^{-1}X)=\phi(\zeta^{-1}(C'^{-1}X))\supset \phi(C'^{-1}\phi^{-1}(Y))
\\ &= C'^{-1}Y\supset C'^{-1}\left(\frac{C'}{C}\alpha^{-1}U\right)=C^{-1}\alpha^{-1}U,
\end{align*} as desired.
\end{proof}

We can now finally prove the multiplicative triangle inequalities for our distances.

\begin{prop}\label{triangle}
For open Liouville domains $(U,\lambda),(V,\mu),(W,\nu)$ of the same dimension we have inequalities \begin{itemize} \item[(i)] $d_c\left((U,\lambda),(W,\nu)\right)\leq d_c\left((U,\lambda),(V,\mu)\right)d_c\left((V,\mu),(W,\nu)\right)$, \item[(ii)] $\delta_f\left((U,\lambda),(W,\nu)\right)\leq \delta_f\left((U,\lambda),(V,\mu)\right)\delta_f\left((V,\mu),(W,\nu)\right)$, and \item[(iii)] $d_f\left((U,\lambda),(W,\nu)\right)\leq d_f\left((U,\lambda),(V,\mu)\right)d_f\left((V,\mu),(W,\nu)\right)$.\end{itemize}
\end{prop}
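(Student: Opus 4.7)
The plan is to tackle (i) directly from Lemma \ref{rescale}, to prove (ii) by combining Lemma \ref{rescale} with Corollary \ref{infint} and Proposition \ref{opencon}, and then to derive (iii) formally from (ii).

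For (i), the key point is that a Liouville embedding can be rescaled symmetrically on source and target. Fix $a>d_c((U,\lambda),(V,\mu))$ and $b>d_c((V,\mu),(W,\nu))$, yielding Liouville embeddings $f_1\co a^{-1/2}U\xhookrightarrow{L}V$, $f_2\co a^{-1/2}V\xhookrightarrow{L}U$, $g_1\co b^{-1/2}V\xhookrightarrow{L}W$, $g_2\co b^{-1/2}W\xhookrightarrow{L}V$. Applying Lemma \ref{rescale} to $f_1$ with rescaling factor $b^{1/2}$ produces a Liouville embedding $(ab)^{-1/2}U\xhookrightarrow{L}b^{-1/2}V$, which composes with $g_1$ to give $(ab)^{-1/2}U\xhookrightarrow{L}W$; a symmetric construction yields $(ab)^{-1/2}W\xhookrightarrow{L}U$. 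Infimizing over $a$ and $b$ gives (i).

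The proof of (ii) is the main difficulty, essentially because the ``inner'' containments $a^{-1}V\subset h(a^{-1/2}U)$ in the definition of $\delta_f$ involve the Liouville flow on the target $V$ and so are not obviously preserved by applying the source Liouville flow to rescale. To work around this, fix $a>\delta_f((U,\lambda),(V,\mu))$, $b>\delta_f((V,\mu),(W,\nu))$, and a small $\epsilon>0$, and set $c=(a+\epsilon)b$. Corollary \ref{infint} furnishes $h_1\co a^{-1/2}U\xhookrightarrow{L}V$ with $a^{-1}V\subset h_1(a^{-1/2}U)\subset V$ and $h_2\co b^{-1/2}V\xhookrightarrow{L}W$ with $b^{-1}W\subset h_2(b^{-1/2}V)\subset W$. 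Apply Lemma \ref{rescale} to $h_1$ with rescaling factor $R=(c/a)^{1/2}>1$ to obtain $(h_1)_R\co c^{-1/2}U\xhookrightarrow{L}(c/a)^{-1/2}V$ whose image contains $(ac)^{-1/2}V$ (using the scale identities $R^{-1}(a^{-1/2}U)=(R^2a)^{-1/2}U$ and $R^{-1}(a^{-1}V)=(R^2a^2)^{-1/2}V$). Simultaneously, apply Proposition \ref{opencon} to $h_2$ with $\alpha=b$, $C'=\sqrt{a(a+\epsilon)}$, and $C=a+\epsilon$; the inequality $C>C'>1$ is elementary given $a>1$, and after unwinding $C'^{-1}(b^{-1/2}V)=(C'^2b)^{-1/2}V=(ac)^{-1/2}V$ and $C^{-1}b^{-1}W=c^{-1}W$, the resulting $\hat{h}_2\co b^{-1/2}V\xhookrightarrow{L}W$ satisfies $c^{-1}W\subset \hat{h}_2((ac)^{-1/2}V)\subset\hat{h}_2(b^{-1/2}V)\subset W$. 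The composition $h=\hat{h}_2\circ(h_1)_R\co c^{-1/2}U\xhookrightarrow{L}W$ then sandwiches its image between $c^{-1}W$ and $W$, witnessing $\delta_f((U,\lambda),(W,\nu))\leq (a+\epsilon)b$. Letting $\epsilon\to 0^+$ and then taking infima over $a,b$ gives (ii).

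Finally, (iii) follows formally: applying (ii) to the triples $((U,\lambda),(V,\mu),(W,\nu))$ and $((W,\nu),(V,\mu),(U,\lambda))$, and using $\delta_f\leq d_f$ in each factor, bounds both $\delta_f((U,\lambda),(W,\nu))$ and $\delta_f((W,\nu),(U,\lambda))$ by $d_f((U,\lambda),(V,\mu))d_f((V,\mu),(W,\nu))$; taking the maximum gives (iii). The hardest step is clearly the parameter juggling in (ii); the slack $\epsilon$ is forced by the strict inequality $C>C'$ in Proposition \ref{opencon}, which precludes taking $C=C'=a$, and this is precisely why one rescales $h_1$ by $R=\sqrt{c/a}=\sqrt{b(1+\epsilon/a)}$ rather than by the naive choice $\sqrt{b}$.
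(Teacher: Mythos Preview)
Your proof is correct and follows essentially the same strategy as the paper's, invoking Lemma \ref{rescale}, Corollary \ref{infint}, and Proposition \ref{opencon} in the same roles. The only difference is cosmetic: you introduce the needed slack via $a+\epsilon$ and let $\epsilon\to 0$ (choosing $C,C'$ so that no rescaling of $h_2$ is required), whereas the paper picks an intermediate $z$ with $\delta_f((U,\lambda),(V,\mu))<z<a$ and rescales both embeddings before composing, but the underlying argument is identical.
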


\begin{proof}
By Lemma \ref{rescale}, Liouville embeddings $\phi\co a^{-1/2}U\xhookrightarrow{L} V$ and $\psi\co b^{-1/2}V\xhookrightarrow{L} W$ give rise to a composition of Liouville embeddings $b^{-1/2}a^{-1/2}U\xhookrightarrow{L} b^{-1/2}V\xhookrightarrow{L} W$, which immediately implies (i).  

(ii) is more subtle as it relies on Proposition \ref{opencon}.  Suppose that $a>\delta_f((U,\lambda),(V,\mu))$ and $b>\delta_f((V,\mu),(W,\nu))$.  Choose $z$ with $a>z>\delta_f((U,\lambda),(V,\mu))$, so Corollary \ref{infint} gives embeddings $\phi\co z^{-1/2}U\xhookrightarrow{L} V$ and $\psi\co b^{-1/2}V\xhookrightarrow{L} W$ with $\Img(\phi)\supset z^{-1}V$ and $\Img(\psi)\supset b^{-1}W$.  Then Lemma \ref{rescale} gives embeddings \[ \phi'\co a^{-1/2}b^{-1/2} U\xhookrightarrow{L} z^{1/2}a^{-1/2}b^{-1/2} V\quad \mbox{with }\Img(\phi')\supset (zab)^{-1/2}V \] and \[ \psi'\co z^{1/2}a^{-1/2}b^{-1/2} V\xhookrightarrow{L} (z/a)^{1/2}W\subset W\quad \mbox{with }\Img(\psi')\supset (z/a)^{1/2}b^{-1}W.\]  Moreover by applying Proposition \ref{opencon} with $\alpha=b$, $C'=z$, and $C=(az)^{1/2}$ we see that $\psi'$ can be chosen to  have the additional property that \[ \psi'\left((zab)^{-1/2}V\right)\supset a^{-1}b^{-1}W.\]  Putting these together we see that $\psi'\circ\phi'\co a^{-1/2}b^{-1/2}U\xhookrightarrow{L} W$ is a Liouville embedding whose image contains $a^{-1}b^{-1}W$, whence $\delta_f((U,\lambda),(W,\nu))\leq ab$.  Since $a$ and $b$ were arbitrary subject to the requirements that $a>\delta_f((U,\lambda),(V,\mu))$ and $b>\delta_f((V,\mu),(W,\nu))$, this suffices to prove (ii).  

Given (ii), (iii) follows immediately from the definition of $d_f$.
\end{proof}

\begin{cor}\label{sympinv}
If $(U,\lambda),(U',\lambda'),(V,\mu)$ are open Liouville domains of the same dimension such that $(U,\lambda)$ and $(U',\lambda')$ are Liouville diffeomorphic, then we have: \[ d_c((U,\lambda),(V,\mu))=d_c((U',\lambda'),(V,\mu)),\quad \delta_f((U,\lambda),(V,\mu))=\delta_f((U',\lambda'),(V,\mu)),\] and\[ \delta_f((V,\mu),(U,\lambda))=\delta_f((V,\mu),(U',\lambda')).\]
\end{cor}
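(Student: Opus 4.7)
The plan is to reduce the corollary to the following intermediate claim: if $\Phi\co (U,\lambda)\to (U',\lambda')$ is a Liouville diffeomorphism, then $\delta_f((U,\lambda),(U',\lambda')) = 1$ and $\delta_f((U',\lambda'),(U,\lambda)) = 1$. Since $d_c \leq \delta_f$ by Proposition \ref{easyineq} and both quantities are at least $1$ by definition, these identities also force $d_c((U,\lambda),(U',\lambda')) = d_c((U',\lambda'),(U,\lambda)) = 1$. Given the intermediate claim, all three statements of the corollary follow from the multiplicative triangle inequality (Proposition \ref{triangle}) applied twice in each direction; for instance,
\[
\delta_f((U,\lambda),(V,\mu)) \leq \delta_f((U,\lambda),(U',\lambda'))\,\delta_f((U',\lambda'),(V,\mu)) = \delta_f((U',\lambda'),(V,\mu)),
\]
and symmetrically using $\Phi^{-1}\co (U',\lambda')\to(U,\lambda)$, with the analogous bookkeeping for $d_c$ and for $\delta_f$ in the right-hand slot.

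To establish the intermediate claim, fix $a > 1$ and construct a Liouville embedding $h\co a^{-1/2}U \xhookrightarrow{L} U'$ such that $a^{-1}U' \subset h(a^{-1/2}U) \subset U'$. The Liouville diffeomorphism $\Phi$ provides a Liouville embedding $\Phi\co U \xhookrightarrow{L} U'$ whose image is all of $U'$, so in particular $\alpha^{-1}U' \subset \Phi(U)$ for every $\alpha > 1$. Apply Proposition \ref{opencon}, with the ``$V$'' and ``$U$'' of its statement played by our $U$ and $U'$ respectively, using the parameters $\alpha = a^{1/4}$, $C' = a^{1/2}$, $C = a^{3/4}$. These satisfy $C > C' > 1$ and $C\alpha = a$, so the proposition yields a Liouville embedding $\hat\phi\co U \xhookrightarrow{L} U'$ with $\hat\phi(U)\subset U'$ and $a^{-1}U' = (C\alpha)^{-1}U' \subset \hat\phi(C'^{-1}U) = \hat\phi(a^{-1/2}U)$. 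Setting $h := \hat\phi|_{a^{-1/2}U}$ gives the required embedding, so $\delta_f((U,\lambda),(U',\lambda')) \leq a$ for every $a > 1$ and hence equals $1$. Running the identical argument with $\Phi^{-1}$ in place of $\Phi$ delivers $\delta_f((U',\lambda'),(U,\lambda)) = 1$.

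The main obstacle is the delicate point that a Liouville diffeomorphism $\Phi$ need not intertwine the Liouville vector fields of $\lambda$ and $\lambda'$, since $\Phi^*\lambda' - \lambda$ is merely exact rather than zero; in general $\Phi(a^{-1/2}U) \neq a^{-1/2}U'$, so the would-be naive proof (just composing with $\Phi$) does not show that $\delta_f$ is invariant under Liouville diffeomorphism. This is exactly the mismatch between source and target Liouville flows that Proposition \ref{opencon} was designed to absorb, at the price of an arbitrarily small parameter loss — a loss harmlessly swallowed by the multiplicative triangle inequality. Beyond this one application of Proposition \ref{opencon}, the argument is purely formal.
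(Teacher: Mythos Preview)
Your argument is correct, and the overall architecture---reduce to showing $\delta_f((U,\lambda),(U',\lambda'))=1$, then apply the multiplicative triangle inequality---matches the paper exactly. The difference is in how you establish the intermediate claim. The paper simply observes that $a=1$ is admissible in the infimum defining $\delta_f$: for $a=1$ one needs $h\co U\xhookrightarrow{L}U'$ with $U'\subset h(U)\subset U'$, and the Liouville diffeomorphism $\Phi$ itself witnesses this. You instead show $\delta_f\leq a$ for every $a>1$ via Proposition~\ref{opencon}, which is valid but unnecessary here. The ``obstacle'' you flag---that $\Phi$ need not intertwine the Liouville flows---is real for $a>1$, but evaporates at $a=1$ since no scaling is involved; this is the observation your route misses. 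Your detour does illustrate that Proposition~\ref{opencon} can absorb exactly this kind of mismatch, which is its purpose elsewhere in the paper (notably in the triangle inequality for $\delta_f$), but for the present corollary the direct check at $a=1$ is cleaner.
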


\begin{proof}
It is clear from the definitions that we have \[ d_c((U,\lambda),(U',\lambda'))=\delta_f((U,\lambda),(U',\lambda'))=\delta_f((U',\lambda'),(U,\lambda))=1,\] so the corollary is an immediate consequence of the multiplicative triangle inequalities: for instance we have \begin{align*} &\delta_f ((U,\lambda),(V,\mu))\leq \delta_f((U,\lambda),(U',\lambda'))\delta_f((U',\lambda'),(V,\mu))\\&=\delta_f((U',\lambda'),(V,\mu))\leq \delta_f((U',\lambda'),(U,\lambda))\delta_f((U,\lambda),(V,\mu))=\delta_f((U,\lambda),(V,\mu)).\end{align*}
\end{proof}

In a somewhat different direction, the following  result will be helpful in proving Theorem \ref{quasiembed}.

\begin{lemma}\label{symplecto}
Let $X$ be a manifold without boundary equipped with a smooth family of $1$-forms $\lambda_t$ ($0\leq t\leq 1$) such that $d\lambda_t$ is symplectic and is independent of $t$. Assume furthermore that the Liouville vector fields $\mathcal{L}_{\lambda_t}$ of $\lambda_t$ are each complete.  Let $W$ be a compact codimension-zero submanifold of $X$ with boundary $\partial W$, having the properties that each $\mathcal{L}_{\lambda_t}$ is positively transverse to $\partial W$, and that every point of $X$ lies on a flowline of $\mathcal{L}_{\lambda_t}$ that intersects $W$.  Then there is a smooth family of symplectomorphisms $F_t\co X\to X$ such that $F_0=1_X$ and the support of $F_{t}^{*}\lambda_t-\lambda_0$ is contained in $W^{\circ}$ for all $t$.
\end{lemma}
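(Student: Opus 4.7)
The plan is to apply a Moser-type argument to the family $\lambda_t$.  Let $\omega=d\lambda_t$, which is independent of $t$ by hypothesis, so that $\dot\lambda_t:=\frac{d}{dt}\lambda_t$ is closed for each $t$.  I seek a smooth family of time-dependent vector fields $V_t$ on $X$ whose flows $F_t$ (with $F_0=1_X$) are $\omega$-symplectomorphisms (automatic provided $\iota_{V_t}\omega$ is closed), are tangent to $\partial W$ along $\partial W$ (so that $F_t$ preserves $W^\circ$ and $X\setminus W^\circ$ setwise), and satisfy the Moser equation
\[ \iota_{V_t}\omega + d(\lambda_t(V_t)) + \dot\lambda_t = 0 \]
outside a compact subset of $W^\circ$.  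Given all of this, the identity $\frac{d}{dt}(F_t^*\lambda_t)=F_t^*\bigl(\iota_{V_t}\omega + d(\lambda_t(V_t)) + \dot\lambda_t\bigr)$ will force $F_t^*\lambda_t=\lambda_0$ on $X\setminus W^\circ$, which is equivalent to the stated support property.

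To produce $V_t$, I would take the ansatz $V_t=-\dot{\mathcal{L}}_{\lambda_t}+X_{K_t}$, where $X_{K_t}$ is the $\omega$-Hamiltonian vector field ($\iota_{X_{K_t}}\omega=dK_t$) of an auxiliary function $K_t$ to be chosen.  Differentiating the identity $\iota_{\mathcal{L}_{\lambda_t}}\omega=\lambda_t$ in $t$ gives $\iota_{\dot{\mathcal{L}}_{\lambda_t}}\omega=\dot\lambda_t$, so $\iota_{V_t}\omega=-\dot\lambda_t+dK_t$ is closed as required.  A short computation using $\lambda_t(X_{K_t})=\omega(\mathcal{L}_{\lambda_t},X_{K_t})=-\mathcal{L}_{\lambda_t}(K_t)$ then shows that the Moser equation reduces to $d\bigl(K_t+\lambda_t(V_t)\bigr)=0$; demanding $K_t+\lambda_t(V_t)\equiv 0$ and substituting yields the first-order linear ODE
\[ \mathcal{L}_{\lambda_t}(K_t)-K_t=-\lambda_t(\dot{\mathcal{L}}_{\lambda_t}) \]
along Liouville flowlines of $\lambda_t$.

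By the reachability and transversality hypotheses, the assignment $(x,s)\mapsto\mathcal{L}_{\lambda_t}^s(x)$ yields a diffeomorphism $\Phi_t\co\partial W\times[0,\infty)\to X\setminus W^\circ$ in which $\mathcal{L}_{\lambda_t}$ becomes $\partial_s$, so the displayed ODE reduces to a scalar linear ODE in $s$ with prescribed inhomogeneity and can be solved on $[0,\infty)$ from initial data $K_t(\cdot,0)$ on $\partial W$.  I would choose this initial data so that $V_t$ is tangent to $\partial W$ along $\partial W$: this is a pointwise linear condition on the $1$-jet of $K_t$ at $\partial W$, satisfiable because the $\mathcal{L}_{\lambda_t}$-component of $\dot{\mathcal{L}}_{\lambda_t}$ can be cancelled by the corresponding component of $X_{K_t}$, which we control through the value and tangential derivatives of $K_t|_{\partial W}$.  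I would then extend $K_t$ smoothly into $W^\circ$ so that it vanishes outside a compact subset of $W^\circ$, producing a globally smooth time-dependent vector field $V_t$.  The tangency guarantees that the flow $F_t$ preserves $X\setminus W^\circ$, and the Moser equation holds there by construction, so $F_t^*\lambda_t=\lambda_0$ on $X\setminus W^\circ$.  The main technical issue is verifying global existence of the flow of $V_t$ on $[0,1]$; this uses the tangency along $\partial W$ (which prevents escapes across $\partial W$) combined with the compactness of $W$ and the fact that on the cylindrical end the explicit ODE solution has controlled (at worst exponential in $s$) growth that is compatible with the Liouville scaling so that the associated flow is complete.
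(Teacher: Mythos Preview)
Your approach has a genuine gap at the tangency step. With your ansatz $V_t=-\dot{\mathcal{L}}_{\lambda_t}+X_{K_t}$, the condition that $V_t$ be tangent to $\partial W$ is equivalent to $\omega(V_t,R_t)=0$ along $\partial W$, where $R_t$ is the Reeb field of $\alpha_t=\lambda_t|_{\partial W}$; since $\iota_{V_t}\omega=-\dot\lambda_t+dK_t$ this reads
\[
R_t\bigl(K_t|_{\partial W}\bigr)=\dot\alpha_t(R_t)\qquad\text{on }\partial W.
\]
This is not a pointwise constraint on the $1$-jet of $K_t$: it is a transport PDE for the \emph{function} $K_t|_{\partial W}$ along Reeb orbits, and the tangential derivatives of $K_t$ along $\partial W$ are determined by $K_t|_{\partial W}$ itself, not freely prescribable. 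The PDE carries obstructions: for every closed Reeb orbit $\gamma$ of $R_t$ one must have $\int_\gamma\dot\alpha_t=0$, and even when these period conditions hold, producing a globally smooth solution on $\partial W$ with possibly complicated Reeb dynamics is nontrivial. Nothing in the hypotheses forces $\dot\lambda_t$ (or its restriction to $\partial W$) to be exact. Since your only freedom is the choice of $K_t|_{\partial W}$ (the first-order ODE along Liouville flowlines then fixes $K_t$ on $X\setminus W^\circ$, including its normal derivative at $\partial W$), there is no further parameter to absorb the obstruction. Without tangency, trajectories of $V_t$ starting in $X\setminus W^\circ$ can cross into $W^\circ$ where the Moser equation fails, and the conclusion $F_t^*\lambda_t=\lambda_0$ on $X\setminus W^\circ$ does not follow.

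The paper avoids this by constructing $F_t$ geometrically rather than as the flow of a single Moser field. It first applies Gray stability on $\partial W$ to get $\psi_t$ with $\psi_t^*\alpha_t=e^{g_t}\alpha_0$, and then uses the Liouville flows to define embeddings $G_t(s,y)=\mathcal{L}_{\lambda_t}^{\,s-g_t(y)}(\psi_t(y))$ with $G_t^*\lambda_t=e^s\alpha_0$ independent of $t$. A cutoff modification $\tilde G_t$ interpolates to $(s,y)\mapsto\mathcal{L}_{\lambda_t}^{s}(y)$ deep inside $W$, and $\tilde F_t=\tilde G_t\circ\tilde G_0^{-1}$ (extended across the core) is a diffeotopy with $\tilde F_t^*\lambda_t=\lambda_0$ on a full \emph{open neighbourhood} of $X\setminus W^\circ$. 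Only then is a standard Moser correction applied, now with a vector field supported compactly in $W^\circ$, to make the map symplectic. Crucially $\tilde F_t$ need not preserve $\partial W$; what matters is that $\tilde F_t^*\lambda_t-\lambda_0$ already vanishes near $X\setminus W^\circ$, so the correction is automatically localised. This is precisely what your direct Moser scheme cannot arrange without first solving the Reeb-transport equation above.
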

(This is similar to a special case of \cite[Proposition 11.8]{CE}; we give details because we need more control over the support of $F_{t}^{*}\lambda_t-\lambda_0$ than can be read off from that result.)

\begin{proof}
Write $Y=\partial W$, and $\alpha_t=\lambda_t|_Y$.  By the Gray stability theorem, there is a diffeotopy $\psi_t\co Y\to Y$ with $\psi_0=1_Y$ and $\psi_{t}^{*}\alpha_t=e^{g_t}\alpha_0$ for some $g_t\co Y\to\R$. Define $G_t\co \R\times Y\hookrightarrow X$ by \[ G_t(s,y)=\mathcal{L}_{\lambda_t}^{s-g_t(y)}(\psi_t(y)).\]  Based on the facts that $\lambda_t(\mathcal{L}_{\lambda_t})=0$ and that $\mathcal{L}_{\lambda_t}^{r*}\lambda_t=e^r\lambda_t$ it is easy to check that we have \[ G_{t}^{*}\lambda_t=e^s\alpha_0, \] independently of $t$. (Here $s$ denotes the coordinate on $\R$.)

Let $0<\delta<\frac{1}{2}$, choose $C>1$ such that $g_t(y)\leq C-1$ for all $t\in [0,1]$ and $y\in  Y$, and choose a smooth function $\chi\co \R\to [0,1]$ such that $\chi(s)=0$ for $s<-C+\delta$, $\chi(s)=1$ for $s>-\delta$, and $\chi'(s)<\frac{1}{C-1}$ for all $s$. The latter condition implies that, for all $(t,y)\in [0,1]\times Y$, the function $s\mapsto s-\chi(s)g_t(y)$ has positive derivative everywhere.   Also let $\nu\co\R\to [0,1]$ be smooth with $\nu(s)=0$ for $s<-C+\frac{\delta}{2}$ and $\nu(s)=1$ for $s>-C+\delta$.  Thus the map $\tilde{G}_t\co [-C,\infty)\times Y\to X$ defined by \[ \tilde{G}_t(s,y)=\mathcal{L}_{\lambda_t}^{s-\chi(s)g_t(y)}(\psi_{\nu(s)t}(y)) \] is an embedding which coincides with $G_t$ on $[-\delta,\infty)\times Y$ but is given on $[-C,-C+\delta/2]\times Y$ by $(s,y)\mapsto \mathcal{L}_{\lambda_t}^{s}(y)$. For $t=0$, since $g_0\equiv 0$ we have $\tilde{G}_0(s,y)=G_0(s,y)=\mathcal{L}_{\lambda_0}^{s}(y)$ for all $s,y$.

Moreover the image of $\tilde{G}_t$ is $\cup_{s\geq -C}\mathcal{L}_{\lambda_t}^{s}(Y)$.  The hypotheses on the behavior of $\mathcal{L}_{\lambda_t}$ with respect to $W$ imply that $X\setminus W^{\circ}=\cup_{s\geq 0}\mathcal{L}^{s}_{\lambda_t}(Y)$, so the image of $\tilde{G}_{t}$ can equivalently be written as $X\setminus\mathcal{L}_{\lambda_t}^{-C}(W^{\circ})$.  For each $t$ we thus have a diffeomorphism \[ \tilde{G}_{t}\circ \tilde{G}_{0}^{-1}\co X\setminus \mathcal{L}_{\lambda_0}^{-C}(W^{\circ})\to X\setminus \mathcal{L}_{\lambda_t}^{-C}(W^{\circ}). \] The restriction of this diffeomorphism to a neighborhood of $X\setminus W^{\circ}=\tilde{G}_0([0,\infty)\times Y)$ coincides with $G_{t}\circ G_{0}^{-1}$ and hence pulls back $\lambda_t$ to $\lambda_0$ since $G_{t}^{*}\lambda_t$ is independent of $t$.  On the other hand the restriction of $\tilde{G}_{t}\circ \tilde{G}_{0}^{-1}$ to a neighborhood of the boundary $\mathcal{L}_{\lambda_0}^{-C}(Y)$ of its domain sends $w$ to $\mathcal{L}^{s_w}_{\lambda_t}\mathcal{L}^{-s_w}_{\lambda_0}(w)$ where $s_w$ is the unique number with $w\in \mathcal{L}_{\lambda_0}^{s_w}(Y)$.  We can then extend $\tilde{G}_t\circ\tilde{G}_{0}^{-1}$ to a diffeomorphism $\tilde{F}_t\co X\to X$ by taking $\tilde{F}_t|_{\mathcal{L}_{\lambda_0}^{-C}(W)}$ to be given by $w\mapsto \mathcal{L}^{f(w)}_{\lambda_t}\circ \mathcal{L}^{-f(w)}_{\lambda_0}(w)$ for a suitable smooth function $f\co \mathcal{L}_{\lambda_0}^{-C}(W)\to \R$ that is constant on $\mathcal{L}_{\lambda_0}^{-C-\delta}(W)$ and obeys $f(w)=s_w$ for $w$ near $\mathcal{L}_{\lambda_0}^{-C}(Y)$.  (Concretely, $f$ can be constructed by first choosing a monotone smooth fuction $\beta\co [-C-\delta,-C]\to \R$ that is the identity on a neighborhood of $-C$ and is equal to a constant $\beta_0$ on a neighborhood of $-C-\delta/2$, and then setting $f\co \mathcal{L}_{\lambda_0}^{-C}(W)\to \R$ equal to $\beta(s)$ on $\mathcal{L}_{\lambda_0}^{s}(Y)$ for $s\in [-C-\delta,-C]$, and to $\beta_0$ on $\mathcal{L}_{\lambda_0}^{-C-\delta}(W)$.)

We thus have a diffeotopy $\tilde{F}_t\co X\to X$ such that $\tilde{F}_{t}^{*}\lambda_t-\lambda_0$ has support contained in $W^{\circ}$.  We finally use a standard Moser argument, introducing the new diffeotopy $F_t=\tilde{F}_t\circ \phi_t$ where $\{\phi_t\}$ is the flow of the time-dependent vector field $\{V_t\}$ given as the solution to $\iota_{V_t}d\tilde{F}_{t}^{*}\lambda=-\left.\frac{d}{du}\right|_{u=t}\left(F_{u}^{*}\lambda_u\right)$.  In particular $V_t$ and hence $\phi_t$ has support contained in $W^{\circ}$ and a standard calculation with Cartan's formula shows that $\frac{d}{dt}F_{t}^{*}d\lambda_t=\frac{d}{dt}\phi_{t}^{*}\tilde{F}_{t}^{*}d\lambda_t=0$.  Thus the $F_t$ are  symplectomorphisms that obey the required properties.
 \end{proof}


\section{$\delta_f$ and filtered equivariant symplectic homology}\label{CHSect}

In the present section we explain, following \cite{V},\cite{BO}, \cite{G}, \cite{GH}, and \cite{GU}, how to associate filtered symplectic homology groups to open Liouville domains, giving rise to constraints on the hemidistance $\delta_f$.  This is most naturally explained using the language of persistence modules (see \cite{CSGO}), though we will not require any deep results concerning these.  It is convenient to parametrize our persistence modules by the positive reals $\mathbb{R}_+$, rather than by $\mathbb{R}$ as is more common in the literature, as $\mathbb{R}_+$ parametrizes scalings of domains in $\mathbb{R}^{2n}$ (or more generally in an exact symplectic manifold with complete Liouville flow).  Of course there is no essential difference here since one can move from scalings in $\mathbb{R}_+$ to translations in $\R$ by taking logarithms.  We begin with the following definition.

\begin{dfn} Let $\mathcal{C}$ be a category.  A $\mathbb{R}_+$-persistence module in $\mathcal{C}$ consists of a collection
of objects $V_s$ for all $s\in \R_+$ together with morphisms (``structure maps'') $\sigma_{ts}\co V_s\to V_t$ whenever $s\leq t$ such that $\sigma_{ss}$ is the identity and $\sigma_{us}=\sigma_{ut}\circ \sigma_{ts}$ whenever $s\leq t\leq u$.

If $\mathbb{V}=\{V_s,\sigma_{ts}\}$ and $\mathbb{W}=\{W_s,\tau_{ts}\}$ are $\R_+$-persistence modules in $\mathcal{C}$ a \textbf{morphism} $F\co\mathbb{V}\to\mathbb{W}$ consists of morphisms (in $\mathcal{C}$) $F_s\co V_s\to W_s$ for all $s$ such that $F_t\circ\sigma_{ts}=\tau_{ts}\circ F_s$ whenever $s\leq t$.
\end{dfn}

In other words, viewing $\R_+$ as a category with a single morphism from $s$ to $t$ when $s\leq t$, the category of $\mathbb{R}_+$-persistence modules in $\mathcal{C}$ is just the category whose objects are functors $\mathbb{R}_+\to \mathcal{C}$ and whose morphisms are natural transformations.  Let us denote this category by $\mathcal{C}^{\R_+}$.

If $(X,\lambda)$ is a Liouville domain, then $\lambda|_{\partial X}$ defines a contact form on $\partial X$. Let us call  $(X,\lambda)$ \emph{nondegenerate} if the Reeb vector field $R_{\lambda}$ of $\lambda|_{\partial X}$ has the property that the linearized return map of each closed orbit of $R_{\lambda}$, acting on $\ker(\lambda|_{\partial X})$, does not have one as an eigenvalue.  Let $\mathbf{LDom}_{2n}$ denote the category whose objects are nondegenerate $2n$-dimensional Liouville domains, and whose morphisms $(X,\lambda)\to (Y,\mu)$ are maps $\phi\co X\to Y$ such that \emph{either} $\phi$ is a diffeomorphism with $\phi^*\mu=\lambda$, \emph{or} $\phi$ is a Liouville embedding whose image is contained in $Y^{\circ}$.

Some of the results in \cite[Section 3]{GH} can be summarized by the statement that filtered positive $S^1$-equivariant symplectic homology defines a functor \[ \mathbb{CH}\co \mathbf{LDom}_{2n}^{op}\to (\mathbf{Mod}_{\mathbb{Q}[T]})^{\mathbb{R_+}}.\] (Here the $\mathbf{Mod}_{\mathbb{Q}[T]}$ refers to the category of modules over the polynomial ring $\mathbb{Q}[T]$, and the superscript  $op$ refers to the opposite category; in other words $\mathbb{CH}$ is a contravariant functor from $\mathbf{LDom}_{2n}$ to  $(\mathbf{Mod}_{\mathbb{Q}[T]})^{\mathbb{R_+}}$.)   Namely, $\mathbb{CH}$ sends the object $(X,\lambda)$ to a persistence module over $\mathbb{R}_+$ whose value at $L\in \R_+$ is the $\mathbb{Q}$-vector space denoted $CH^L(X,\lambda)$ in \cite{GH}, which is made into a $\mathbb{Q}[T]$-module by the map $U^L$ from the ``(U map)'' statement of \cite[Proposition 3.1]{GH}.\footnote{We use $T$ rather than $U$ as our formal variable so that $U$ can refer to an open Liouville domain later on.  For the main applications of this paper the fact that $CH^{L}(X,\lambda)$ is a $\mathbb{Q}[T]$-module as opposed to just a $\mathbb{Q}$-vector space will not be important.}  The structure maps of this persistence module are the maps $\imath_{L_2,L_1}$ from \cite[(3.1)]{GH}.  As for the action on morphisms, for a  Liouville embedding $\phi\co (X,\lambda)\hookrightarrow (Y,\mu)$, $\mathbb{CH}$ sends $\phi$ to the $L$-parametrized family of ``transfer maps'' $\Phi^L\co CH^L(Y,\mu)\to CH^L(X,\lambda)$; that these assemble into a morphism in the category of $\R_+$-persistence modules in $\mathbf{Mod}_{\mathbb{Q}[T]}$is the content of \cite[(3.4) and (3.6)]{GH}. If instead $\phi$ is a Liouville isomorphism then the associated map $\mathbb{CH}(\phi)$ is instead formed by pulling back all of the ingredients in the construction of $CH^L(Y,\mu)$ in the obvious way as in \cite[Lemma 4.14]{G}.  The functoriality property $\mathbb{CH}(\psi\circ\phi)=\mathbb{CH}(\phi)\circ\mathbb{CH}(\psi)$ holds as in \cite[Theorem 4.12]{G}.  We say a little more about how these structures are constructed in Section \ref{scalesect} below; for details one should consult references such as \cite{BO},\cite{GH}.

\subsection{Gradings}
In our applications it will be useful to appeal to an absolute $\mathbb{Z}$-grading on equivariant symplectic homology, which requires imposing topological hypotheses on the Liouville domains in question. A $\Z$-grading on the $\mathbb{Q}[T]$-modules $CH^L(X,\lambda)$ (with the formal variable $T$ having degree $-2$) requires systematically choosing homotopy classes of trivializations of the symplectic vector bundles $\gamma^*TX$ as $\gamma$ varies through loops $\gamma\co S^1\to X$, in such a way that whenever $\Gamma\co [0,1]\times S^1\to X$ is a homotopy between two loops $\gamma_0,\gamma_1$, the chosen trivializations of $\gamma_{0}^{*}TX$ and $\gamma_{1}^{*}TX$ simultaneously extend over $\Gamma^*TX$.  Consideration of the case that $\gamma_0=\gamma_1$ shows that a necessary condition for such a grading is that $c_1(TX)$ vanish on homology classes represented by tori.  If one just wants a $\Z$-grading on $CH^L(X,\lambda)$ this is also sufficient, as one can see by choosing trivializations of $\gamma^*TX$ for one choice of $\gamma$ in each component of the free loop space of $X$ and then extending these via homotopies.  However this involves a non-canonical choice which cannot be expected to behave well with respect to transfer maps.

To get a more canonical grading we can impose the conditions that $H_1(X;\Z)=\{0\}$ and that $c_1(TX)$ represent a torsion class in $H^2(X;\Z)$.  Then any loop $\gamma\co S^1\to X$ has the form $\gamma=u|_{\partial \Sigma}$ for some map $u\co \Sigma\to X$ where $\Sigma$ is an oriented surface with boundary $S^1$, and a unique homotopy class of trivializations of $\gamma^*TX$ is prescribed by requiring the trivialization to extend to $u^*TX$.  Moreover the assumption that $c_1(TX)$ is torsion  implies that this homotopy class is independent of the choice of $\Sigma$ and $u$.  In this way the persistence modules $\mathbb{CH}(X,\lambda)$ obtain a $\Z$-grading whenever $H_1(X;\Z)=\{0\}$ and $c_1(TX)$ is torsion.  Moreover if $(X,\lambda)$ and $(Y,\mu)$ both satisfy these conditions and if $\phi\co X\to Y$ is a morphism in $\mathbf{LDom}_{2n}$ then the associated transfer map $\mathbb{CH}(Y,\mu)\to \mathbb{CH}(X,\lambda)$ preserves gradings, as follows directly from the construction of this map as described \emph{e.g.} in \cite[Section 7]{GH}; the point is that the transfer map is defined using solutions to the Floer continuation equation, and such solutions provide homotopies over which the relevant trivializations can be extended. 

Thus if we let $\mathbf{SLDom}_{2n}$ be the full subcategory of $\mathbf{LDom}_{2n}$ whose objects are nondegenerate $2n$-dimensional Liouville domains $(X,\lambda)$ having $H_1(X;\Z)=\{0\}$ and $c_1(TX)$ torsion, then the restriction of the functor $\mathbb{CH}$ to $\mathbf{SLDom}_{2n}$ lifts to a functor (still denoted $\mathbb{CH}$) \[ \mathbb{CH}\co \mathbf{SLDom}_{2n}^{op}\to \left(\mathbf{GrMod}_{\Q[T]}\right)^{\R_+},\] where $\mathbf{GrMod}_{\Q[T]}$ is the category of $\Z$-graded $\Q[T]$-modules, $\Q[T]$ being regarded as a graded ring with $T$ having degree $-2$.

\subsection{Open domains}

We now observe, following \cite{GU}, that the functor $\mathbb{CH}$ just described can be adapted to give a functor on the category $\mathbf{TE}_{2n}$ of $2n$-dimensional tamely exhausted exact symplectic manifolds (as defined in Definition \ref{tamedef}; the morphisms in this category are taken to be all Liouville embeddings), which by Proposition \ref{sstame} includes all open Liouville domains.  This functor is valued in $\left(\mathbf{Mod}_{\Q[T]}\right)^{\R_+}$; if we restrict to the full subcategory of $\mathbf{TE}_{2n}$ consisting of $(U,\lambda)$ with $H_1(U,\Z)=\{0\}$ and $c_1(TU)$ torsion then the functor lifts to $\left(\mathbf{GrMod}_{\Q[T]}\right)^{\R_+}$.

To define this functor on $\mathbf{TE}_{2n}$, on objects one puts $\mathring{\mathbb{CH}}(U,\lambda)=\varprojlim_X \mathbb{CH}(X,\lambda|_X)$ where the inverse limit\footnote{The inverse limit of a directed system $\{M(\alpha)\}$ of persistence modules in a category $\mathbf{D}$ that itself admits inverse limits can be constructed in the obvious way, setting $\varprojlim_{\alpha}M(\alpha)$ to be the persistence module given by setting $\left(\varprojlim_{\alpha}M(\alpha)\right)_t=\varprojlim_{\alpha}(M(\alpha)_t)$ and using the result of the obvious diagram chase to define the structure maps $\left(\varprojlim_{\alpha}M(\alpha)\right)_s\to \left(\varprojlim_{\alpha}M(\alpha)\right)_t $.} is over compact subsets $X\subset U$ with $(X,\lambda|_X)$ an object of $\mathbf{LDom}_{2n}^{op}$, partially ordered by saying that $X<X'$ provided that $X'\subset X^{\circ}$, with transition maps $\mathbb{CH}(X,\lambda|_X)\to \mathbb{CH}(X',\lambda|_{X'})$ when $X<X'$ given by applying the functor $\mathbb{CH}$ to the inclusion of $X'$ into $X$.  On morphisms, $\mathring{\mathbb{CH}}$ assigns to a Liouville embedding $\phi\co (U,\lambda)\to (V,\mu)$ the map $\mathring{\mathbb{CH}}(\phi)\co \mathring{\mathbb{CH}}(V,\mu)\to \mathring{\mathbb{CH}}(U,\lambda)$ characterized by requirement that the diagram \begin{equation}\label{chringdiag} \xymatrix{ \mathring{\mathbb{CH}}(V,\mu)\ar[r]^{\mathring{\mathbb{CH}}(\phi)}\ar[d] & \mathring{\mathbb{CH}}(U,\lambda) \ar[d] \\  \mathbb{CH}(Y,\mu|_Y) \ar[r]^{\mathbb{CH}(\phi|_X)} & \mathbb{CH}(X,\lambda|_X)   } \end{equation} commute whenever $X\subset U,Y\subset V$ with $(X,\lambda|_X),(Y,\mu|_Y)$ being objects of $\mathbf{LDom}_{2n}$ such that $\phi(X)\subset Y^{\circ}$.  Here of course the vertical arrows are the structure maps of the inverse limits.   While \cite[Lemma 2.18]{GU} is stated in a somewhat more specific context, its proof goes through without substantive change to show that this prescription indeed uniquely defines a functor $\mathring{\mathbb{CH}}$.  Note that the fact that $(V,\mu)$ is tamely exhausted guarantees that if one has $U,V,X$ as in the diagram (\ref{chringdiag}) then one can find a subset $Y\subset V$ that allows one to complete the diagram.  (Formally, the definition yields a Liouville domain $(Y,\mu|_Y)$ with $\phi(X)\subset Y^{\circ}\subset Y\subset V$ without a guarantee of nondegeneracy for $Y$, but a small perturbation of $Y$ will be nondegenerate and will satisfy the remaining requirements.)  If it additionally holds that $U,V$ both have vanishing first homology and torsion first Chern class, then by Proposition \ref{sstame} a cofinal system of the nondegenerate Liouville domains contained in $U$ (resp. in $V$) will have the same property, and so $\mathring{\mathbb{CH}}(\phi)$ will be a morphism of $\R_+$-persistence modules in the category of \emph{graded} $\mathbb{Q}[T]$-modules.

\subsection{Scalings}\label{scalesect}

 If $a\geq 1$ and $L>0$, one obtains as in the proof of \cite[Proposition 4.15]{G} a \emph{rescaling isomorphism} $\sigma_a\co CH^L(a^{-1}X,\lambda|_{a^{-1}X})\to CH^{aL}(X,\lambda)$ by using the Liouville flow $\mathcal{L}_{\log a}^{\lambda}$ to identify various objects associated to $a^{-1}X$ with objects associated to $X$.  To explain this detail, for a Liouville domain $(X,\lambda)$ with completion $(\hat{X},\hat{\lambda})$,  $\mathbb{CH}(X,\lambda)$ is formed as a direct limit of filtered equivariant positive\footnote{by positive we just mean that the underlying chain complex $CF^{S^1,N}(H,J)$ is obtained by quotienting out by a certain subcomplex  generated by orbits with action close to zero} Floer homologies $\mathbb{HF}^{S^1,N}(H,J)$ as $H$ varies over the class $\mathcal{H}_{adm}(X)$ of ``admissible parametrized Hamiltonians'' $H\co S^1\times \hat{X}\times S^{2N-1}\to \R$, as $N$ varies through $\N$, and where $J$ belongs to a suitable family of almost complex structures on $\hat{X}$.  (See, \emph{e.g.}, \cite[Definitions 5.8 and 6.1]{GH}.)  For $a\geq 1$, the time-$(\log a)$ flow $\mathcal{L}_{\log a}^{\lambda}$ defines a diffeomorphism $a^{-1}X\to X$  which extends trivially\footnote{by $\psi_a(x,s)=\left(\mathcal{L}_{\log a}^{\lambda}(x),s\right)$ for $(x,s)\in \partial(a^{-1}X)\times[1,\infty)$} to a diffeomorphism $\psi_a\co \widehat{a^{-1}X}\to \hat{X}$ of the symplectizations, such that \begin{equation}\label{confpull} \psi_{a}^{*}\hat{\lambda}=a\widehat{\lambda|_{a^{-1}X}}.\end{equation}  Now a parametrized Hamiltonian $H\co S^1\times \widehat{a^{-1}X}\times S^{2N-1}\to \R$ belongs to $\mathcal{H}_{adm}(a^{-1}X)$ if and only if the parametrized Hamiltonian $\delta_aH\co S^1\times \hat{X}\times S^{2N-1}\to\R$ defined by \begin{equation}\label{deltaa} \delta_aH(t,x,z)=aH(t,\psi_{a}^{-1}(x),z)\end{equation} belongs to $\mathcal{H}_{adm}(X)$; moreover $\psi_{a}$ pushes forward the Hamiltonian vector field of $H$ to the Hamiltonian vector field of $\delta_aH$.  Consequently the action of $\psi_a$ induces  isomorphisms of equivariant chain complexes $CF^{S^1,N}(H,J)\cong CF^{S^1,N}(\delta_aH,\psi_{a*}J)$ for each choice of $H,J,N$; also (\ref{confpull}) implies that the action of $\psi_a$ on generators multiplies their actions by $a$.  That we obtain the aforementioned isomorphisms $\sigma_a\co CH^{L}(a^{-1}X,\lambda|_{a^{-1}X})\to CH^{aL}(X,\lambda)$ in the direct limit follows from the fact that the continuation maps used to take the direct limit that defines $CH^{L}(a^{-1}X,\lambda|_{a^{-1}X})$ can likewise be transported to $X$ via the conformal symplectomorphism $\psi_a$.

Suppose now that $(X,\lambda)$ is a Liouville domain and that $Y\subset X^{\circ}$ has the property that $(Y,\lambda|_{Y})$ is a Liouville domain with $\dim Y=\dim X$.  Let us quickly outline the construction of the transfer map $\mathbb{CH}(X,\lambda)\to \mathbb{CH}(Y,\lambda|_Y)$ from \cite[Section 4.1 and 4.2]{G} and \cite[Sections 7.1 and 7.2]{GH}.  This map is obtained as a direct limit of compositions $HF^{S^1,N}(H_1,J_1)\to HF^{S^1,N}(H_2,J_2)\to HF^{S^1,N}(H_2^Y,J_2^Y)$ where: 
\begin{itemize} \item $H_1\in \mathcal{H}_{adm}(X)$, the function $H_2\co S^1\times \hat{X}\times S^{2N-1}\to \R$ is an ``admissible parametrized stair Hamiltonian'' (see \cite[Definition 7.1]{GH}) and $H_{2}^{Y}\co S^1\times \hat{Y}\times S^{2N-1}\to\R$ is a certain element of $\mathcal{H}_{adm}(Y)$ that coincides with $H_2$ on a neighborhood of $Y$ in $\hat{X}$;
\item the map $HF^{S^1,N}(H_1,J_1)\to HF^{S^1,N}(H_2,J_2)$ is a standard Floer continuation map in $\hat{X}$, while the map $HF^{S^1,N}(H_2,J_2)\to HF^{S^1,N}(H_2^Y,J_2^Y)$ is given by identifying the underlying chain complex $CF^{S^1,N}(H_{2}^{Y},J_{2}^{Y})$ as the quotient of $CF^{S^1,N}(H_2,J_2)$ by a canonical subcomplex (see \cite[(7.5)]{GH}), and then taking the map on homology induced by the quotient projection.\end{itemize}

If $a\geq 1$ the same constructions may be applied to $a^{-1}Y\subset a^{-1}X$.  Moreover the conformal symplectomorphism $\psi_a\co \widehat{a^{-1}X}\to \hat{X}$ has the property that if $H_2$ is an admissible stair Hamiltonian for the pair $(a^{-1}Y,a^{-1}X)$ then the map $\delta_aH_2$ (using the same notation as in (\ref{deltaa})) is an admissible stair Hamiltonian for $(Y,X)$, and the assignment $(H_2,J_2)\rightsquigarrow (H_{2}^{Y},J_{2}^{Y})$ and the identification of $CF(H_2^Y,J_2^Y)$ with a quotient of $CF(H_2,J_2)$ are likewise respected by the operation $\delta_a$ on parametrized Hamiltonians.  Since $\psi_a$ can be used to identify the continuation map $HF^{S^1,N}(H_1,J_1)\to HF^{S^1,N}(H_2,J_2)$ (for $H_1\in \mathcal{H}_{adm}(a^{-1}X)$) with the continuation map $HF^{S^1,N}(\delta_aH_1,\psi_{a*}J_1)\to HF^{S^1,N}(\delta_aH_2,\psi_{a*}J_2)$ we see by passing to direct limits that we have a commutative diagram \begin{equation}\label{refdiag} \xymatrix{ CH^{aL}(X,\lambda)\ar[r]& CH^{aL}(Y,\lambda|_Y) \\ CH^L(a^{-1}X,\lambda|_{a^{-1}X})\ar[r] \ar[u]_{\sigma_a} & CH^L(a^{-1}Y,\lambda|_{a^{-1}Y}) \ar[u]^{\sigma_a}   }\end{equation} whenever $a\geq 1$ and $(X,\lambda),(Y,\lambda|_Y)$ are Liouville domains with $Y\subset X^{\circ}$, where the horizontal maps are transfer maps associated to inclusions.

In a different vein, if $(X,\lambda)$ is a Liouville domain  and if $1\leq a\leq b$ there is an inclusion $b^{-1}X\subset a^{-1}X$, which the functor $\mathbb{CH}$ sends to a morphism of $\mathbb{R}^+$-persistence modules $\mathbb{CH}(a^{-1}X,\lambda|_{a^{-1}X})\to \mathbb{CH}(b^{-1}X,\lambda|_{b^{-1}X})$. 
Specializing this at any given value $L\in \R^+$ yields a morphism of $\Q[T]$-modules $r_{ba,L}\co CH^L(a^{-1}X,\lambda|_{a^{-1}X})\to CH^L(b^{-1}X,\lambda|_{b^{-1}X})$. If $H_1(X;\Z)=\{0\}$ and $c_1(TX)$ is torsion then this morphism preserves grading.


By \cite[Lemma 2.5]{GU}, these morphisms $r_{ba,L}$ are related to the rescaling isomorphisms as follows: for any $L\in \R_+$ the diagram \begin{equation} \xymatrix{ CH^L(a^{-1}X,\lambda|_{a^{-1}X})\ar[r]^{r_{ba,L}} \ar[d]^{\sigma_a} & CH^{L}(b^{-1}X,\lambda|_{b^{-1}X})\ar[d]_{\sigma_b} \\ CH^{aL}(X,\lambda) \ar[r]^{\imath_{bL,aL}} & CH^{bL}(X,\lambda) } \label{scaleclosed}\end{equation}  commutes. 

We now extend this to open Liouville domains $(U,\lambda)$. 
For $a\geq 1$ and $L>0$, taking the inverse limit of the rescaling isomorphisms $\sigma_a\co CH^{L}(a^{-1}X,\lambda|_{a^{-1}X})\to CH^{aL}(X,\lambda)$ over compact subsets $X\subset U$ that are Liouville domains with respect to the fixed one-form $\lambda$  (as is justified by the commutativity of (\ref{refdiag})) gives an isomorphism $\sigma_a\co CH^{L}(a^{-1}U,\lambda|_{a^{-1}U})\to CH^{aL}(U,\lambda)$.  Taking a limit of (\ref{scaleclosed}) gives a commutative diagram \begin{equation}\label{scaleopen} \xymatrix{  CH^L(a^{-1}U,\lambda|_{a^{-1}U})\ar[r]\ar[d]^{\sigma_a}& CH^{L}(b^{-1}U,\lambda|_{b^{-1}U}) \ar[d]^{\sigma_b} \\ CH^{aL}(U,\lambda)\ar[r] & CH^{bL}(U,\lambda)} \mbox{ for $1\leq a\leq b$}\end{equation} for any  open Liouville domain $(U,\lambda)$, where the top arrow is the transfer map associated to the inclusion $b^{-1}U\subset a^{-1}U$ and the bottom map is the structure map for the persistence module $\mathring{\mathbb{CH}}(U,\lambda)$.

\subsection{Implantations}

We will express the key relationship between $\delta_f$ and the equivariant symplectic homology persistence modules in terms of the following notion.
\begin{dfn}
Let $\mathbb{V}=\{V_s,\sigma_{ts}\}$ and $\mathbb{W}=\{W_s,\tau_{ts}\}$ be two $\mathbb{R}_+$-persistence modules in the same category $\mathcal{C}$ and let $a\geq 1$.  An \textbf{$a$-implantation} of $\mathbb{V}$ into $\mathbb{W}$ is a collection of $\mathcal{C}$-morphisms $\phi_s\co V_s\to W_{as}$ and $\psi_s\co W_{s}\to V_{as}$ for all $s\in \R_+$, such that each of the following diagrams commute for all $s,t\in \R_+$ with $s\leq t$:\begin{equation}\label{implantdiagrams} \xymatrix{ V_s\ar[r]^{\sigma_{ts}}\ar[d]_{\phi_s} & V_t \ar[d]^{\phi_t} \\ W_{as} \ar[r]_{\tau_{at,as}} & W_{at}},\quad \xymatrix{ W_s\ar[r]^{\tau_{ts}}\ar[d]_{\psi_s} & W_t \ar[d]^{\psi_t} \\ V_{as} \ar[r]_{\sigma_{at,as}} & V_{at}},\quad \xymatrix{V_s\ar[rr]^{\sigma_{a^2s,s}}\ar[dr]_{\phi_s} & & V_{a^2s} \\ & W_{as}\ar[ru]_{\psi_{as}}&}.\end{equation}
\end{dfn} 

Note that in contrast to the well-established notion of an $a$-interleaving of persistence modules (discussed, \emph{e.g.}, in \cite[Chapter 4]{CSGO}) we do not impose any requirement on the compositions $\phi_{as}\circ\psi_s\co W_{s}\to W_{a^2s}$.  Whereas an $a$-interleaving can be regarded as an ``approximate isomorphism'' between persistence modules, an $a$-implantation from $\mathbb{V}$ into $\mathbb{W}$ is a sort of approximate injection of $\mathbb{V}$ into $\mathbb{W}$.  The asymmetry between $\mathbb{V}$ and $\mathbb{W}$ in the definition will ultimately be what allows us to distinguish between $\delta_f((U,\lambda),(V,\mu))$ and $\delta_f((V,\mu),(U,\lambda))$ for certain open Liouville domains $(U,\lambda),(V,\mu)$.

Here is our main bridge connecting Banach-Mazur distances to persistence modules.

\begin{prop}\label{implantdelta}
Let $(U,\lambda)$ and $(V,\mu)$ be  open Liouville domains and suppose that $\delta_f\left((U,\lambda),(V,\mu)\right)< b$.  Then there is a $b^{1/2}$-implantation of the persistence module $\mathring{\mathbb{CH}}(V,\mu)$ into the persistence module $\mathring{\mathbb{CH}}(U,\lambda)$.  Here $\mathring{\mathbb{CH}}(V,\mu)$  and $\mathring{\mathbb{CH}}(U,\lambda)$ are regarded as persistence modules in $\mathbf{GrMod}_{\mathbb{Q}[T]}$ if both $U$ and $V$ have $H_1=0$ and $c_1$ torsion, and are regarded as persistence modules in $\mathbf{Mod}_{\mathbb{Q}[T]}$ otherwise.
\end{prop}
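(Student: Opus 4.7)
The plan is to apply Corollary \ref{infint} to extract a concrete Liouville embedding from the hypothesis on $\delta_f$, and then to build the implantation by composing the induced transfer maps with the rescaling isomorphisms of Section \ref{scalesect}.  Since $\delta_f((U,\lambda),(V,\mu))<b$, Corollary \ref{infint} supplies a Liouville embedding $h\co b^{-1/2}U\xhookrightarrow{L} V$ with $b^{-1}V\subset h(b^{-1/2}U)\subset V$.  Writing $a=b^{1/2}$ and $j\co b^{-1}V\xhookrightarrow{L} b^{-1/2}U$ for the inverse of $h$ restricted to its image (so that $h\circ j\co b^{-1}V\hookrightarrow V$ is the inclusion), I would set, for each $L\in\mathbb{R}_+$,
\[ \phi_L=\sigma_a\circ \mathring{\mathbb{CH}}(h)_L\co CH^L(V,\mu)\to CH^{aL}(U,\lambda) \]
and
\[ \psi_L=\sigma_b\circ \mathring{\mathbb{CH}}(j)_{L/a}\circ \sigma_{a}^{-1}\co CH^L(U,\lambda)\to CH^{aL}(V,\mu). \]

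The first two squares of (\ref{implantdiagrams}) should then follow once one checks that each ingredient in the definitions of $\phi_L$ and $\psi_L$ commutes with persistence structure maps (with the appropriate rescaling of the filtration parameter).  For the transfer maps $\mathring{\mathbb{CH}}(h)$ and $\mathring{\mathbb{CH}}(j)$ this is exactly the assertion that $\mathring{\mathbb{CH}}$ takes Liouville embeddings to morphisms of $\R_+$-persistence modules; for the rescaling isomorphisms it is visible from the construction recalled in Section \ref{scalesect}, since the symplectomorphism $\psi_a$ used there intertwines the Floer continuation maps that define the structure maps $\imath_{L_2,L_1}$.

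The content of the third square is the identity $\psi_{aL}\circ \phi_L=\imath_{bL,L}$ in $\mathring{\mathbb{CH}}(V,\mu)$.  Unfolding the definitions, the pair $\sigma_a^{-1}\circ\sigma_a$ in the middle cancels, leaving $\sigma_b\circ \mathring{\mathbb{CH}}(j)_L\circ \mathring{\mathbb{CH}}(h)_L$.  Contravariant functoriality of $\mathring{\mathbb{CH}}$ collapses this to $\sigma_b\circ \mathring{\mathbb{CH}}(\iota)_L$, where $\iota\co b^{-1}V\hookrightarrow V$ is the inclusion.  The commutative square (\ref{scaleopen}), applied to the open Liouville domain $(V,\mu)$ with the scalings $1$ and $b$ (so that the left vertical arrow is the identity), then identifies $\sigma_b\circ \mathring{\mathbb{CH}}(\iota)_L$ with the structure map $CH^L(V,\mu)\to CH^{bL}(V,\mu)$, as required.

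The graded refinement requires no new idea: transfer maps preserve the $\Z$-grading by the discussion preceding the definition of $\mathbf{SLDom}_{2n}$, and the rescaling isomorphisms do the same because the symplectomorphism $\psi_a$ preserves Conley--Zehnder indices.  I expect the main effort to lie in carefully tracking the various index shifts $L\leftrightarrow L/a\leftrightarrow aL\leftrightarrow bL$ and in confirming the essentially routine assertion that the collection $\{\sigma_a\}_L$ assembles into a shift-by-$a$ morphism of persistence modules; the substantive mathematical input is already contained in Corollary \ref{infint}, in the contravariant functoriality of $\mathring{\mathbb{CH}}$, and in the commutativity of (\ref{scaleopen}).
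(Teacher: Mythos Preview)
Your proposal is correct and follows essentially the same route as the paper: both arguments extract from the hypothesis a Liouville embedding $h\co b^{-1/2}U\xhookrightarrow{L}V$ with $b^{-1}V\subset h(b^{-1/2}U)$, apply the contravariant functor $\mathring{\mathbb{CH}}$ to $h$ and to $h^{-1}|_{b^{-1}V}$, and conjugate by the rescaling isomorphisms $\sigma_{b^{1/2}}$, $\sigma_b$ to produce the maps $\phi_L$, $\psi_L$; the triangle then reduces via functoriality to (\ref{scaleopen}) applied to the inclusion $b^{-1}V\hookrightarrow V$. The only cosmetic difference is that you invoke Corollary~\ref{infint} to work directly at parameter $b$, whereas the paper first builds an $a^{1/2}$-implantation for some $a<b$ and then enlarges via the structure maps.
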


\begin{proof}
Since if $a\leq b$ a $a^{1/2}$-implantation gives rise in obvious fashion (by composing with the structure maps of the respective persistence modules) to a $b^{1/2}$-implantation, it suffices to show that if $h\co a^{-1/2}U\xhookrightarrow{L} V$ with $a^{-1}V\subset h(a^{-1/2}U)$ then there is a $a^{1/2}$-implantation  of $\mathring{\mathbb{CH}}(V,\mu)$ into $\mathring{\mathbb{CH}}(U,\lambda)$.

Such an embedding $h$ gives rise to a commutative diagram of Liouville embeddings  \begin{equation}\label{spacediag} \xymatrix{ & a^{-1/2}U\ar[ld]_{h} & \\ V  & & a^{-1}V \ar[lu]_{h^{-1}|_{a^{-1}V}} \ar[ll] }  \end{equation}  where the bottom map is the inclusion, and hence to a commutative diagram of persistence module morphisms  \[ 
\xymatrix{ & \mathring{\mathbb{CH}}(a^{-1/2}U,\lambda)\ar[dr]^{F}  & \\ \mathring{\mathbb{CH}} (V,\mu)\ar[rr]^{r_{a1}}\ar[ru]^{G} & &  \mathring{\mathbb{CH}} (a^{-1}V,\mu).
} \] Here each arrow is induced by the corresponding map in (\ref{spacediag}), and in particular $r_{a1}$ is the map associated to the inclusion $a^{-1}V\hookrightarrow V$.  

For $L\in \R_+$ we have specializations $F_L\co CH^L(a^{-1/2}U,\lambda)\to CH^L(a^{-1}V,\mu)$ and $G_L\co CH^L(V,\mu)\to CH^L(a^{-1/2}U,\lambda)$, and (\ref{scaleopen}) gives a commutative diagram \[ \xymatrix{  & CH^L(a^{-1/2}U,\lambda)\ar[rdd]^{F_L}\ar[d]^{\sigma_{\sqrt{a}}} & \\  & CH^{a^{1/2}L}(U,\lambda) \\ CH^{L}(V,\mu)\ar[rr] \ar[ruu]^{G_L} \ar[d]_{\sigma_1} & & CH^L(a^{-1}V,\mu) \ar[d]^{\sigma_a} \\ CH^L(V,\mu) \ar[rr]_{\imath_{aL,L}} & & CH^{aL}(V,\mu) }.\]  (Of course $\sigma_1$ is the identity.)  So assuming that the various $\sigma_a\circ F_L\circ\sigma_{\sqrt{a}}^{-1}\co CH^{\sqrt{a}L}(U,\lambda)\to CH^{aL}(V,\mu)$ have the property that the resulting diagrams \begin{equation}\label{sFs} \xymatrix{  CH^{\sqrt{a}L}(U,\lambda)\ar[r]^{\iota_{\sqrt{a}M,\sqrt{a}L}} \ar[d]_{\sigma_a\circ F_{L}\circ\sigma_{\sqrt{a}}^{-1}} & CH^{\sqrt{a}M}(U,\lambda)\ar[d]^{\sigma_a\circ F_{M}\circ\sigma_{\sqrt{a}}^{-1}} \\ CH^{aL}(V,\mu)\ar[r]_{\imath_{aM,aL}} & CH^{aM}(V,\mu) } \end{equation} commute, and that similar diagrams involving the various $\sigma_{\sqrt{a}}\circ G_L$ commute, our desired $a^{1/2}$-implantation will be given by the maps $\sigma_a\circ F_L\circ\sigma_{\sqrt{a}}^{-1}$  and $\sigma_{\sqrt{a}}\circ G_L$.

To check the required property of the $\sigma_a\circ F_L\circ\sigma_{\sqrt{a}}^{-1}$, we observe that, for $L\leq M$, we have a commutative diagram \[ \xymatrix{CH^{a^{1/2}L}(U,\lambda) \ar[r] \ar[d]^{\sigma_{\sqrt{a}}^{-1}} & CH^{a^{1/2}M}(U,\lambda) \ar[d]_{\sigma_{\sqrt{a}}^{-1}} \\ CH^L(a^{-1/2}U,\lambda)\ar[r]\ar[d]_{F_L}& CH^M(a^{-1/2}U,\lambda)\ar[d]^{F_M} \\ CH^L(a^{-1}V,\mu) \ar[r]\ar[d]_{\sigma_a}& CH^M(a^{-1}V,\mu)\ar[d]^{\sigma_a} \\ CH^{aL}(V,\mu)\ar[r] & CH^{aM}(V,\mu)} \] where the horizontal maps are the structure maps of the persistence modules $\mathring{\mathbb{CH}}(U,\lambda)$,$\mathring{\mathbb{CH}}(a^{-1/2}U,\lambda)$, $\mathring{\mathbb{CH}}(a^{-1}V,\mu)$ and $\mathring{\mathbb{CH}}(V,\mu)$, respectively. Indeed the commutativity of the middle square follows from $F$ being a persistence module morphism, and the commutativity of the top and bottom squares follows by using (\ref{refdiag}) to take an inverse limit of appropriate versions of the  diagram \cite[(2.3)]{GU}.  So (\ref{sFs}) indeed commutes. The same argument (with a slightly smaller diagram) shows that the relevant diagram involving the $\sigma_{\sqrt{a}}\circ G_L$ commutes, completing the proof.
\end{proof}

In order to apply Proposition \ref{implantdelta} it will of course be necessary to know something about the structure of the persistence modules $\mathring{\mathbb{CH}}(U,\lambda)$ for particular examples of open Liouville domains $(U,\lambda)$. For a Liouville domain (in the ordinary sense), the filtered positive equivariant symplectic homology is related to the good\footnote{As usual, a Reeb orbit is called good if it is not an even multiple cover of another Reeb orbit whose Conley--Zehnder index has opposite parity.} closed orbits of the Reeb vector field on the boundary of the domain.  The following result uses information about such orbits (``good Reeb orbits,'' for short) on a sequence of Liouville domains approximating a given open Liouville domain $(U,\lambda)$ in order to gain some information about $\mathring{\mathbb{CH}}(U,\lambda)$.  While this information is far from complete, we will see later that it suffices in the context of our main theorems to give obstructions to implantations that lead via Proposition \ref{implantdelta} to lower bounds on $\delta_f$. 

\begin{lemma}\label{exhaust}
Let $(U,\lambda)$ be an open Liouville domain, let $k,r\in\Z$ and $s,t\in \R$ with $0<s<t$, and suppose that we can write $U=\cup_{m=1}^{\infty} U_m$ where each $U_m\subset U_{m+1}$, and each $(U_m,\lambda)$ is the interior of a Liouville domain $(\bar{U}_m,\bar{\lambda}_m)$ with $H_1(\bar{U}_m;\Z)=\{0\}$ and $c_1(T\bar{U}_m)$ torsion whose periodic Reeb orbits satisfy the following properties, independently of $m$:
\begin{itemize} \item[(i)] Among the nondegenerate good Reeb orbits on $\partial \bar{U}_m$ with Conley--Zehnder index $k$, none have period exactly $s$, and exactly $r$ have period less than $s$.
\item[(ii)] Every Reeb orbit $\gamma$ on $\partial\bar{U}_m$ apart from those from part (i) either is nondegenerate with Conley--Zehnder index $CZ(\gamma)$ not belonging to the set $\{k-1,k+1\}$, or else has period greater than $t$.
\end{itemize}
Then $\dim_{\Q}CH^{s}_{k}(U,\lambda)=r$, and for all $u\in [s,t)$ the structure map $\imath_{us}\co CH^{s}_{k}(U,\lambda)\to CH^{u}_{k}(U,\lambda)$ is injective.
\end{lemma}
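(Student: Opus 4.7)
The plan is to reduce to an explicit computation of filtered positive $S^1$-equivariant symplectic homology on each Liouville domain $\bar{U}_m$ in the exhaustion, and then pass to the limit defining $\mathring{\mathbb{CH}}(U,\lambda)$. The key observation is that the chain complex computing $CH^\bullet_*(\bar{U}_m,\bar{\lambda}_m)$ is concentrated in degree $k$ within the range $\{k-1,k,k+1\}$ at filtration levels up to $t$, with no nontrivial differentials into or out of degree $k$, which makes both conclusions immediate.

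Fix $m$. By the Morse--Bott construction of \cite{BO} and \cite{GH}, for the nondegenerate Liouville domain $(\bar{U}_m,\bar{\lambda}_m)$ the graded $\Q$-vector space $CH^L_*(\bar{U}_m,\bar{\lambda}_m)$ is the homology of a chain complex whose generators are the good nondegenerate closed Reeb orbits on $\partial\bar{U}_m$ of action less than $L$, graded by (a fixed shift of) the Conley--Zehnder index, with a differential that lowers grading by one and strictly decreases action; the assumptions $H_1(\bar{U}_m;\Z)=0$ and $c_1(T\bar{U}_m)$ torsion provide a well-defined absolute $\Z$-grading. Hypothesis (ii) forces the complex to be trivial in degrees $k\pm 1$ at any filtration level $\leq t$: any would-be generator would be a nondegenerate good orbit of CZ index in $\{k-1,k+1\}$, but by (ii) every such orbit has period exceeding $t$. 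Combining with (i), the degree-$k$ part at filtration $s$ is a $\Q$-vector space of dimension $r$, each generator automatically being a cycle (nothing in degree $k-1$) that cannot bound (nothing in degree $k+1$), so $CH^s_k(\bar{U}_m,\bar{\lambda}_m)\cong\Q^r$. For $u\in[s,t)$, the generators in degree $k$ at filtration $u$ contain those at filtration $s$, with neighboring degrees still vanishing, so the inclusion of subcomplexes yields the required injection $CH^s_k(\bar{U}_m,\bar{\lambda}_m)\hookrightarrow CH^u_k(\bar{U}_m,\bar{\lambda}_m)$.

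To transport these conclusions to $(U,\lambda)$, I would use the functor $\mathring{\mathbb{CH}}$ together with the fact that the Liouville embeddings $U_m\xhookrightarrow{L} U$ realize the $\bar{U}_m$ as a cofinal family of Liouville subdomains of $U$ (by Proposition \ref{sstame} and the exhaustion hypothesis), so that $\mathring{\mathbb{CH}}(U,\lambda)$ in bidegree $(s,k)$ is computed as the appropriate limit of the constant system $CH^s_k(\bar{U}_m,\bar{\lambda}_m)\cong\Q^r$. The transfer maps in this system commute with the structure maps $\iota_{us}$ by the functoriality encoded in diagrams such as (\ref{refdiag}) and (\ref{scaleopen}), so both the dimension assertion and the injectivity of $\iota_{us}$ pass from each $\bar{U}_m$ to $(U,\lambda)$.

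The main obstacle is this last limit step: verifying that the transfer maps $CH^s_k(\bar{U}_{m'},\bar{\lambda}_{m'})\to CH^s_k(\bar{U}_m,\bar{\lambda}_m)$ for $m<m'$ are genuine isomorphisms and not merely arbitrary linear endomorphisms of $\Q^r$. I would handle this by applying the same Morse--Bott analysis to the admissible stair Hamiltonian that implements the transfer map: the vanishing of the chain complexes in degrees $k\pm 1$ at filtration $\leq t$ on both sides, combined with action-filtration constraints on Floer continuation trajectories, constrains the chain-level transfer map so tightly that at the level of homology in bidegree $(s,k)$ it must be an isomorphism. Alternatively, one can bypass the $m$-dependent analysis by noting that the open Liouville domain $U_m$ has $\mathring{\mathbb{CH}}(U_m,\lambda)\cong \mathbb{CH}(\bar{U}_m,\bar{\lambda}_m)$ (via collar approximation of $\bar{U}_m$ by interior Liouville subdomains) and then deducing the result from functoriality applied directly to the embedding $U_m\xhookrightarrow{L}U$.
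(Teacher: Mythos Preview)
Your chain-level analysis on each $\bar{U}_m$ is correct and matches the paper: hypotheses (i) and (ii) kill the degree-$(k\pm 1)$ parts of the complex at filtration $\le t$, so $CH^s_k(\bar{U}_m,\bar{\lambda}_m)\cong\Q^r$ and the structure maps $\imath_{us}$ are injective on each $\bar{U}_m$. You also correctly isolate the real difficulty, namely showing that the transfer maps $CH^s_k(\bar{U}_{m'},\bar{\lambda}_{m'})\to CH^s_k(\bar{U}_m,\bar{\lambda}_m)$ are isomorphisms so that the inverse limit stabilizes to $\Q^r$.

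However, neither of your proposed fixes actually closes that gap. Vanishing in degrees $k\pm 1$ on both sides, together with action-filtration compatibility, only tells you the transfer map is a linear map $\Q^r\to\Q^r$; nothing in the stair-Hamiltonian construction forces it to be injective or surjective without further input. Likewise, knowing $\mathring{\mathbb{CH}}(U_m)\cong\mathbb{CH}(\bar{U}_m)$ and invoking functoriality of the embedding $U_m\hookrightarrow U$ only produces a map $CH^s_k(U)\to\Q^r$, not an identification. The paper supplies the missing idea via a \emph{squeeze argument using the Liouville scaling}: pick $\zeta$ with $1<\zeta<t/s$, find $M$ with $\zeta^{-1}U\subset U_M$, and for $m\ge M$ consider the inclusions $\zeta^{-1}\bar{U}_m\subset\bar{U}_M\subset\bar{U}_m$. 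The composite transfer map $CH^s_k(\bar{U}_m)\to CH^s_k(\bar{U}_M)\to CH^s_k(\zeta^{-1}\bar{U}_m)$ is, after the rescaling isomorphism $\sigma_\zeta$, exactly the persistence structure map $CH^s_k(\bar{U}_m)\to CH^{\zeta s}_k(\bar{U}_m)$, which you have already shown is injective since $\zeta s<t$. Hence the first factor $CH^s_k(\bar{U}_m)\to CH^s_k(\bar{U}_M)$ is an injection between $r$-dimensional spaces, so an isomorphism. This is the key step you are missing.

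Two minor points: the hypotheses do not assume $\partial\bar{U}_m$ is nondegenerate (orbits of period $>t$ may be degenerate), so a preliminary perturbation is needed before invoking the good-orbit chain model; and one should first arrange $\bar{U}_m\subset U$ with $\bar{\lambda}_m=\lambda$ by shrinking each $\bar{U}_m$ slightly, as the paper does.
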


\begin{remark}
In the language of barcodes, the hypothesis of Lemma \ref{exhaust} is designed to imply that the degree-$k$ part of the barcode of $\mathbb{CH}(\bar{U}_m,\lambda_m)$ includes exactly $r$ bars which contain $s$, all of which in fact contain the whole interval $[s,t)$; the conclusion of the Lemma states that the barcode of $\mathring{\mathbb{CH}}(U,\lambda)$  has the same property.
\end{remark}

\begin{proof}[Proof of Lemma \ref{exhaust}]
First we will show that there is no loss of generality in assuming that the $\bar{U}_m$ are contained in $U$ with $\bar{\lambda}_m=\lambda$.  For any $\ep_m>0$ we will have $(1-\ep_m)\bar{U}_m\subset U_m\subset U$, so $\bar{\lambda}_m|_{(1-\ep_m)\bar{U}_m}=\lambda|_{(1-\ep_m)\bar{U}_m}$ since by assumption $\bar{\lambda}_m|_{U_m}=\lambda|_{U_m}$. Moreover rescaling gives a bijection between the Reeb orbits on $\partial \bar{U}_m$ and those on $\partial(1-\ep_m)\bar{U}_m$, under which non-degeneracy and Conley--Zehnder indices are unchanged while periods are multiplied by $1-\ep_m$.  So since the period spectrum of $\partial\bar{U}_m$ is a closed set, if $\ep_m$ is small enough the truth of conditions (i) and (ii) in the statement of the proposition will be unaffected by replacing $\bar{U}_m$ by $(1-\ep_m)\bar{U}_m$.  We claim moreover that if $\ep_m\searrow 0$ then we will still have $U=\cup_{m=1}^{\infty}(1-\ep_m)U_m$.  Indeed, if $K\subset U$ is compact then the hypotheses immediately imply that $K\subset U_M$ for some $M$, and then an easy covering argument shows that there is $\delta>0$ such that $K\subset (1-\delta)U_M$, so since $U_M\subset U_m$ for $m\geq M$, once $m$ is large enough that $\ep_m<\delta$ we will have $K\subset (1-\ep_m)U_m$.  So since $U$ is exhausted by its compact subsets this shows that $U=\cup_{m=1}^{\infty}(1-\ep_m)U_m$.  Putting together the facts in this paragraph shows that, by replacing each $U_m$ with $(1-\ep_m)U_m$ for a sufficiently small sequence $\ep_m\searrow 0$, we may indeed assume that $\bar{U}_m\subset U$ with $\bar{\lambda}_m=\lambda$.

Moreover there is no loss of generality in assuming that the Reeb flow on $\partial \bar{U}_m$ is nondegenerate, as this can be achieved by arbitrarily small perturbations which (if taken small enough) will not affect conditions (i) and (ii).

Assuming this, let $1<\zeta<t/s$.  The increasing family of open sets $U_m$ then comprise an open cover of the compact set $\overline{\zeta^{-1}U}$, so $\zeta^{-1}U\subset U_M$ for some $M$.  Hence for all $m\geq M$ we have a sequence of inclusions of Liouville domains (with respect to the same one-form $\lambda$) $\zeta^{-1}\bar{U}_m\subset \bar{U}_M\subset \bar{U}_m$.  This gives rise to a diagram, for any $m\geq M$, \begin{equation}\label{appint} \xymatrix{ CH^{s}_{k}(\bar{U}_m,\lambda)\ar[r] \ar[rrd] & CH^{s}_{k}(\bar{U}_M,\lambda) \ar[r] & CH^{s}_{k}(\zeta^{-1}\bar{U}_m,\lambda)\ar[d]^{\sigma_{\zeta}}_{\cong} \\ & & CH^{\zeta s}(\bar{U}_m,\lambda) } \end{equation} where the horizontal maps are transfer maps induced by inclusion and the diagonal map is induced by inclusion of filtered subcomplexes. This diagram commutes by \cite[Lemma 2.4]{GU}. By \cite[Lemma 2.1]{GU}, for any $m\geq 1$ and any $u\in \R_+$, $CH^{u}(\bar{U}_m,\lambda)$ is the $u$-filtered homology of an $\R$-filtered complex $\{CC_{*}(\bar{U}_m,\lambda)\}$ whose generators in degree $d$ are in bijection with good Reeb orbits of Conley--Zehnder index $d$, with filtration level given by the period of the orbit.  In particular our hypotheses imply that if $u<t$ then $CC_{k\pm 1}^{u}(\bar{U}_m,\lambda)=\{0\}$, so that $CH^{u}_{k}(\bar{U}_m,\lambda)=CC^{u}_{k}(\bar{U}_m,\lambda)$ with the inclusion-induced map $CH_{k}^{s}(\bar{U}_m,\lambda)\to CH^{u}(\bar{U}_m,\lambda)$ an injection for $s<u<t$.  Hence in (\ref{appint}), both $CH^{s}_{k}(\bar{U}_m,\lambda)$ and $CH^{s}_{k}(\bar{U}_M,\lambda)$ are $r$-dimensional and the diagonal map is an injection, whence the first map is also an injection, and hence an isomorphism by dimensional considerations.  

Since $CH_{k}^{s}(\bar{U}_m,\lambda)\to CH_{k}^{s}(\bar{U}_M,\lambda)$ is an isomorphism for all $m\geq M$, and since the $\bar{U}_m$ form a cofinal sequence in the inverse limit defining $CH_{k}^{s}(U,\lambda)$, we conclude that the canonical map $CH^{s}_{k}(U,\lambda)\to CH^{s}_{k}(\bar{U}_M,\lambda)$ is an isomorphism, and in particular that $\dim_{\Q}CH^{s}_{k}(U,\lambda)=r$.

Furthermore for $s<u<t$ there is a commutative diagram \[ \xymatrix{ CH^{s}_{k}(U,\lambda)\ar[r]\ar[d] & CH^{u}_{k}(U,\lambda) \ar[d] \\ CH^{s}_{k}(\bar{U}_M,\lambda)\ar[r] & CH^{u}_{k}(\bar{U}_M,\lambda) } \] where the horizontal maps are the structure maps of the  persistence modules $\mathring{\mathbb{CH}}(U,\lambda)$ and $\mathbb{CH}(\bar{U}_M,\lambda)$ and the vertical maps are the canonical maps for the inverse limit.  Since we have seen that the left map is an isomorphism and the bottom map is injective it follows that the top map is also injective, completing the proof.
\end{proof}

\section{Tubes}\label{tubesect}
This section will describe a rather general way of constructing  $(2n+2)$-dimensional Louville domains $W_H$ from certain autonomous Hamiltonian flows on $2n$-dimensional Liouville domains $W$, and will describe the periods and Conley--Zehnder indices  of the closed Reeb orbits on $\partial W_H$.  Later, this will  be applied to obtain the examples in our main theorems.

Suppose we are given:

\begin{itemize}
\item[(i)] A Liouville domain $(W,\lambda)$ and 
\item[(ii)] An autonomous Hamiltonian $H\co W\to \R$ such that $H|_{\partial W}\equiv 0$, and such that the Hamiltonian vector field $X_H$ (defined by $d\lambda(X_H,\cdot)=dH$) obeys \begin{equation}\label{taudef} \tau_{\lambda,H}:=\iota_{X_H}\lambda+H >0 \mbox{ everywhere on }W.\end{equation}
\end{itemize}

Note that (ii) implies that $H>0$ on $W^{\circ}$, for otherwise $H$  would attain a nonpositive global minimum on $W^{\circ}$, at which $\tau_{\lambda,H}$ would coincide with $H$ because $X_H$ would be zero, contradicting the positivity of $\tau_{\lambda,H}$.  (ii) also immediately implies that $\iota_{X_H}\lambda|_{\partial W}>0$ and that $d\lambda(X_H,\cdot)|_{T\partial W}\equiv 0$, and hence that $X_H|_{\partial W}$ is a positive (possibly nonconstant) multiple of the Reeb field $R_{\partial W}$ of the contact form $\lambda|_{\partial W}$ on $\partial W$.

On $\mathbb{C}=\{x+iy|x,y\in \R\}$, write $\rho=\frac{1}{2}(x^2+y^2)$ and (away from $0$) $d\theta=\frac{xdy-ydx}{x^2+y^2}$.  Thus $\rho d\theta$ extends smoothly by zero over the origin to give the standard primitive $\frac{1}{2}(xdy-ydx)$ for the standard symplectic form on $\mathbb{C}$.  The dual vector field $\partial_{\theta}=-y\partial_x+x\partial_y$ to $d\theta$ with respect to the cotangent frame $\{d\rho,d\theta\}$ over $\C\setminus\{0\}$ also extends smoothly as zero over the origin.  Using these smooth extensions we accordingly regard both $\rho d\theta$ and $\partial_{\theta}$ (but not $d\theta$ by itself) as objects defined over all of $\C$ and not just over $\C\setminus\{0\}$.

\begin{prop}\label{suspliouville}
Let $(W,\lambda)$ and $H\co W\to \R$ satisfy (i) and (ii), and define \[ W_H=\left\{(w,z)\in W\times \C\left| \pi|z|^2\leq H(w)\right.\right\}, \qquad \hat{\lambda}=\lambda+\rho d\theta\in \Omega^1(W\times\C).\]  Then $(W_H,\hat{\lambda})$ is a Liouville domain, and the Reeb field for the contact form $\hat{\lambda}|_{\partial W_H}$ on $\partial W_H=\{(w,z)|\pi|z|^2=H(w)\}$ is given by \[ R_{\partial W_H}=\frac{1}{\tau_{\lambda,H}}(X_H+2\pi\partial_{\theta}).\]
\end{prop}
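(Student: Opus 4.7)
The plan is to verify three things in sequence: (a) $W_H$ is a smooth compact manifold whose boundary is $F^{-1}(0)$ for the defining function $F(w,z)=\pi|z|^2-H(w)$; (b) $\hat\lambda$ makes $W_H$ a Liouville domain; and (c) the Reeb field has the claimed form. The main input throughout is the product structure $d\hat\lambda=d\lambda+dx\wedge dy$ (using $d(\rho\,d\theta)=dx\wedge dy$) together with the defining identities $d\lambda(X_H,\cdot)=dH$ and $d\lambda(\mathcal{L}_\lambda,\cdot)=\lambda$; essentially everything reduces to direct calculation.

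For (a), I would check that $0$ is a regular value of $F$. At any point of $F^{-1}(0)$ with $z\neq 0$ the $\C$-component $2\pi(x\,dx+y\,dy)$ of $dF$ is already nonzero; at a point with $z=0$ we must have $H(w)=0$, hence $w\in\partial W$, and the hypothesis $\tau_{\lambda,H}>0$ forces $\iota_{X_H}\lambda>0$ and in particular $X_H\neq 0$ on $\partial W$, so by nondegeneracy of $d\lambda$ the form $dH=d\lambda(X_H,\cdot)$ does not vanish there, making the $W$-component of $dF$ nonzero. Compactness of $W_H$ is immediate from that of $W$ together with the uniform bound $|z|\leq\sqrt{H(w)/\pi}$. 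For (b), the product structure of $d\hat\lambda$ gives $\mathcal{L}_{\hat\lambda}=\mathcal{L}_\lambda+\tfrac{1}{2}(x\partial_x+y\partial_y)$, and applying this to $F$ yields $\mathcal{L}_{\hat\lambda}(F)=-dH(\mathcal{L}_\lambda)+\pi|z|^2$; the antisymmetry identity $dH(\mathcal{L}_\lambda)=d\lambda(X_H,\mathcal{L}_\lambda)=-\lambda(X_H)=-\iota_{X_H}\lambda$ then rewrites this as $\iota_{X_H}\lambda+\pi|z|^2$, which on $\partial W_H$ equals $\tau_{\lambda,H}>0$, confirming outward transversality since $F<0$ in the interior.

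For (c), set $Y=X_H+2\pi\partial_\theta$. I would check in turn that (i) $Y(F)=0$ on all of $W\times\C$ because $X_H(H)=d\lambda(X_H,X_H)=0$ and $\partial_\theta(\rho)=0$ while $X_H$ and $\partial_\theta$ act in orthogonal factors; (ii) $\hat\lambda(Y)=\iota_{X_H}\lambda+2\pi\rho$, which on $\partial W_H$ equals $\iota_{X_H}\lambda+H=\tau_{\lambda,H}$; and (iii) $\iota_Y d\hat\lambda = \iota_{X_H}d\lambda+2\pi\iota_{\partial_\theta}(dx\wedge dy)=dH-2\pi\,d\rho=-dF$, using the direct computation $\iota_{\partial_\theta}(dx\wedge dy)=-y\,dy-x\,dx=-d\rho$. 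Since $T\partial W_H=\ker dF$, property (iii) shows that $\iota_Y d\hat\lambda$ annihilates $T\partial W_H$, and dividing by $\tau_{\lambda,H}$ gives $R_{\partial W_H}=\tau_{\lambda,H}^{-1}(X_H+2\pi\partial_\theta)$. The only step requiring genuine care is the regular-value check at the ``degenerate'' locus $\partial W\times\{0\}$ where the two boundary regimes meet; that case is handled exactly by the positivity hypothesis on $\tau_{\lambda,H}$, and the rest is routine bookkeeping with Cartan-type identities.
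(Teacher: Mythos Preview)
Your proof is correct and follows essentially the same route as the paper's. The only organizational difference is that the paper combines your steps (a) and (b) into a single computation: it evaluates $dF(\hat{\mathcal{L}})=F+\tau_{\lambda,H}$ globally, which on $F^{-1}(0)$ is positive and hence simultaneously certifies that $0$ is a regular value and that the Liouville field points outward, eliminating the need for your case split at $\partial W\times\{0\}$.
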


 \begin{proof} Obviously $d\hat{\lambda}$ is a symplectic form on $W\times \C$. Let $\mathcal{L}$ denote the Liouville vector field of $(W,\lambda)$.  Then writing \[ \hat{\mathcal{L}}=\mathcal{L}+\frac{1}{2}(x\partial_x+y\partial_y),\] we have $\iota_{\hat{\mathcal{L}}}d\hat{\lambda}=\hat{\lambda}$.  Define $f_H\co W\times \C\to\R$ by $f_H(w,z)=\pi|z|^2-H(w)$, \emph{i.e.} $f_H=2\pi \rho-H$.  Then $W_H=f_{H}^{-1}((-\infty,0])$, so to show that $(W_H,\hat{\lambda})$ is a Liouville domain it suffices to show that $df_H(\hat{\mathcal{L}})>0$ everywhere on $f_{H}^{-1}(\{0\})$ (as this shows both that $0$ is a regular value so that $W_H$ is a smooth manifold with boundary $f_{H}^{-1}(\{0\})$, and that the Liouville vector field $\hat{\mathcal{L}}$ points outward along $\partial W_H$).  

We calculate \begin{align} df_H(\hat{\mathcal{L}})&=(2\pi d\rho-dH)\left(\frac{1}{2}(x\partial_x+y\partial_y)+\mathcal{L}\right)=2\pi \rho-(\iota_{X_H}d\lambda)(\mathcal{L}) \nonumber
\\ &= 2\pi\rho+d\lambda(\mathcal{L},X_H)=f_H+H+\iota_{X_H}\lambda = f_H+\tau_{\lambda,H}, \label{dfL}\end{align} which by condition (ii) is indeed positive on $f_{H}^{-1}(\{0\})$. Thus $(W_H,\hat{\lambda})$ is a Liouville domain.

Moreover we find that $df_H(X_H+2\pi\partial_{\theta})=-dH(X_H)+(2\pi)^2 d\rho(\partial_{\theta})=0$ and also that \begin{align*} \iota_{X_H+2\pi\partial_{\theta}}d\hat{\lambda}&=\iota_{X_H}d\lambda+2\pi\iota_{\partial_{\theta}}d(\rho d\theta)
\\ &= dH-2\pi d\rho = -df_H,\end{align*} so the vector field $X_H+2\pi\partial_{\theta}$ is tangent to the level sets of $f_H$ (in particular to $\partial W_H=\{f_H=0\}$) and lies in the kernel of the restriction of $d\lambda$ to each level set of $f_H$.  In particular, along $\partial W_H$, $X_H+2\pi\partial_{\theta}$ is proportional to the Reeb field $R_{\partial W_H}$; specifically \[ X_H+2\pi\partial_{\theta}=\hat{\lambda}(X_H+2\pi\partial_{\theta})R_{\partial W_H}.\]  So the last statement of the proposition follows from the calculation \[ \hat{\lambda}(X_H+2\pi\partial_{\theta})=\lambda(X_H)+2\pi\rho=\tau_{\lambda,H}+f_H \] and the fact that $f_H|_{\partial W_H}\equiv 0$.
\end{proof}

Thus on the contact manifold $(\partial W_H,\ker(\hat{\lambda}|_{\partial W_H}))$ the orbits of the Reeb vector field $R_{\partial W_H}$ are reparametrizations of the orbits of the vector field $X_H+2\pi \partial_{\theta}$.  To be more precise we have:

\begin{cor}\label{flowformula} In the situation of Proposition \ref{suspliouville},
let $\{\phi_{H}^{t}\co W\to W\}_{t\in \R}$ denote the flow of the Hamiltonian vector field $X_H$, and let $\{\psi^t\co \partial W_H\to\partial W_H\}_{t\in \R}$ denote the flow of $R_{\partial W_H}$.  Also define $h\co \R\times W\to \R$ implicitly\footnote{The fact that $\tau_{\lambda,H}$ is positive and (by compactness of $W$) bounded away from zero readily implies that this equation uniquely defines $h(t,w)$, and the implicit function theorem shows that $h$ is smooth.} by the equation \[ \int_{0}^{h(t,w)}\tau_{\lambda,H}(\phi_{H}^{s}(w))ds = t.\]  Then, for $(w,z)\in \partial W_H$, we have \[ \psi^t(w,z)=\left(\phi_{H}^{h(t,w)}(w),e^{2\pi i h(t,w)}z\right).\]
\end{cor}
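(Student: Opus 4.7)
The plan is to verify directly that the curve
\[
\gamma(t) = \left(\phi_{H}^{h(t,w)}(w),\, e^{2\pi i h(t,w)} z\right)
\]
coincides with the integral curve of $R_{\partial W_H}$ through $(w,z)$, by checking the initial condition, tangency to $\partial W_H$, and matching the velocity to $R_{\partial W_H}$. Then uniqueness of flows of smooth vector fields finishes the job.

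First I would observe that $h(0,w)=0$ (since the defining integral from $0$ to $0$ is zero), so $\gamma(0)=(w,z)$. Next I would show that $\gamma(t) \in \partial W_H$ for all $t$. Since $H$ is autonomous and preserved by its own flow, $H(\phi_{H}^{h(t,w)}(w)) = H(w)$; since multiplication by $e^{2\pi i h(t,w)}$ preserves $|z|$, we have $\pi|e^{2\pi i h(t,w)} z|^2 = \pi|z|^2 = H(w)$, where the last equality uses $(w,z)\in \partial W_H$. Thus $\gamma(t)\in \partial W_H$.

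The main computational step is to differentiate $\gamma$ at a general time $t$. Implicit differentiation of $\int_{0}^{h(t,w)} \tau_{\lambda,H}(\phi_{H}^{s}(w))\,ds = t$ gives
\[
\frac{\partial h}{\partial t}(t,w) = \frac{1}{\tau_{\lambda,H}\!\left(\phi_{H}^{h(t,w)}(w)\right)}.
\]
The $W$-component of $\gamma'(t)$ is therefore $\frac{\partial h}{\partial t}\,X_H(\phi_{H}^{h(t,w)}(w))$. For the $\C$-component, the chain rule gives $\frac{\partial h}{\partial t}\cdot 2\pi i\,e^{2\pi i h(t,w)} z$; under the identification of multiplication-by-$i$ on $\C$ with the vector field $\partial_\theta = -y\partial_x + x\partial_y$ (so that $iz$ at the point $z$ corresponds to $\partial_\theta|_z$), this becomes $\frac{\partial h}{\partial t}\cdot 2\pi\, \partial_\theta|_{e^{2\pi i h(t,w)} z}$. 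Combining the two components,
\[
\gamma'(t) = \frac{1}{\tau_{\lambda,H}(\gamma(t))}\bigl(X_H + 2\pi\,\partial_\theta\bigr)\bigl|_{\gamma(t)} = R_{\partial W_H}\!\bigl|_{\gamma(t)},
\]
where the last equality is precisely the formula for $R_{\partial W_H}$ established in Proposition \ref{suspliouville}.

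The only subtlety is keeping the identification $iz \leftrightarrow \partial_\theta|_z$ straight and remembering that $\tau_{\lambda,H}$ is strictly positive so that $h$ is a well-defined smooth function; both points are already recorded in the setup. Once the velocity identity above is in hand, $\gamma$ is the unique integral curve of $R_{\partial W_H}$ starting at $(w,z)$, so $\gamma(t) = \psi^t(w,z)$ as claimed.
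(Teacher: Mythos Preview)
Your proof is correct and follows essentially the same approach as the paper's own proof, which is a terse one-liner noting that $h(0,w)=0$ and $\frac{\partial h}{\partial t}(t,w)=\left(\tau_{\lambda,H}(\phi_{H}^{h(t,w)}(w))\right)^{-1}$ and then appealing to the formula for $R_{\partial W_H}$ from Proposition~\ref{suspliouville}. You have simply spelled out the details---the initial condition, the tangency to $\partial W_H$, the chain-rule computation of $\gamma'(t)$, and the uniqueness of integral curves---that the paper leaves to the reader.
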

\begin{proof} Since $h(0,w)=0$ and $\frac{\partial h}{\partial t}(t,w)=\left( \tau_{\lambda,H}(\phi_{H}^{h(t,w)}(w))\right)^{-1}$ this follows immediately from the formula for $R_{\partial W_H}$ in Proposition \ref{suspliouville}.
\end{proof}

\begin{cor}\label{neworbits} Up to time-translation, the periodic Reeb orbits for $(\partial W_H,\hat{\lambda})$ consist of:
\begin{itemize}\item[(i)] For each periodic orbit $c\co \R/T\Z\to\partial W$ for the Reeb field $R_{\partial W}$, the orbit $\gamma_c\co \R/T\Z\to \partial W_H$ defined by $\gamma_c(t)=(c(t),0)$.  
\item[(ii)] For each pair $(w,N)\in W^{\circ}\times \Z_+$ such that $\phi_{H}^{N}(w)=w$, an orbit $\gamma_{w,N}\co \R/T_{w,N}\Z\to \partial W_H$ defined by \[ \gamma_{w,N}(t)=\left(\phi_{H}^{h(t,w)}(w),\sqrt{\frac{H(w)}{\pi}}e^{2\pi i h(t,w)}\right).\]  Here \begin{equation}\label{tformula} T_{w,N}=\int_{0}^{N}\tau_{\lambda,H}(\phi_{H}^{s}(w))ds,\quad \mbox{ and }h(T_{w,N},w)=N.\end{equation}
\end{itemize}\end{cor}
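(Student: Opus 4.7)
The plan is to derive the classification directly from the flow formula in Corollary \ref{flowformula}. A periodic Reeb orbit of period $T>0$ on $\partial W_H$ corresponds to a point $(w_0,z_0)\in\partial W_H$ satisfying $\psi^T(w_0,z_0)=(w_0,z_0)$. By Corollary \ref{flowformula}, this is equivalent to the pair of conditions
\[ \phi_{H}^{h(T,w_0)}(w_0)=w_0\qquad\mbox{and}\qquad e^{2\pi i h(T,w_0)}z_0=z_0. \]
I then split into the cases $z_0=0$ and $z_0\neq 0$, which will give (i) and (ii) respectively.

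If $z_0=0$, then $\pi|z_0|^2=H(w_0)=0$, which by the positivity of $H$ on $W^{\circ}$ (noted right after (ii) in the setup of Proposition \ref{suspliouville}) forces $w_0\in\partial W$. Along $\partial W\times\{0\}$, the Reeb field formula from Proposition \ref{suspliouville} simplifies: the vector field $\partial_{\theta}$ has been extended by zero at the origin, so $R_{\partial W_H}(w,0)=\tau_{\lambda,H}(w)^{-1}X_H(w)$. Since $H|_{\partial W}\equiv 0$ we have $\tau_{\lambda,H}=\iota_{X_H}\lambda$ there, and combined with $X_H|_{\partial W}=(\iota_{X_H}\lambda)R_{\partial W}$ (which was already observed in the paragraph preceding Proposition \ref{suspliouville}), this gives $R_{\partial W_H}(w,0)=R_{\partial W}(w)$. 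Hence the periodic Reeb orbits with $z_0=0$ are precisely the curves $t\mapsto(c(t),0)$ arising from periodic $R_{\partial W}$-orbits $c$, yielding the family in (i).

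If $z_0\neq 0$, the rotation condition forces $h(T,w_0)\in\Z$; since $\partial_t h(t,w)=\tau_{\lambda,H}(\phi_H^{h(t,w)}(w))^{-1}>0$ and $h(0,w_0)=0$, we have $h(T,w_0)=N$ for a unique $N\in\Z_+$. Unwinding the implicit definition of $h$ gives $T=\int_0^N\tau_{\lambda,H}(\phi_H^s(w_0))\,ds=T_{w_0,N}$, and the first periodicity condition becomes $\phi_H^N(w_0)=w_0$. Moreover, since $X_H$ is tangent to $\partial W$ and $H$ vanishes there, a periodic point $w_0\in\partial W$ would force $z_0=0$; thus $w_0\in W^{\circ}$. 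To put the orbit in the canonical form claimed in (ii), I time-translate by some $t_1$: this replaces $(w_0,z_0)$ by $(\phi_H^{h(t_1,w_0)}(w_0),e^{2\pi i h(t_1,w_0)}z_0)$, and because $|z_0|=\sqrt{H(w_0)/\pi}$ and $h(\cdot,w_0)\!\mod 1$ is surjective onto $[0,1)$ (as $h$ is strictly increasing and unbounded), we can choose $t_1$ so that the new $z$-component is the positive real number $\sqrt{H(w)/\pi}$, where $w=\phi_H^{h(t_1,w_0)}(w_0)$ still satisfies $\phi_H^N(w)=w$. Conversely, for any such pair $(w,N)$, plugging $t=T_{w,N}$ into Corollary \ref{flowformula} (using $h(T_{w,N},w)=N$ from (\ref{tformula})) verifies directly that $\gamma_{w,N}$ closes up with period $T_{w,N}$.

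The argument is really just organized bookkeeping given Corollary \ref{flowformula}, so there is no serious analytical obstacle. The one step that requires a little care is the time-translation argument in the $z_0\neq 0$ case: one must verify that the equivalence class of orbits under time translation does contain a representative with initial point of the form $(w,\sqrt{H(w)/\pi})$, and that the corresponding $w$ still satisfies $\phi_H^N(w)=w$. Both facts follow from $\phi_H$-invariance of $H$ and the monotonicity and surjectivity properties of $h(\cdot,w_0)$ noted above.
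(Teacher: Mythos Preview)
Your proof is correct and follows essentially the same approach as the paper: both split into the cases $z_0=0$ and $z_0\neq 0$, identify the Reeb flow on $\partial W\times\{0\}$ with $R_{\partial W}$, and in the other case use Corollary~\ref{flowformula} to deduce $h(T,w_0)=N\in\Z_+$ and then time-translate to a point of $W\times\R_+$. Your argument for $R_{\partial W_H}|_{\partial W\times\{0\}}=R_{\partial W}$ via $\tau_{\lambda,H}=\iota_{X_H}\lambda$ on $\partial W$ is slightly more explicit than the paper's (which simply notes $\hat\lambda|_{\partial W\times\{0\}}=\lambda|_{\partial W}$), but the content is the same.
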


\begin{proof}
The manifold $\partial W_H$ contains $\partial W\times\{0\}$, and (since the vector field $\partial_{\theta}$ on $\C$ vanishes at the origin) the Reeb vector field $R_{\partial W_H}=\frac{1}{\tau_{\lambda,H}}(X_H+2\pi\partial_{\theta})$ is tangent to $\partial W\times\{0\}$.  Hence the orbits of $R_{\partial W_H}$ are each either contained in or disjoint from $\partial W\times\{0\}$.

By construction, $\hat{\lambda}|_{\partial W\times \{0\}}=\lambda|_{\partial W}$, so the restriction of the Reeb field $R_{\partial W_H}$ to $\partial W\times\{0\}$ coincides with $R_{\partial W}$.  Thus the closed Reeb orbits that are contained in $\partial W\times\{0\}$ are as described in item (i) in the corollary.

On the other hand if $T>0$ and $\gamma\co \mathbb{R}/T\Z\to \partial W_H$ is a Reeb orbit with $\gamma(0)=(w_0,z_0)$ with $z_0\neq 0$ (and hence $H(w_0)>0$ and so $w_0\in W^{\circ}$), then it follows from Corollary \ref{flowformula} that $e^{2\pi ih(T,w_0)}z_0=z_0$ and hence that $h(T,w_0)=N$ where $N\in \Z_+$ is the number of times that $\gamma$ intersects $W\times \R_+$.  (We must have $N>0$ since $T>0$ and $\frac{\partial h}{\partial t}>0$.)  Since the image of $\gamma$ intersects $W\times\R_+$, we can reparametrize the domain $\R/T\Z$ by a time-translation so that $\gamma(0)\in W\times \R_+$.  After doing so, we will have $\gamma(0)=\gamma(T)=\left(w,\sqrt{\frac{H(w)}{\pi}}\right)$ for some $w\in W^{\circ}$.  Then Corollary \ref{flowformula} shows that $h(T,w)=N$ and $\phi_{H}^{N}(w)=w$, so $\gamma=\gamma_{w,N}$ as defined in item (ii) of the Proposition, and (by the formula defining $h(T,w)$ in Corollary \ref{flowformula}) we have \[ \int_{0}^{N}\tau_{\lambda,H}(\phi_{H}^{s}(w))ds = T,\] so the period $T$ is as indicated in (\ref{tformula}).  Conversely any $\gamma_{w,N}\co \R/T_{w,N}\Z\to\partial W_H$ is indeed a periodic Reeb orbit by Corollary \ref{flowformula}.  So the closed orbits for $R_{\partial W_H}$ not contained in $\partial W\times \{0\}$ are, up to time-translation, precisely those described in item (ii).
\end{proof}

\begin{ex}\label{introex}
A family of special cases of the construction of Proposition \ref{suspliouville} that will be relevant in Sections \ref{trunc} and \ref{sink} is given as follows.  Choose $a_1,\ldots,a_n>0$ such that $\frac{a_i}{a_j}\notin \Q$ for $i\neq j$, and take $W=E(a_1,\ldots,a_n)$.  Define $u\co \C^n\to \R$ by $u(z_1,\ldots,z_n)=\sum_{j=1}^{n}\frac{\pi|z_j|^2}{a_j}$; thus $W=\{\vec{z}\in \C^n|u(z)\leq 1\}$.  The function $H\co W\to \R$ will be taken to be of the form $h\circ u$ for some $h\co [0,1]\to[0,\infty)$ with $h(1)=0$.  We use $\lambda=\sum_j\rho_jd\theta_j$ where as usual $\rho_j=\frac{1}{2}|z_j|^2$ and $\theta_j$ is the usual angular polar coordinate on the $j$th copy of $\C$ in $\C^n$.

In this case the vector field $X_H$ is given by \[ X_H=-\sum_{j=1}^{n}\frac{2\pi h'(u)}{a_j}\partial_{\theta_j},\] and so \[ \tau_{\lambda,H}=-h'(u)\sum_j\frac{2\pi \rho_j}{a_j}+h(u) = h(u)-uh'(u).\]  The right hand side above has a geometric interpretation as the $y$-intercept of the tangent line to the graph of the function $h$ at $u$, and so for Proposition \ref{suspliouville} to apply $h$ must be chosen so that all of the tangent lines to its graph have positive $y$-intercept.

The Reeb orbits on $\partial W_H$ as described in Corollary \ref{neworbits} include the Reeb orbits on $\partial W\times\{0\}$, which are just circles in the $j$th copy of $\C$ in $\C^{n+1}$ for $j=1,\ldots,n$, as well as Reeb orbits which trace out circles in $\C^{n+1}$ that project to $\C^n$ as periodic points of $\phi_{H}^{1}\co W\to W$.  One such periodic point will always be the origin in $\C^n$, leading to a Reeb orbit which is  a circle in the $(n+1)$st copy of $\C$; other periodic points will depend on the specific function $h$. 
\end{ex}

\subsection{Gradings}

Recall that we regard the positive equivariant symplectic homology of an open Liouville domain $(X,\mu)$ as a persistence module in the category of \emph{graded} $\Q[T]$-modules under the topological hypotheses that $H_1(V;\Z)=\{0\}$ and $c_1(TV)$ is torsion. 

\begin{prop}\label{toppres}
Let $(W,\lambda)$ and $H$ be as in Proposition \ref{suspliouville}, with $\dim W>0$.  \begin{itemize}
\item[(i)] There are isomorphisms $H_1(W_H;\Z)\cong H_1(W;\Z)\cong H_1(\partial W_H;\Z)$.
\item[(ii)] If $c_1(TW)$ is torsion then $c_1(TW_H)$ is a torsion element of $H^2(W_H;\Z)$ and $c_1(\ker\hat{\lambda}|_{\partial W_H})$ is torsion as an element of $H^2(\partial W_H;\Z)$.
\end{itemize}
\end{prop}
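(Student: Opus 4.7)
My plan is to treat (i) via explicit homotopy equivalences together with a Mayer--Vietoris calculation, and (ii) via a splitting of the tangent bundle.

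For the first isomorphism in (i), observe that the straight-line homotopy $r_s(w,z)=(w,sz)$, $s\in[0,1]$, preserves $W_H$ (since $\pi|sz|^2\leq\pi|z|^2\leq H(w)$) and deformation retracts $W_H$ onto the slice $W\times\{0\}\cong W$.  In particular the projection $p\co W_H\to W$ is a homotopy equivalence, giving $H_1(W_H;\Z)\cong H_1(W;\Z)$, and also making $p^*\co H^*(W;\Z)\to H^*(W_H;\Z)$ an isomorphism that will be reused in (ii).

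For the second isomorphism in (i), I would start from the continuous surjection $\Phi\co W\times S^1\to\partial W_H$, $\Phi(w,\theta)=\bigl(w,\sqrt{H(w)/\pi}\,e^{i\theta}\bigr)$, which is a diffeomorphism over $W^\circ$ but collapses each circle $\{w\}\times S^1$ to the point $(w,0)$ for $w\in\partial W$.  Using a collar of $\partial W$ in $W$, one writes $\partial W_H$ as a union of an open set $U\simeq W\times S^1$ (the part lying over $W^\circ$, trivially fibered by $(w,z)\mapsto(w,z/|z|)$) and an open neighborhood $V$ of the collapsed locus, where $V$ is a trivial $D^2$-bundle over $\partial W$ and hence $V\simeq\partial W$, with intersection $U\cap V\simeq\partial W\times S^1$.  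The Mayer--Vietoris sequence then reads
\[ H_1(\partial W\times S^1)\xrightarrow{\phi}H_1(\partial W)\oplus H_1(W\times S^1)\to H_1(\partial W_H)\to H_0(\partial W\times S^1)\xrightarrow{\delta}H_0(\partial W)\oplus H_0(W\times S^1). \]
By K\"unneth, $\phi$ sends the fiber class $[S^1]\in H_1(\partial W\times S^1)$ to $\bigl(0,[S^1]\bigr)$, killing the $\Z[S^1]$-summand of $H_1(W\times S^1)$; a brief diagram chase identifies $\coker(\phi)\cong H_1(W)$ via $(\beta,x+n[S^1])\mapsto x+i_*\beta$, where $i\co\partial W\hookrightarrow W$.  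Injectivity of $\delta$ (when $\partial W\neq\emptyset$) then yields $H_1(\partial W_H)\cong H_1(W)$.

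For (ii), since $W_H$ is a codimension-zero submanifold of $W\times\C$, the tangent bundle splits as $TW_H\cong p^*TW\oplus\underline{\C}$, where $\underline{\C}$ is the trivial complex line bundle coming from the second factor.  Hence $c_1(TW_H)=p^*c_1(TW)$, which is torsion because $p^*$ is the isomorphism found above.  For the contact bundle, I would use the symplectic splitting $TW_H|_{\partial W_H}=\xi\oplus\varepsilon^2$ with $\xi=\ker\hat\lambda|_{\partial W_H}$ and $\varepsilon^2$ the trivial rank-two symplectic subbundle spanned by the Reeb field $R_{\partial W_H}$ and the Liouville vector field $\hat{\mathcal{L}}$; this gives $c_1(\xi)=c_1(TW_H)|_{\partial W_H}$, which is torsion as the restriction of a torsion class.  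The main obstacle is the Mayer--Vietoris step, which requires accurately tracking the effect of collapsing the $S^1$ fiber over $\partial W$ on first homology; the rest is essentially formal once one exploits the ``disk-bundle with collapsing fibers'' structure of $W_H$ over $W$.
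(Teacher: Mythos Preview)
Your proposal is correct and follows essentially the same route as the paper: the projection $W_H\to W$ as a homotopy equivalence for the first isomorphism in (i), the same Mayer--Vietoris decomposition of $\partial W_H$ (open piece $\simeq W\times S^1$, collar piece $\simeq\partial W\times D^2$) for the second, and the identical bundle splittings $TW_H\cong p^*TW\oplus\underline\C$ and $TW_H|_{\partial W_H}=\xi\oplus\langle\hat{\mathcal L},R_{\partial W_H}\rangle$ for (ii). Two small points to tidy in your Mayer--Vietoris chase: the cokernel map should read $(\beta,x+n[S^1])\mapsto x-i_*\beta$ (watch the sign so that it actually vanishes on $\operatorname{im}\phi$), and when $\partial W$ has several components you need that every component of a Liouville domain has nonempty boundary, so that the $H_0(\partial W)\otimes[S^1]$ summand surjects onto $H_0(W)\otimes[S^1]$---the paper makes this explicit.
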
 

\begin{proof}
Let $\pi\co W_H\to W$ denote the restriction to $W_H$ of the projection $W\times\C\to W$.  Then $\pi$ is obviously a homotopy equivalence, giving an isomorphism on homology. Also there is an obvious  symplectic bundle isomorphism $TW_{H}\cong \pi^*TW\oplus\underline{\C}$ where $\underline{\C}$ denotes the trivial Hermitian line bundle.    So $c_1(TW_H)=\pi^*c_1(TW)$.  This proves the statements about $W_H$ in both (i) and (ii).

As for $\partial W_H$, denote as before $\hat{\mathcal{L}}$ and $R_{\partial W_H}$ the Liouville and Reeb vector fields for $\lambda$ along $\partial W_H$.  Then we have a direct sum decomposition of symplectic vector bundles \[ TW_H|_{\partial W_H}=\ker(\hat{\lambda}|_{\partial W_H})\oplus span\{\hat{\mathcal{L}},R_{\partial W_H}\},\] where the second summand is a trivial bundle.  Hence $c_1(\ker\hat{\lambda}|_{\partial W_H})=c_1(TW_H)|_{\partial W_H}$, which by what we have already shown is torsion provided that $c_1(TW)$ is torsion.

Finally note that $\partial W_H\setminus(\partial W\times\{0\})$ maps diffeomorphically to $W^{\circ}\times S^1$ via the map $(w,z)\mapsto (w,\frac{z}{|z|})$, while a neighborhood of $\partial W\times\{0\}$ in $\partial W_H$ can be identified (by using the Liouville flow of $(W,\lambda)$ to retract a neighborhood of $\partial W$ in $W$ to $\partial W$)  with $\partial W\times D^2$.  This gives rise to a Mayer-Vietoris sequence  \begin{equation}\label{mvseq} \xymatrix{
H_1(\partial W\times S^1;\Z)\ar[r] & H_1(W^{\circ}\times S^1;\Z)\oplus H_1(\partial  W;\Z)\ar[r] & H_1(\partial W_H;\Z)\ar[dll] \\  H_0(\partial W\times S^1;\Z)\ar[r] & H_0(W^{\circ}\times S^1;\Z)\oplus H_0(\partial W;\Z)  
}\end{equation}  The last map is injective, so the second map is surjective.  Let us analyze the first map in terms of the decompositions $H_1(\partial W\times S^1;\Z)=H_0(\partial W;\Z)\otimes[S^1]\oplus H_1(\partial W;\Z)\otimes [pt]$ and $H_1( W^{\circ}\times S^1;\Z)=H_0( W^{\circ};\Z)\otimes[S^1]\oplus H_1(W^{\circ};\Z)\otimes [pt]$.  Because every component of a Liouville domain has nonempty boundary, restricting the first map to 
$H_0(\partial W;\Z)\otimes[S^1]$ gives a surjection to $H_0(W^{\circ};\Z)\otimes [S^1]\oplus\{0\}\oplus\{0\}$. On the other hand the image of the restriction to $H_1(\partial W;\Z)\otimes [pt]$ consists of those elements of form $(0,i_*x,x)$ where $x$ varies through $H_1(\partial X;\Z)$ and $i_*$ is the inclusion-induced map $H_1(\partial W;\Z)\to H_1(W;\Z)$. Thus the full image of
the first map in (\ref{mvseq}) is complementary to the summand $H_1(W^{\circ};\Z)\otimes [\mathrm{pt}]$ of $H_1(W^{\circ}\times S^1;\Z)$.  Hence the second map sends $H_1(W^{\circ};\Z)\otimes [\mathrm{pt}]$ isomorphically to $H_1(\partial W_H;\Z)$. So we indeed have an isomorphism $H_1(W;\Z)\cong H_1(\partial W_H;\Z)$.
\end{proof}

Recall that, for $(V,\mu)$ a Liouville domain with $H_1(V;\Z)=\{0\}$ and $c_1(TV)$ torsion, a Reeb orbit $\gamma$ on $\partial V$ has an associated Conley--Zehnder index $CZ(\gamma)$ which is computed as follows. Use the Liouville vector field to construct a collar neighborhood $(\delta,1]\times \partial V$ of $\partial V$ on which $\mu$ is identified with $r\cdot(\mu|_{\partial V})$ where $r$ is the coordinate on $(\delta,1]$, and extend the Reeb flow $\psi^t$ to a flow $\Psi^t$ on this neighborhood as the Hamiltonian flow of the function $-r$. This flow then acts as the Reeb flow on each parallel copy $\{r\}\times\partial V$ and preserves both the Liouville vector field and the Reeb vector field.  Also choose a map $u\co \Sigma\to V$ with $u|_{\partial \Sigma}=\gamma$; then $CZ(\gamma)$ is defined to be the Maslov index as defined in \cite[Section 4]{RS} of the path of symplectic matrices given by expressing the linearization of the $\Psi^t$ in terms of a trivialization of $\gamma^*TV$ that extends over $u^*TV$.  The assumption on $c_1$ implies that this index is independent of the choices involved.

\begin{remark}\label{twxi} Write $\xi=\ker(\mu|_{\partial V})$. We then have a direct sum decomposition $TV|_{\partial V}=\xi\oplus \underline{\mathbb{C}}$ where the trivial summand $\underline{\mathbb{C}}$ is spanned by the Liouville and Reeb fields. In particular $c_1(\xi)$ is torsion when $c_1(TV)$ is torsion. If the Reeb orbit $\gamma$ happens to be homologically trivial in $\partial V$ and not just in $V$, then the map $u$ from the previous paragraph can be taken to have image in $\partial V$.  This allows us to first trivialize $u^*\xi$ and extend this trivially over the $\underline{\mathbb{C}}$ summand; hence our trivialization of $\gamma^*TV$ will be the direct sum of a trivialization of $\gamma^*\xi$ with a trivialization having frame given by the Liouville and Reeb fields.  Since $\Psi^t$ preserves the latter two vector fields, the linearized flow acts as the identity on the second summand, and so (by the product and zero axioms of \cite[Theorem 4.1]{RS}), in this case $CZ(\gamma)$ is equal to the Maslov index of the linearized Reeb flow acting on $\gamma^*\xi$, computed with respect to a trivialization of $\gamma^*\xi$ that extends over a bounding surface for $\gamma$ in $\partial V$.\end{remark}

\begin{prop} \label{czc}
Let $(W,\lambda)$ and $H$ be as in Proposition \ref{suspliouville}, and assume that $H_1(W;\Z)=\{0\}$ and that $c_1(TW)$ is torsion. Let $c\co \R/T\Z\to\partial W$ be a periodic orbit for $R_{\partial W}$, giving rise to the orbit $\gamma_{c}\co \R/T\Z\to \partial W_H$ as in Corollary \ref{neworbits}.  Assume that $c$ is nondegenerate as a Reeb orbit for $R_{\partial W}$.  Then $\gamma_c$ is nondegenerate as a Reeb orbit for $R_{\partial W_H}$ if and only if $h(T,c(0))\notin \Z$. In this case we have \[ CZ(\gamma_c)=CZ(c)+1+2\lfloor h(T,c(0))\rfloor.\]  
\end{prop}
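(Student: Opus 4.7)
The plan is to exploit the product structure of $W_H$ near $\partial W\times\{0\}$: identify a natural symplectic splitting of $\xi_{W_H}$ along $\gamma_c$, check that the linearized Reeb flow preserves this splitting, and then invoke the direct-sum axiom of the Conley--Zehnder index.

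First I would set up the splitting.  At a point $(c(t),0)$ the computation $dH(\hat{\mathcal{L}})=-\lambda(X_H)|_{c(t)}<0$ (using $\iota_{X_H}d\lambda=dH$ and $\iota_{\hat{\mathcal{L}}}d\hat{\lambda}=\hat{\lambda}$) shows $\ker(dH|_{c(t)})=T_{c(t)}\partial W$, so that $T_{(c(t),0)}\partial W_H=T_{c(t)}\partial W\oplus\mathbb{C}$.  Since $\rho$ and $\rho\,d\theta$ vanish along the zero section, $\hat{\lambda}|_{(c(t),0)}=\lambda|_{c(t)}$, yielding the symplectic splitting $\xi_{W_H}|_{(c(t),0)}=\xi_W|_{c(t)}\oplus\mathbb{C}$, where $\xi_W=\ker(\lambda|_{\partial W})$ and the $\mathbb{C}$-summand is the fiber of $W\times\mathbb{C}\to W$.

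Next I would analyze $d\psi^t$ on this splitting via Corollary \ref{flowformula}.  Since $H|_{\partial W}\equiv 0$ forces $X_H|_{\partial W}=\tau_{\lambda,H}R_{\partial W}$, the definition of $h$ implies $\phi_H^{h(t,w)}(w)=\phi_R^t(w)$ for $w\in\partial W$ (writing $\phi_R^t$ for the Reeb flow on $\partial W$), so $\psi^t|_{\partial W\times\{0\}}(w,0)=(\phi_R^t(w),0)$.  Differentiating this in the $T\partial W\oplus 0$ direction shows $d\psi^t$ restricts to $d\phi_R^t|_{w_0}\oplus 0$, which preserves $\xi_W\oplus 0$ and at $t=T$ specializes to the linearized Reeb return map $P_c$ on $\xi_W|_{c(0)}$.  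On pure $\mathbb{C}$-variations Corollary \ref{flowformula} gives $d\psi^t(0,\delta z)=(0,e^{2\pi i h(t,c(0))}\delta z)$.  Thus $d\psi^T$ restricted to $\xi_{W_H}|_{(c(0),0)}$ splits as $P_c\oplus e^{2\pi i h(T,c(0))}$, and nondegeneracy of $\gamma_c$ reduces---given the hypothesis on $c$---to $e^{2\pi i h(T,c(0))}\neq 1$, i.e.\ $h(T,c(0))\notin\mathbb{Z}$.

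For the index I would select trivializations compatible with the splitting.  Since $H_1(W;\mathbb{Z})=0$ the loop $c$ bounds a map $u\co\Sigma\to W$, and $c_1(TW)$ being torsion selects a preferred homotopy class of symplectic trivializations of $u^*TW$ that computes $CZ(c)$; coupling its restriction to $c$ with the canonical trivialization of the global trivial summand $\underline{\mathbb{C}}$ in $TW_H=\pi^*TW\oplus\underline{\mathbb{C}}$ produces a trivialization of $\gamma_c^*TW_H$ extending over $u$ (viewed inside $W\times\{0\}\subset W_H$), of the type permitted by Proposition \ref{toppres}.  Since the linearized extended Reeb flow acts as the identity on the $\mathrm{span}(\hat{\mathcal{L}},R_{\partial W_H})$-summand, the direct-sum axiom then gives $CZ(\gamma_c)=CZ(c)+\mu$, where $\mu$ is the Maslov index of the monotone rotation path $s\mapsto e^{2\pi ih(s,c(0))}$ for $s\in[0,T]$; the standard evaluation $\mu=2\lfloor h(T,c(0))\rfloor+1$ then yields the formula.

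The main subtlety to check is preservation of the splitting under $d\psi^t$: naive differentiation of $\psi^t(w,z)=(\phi_H^{h(t,w)}(w),e^{2\pi ih(t,w)}z)$ in a $T\partial W$ direction produces a cross-term $dh(t,\cdot)(\delta w)\cdot X_H(c(t))$ in the $W$-component that a priori lies in $T\partial W$ rather than $\xi_W$.  The cleanest resolution is to differentiate $\psi^t|_{\partial W\times\{0\}}=(\phi_R^t,0)$ directly, so that only $d\phi_R^t$ appears and preservation of $\xi_W$ is automatic; equivalently, Reeb-flow preservation of $\hat{\lambda}$ forces the cross-term to be a multiple of $R_{\partial W}(c(t))$, outside $\xi_W$ but canceling against the $\lambda$-value of $d\phi_H^{h(t,w_0)}(\delta w)$.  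Once this is settled the remaining ingredients are routine Maslov-index manipulations.
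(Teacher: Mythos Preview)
Your proof is correct and follows essentially the same approach as the paper: both exploit the splitting $\gamma_c^*TW_H\cong c^*TW\oplus\underline{\C}$ along $\partial W\times\{0\}$, verify that the linearized (extended) Reeb flow of $R_{\partial W_H}$ acts block-diagonally as the linearized (extended) Reeb flow of $R_{\partial W}$ on the first summand and as the rotation $e^{2\pi i h(t,c(0))}$ on the second, and then appeal to the product/direct-sum axiom for the Maslov index. Your explicit discussion of the cross-term subtlety corresponds to the paper's terse remark that the flows agree on $c^*T\partial W$ by Corollary~\ref{flowformula} and on the remaining Liouville direction because both flows preserve the respective Liouville fields, which coincide along $\partial W\times\{0\}$.
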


\begin{proof}
If $u_0\co \Sigma\to W$ has $u_0|_{\partial\Sigma}=c$, then we get a map $u\co \Sigma\to W_H$ with $u|_{\partial \Sigma}=\gamma_c$ by simply putting $u(x)=(u_0(x),0)$.  Since $TW_H=\pi^*TW\oplus\underline{\mathbb{C}}$, a trivialization of $u_{0}^{*}TW$ extends in obvious fashion to a trivialization of $u^*TW_H$. Along $\partial W\times\{0\}$ we have $\ker\hat{\lambda}=(\ker\lambda)\times\C$, and the Liouville and Reeb fields for $\lambda$ along $\partial W\times\{0\}$ coincide with the Liouville and Reeb fields for $\hat{\lambda}$.  With respect to the resulting trivialization of $\gamma_{c}^{*}TW_H=c^*TW\oplus \underline{\mathbb{C}}$, Corollary \ref{flowformula} shows that extended version $\Psi^t$ of the time-$t$ linearized flow of $R_{\partial W_H}$ acts as the (extended) time-$t$-linearized flow of $R_{\partial W}$ on the $c^*TW$ summand, and as rotation by the angle $2\pi h(t,c(0))$ on the $\underline{\C}$ summand.  (To be more precise, Corollary \ref{flowformula} shows explicitly that the flows coincide on $c^*T\partial W$; that they coincide on all of $c^*TW$ then follows from the fact that they preserve the respective Liouville fields, which agree with each other along $\partial W\times\{0\}$.)   As $t$ increases from $0$ to $T$, the value $h(t,c(0))$ increases from $0$ to $h(T,c(0))$, so the conclusion follows immediately from standard properties of the Maslov index.  
\end{proof}

To find the Conley--Zehnder indices of the remaining Reeb orbits on $\partial W_H$, namely the $\gamma_{w,N}$ of Corollary \ref{neworbits}, we will first work out the relationship between two types of trivialization for $\gamma^*\xi_{H}$, where $\gamma\co S^1\to \partial W_H\setminus(\partial W\times\{0\})$, $\xi_H=\ker(\hat{\lambda}|_{\partial W_H})$, and we continue to assume that we are in the situation of the first sentence of Proposition \ref{czc}:
\begin{itemize}\item[(i)] Because $H_1(\partial W_H;\Z)=\{0\}$ and $c_1(\xi_H)$ is torsion by Proposition \ref{toppres} and Remark \ref{twxi}, a trivialization of $\gamma^*\xi_H$ is determined up to homotopy by the requirement that there be a map $u\co \Sigma\to \partial W_H$ where $\Sigma$ is a compact surface with boundary and $u|_{\partial \Sigma}=\gamma$ such that the trivialization $\gamma^*\xi_H$ extends to a trivialization of $u^*\gamma_H$.  Call one such trivialization $\mathcal{T}_{\gamma}\co \gamma^*\xi_H\to S^1\times \C^n$. (Of course, the Conley--Zehnder indices of our Reeb orbits are eventually to be calculated with respect to $\mathcal{T}_{\gamma}$.)
\item[(ii)]  Write $Y=\partial W_H\setminus(\partial W\times\{0\})$. (Equivalently, $Y=\{(w,z)\in \partial W_H|z\neq 0\}$.)  Also let $\pi\co Y\to W^{\circ}$ denote the projection to the $W$ factor. There is then a symplectic vector bundle isomorphism $p\co \pi^*TW|_Y\to \xi_H|_Y$ defined as follows.  For $v\in (\pi^*TW)_{(w,z)}=T_wW$, note that $v$ lifts to the tangent vector $\hat{v}:=v+\frac{dH(v)}{2\pi}\partial_{\rho}\in T_{(w,z)}\partial W_H$; we then let $p(v)$ be the projection of $\hat{v}$ to the summand $\xi_{\partial W_H}$ of the direct sum decomposition $T\partial W_H=  \xi_{\partial W_H}\oplus\langle R_{\partial W_H}\rangle$.  So if $\gamma\co S^1\to Y$, and hence $\pi\circ\gamma\co S^1\to W^{\circ}$, then $p$ induces an isomorphism $(\pi\circ\gamma)^*TW\cong \gamma^*\xi_H$ and so (since $H_1(W;\Z)=\{0\}$ and $c_1(TW)$ is torsion by assumption) we obtain a trivialization for $\gamma^*\xi_H$ by taking a trivialization of $(\pi\circ\gamma)^*TW$ that extends over a surface that bounds $\pi\circ\gamma$ and then applying $p$.  Denote one such trivialization by $\mathcal{U}_{\gamma}\co \gamma^*\xi_H\to S^1\times \C^{n}$.
\end{itemize}

\begin{prop}\label{relmas}
If $\gamma\co S^1\to \partial W_H\setminus(\partial W\times\{0\})$ projects via $(w,z)\mapsto z$ as a loop in $\C\setminus\{0\}$ having winding number $N$ around $0$, then the relative Maslov index of $\mathcal{T}_{\gamma}$ with respect to $\mathcal{U}_{\gamma}$ is $N$.  (In other words, the map $\mathcal{T}_{\gamma}\circ \mathcal{U}_{\gamma}^{-1}\co S^1\times \C^n\to S^1\times \C^n$ has form $(e^{i\theta},v)\mapsto(e^{i\theta}, a_{\gamma}(e^{i\theta})v)$ where the loop $a_{\gamma}\co S^1\to Sp(2n)$ has Maslov index $N$.)
\end{prop}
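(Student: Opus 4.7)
The plan is to realize the relative Maslov index as the first Chern obstruction to extending the trivialization $\mathcal{U}_\gamma$ inward across a bounding disk for $\gamma$ in $\partial W_H$, and to concentrate this obstruction at the single point where such a disk must meet the codimension-two submanifold $\partial W\times\{0\}\subset\partial W_H$.

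First I would reduce to a standard model. Since $H_1(W;\Z)=0$ and $W^\circ\simeq W$, we have $H_1(Y;\Z)=H_1(W^\circ\times S^1;\Z)\cong\Z$, generated by the $S^1$-factor; and because the relative Maslov index for these two canonically-defined trivializations is a homological invariant of $\gamma$ (additive under concatenation, invariant under homotopy in $Y$, and factoring through $H_1(Y;\Z)$), it suffices to treat the model loop $\gamma_0(\theta)=(w_0,\sqrt{H(w_0)/\pi}\,e^{iN\theta})$ for some fixed $w_0\in W^\circ$ with $H(w_0)>0$. Then $\tilde u$ is the constant map to $w_0$, and $\mathcal{U}_\gamma$ is obtained by applying $p$ to a constant complex frame $\{e_j\}_{j=1}^n$ of $T_{w_0}W$.

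Next I would construct the bounding disk explicitly: pick a smooth path $\alpha\co[0,1]\to W$ with $\alpha(0)=w_0$, $\alpha([0,1))\subset W^\circ$, $\alpha(1)=w_1\in\partial W$, and set $r(s)=\sqrt{H(\alpha(s))/\pi}$, $\Sigma_T(s,\theta)=(\alpha(s),r(s)e^{iN\theta})$; since $r(1)=0$, $\Sigma_T$ factors through the quotient $([0,1]\times S^1)/(\{1\}\times S^1)$, which is a topological disk in $\partial W_H$ with boundary $\gamma_0$. I would extend $\{e_j\}$ to a frame along $\alpha$, arranged at $w_1$ so that $e_1,\dots,e_{n-1}$ lie in the contact subspace $\ker\lambda|_{w_1}\cap J\ker\lambda|_{w_1}$ (whence $\lambda(e_j)|_{w_1}=dH(e_j)|_{w_1}=0$ for $j<n$) and $e_n$ points outward along the Liouville direction (so $dH(e_n)|_{w_1}\neq 0$, as follows from $\tau_{\lambda,H}>0$ on $\partial W$). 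The formulas $p(v)=v+\tfrac{dH(v)}{2\pi}\partial_\rho-\lambda(v)R$ and $R=\tau^{-1}(X_H+2\pi iz)$ then give
\[
p(e_j)\big|_{\Sigma_T(s,\theta)}=\left(e_j-\tfrac{\lambda(e_j)}{\tau}X_H,\ \left(\tfrac{dH(e_j)}{2\pi r(s)}-\tfrac{2\pi r(s)\lambda(e_j)}{\tau}i\right)e^{iN\theta}\right)
\]
in components $T_{\alpha(s)}W\oplus T_{z_s}\C$. Hence $p(e_j)$ extends continuously across $s=1$ for $j<n$, while for $j=n$ the $\C$-component diverges as $r(s)^{-1}$; the renormalized vector $r(s)p(e_n)$ extends continuously with limit $(0,\tfrac{dH(e_n)|_{w_1}}{2\pi}e^{iN\theta})\in\xi_H|_{(w_1,0)}=(\ker\lambda|_{w_1})\oplus T_0\C$.

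Finally I would assemble these into the top wedge $\tilde\sigma=p(e_1)\wedge\cdots\wedge p(e_{n-1})\wedge r(s)p(e_n)$, a section of the complex line bundle $\det\Sigma_T^*\xi_H$ over the bounding disk. It is everywhere nonzero on the interior (by independence of the $e_j$), its boundary restriction is a nonzero positive scalar multiple of $\det\mathcal{U}_\gamma$, and its limit at the collapsed point is a nonzero scalar multiple of $(e_1\wedge\cdots\wedge e_{n-1})\otimes e^{iN\theta}$, which winds exactly $N$ times in the fiber of $\det\xi_H|_{(w_1,0)}$ as $\theta$ traverses $S^1$. That $N$-fold winding is the relative first Chern number of $\Sigma_T^*\xi_H$ against $\mathcal{U}_\gamma$, and since $\mathcal{T}_\gamma$ extends across this disk by construction (contributing zero on its side), it equals the relative Maslov index of $\mathcal{T}_\gamma$ with respect to $\mathcal{U}_\gamma$. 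The main obstacle will be verifying that no spurious zeros of $\tilde\sigma$ appear on the interior $\Sigma_T^\circ$—so that the relative Chern number is concentrated entirely at the collapsed point—and that the orientation conventions yield $+N$ rather than $-N$; both are routine sign-tracking tasks once the degeneration at the collapsed point is understood.
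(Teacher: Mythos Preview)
Your approach is correct and follows essentially the same strategy as the paper: reduce to a model loop by homological invariance, build a bounding disk in $\partial W_H$ by sweeping a path in $W$ out to $\partial W$ so that the disk collapses to a single point of $\partial W\times\{0\}$, and then identify the relative index as the winding contributed at that collapsed point by the $\C$-factor $e^{iN\theta}$.

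The packaging differs slightly. The paper takes $w$ already near $\partial W$ (via a short Liouville flowline) and compares the two frames for $\xi_H$ directly along $\gamma$: the first $2n-2$ basis vectors match, while the last two (namely $p\mathcal{L}_w,pR_w$ on the $\mathcal{U}_\gamma$ side versus lifts of $\partial_x,\partial_y$ on the $\mathcal{T}_\gamma$ side) differ by one full rotation in the $\C$-component, giving Maslov index $1$ for the generator. You instead pass to the determinant line $\det\xi_H$, build the section $\tilde\sigma$ over the disk out of the $\mathcal{U}_\gamma$-frame (rescaled in the last slot), and read off the $e^{iN\theta}$-winding at the collapsed point as the relative first Chern number. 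Your formulation handles general $N$ in one step rather than reducing to $N=1$, and the Chern-class viewpoint cleanly localizes the obstruction; the paper's direct frame comparison is a little more hands-on but avoids having to track complex structures and verify nonvanishing of $\tilde\sigma$. Both rely on exactly the same geometric fact: the only place where $\mathcal{U}_\gamma$ fails to extend is where the disk touches $\partial W\times\{0\}$, and there the divergent $\partial_\rho$-term in $p(e_n)$ forces the $e^{iN\theta}$ twist.
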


\begin{proof}
Homologous loops $\gamma$ in $Y= \partial W_H\setminus(\partial W\times\{0\})$ give rise to homologous transition maps $a_{\gamma}\co S^1\to Sp(2n)$, and the Maslov index gives rise to a well-defined homomorphism (in fact isomorphism) $H_1(Sp(2n);\Z)\to\Z$.  So it suffices to prove the proposition for a loop $\gamma$ whose homology class generates $H_1(Y;\Z)$. We have a diffeomorphism $Y\to  W^{\circ}\times S^1$ defined by $(w,z)\mapsto (w,z/|z|)$ and we are assuming that $H_1(W^{\circ};\Z)=\{0\}$, so we can take $\gamma(e^{i\theta})=\left(w,\sqrt{\frac{H(w)}{\pi}}e^{i\theta}\right)$ for any choice of $w\in W^{\circ}$ and we just need to show that $a_{\gamma}$ has Maslov index $1$ for this particular $\gamma$ (as the general loop in $Y$ that projects to a loop with winding number $N$ in $\C\setminus\{0\}$ will be homologous to an $N$-fold iterate of this loop).  For convenience choose $w=\eta_{-\delta}(w_0)$ for some $w_0\in \partial W$ and a small $\delta>0$, where for $t\leq 0$, $\eta_{t}\co W\to W$ denotes the time-$t$ flow of the Liouville vector field $\mathcal{L}$ of $(W,\lambda)$.

Since $\pi\circ\gamma$ is then the constant map to $w$, we can obtain a trivialization of the form $\mathcal{U}_{\gamma}$ from (ii) above by using a frame obtained by simply applying the bundle isomorphism $p$ to a symplectic basis for $T_wW$.  To choose a specific basis, note that $\eta_{-\delta}(\partial W)$ contains $w$ and is a contact manifold with respect to the contact form $\lambda|_{\eta_{-\delta}(\partial W)}$, so let us take $\{v_1,\ldots,v_{2n-2}\}$ to be a symplectic basis for the contact distribution at $w$, and extend it to a symplectic basis $\{v_1,\ldots,v_{2n-2},\mathcal{L}_w,R_w\}$ where $R_w$ is the value at $w$ of the Reeb vector field for this contact form and $\mathcal{L}_w$ is the value at $w$ of the Liouville field.  So $\mathcal{U}_{\gamma}$ is given by the frame $\{pv_1,\ldots,pv_{2n-2},p\mathcal{L}_w,pR_w\}$. (Actually since $v_i$ lie in the contact distribution one has $pv_i=\hat{v}_i$ in the notation used earlier.)

To construct $\mathcal{T}_{\gamma}$, for the disk $u\co D^2\to \partial W_H$ with boundary $\gamma$ let us use \[ u(re^{i\theta})=\left(\eta_{-r\delta}(w_0),\sqrt{\frac{H(\eta_{-r\delta}(w_0))}{\pi}}e^{i\theta}\right).\]  Now at $u(0)=(w_0,0)$, the contact distribution $\xi_{\partial W_H}$ is the direct sum of $(\ker\lambda|_{\partial W})\times \{0\}$ with $\{0\}\times \C$. Hence if $\delta$ has been chosen small enough, we can find a symplectic frame $\{e_1,\ldots,e_{2n}\}$ for $u^{*}\xi_{H}$ such that $e_1,\ldots,e_{2n-2}$ are each of the form $\hat{v}$ where $v$ lies in the image of $\ker\lambda|_{\partial W}$ under the Liouville flow, and such that $e_{2n-1},e_{2n}$ project to the $\C$ factor as positive multiples of, respectively $\partial_x$ and $\partial_y$. We use such a frame to construct $\mathcal{T}_{\gamma}$. If we do this in such a way that $e_1,\ldots,e_{2n-2}$ coincide along the boundary   with $\hat{v}_1,\ldots,\hat{v}_{2n-2}$ from the previous paragraph, then the transition matrix that expresses the frame for $\mathcal{U}_{\gamma}$ in terms of the frame for $\mathcal{T}_{\gamma}$ will act as the identity on the first $2n-2$ basis vectors, and its action on the remaining basis vectors will be given by the projections of positive multiples of the vector fields $p\mathcal{L}_w$ and $pR_w$ to $\C$.   Since $\lambda(\mathcal{L}_w)=0$ and $dH(\mathcal{L}_w)<0$, we see that $p\mathcal{L}_w$ projects to $\C$ as a negative multiple of $\partial_{\rho}$, while since along the boundary the Reeb field is positively proportional to $X_H$ (and hence $R_w$ will be almost positively proportional to $X_H$ if $\delta$ is small), $pR_w$ will, at least for small $\delta$, be close to a negative multiple of $\partial_{\theta}$.  Since the frame $\{-\partial_{\rho},-\partial_{\theta}\}$ makes one full counterclockwise turn around $S^1$, this shows that the basis change matrix from the frame for $\mathcal{U}_{\gamma}$ to the frame for $\mathcal{T}_{\gamma}$ has Maslov index one. As explained in the first paragraph of the proof this suffices to prove the proposition.
\end{proof}

We now consider the $\gamma_{w,N}$ described in  Corollary \ref{neworbits} (ii).  Here $w\in W^{\circ}$ and $N\in \Z_+$ with $\phi_H^N(w)=w$.  Provided that $H_1(W;\Z)=\{0\}$ and $c_1(TW)$ is torsion, we can associate a ``Hamiltonian Conley--Zehnder index'' $CZ_{Ham}(w,N)$ to any such pair $(w,N)$: find $u\co \Sigma\to W$ with $\partial\Sigma = \R/N\Z$ and $u|_{\partial\Sigma}(t)=\phi_{H}^{t}(w)$, trivialize $u^{*}TW$, and let $CZ_{Ham}(w,N)$ be the Conley--Zehnder index of the linearized flow $\{\phi_{H*}^{t}\}$ with respect to the restriction of the trivialization to the boundary.

\begin{prop}\label{czwn}
Let $(W,\lambda)$ and $H$ be as in Proposition \ref{suspliouville}, and assume that $H_1(W;\Z)=\{0\}$ and that $c_1(TW)$ is torsion. Let $w\in W^{\circ}$ and $N\in \Z_+$ be such that $\phi_{H}^{N}(w)=w$.  Then $\gamma_{w,N}$ is nondegenerate as a Reeb orbit provided that $\phi_{H*}^{N}\co T_wW\to T_wW$ does not have one as an eigenvalue.  Moreover \begin{equation}\label{czchange} CZ(\gamma_{w,N})=CZ_{Ham}(w,N)+2N.\end{equation}
\end{prop}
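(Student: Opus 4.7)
The plan is to linearize the Reeb flow $\psi^t$ of Corollary \ref{flowformula} at the starting point $(w, z_0) = \gamma_{w,N}(0)$ with $z_0 = \sqrt{H(w)/\pi}$, then use the bundle isomorphism $p\co \pi^*TW|_Y \to \xi_H|_Y$ introduced before Proposition \ref{relmas} to identify the induced action on $\xi_H$ with the linearization of the Hamiltonian flow on $TW$. Proposition \ref{relmas} will then provide the correction between the trivialization $\mathcal{U}_{\gamma_{w,N}}$ naturally appearing in this comparison and the trivialization $\mathcal{T}_{\gamma_{w,N}}$ used to compute $CZ(\gamma_{w,N})$.

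First I would compute $d\psi^t$ directly from the formula $\psi^t(w, z) = (\phi_H^{h(t, w)}(w), e^{2\pi i h(t, w)} z)$. For $v \in T_wW$ and its lift $\hat{v} = v + \frac{dH(v)}{2\pi}\partial_\rho \in T_{(w_0, z_0)}\partial W_H$, differentiating produces the expected diagonal terms ($\phi_{H*}^{h(t, w_0)} v$ in the $W$-direction and a matching piece in the $\C$-direction) together with a cross-term from the $w$-dependence of $h$, of the form $(\partial_w h)(t, w_0)(v) \cdot (X_H + 2\pi\partial_\theta)|_{\psi^t(w_0, z_0)}$. The decisive observation is that $X_H + 2\pi\partial_\theta = \tau_{\lambda, H}\cdot R_{\partial W_H}$, so this cross-term lies purely along the Reeb direction and disappears upon projecting to $\xi_H$. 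After also handling the Liouville correction $-\lambda(v)R_{\partial W_H}$ in the definition $p(v) = \hat{v} - \lambda(v)R_{\partial W_H}$, the calculation yields the intertwining
\[ \psi^t_* \circ p \,\equiv\, p\circ \phi_{H*}^{h(t, w_0)} \pmod{R_{\partial W_H}}. \]
Since $h(T_{w, N}, w) = N$ by (\ref{tformula}), evaluating at $t = T_{w, N}$ identifies the linearized Reeb Poincar\'e return map on $\xi_H$ with $\phi_{H*}^N$ via $p$, which is precisely the asserted nondegeneracy criterion.

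For the Conley--Zehnder index, Proposition \ref{toppres} gives $H_1(\partial W_H;\Z) = \{0\}$ and $c_1(\xi_H)$ torsion, so by Remark \ref{twxi} $CZ(\gamma_{w, N})$ equals the Maslov index of the linearized Reeb flow on $\xi_H$ computed with respect to $\mathcal{T}_{\gamma_{w, N}}$. Computed instead with respect to $\mathcal{U}_{\gamma_{w, N}}$, the intertwining above identifies this Maslov index with that of the path $\phi_{H*}^s$ for $s\in [0, N]$ relative to a trivialization of $(\pi\circ\gamma_{w, N})^*TW$ extending over a surface bounding the projected orbit in $W$ (the reparametrization $t\mapsto h(t, w_0)$ is monotone and hence does not affect the index), and this is $CZ_{Ham}(w, N)$ by definition. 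By Proposition \ref{relmas}, the transition loop from $\mathcal{U}_{\gamma_{w, N}}$ to $\mathcal{T}_{\gamma_{w, N}}$ has Maslov index $N$, corresponding to the winding number $N$ of $\gamma_{w,N}$ in the $\C$-factor; by the standard effect of a change of trivialization on Maslov indices this shifts the index by $+2N$, giving $CZ(\gamma_{w, N}) = CZ_{Ham}(w, N) + 2N$.

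The hard part will be carrying out the linearization carefully and verifying the intertwining identity modulo $R_{\partial W_H}$. In particular one must check that $\hat{v}$ truly lies in $T\partial W_H$ (which follows from $df_H = 2\pi d\rho - dH$ together with $dH(\partial_\rho) = 0$ and $d\rho(\partial_\rho) = 1$) and, more delicately, that the Liouville correction term $\lambda(v)R_{\partial W_H}$ appearing in $p(v)$ transports compatibly under $\psi^t_*$ modulo $R_{\partial W_H}$. Once this computational core is in place, the remainder is a routine application of Proposition \ref{relmas} and standard Maslov index theory.
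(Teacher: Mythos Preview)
Your proposal is correct and follows essentially the same approach as the paper: you use the bundle isomorphism $p$ to intertwine the linearized Reeb flow on $\xi_H$ with the linearized Hamiltonian flow on $TW$ (yielding both the nondegeneracy criterion and the equality of the Maslov index with $CZ_{Ham}(w,N)$ in the trivialization $\mathcal{U}_{\gamma_{w,N}}$), and then invoke Proposition~\ref{relmas} together with the loop property to obtain the $+2N$ shift. If anything, your treatment is more explicit than the paper's, which simply asserts the commutative diagram as ``easy to see''; your observation that the cross-terms from $\partial_w h$ land in the Reeb direction is exactly the content behind that assertion.
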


\begin{proof} We make use of the symplectic bundle isomorphism
$p\co \pi^*TW|_Y\to \xi_H|_Y$ from item (ii) before Proposition \ref{relmas}, where again $Y=\partial W_H\setminus (\partial W\times \{0\})$.  From the definition of this isomorphism and Corollary \ref{flowformula} it is easy to see that we have a commutative diagram, for $(w,z)\in Y$, \[ \xymatrix{ T_{w}W\ar[r]^{p}\ar[d]^{\phi_{H*}^{h(t,w)}} &  T_{(w,z)}\partial W_H  \ar[d]_{\psi^{t}_{*}} \\ T_{\phi_{H}^{h(t,w)}(w)}W \ar[r]^{p} & T_{\psi^t(w,z)}\partial W_H.
}\]  Under the assumptions of the proposition, applying this with $(w,z)=\left(w,\sqrt{\frac{H(w)}{\pi}}\right)$ and with $t=T_{w,N}$ (so that $h(t,w)=N$) makes it immediately clear that $\gamma_{w,N}$ is nondegenerate if and only if $1$ does not lie in the spectrum of $\phi_{H*}^{N}\co T_wW\to T_wW$.  Moreover the diagram shows that the Conley--Zehnder index of the linearized flow of $\psi^t$ along $\gamma_{w,N}$ \emph{with respect to the trivialization $\mathcal{U}_{\gamma}$} is equal to $CZ_{Ham}(w,N)$.  So (\ref{czchange}) follows from Proposition \ref{relmas} and the loop property \cite{Sal} of the Conley--Zehnder index. 
\end{proof}

\begin{remark}\label{zerodim}
Throughout the foregoing we have implicitly assumed that the dimension of the initial Liouville domain $(W,\lambda)$ is positive, as is typically done in the literature.  It is amusing to consider the effect of allowing $W$ to be zero-dimensional.  Indeed a strict reading of the definition shows that any compact zero-manifold $W$ becomes a Liouville domain with respect  to the unique element $0\in \Omega^1(W)$.  Since $\partial W=\varnothing$, the hypotheses of Proposition \ref{suspliouville} are satisfied by any positive function $H\co W\to \R$, and we will have $\tau_{\lambda,H}=H$.  If $W=\{w_1,\ldots,w_k\}$ and we write $H_j=H(w_j)$, then $W_H$ is just the disjoint union of disks of respective radii $\sqrt{\frac{H_j}{\pi}}$, and Proposition \ref{suspliouville} endows this with a Liouville form having Reeb vector field on the boundary of the $j$th disk given by $\frac{2\pi}{H_j}\partial_{\theta}$. Corollary \ref{neworbits}  identifies the $N$-fold cover of the boundary of the $j$th disk as the orbit denoted there by $\gamma_{w_j,N}$, and correctly gives its period as $NH_j$. 

The part of Proposition \ref{toppres} asserting that $H_1(W;\Z)\cong H_1(\partial W_H;\Z)$ is obviously no longer true when $\dim W=0$; indeed since now $\partial W=\varnothing$ it is no longer true that $H_0(\partial W;\Z)$ surjects to $H_0(W;\Z)$ and this was used in the proof.  Since the Reeb orbits $\gamma_{w_j,N}$ are now homologically essential in $\partial W_H$ the method of proof of Proposition \ref{czwn} no longer applies; we must use trivializations of $\gamma_{w_j,N}^{*}TW_H$ rather than of $\gamma_{w_j,N}^{*}\xi_H$ in order to compute $CZ(\gamma_{w_j,N})$.  However since the linearized Reeb flow along $\gamma_{w_j,N}$ is simply given by $N$ counterclockwise rotations, we see by direct inspection that $CZ(\gamma_{w_j,N})=2N$, consistently with Proposition \ref{czwn}.  So the conclusion of Proposition \ref{czwn} extends to the case that $\dim W=0$ even though the original proof does not.
\end{remark}

\begin{ex}\label{ellex}
Up to linear symplectomorphism, any closed ellipsoid in $\mathbb{C}^n$ can be expressed in the form \[ E(a_1,\ldots,a_n)=\left\{(z_1,\ldots,z_n)\in\C^n\left|\sum_{j=1}^{n}\frac{\pi|z_j|^{2}}{a_j}\leq 1\right.\right\}\] where each $a_j\in\R_+$.
Suppose that $n\geq 1$.  Write $\rho_j=\frac{1}{2}|z_j|^2$ for $j=1,\ldots,n$ and $\theta_j$ for the usual angular polar coordinate on the $j$th factor of $\C^n$, so that $\lambda_0=\sum_{j=1}^{n}\rho_jd\theta_j$ is a primitive for the standard symplectic form on $\C^n$.

If we let $(W,\lambda)=\left(E(a_1,\ldots,a_n),\lambda_0\right)$ and, for some $a_{n+1}>0$, define $H\co W\to \R$ by \[ H(z_1,\ldots,z_n)=a_{n+1}\left(1-\sum_{j=1}^{n}\frac{2\pi \rho_j}{a_j}\right),\] 
then the function $\tau_{\lambda,H}=\iota_{X_H}\lambda_0+H$ will be identically equal to $a_{n+1}$, and the Liouville domain $(W_H,\lambda_0+\rho_{n+1}d\theta_{n+1})$ produced by Proposition \ref{suspliouville} is evidently just $E(a_1,\ldots,a_{n+1})$, with its standard Liouville primitive.  Hence the function $h\co \R\times W\to \R$ of Proposition \ref{flowformula} is just given by $h(t,w)=\frac{t}{a_{n+1}}$.  

The Hamiltonian flow $\phi_{H}^{t}\co W\to W$ in this case is given by rotating the $j$th factor of $\C^n$ by the angle $2\pi a_{n+1}t/a_j$.  Assume that, for each $j\in\{1,\ldots,n\}$, $\frac{a_{n+1}}{a_j}\notin \Q$.  Then the only orbits $\gamma_{w,N}$ as in Proposition \ref{neworbits} (ii) will have $w=\vec{0}$, and their periods will be $Na_{n+1}$.  (More specifically, they are given by $\gamma_{\vec{0},N}(t)=\left(\vec{0},\sqrt{\frac{a_{n+1}}{\pi}}e^{2\pi it/a_{n+1}}\right)$.)  Moreover Proposition \ref{czwn} shows that in this case $\gamma_{\vec{0},N}$ is nondegenerate with  \[ CZ(\gamma_{\vec{0},N})=n+2N+2\sum_{j=1}^{n}\left\lfloor\frac{Na_{n+1}}{a_j}\right\rfloor = n+2\sum_{j=1}^{n+1}\left\lfloor\frac{Na_{n+1}}{a_j}\right\rfloor.\]

The other closed Reeb orbits on $\partial E(a_1,\ldots,a_{n+1})$ are, according to Proposition \ref{neworbits}, the orbits $\gamma_c(t)=(c(t),0)$ as $c$ varies through closed Reeb orbits on $\partial E(a_1,\ldots,a_n)$.  By Proposition \ref{czc} such an orbit is nondegenerate provided that $c$ is nondegenerate and the period $T_c$ of $c$ does not lie in $a_{n+1}\Z$, in which case the Conley--Zehnder indices are related by $CZ(\gamma_c)=CZ(c)+1+2\left\lfloor \frac{T_c}{a_{n+1}}\right\rfloor$.  

From the previous two paragraphs it follows by induction (with the base step given by setting $W=\{0\}$ and $H=a_1$ as in Remark \ref{zerodim}) that if every $\frac{a_j}{a_k}$ for $j\neq k$ is irrational, then the closed Reeb orbits on $\partial E(a_1,\ldots,a_{n+1})$ are all nondegenerate and consist of the orbits $\gamma(k,N)$ which wrap $N$ times counterclockwise around the circle of radius $\sqrt{\frac{a_k}{\pi}}$ in the $k$th factor of $\C^{n+1}$, as $k$ varies through $\{1,\ldots,n+1\}$ and $N$ varies through $\Z_+$, with the period of $\gamma(k,N)$ equal to $Na_k$ and the Conley--Zehnder index equal to $n+2\sum_{j=1}^{n+1}\left\lfloor\frac{Na_k}{a_j}\right\rfloor$.  This agrees with the standard calculation found \emph{e.g.} in \cite[Section 2.1]{GH}.
\end{ex}


\section{Truncated ellipsoids}\label{trunc}

Throughout this section fix $a_1,\ldots,a_{n+1}\in (0,\infty)$, corresponding to ellipsoids \[ E=E(a_1,\ldots,a_n)\subset \C^n, \qquad \hat{E}=E(a_1,\ldots,a_{n+1})\subset \C^{n+1}.\]  We will assume moreover that \begin{equation}\label{irrat1} \mbox{For each $j\in\{1,\ldots,n\}$, }\frac{a_{n+1}}{a_j}\notin \Q.\end{equation}

For $j=1,\ldots,n+1$ write $\rho_j=\frac{1}{2}|z_j|^2$ and $\theta_j$ for the standard polar coordinate on the $j$th factor of $\C^{n+1}$.  So the standard Liouville form on $\C^n$ is $\lambda_0=\sum_{j=1}^{n}\rho_j d\theta_j$, and by definition \[ E=\left\{\sum_j\frac{2\pi \rho_j}{a_j}\leq 1\right\}.\] As in Example \ref{introex} let us abbreviate \[ u:=\sum_{j=1}^{n}\frac{2\pi \rho_j}{a_j},\qquad \mbox{so that } E=\{u\leq 1\}.\]

For any continuous function $H\co E\to \R$ let us write \[ E_H=\left\{(w,z)\in E\times\C\left|\pi|z|^2\leq H(w)\right.\right\},\] which evidently has interior given by $E_{H}^{\circ}=\{(w,z)|\pi|z|^2<H(w)\}$.  According to Proposition \ref{suspliouville}, if $H$ is smooth with $H|_{\partial E}\equiv 0$ and if $\iota_{\lambda_0}X_H+H>0$ then $E_H$ is a Liouville domain with respect to the usual Liouville form $\hat{\lambda}_0:=\lambda_0+\rho_{n+1}d\theta_{n+1}$ on $\C^{n+1}$.  As a special case, as noted in Example \ref{ellex}, we have $\hat{E}=E_{a_{n+1}(1-u)}$.

For $0<\ep<1<\beta$ we define $H_{\ep,\beta}\co E\to \R$ by  \[ H_{\ep,\beta}=\min\{a_{n+1}(\ep+\beta u),a_{n+1}(1-u)\}.\]  Thus $H$ is continuous, and smooth everywhere except the locus where $u=\frac{1-\ep}{1+\beta}$.  Evidently $E_{H_{\ep,\beta}}$ is not a Liouville domain since its boundary is not even a smooth manifold.  We do however have:

\begin{prop} For any $0<\ep<1<\beta$, $(E_{H_{\ep,\beta}}^{\circ},\hat{\lambda}_0)$ is an open Liouville domain.
\end{prop}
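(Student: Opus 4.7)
The plan is to verify conditions (i) and (ii) of Definition \ref{opendef} directly, using the explicit form of the Liouville flow on $(\mathbb{C}^{n+1},\hat{\lambda}_0)$. The Liouville vector field is the radial field $\frac{1}{2}\sum_{j=1}^{n+1}(x_j\partial_{x_j}+y_j\partial_{y_j})$, whose time-$t$ flow $\mathcal{L}^t$ is $(w,z)\mapsto(e^{t/2}w,e^{t/2}z)$; under this scaling both $u(w)$ and $\pi|z|^2$ are multiplied by $e^t$. I will use the explicit description
\[ E_{H_{\ep,\beta}}^{\circ}=\{(w,z)\in\mathbb{C}^n\times\mathbb{C}\,:\,u(w)<1,\ \pi|z|^2<H_{\ep,\beta}(w)\}, \]
which is correct because $H_{\ep,\beta}(w)>0$ whenever $u(w)<1$ (since $\ep>0$). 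The whole argument rests on the simple observation that each of the two affine functions $a_{n+1}(\ep+\beta u)$ and $a_{n+1}(1-u)$ whose minimum defines $H_{\ep,\beta}$ has strictly positive value at $u=0$, so that $H_{\ep,\beta}(e^{t/2}w)$ beats $e^tH_{\ep,\beta}(w)$ for $t\leq 0$.

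For (i), fix $t\leq 0$ and $(w,z)\in E_{H_{\ep,\beta}}^{\circ}$. Then $u(e^{t/2}w)=e^tu(w)<1$, and I will compare the two minimands of
\[ H_{\ep,\beta}(e^{t/2}w)=\min\{a_{n+1}(\ep+\beta e^tu(w)),\ a_{n+1}(1-e^tu(w))\} \]
with those of $e^tH_{\ep,\beta}(w)=\min\{a_{n+1}(e^t\ep+\beta e^tu(w)),\ a_{n+1}(e^t-e^tu(w))\}$. The differences are $a_{n+1}\ep(1-e^t)\geq 0$ and $a_{n+1}(1-e^t)\geq 0$ respectively, so term-by-term $H_{\ep,\beta}(e^{t/2}w)\geq e^tH_{\ep,\beta}(w)>\pi|e^{t/2}z|^2$, establishing $\mathcal{L}^t(E_{H_{\ep,\beta}}^{\circ})\subset E_{H_{\ep,\beta}}^{\circ}$.

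For (ii), fix $t<0$. The image $\mathcal{L}^t(E_{H_{\ep,\beta}}^{\circ})$ equals $\{(w',z'):u(w')<e^t,\ \pi|z'|^2<e^tH_{\ep,\beta}(e^{-t/2}w')\}$ (note $u(w')\leq e^t$ forces $u(e^{-t/2}w')\leq 1$, so $H_{\ep,\beta}(e^{-t/2}w')$ is defined and continuous on this region). Its closure in $\mathbb{C}^{n+1}$ is the corresponding closed and bounded, hence compact, set obtained by replacing $<$ with $\leq$. The same termwise comparison as in (i), now with $1-e^t>0$ strictly, yields $e^tH_{\ep,\beta}(e^{-t/2}w')<H_{\ep,\beta}(w')$ whenever $u(w')\leq e^t$, so every point of the closure satisfies $u(w')<1$ and $\pi|z'|^2<H_{\ep,\beta}(w')$ and thus lies in $E_{H_{\ep,\beta}}^{\circ}$. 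This gives the required compact closure inside $U$.

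No step in this plan looks hard; the only thing to be a little careful about is that the non-smoothness of $\partial E_{H_{\ep,\beta}}$ at the corner locus $\{u=(1-\ep)/(1+\beta)\}$ is irrelevant because we work exclusively with the open set $E_{H_{\ep,\beta}}^{\circ}$ and only ever need strict inequalities, which pass through the $\min$ without difficulty.
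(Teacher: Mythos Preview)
Your proof is correct and is essentially the same argument as the paper's. The paper writes $E_{H_{\ep,\beta}}^{\circ}=\{\max\{f,g\}<0\}$ with $f=2\pi\rho_{n+1}-a_{n+1}(1-u)$ and $g=2\pi\rho_{n+1}-a_{n+1}(\ep+\beta u)$, computes $df(\hat{\mathcal{L}})=f+a_{n+1}$ and $dg(\hat{\mathcal{L}})=g+a_{n+1}\ep$, and integrates these to see that the time-$t$ flow sends $\{f<0\}$, $\{g<0\}$ into $\{f<a_{n+1}(e^t-1)\}$, $\{g<a_{n+1}\ep(e^t-1)\}$; your termwise comparison $H_{\ep,\beta}(e^{t/2}w)-e^tH_{\ep,\beta}(w)\geq a_{n+1}\ep(1-e^t)$ is exactly the integrated form of the same identities, carried out directly using the explicit radial flow rather than via the ODE. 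One small imprecision: the closure of $\mathcal{L}^t(E_{H_{\ep,\beta}}^{\circ})$ need not literally \emph{equal} the set obtained by replacing $<$ with $\leq$, but it is certainly contained in that closed bounded set, which is all your argument uses.
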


\begin{proof} The Liouville vector field of $(E_{H_{\ep,\beta}}^{\circ},\hat{\lambda}_0)$ is $\hat{\mathcal{L}}=\frac{1}{2}\sum_{j=1}^{n+1}(x_j\partial_{x_j}+y_j\partial_{y_j})$; if this vector field is regarded as a vector field on $\C^{n+1}$ instead of just on $E_{H_{\ep,\beta}}^{\circ}$ its flow is complete.  Moreover $du(\hat{\mathcal{L}})=u$ and $d\rho_{n+1}(\hat{\mathcal{L}})=\rho_{n+1}$.  Now we can write \[ E_{H_{\ep,\beta}^{\circ}}=\left\{\max\{f,g\}<0\right\} \mbox{ where }f=2\pi\rho_{n+1}-a_{n+1}(1-u)\mbox{ and }g=2\pi\rho_{n+1}-a_{n+1}(\ep+\beta u).\]  We then have \[ df(\hat{\mathcal{L}})=f+a_{n+1},\qquad dg(\hat{\mathcal{L}})=g+a_{n+1}\ep.\]  Thus the time-$t$ flow of $\hat{\mathcal{L}}$ maps the sublevel set $\{f<0\}$ to the sublevel set $\{f<a_{n+1}(e^{t}-1)\}$ and the sublevel set $\{g<0\}$ to the sublevel set $\{g<a_{n+1}\ep(e^{t}-1)\}$; hence if $t<0$ this flow maps $E_{H_{\ep,\beta}}^{\circ}=\{\max\{f,g\}<0\}$ inside $\{\max\{f,g\}<a_{n+1}\ep(e^{t}-1)\}$ which (since $e^t-1<0$) has compact closure inside of $E_{H_{\ep,\beta}}^{\circ}$.
\end{proof}

When $\beta$ is large, we will see that $(E^{\circ}_{H_{\ep,\beta}},\lambda_0)$ is close to $(\hat{E}^{\circ},\lambda)$ with respect to the coarse symplectic Banach-Mazur distance $\delta_c$, whereas at least for certain choices of large $\beta$ and small $\ep$ the distance with respect to the fine symplectic Banach-Mazur distance is large.  The first of these statements follows relatively easily from the following lemma:

\begin{lemma}\label{translate}
Given $\beta>0$ and $0<\ep<1$ there is a Hamiltonian isotopy supported in $\hat{E}^{\circ}$ whose time-one map $\psi$ maps $\frac{\beta}{1+\beta}\hat{E}$ into the region $\hat{E}^{\circ}\cap \{u\geq \frac{1-\ep}{1+\beta}\}$.
\end{lemma}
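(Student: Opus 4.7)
The plan is to construct an explicit compactly supported Hamiltonian isotopy of $\hat{E}^{\circ}$ by adapting the argument in \cite[Section 5]{T}.

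First, I would set up the moment-polytope geometry. Writing $v=\pi|z_{n+1}|^{2}/a_{n+1}$, the source $\frac{\beta}{1+\beta}\hat{E}$ projects under $(u,v)$ onto the triangle $\{u+v\leq\beta/(1+\beta)\}$, while the target $\hat{E}^{\circ}\cap\{u\geq (1-\ep)/(1+\beta)\}$ projects onto $\{u\geq (1-\ep)/(1+\beta),\,u+v<1\}$. A translation by $\Delta:=(1-\ep)/(1+\beta)$ in the $u$-direction carries the source triangle into the target triangle, since the new maximum of $u+v$ becomes $(1+\beta-\ep)/(1+\beta)<1$; the slack available in the construction is therefore $\frac{\ep}{1+\beta}$, which is the room we have to absorb any imprecision in realizing the shift.

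The second step is to realize this moment-polytope shift by an actual Hamiltonian isotopy. The naive candidate is the Hamiltonian $-\frac{a_{1}\Delta}{2\pi}\theta_{1}$, whose vector field on $\{z_{1}\neq 0\}$ equals $\frac{a_{1}\Delta}{2\pi}\partial_{\rho_{1}}$, shifting $u$ by $\Delta$ in unit time. The obstruction is that $\theta_{1}$ is multivalued, and since $\frac{\beta}{1+\beta}\hat{E}$ meets the singular locus $\{z_{1}=0\}$ one cannot simply localize away from it. Following Traynor, I would split the isotopy into two stages. The first stage is a symplectic ``fold'' acting primarily in the $z_{1}$-plane (with the remaining coordinates as parameters) which, exploiting the slack $\frac{\ep}{1+\beta}$, rearranges $\frac{\beta}{1+\beta}\hat{E}$ inside the slightly larger ellipsoid $\frac{\beta+\ep}{1+\beta}\hat{E}\subset\hat{E}^{\circ}$ so that the image is compactly contained in $\{z_{1}\neq 0\}$. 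The second stage then applies the $\theta_{1}$-shift via a Hamiltonian of the form $\chi(|z_{1}|^{2})\cdot\bigl(-\frac{a_{1}\Delta}{2\pi}\theta_{1}\bigr)$, where $\chi$ is a smooth cutoff supported in $\{|z_{1}|^{2}\geq\delta\}$ and modified (if necessary via a periodic replacement of $\theta_{1}$) to give a globally defined flow on $\hat{E}^{\circ}$ that still shifts $u$ by $\Delta$ on the image of the fold.

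The main obstacle is precisely the fold across $\{z_{1}=0\}$: the construction must depend smoothly on the parameters $(z_{2},\ldots,z_{n+1})$ as the source slice in the $z_{1}$-plane shrinks to a point. This is where the slack $\frac{\ep}{1+\beta}$ is essential, since it translates into positive symplectic area to absorb the fold in each fiber, and standard tools (Moser's method applied fiberwise, in the spirit of Lemma \ref{symplecto} above) smooth out the parameter dependence to produce a genuine Hamiltonian isotopy of $\hat{E}^{\circ}$. Once this isotopy $\psi$ is in place, verifying that $\psi\bigl(\frac{\beta}{1+\beta}\hat{E}\bigr)\subset \hat{E}^{\circ}\cap\{u\geq (1-\ep)/(1+\beta)\}$ reduces to the moment-polytope computation from the first step, using $\Delta+\beta/(1+\beta)=(1+\beta-\ep)/(1+\beta)<1$.
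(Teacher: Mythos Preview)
Your two-stage strategy---first displace $\frac{\beta}{1+\beta}\hat E$ off the singular locus in the $z_1$-plane, then use a cutoff of $\theta_1$ to shift $u$---is exactly the paper's approach, and your identification of the available slack $\frac{\ep}{1+\beta}$ is correct. There is, however, one genuine imprecision worth fixing.

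You propose to fold into $\{z_1\neq 0\}$ and then use $\chi(|z_1|^2)\cdot\theta_1$ as a Hamiltonian. But $\theta_1$ is still multivalued on $\{z_1\neq 0\}$, and the product $\chi(|z_1|^2)\,d\theta_1$ is not even closed, so this does not define a global Hamiltonian (your parenthetical ``periodic replacement of $\theta_1$'' would destroy the radial shift you need). The paper resolves this by folding not merely off $\{z_1=0\}$ but off the \emph{branch cut} $\{z_1\in[0,\infty)\}\subset\C$: one builds a parametrized family $K(\cdot,v)\co\C\to\R$ (with $v=\alpha-\sum_{j\geq 2}\pi|z_j|^2/a_j$) whose time-one map pushes the disk $\{\pi|z|^2/a_1\leq v\}$ off the nonnegative real axis while staying in $\{\pi|z|^2/a_1<v+\delta\}$, and simply takes $K\equiv 0$ once $v\leq -\delta$. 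After this, $\theta_1$ is an honest smooth function valued in $(0,2\pi)$ on the image, and a cutoff of $\frac{a_1\gamma}{2\pi}\theta_1$ is a globally defined Hamiltonian on $\hat E^\circ$ implementing the shift. This also makes the smooth parameter dependence automatic---no Moser argument is needed.
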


\begin{proof} Let us abbreviate $\alpha=\frac{\beta}{1+\beta}$, so that $\frac{1}{1+\beta}=1-\alpha$ and the lemma is equivalent to the statement that  if $0<\alpha<1$ and $0<\gamma<1-\alpha$ then there is a Hamiltonian isotopy supported in $\hat{E}^{\circ}$ whose time-one map sends $\alpha \hat{E}$ into $E^{\circ}\cap \{u\geq \gamma\}$.

Choose a number $\delta$ with $0<\delta<1-\alpha-\gamma$ and let $K\co \C\times \R\to \R$ be a smooth function such that for each $v\in \R$ the function $K(\cdot,v)\co \C\to\R$ has support inside $\left\{\frac{\pi|z|^2}{a_1}<v+\delta\right\}$ (so in particular $K(\cdot,v)\equiv 0$ for $v\leq -\delta$) and such that the time-one map of the Hamiltonian flow of $K(\cdot,v)$ maps $\{\frac{\pi|z|^2}{a_1}\leq v\}$ into the complement of $\{\mathrm{Re}(z)\geq 0\}\cap \R$ in 
$\left\{\frac{\pi|z|^2}{a_1}<v+\delta\right\}$.
Define $G\co \C^{n+1}\to \R$ by \[ G\left(z_1,z_2,\ldots,z_{n+1}\right)=K\left(z_1,\alpha-\sum_{j=2}^{n+1}\frac{\pi|z_j|^2}{a_j}\right).\] Then:\begin{itemize} \item The value of $\sum_{j=2}^{n+1}\frac{\pi |z_j|^{2}}{a_j}$ is conserved under the Hamiltonian flow of $G$.
\item The support of $G$ is contained in $\{\sum_{j=1}^{n+1}\frac{\pi|z_{j}|^{2}}{a_j}<\alpha+\delta\}=(\alpha+\delta)\hat{E}^{\circ}$.
\item The time-one map $\phi_{G}^{1}$ sends $\alpha\hat{E}$ to the complement of $\{\mathrm{Re}(z_1)\in [0,\infty)\}$ in $(\alpha+\delta)\hat{E}^{\circ}$.\end{itemize}

In particular the polar coordinate $\theta_1$ gives a well-defined smooth function valued in $(0,2\pi)$ on the compact subset $\phi_{G}^{1}(\alpha\hat{E})$ of $(\alpha+\delta)\hat{E}^{\circ}$.  Since $\alpha+\gamma+\delta<1$, we can then use a cutoff version of $\frac{a_1\gamma}{2\pi}\theta_1$ as a Hamiltonian supported in $\hat{E}^{\circ}$ whose time-one map, say $\psi_1$, restricts to $\phi_{G}^{1}(\alpha\hat{E})$ as a map which increases $\frac{2\pi \rho_1}{a_1}$ by $\gamma$ and leaves $\rho_2,\ldots,\rho_{n+1}$ unchanged.  Thus \[ \psi_1(\phi_{G}^{1}(\alpha\hat{E}))\subset \hat{E}^{\circ}\cap \left\{\frac{2\pi \rho_1}{a_1}\geq \gamma\right\}\subset \hat{E}^{\circ}\cap \{u\geq \gamma\} \] and the lemma holds with $\psi=\psi_1\circ\phi_{G}^{1}$.
\end{proof}

\begin{cor}\label{coarsecvg}
For $0<\ep<1<\beta$ we have \[ d_c\left((E_{H_{\ep,\beta}}^{\circ},\lambda_0),(\hat{E}^{\circ},\lambda_0)\right)   \leq \delta_f\left((E_{H_{\ep,\beta}}^{\circ},\lambda_0),(\hat{E}^{\circ},\lambda_0)\right)\leq \left(\frac{1+\beta}{\beta}\right)^2.\]
\end{cor}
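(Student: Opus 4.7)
The first inequality is immediate from Proposition~\ref{easyineq}, so the task reduces to bounding $\delta_f\bigl((E^{\circ}_{H_{\ep,\beta}},\lambda_0),(\hat E^{\circ},\lambda_0)\bigr)$ above by $a:=\bigl(\tfrac{1+\beta}{\beta}\bigr)^{2}$. Setting $\alpha:=a^{-1/2}=\tfrac{\beta}{1+\beta}$, the definition of $\delta_f$ asks me to construct a Liouville embedding $h\co \alpha E^{\circ}_{H_{\ep,\beta}}\xhookrightarrow{L}\hat E^{\circ}$ whose image contains $\alpha^{2}\hat E^{\circ}=a^{-1}\hat E^{\circ}$. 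The plan is to first use Lemma~\ref{translate} to produce a Hamiltonian diffeomorphism of $\hat E^{\circ}$ that pushes $\alpha\hat E$ inside $E^{\circ}_{H_{\ep,\beta}}$, and then to feed this into Lemma~\ref{rescale} to convert the resulting embedding into the scaled form required by $\delta_f$.

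I would apply Lemma~\ref{translate} with $\gamma=\tfrac{1-\ep}{1+\beta}$; the hypothesis $\gamma<\tfrac{1}{1+\beta}=1-\alpha$ holds because $\ep>0$, so the lemma produces a Hamiltonian isotopy compactly supported in $\hat E^{\circ}$ whose time-one map $\psi$ sends $\alpha\hat E$ into $\hat E^{\circ}\cap\{u\geq\gamma\}$. The algebraic inequality $1-u\leq\ep+\beta u$ is equivalent to $u\geq\gamma$, so on $\{u\geq\gamma\}$ the piecewise function $H_{\ep,\beta}$ collapses to $a_{n+1}(1-u)$; consequently $\hat E^{\circ}\cap\{u\geq\gamma\}\subset E^{\circ}_{H_{\ep,\beta}}$ and therefore $\psi(\alpha\hat E)\subset E^{\circ}_{H_{\ep,\beta}}$. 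Equivalently, the Liouville diffeomorphism $\psi^{-1}\co\hat E^{\circ}\to\hat E^{\circ}$ restricts to a Liouville embedding $\phi:=\psi^{-1}|_{E^{\circ}_{H_{\ep,\beta}}}\co E^{\circ}_{H_{\ep,\beta}}\xhookrightarrow{L}\hat E^{\circ}$ whose image contains $\alpha\hat E\supset\alpha\hat E^{\circ}$.

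Finally, I would apply Lemma~\ref{rescale} to this $\phi$ with scaling factor $1/\alpha>1$, using $b=1/\alpha$ in the ``moreover'' clause; the required containment $b^{-1}\hat E^{\circ}=\alpha\hat E^{\circ}\subset\phi(E^{\circ}_{H_{\ep,\beta}})$ has just been verified, so the lemma produces a Liouville embedding $\phi_{1/\alpha}\co\alpha E^{\circ}_{H_{\ep,\beta}}\xhookrightarrow{L}\alpha\hat E^{\circ}$ whose image contains $\alpha\cdot\alpha\hat E^{\circ}=\alpha^{2}\hat E^{\circ}$. Post-composing with the inclusion $\alpha\hat E^{\circ}\hookrightarrow\hat E^{\circ}$ then gives an $h$ satisfying $a^{-1}\hat E^{\circ}\subset h\bigl(\alpha E^{\circ}_{H_{\ep,\beta}}\bigr)\subset\hat E^{\circ}$, yielding $\delta_f\leq a$ by definition. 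Essentially all of the geometric content is packaged inside Lemma~\ref{translate}; the only real obstacle beyond that lemma is the elementary piecewise verification that $H_{\ep,\beta}$ reduces to $a_{n+1}(1-u)$ precisely on $\{u\geq\gamma\}$, together with the bookkeeping required to convert the statement $\psi(\alpha\hat E)\subset E^{\circ}_{H_{\ep,\beta}}$ into the rescaled form demanded by the definition of $\delta_f$.
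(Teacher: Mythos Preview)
Your proof is correct and follows essentially the same route as the paper's. The only cosmetic difference is that you package the rescaling step as an appeal to Lemma~\ref{rescale}, whereas the paper carries out that conjugation explicitly (writing $h=(g\psi g^{-1})^{-1}|_{\alpha E_{H_{\ep,\beta}}^{\circ}}$ with $g=\mathcal{L}_{\lambda_0}^{\log\alpha}$); since both Liouville forms are the standard $\lambda_0$ on $\C^{n+1}$, Lemma~\ref{rescale}'s $\phi_{1/\alpha}$ is literally the same map.
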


\begin{proof}
The first inequality is given by Proposition \ref{easyineq}.  For the second, observe firstly that $E_{H_{\ep,\beta}}^{\circ}\subset \hat{E}^{\circ}$, and secondly that for $u\geq \frac{1-\ep}{1+\beta}$ we have $1-u\leq \ep+\beta u$, in view of which $\hat{E}^{\circ}\cap\left\{u\geq \frac{1-\ep}{1+\beta}\right\}\subset E_{H_{\ep,\beta}}^{\circ}$.  Thus the map $\psi$ from Lemma \ref{translate} obeys $\psi\left(\frac{\beta}{1+\beta}\hat{E}^{\circ}\right)\subset E_{H_{\ep,\beta}}^{\circ}$.   Let $g\co \C^{n+1}\to\C^{n+1}$ denote the time-$\left(\log\frac{\beta}{1+\beta}\right)$ flow of the Liouville vector field $\hat{\mathcal{L}}=\sum_{j=1}^{n+1}\frac{1}{2}(x_j\partial_{x_j}+y_j\partial_{y_j})$ for $\lambda_0$ on $\mathbb{C}^n$, so that whenever $V\subset\C^n$ is an open subset with $(V,\lambda_0)$ an open Liouville domain we have $g(V)=\frac{\beta}{1+\beta}V$.  The Hamiltonian isotopy of Lemma \ref{translate} is conjugated by $g$ to a Hamiltonian isotopy with support in $\frac{\beta}{1+\beta}\hat{E}^{\circ}$ whose time-one map is $g\psi g^{-1}\co \frac{\beta}{1+\beta}\hat{E}^{\circ}\to \frac{\beta}{1+\beta}\hat{E}^{\circ}$.  Then \[ g\psi g^{-1}\left(\left(\frac{\beta}{1+\beta}\right)^2\hat{E}^{\circ}\right)\subset \frac{\beta}{1+\beta}E_{H_{\ep,\beta}}^{\circ}\subset \frac{\beta}{1+\beta}\hat{E}^{\circ}.\]  So the  embedding \[ h:=(g\psi g^{-1})^{-1}|_{\frac{\beta}{1+\beta}E_{H_{\ep,\beta}}^{\circ}}\co \frac{\beta}{1+\beta}E_{H_{\ep,\beta}}^{\circ}\xhookrightarrow{L} \frac{\beta}{1+\beta}\hat{E}^{\circ}\subset \hat{E}^{\circ} \] has image containing $\left(\frac{\beta}{1+\beta}\right)^2\hat{E}^{\circ}$, whence the conclusion is immediate from the definition of $\delta_f$.
\end{proof}

While the above shows that $\delta_f\left((E_{H_{\ep,\beta}}^{\circ},\lambda_0),(\hat{E}^{\circ},\lambda_0)\right)$ is quite small when $\beta$ is large, the remainder of the section will be devoted to proving results (culminating in Corollary \ref{dellu}) that show that $\delta_f\left((\hat{E}^{\circ},\lambda_0),(E_{H_{\ep,\beta}}^{\circ},\lambda_0)\right)$ becomes quite large for at least  some large $\beta$.   

For each sufficiently small $\delta>0$ choose a smooth function $h_{\delta}\co [0,1]\to [0,\infty)$ such that $h_{\delta}(s)=a_{n+1}(\ep+\beta s)$ for $s<\frac{1-\ep}{1+\beta}-\delta$, $h_{\delta}(s)=a_{n+1}(1-s)$ for $s>\frac{1-\ep}{1+\beta}+\delta$, and $h''_{\delta}(s)\leq 0$ for all $s$.  Also arrange that $\delta_1<\delta_2\Rightarrow h_{\delta_1}\geq h_{\delta_2}$.  Evidently $E_{H_{\ep,\beta}}^{\circ}=\cup_{m=1}^{\infty}E_{h_{\delta_m}\circ u}^{\circ}$ for any sequence $\delta_m\searrow 0$, with $E_{h_{\delta_m}\circ u}^{\circ}\subset E_{H_{\ep,\beta}}^{\circ}$.
 
Now for any smooth $h\co [0,1]\to [0,\infty)$ We have  $X_{h\circ u}=(h'\circ u)X_u$, and $X_u=-\sum_{j=1}^{n}\frac{2\pi}{a_j}\partial_{\theta_j}$, so \[ \iota_{X_{h\circ u}}\lambda_0+(h\circ u) = h\circ u-uh'\circ u.\]  For $h=h_{\delta}$ as in the previous paragraph, observe that $\frac{d}{ds}(h_{\delta}(s)-sh_{\delta}'(s))=-sh''_{\delta}(s)\geq 0$, so since $h_{\delta}(0)=a_{n+1}\ep$ we have $\iota_{X_{h_{\delta}\circ u}}\lambda_0+h_{\delta}\circ u\geq a_{n+1}\ep$ everywhere.  Thus by Proposition \ref{suspliouville} each $(E_{h_{\delta}\circ u},\lambda_0)$ is a Liouville domain, and for any sequence $\delta_m\searrow 0$ we can use Lemma \ref{exhaust} with $\bar{U}_m=E_{h_{\delta_m}\circ u}$ to gain information about $\mathring{\mathbb{CH}}(E_{H_{\ep,\beta}}^{\circ},\lambda_0)$.  

\begin{lemma}\label{betaorbits} Assume that $0<\delta<\frac{1-\ep}{1+\beta}<1<\beta$ and that, for each $j\in\{1,\ldots,n\}$, both $\frac{a_{n+1}}{a_j}$ and $\frac{\beta a_{n+1}}{a_j}$ are irrational.  Then the closed Reeb orbits of the contact form $\lambda_0$ on $\partial E_{h_{\delta}\circ u}$ consist only of:
\begin{itemize}\item[(i)] orbits contained in $\partial E\times\{0\}$, each having period bounded below by $\min_{1\leq j\leq n}a_j$;
\item[(ii)] for $N\in\Z_+$, the orbits $\gamma_{0,N}\co \R/Na_{n+1}\ep \Z\to  \partial E_{H_{m,\delta}}$ defined up to time translation by $\gamma_{0,N}(t)=(0,\ldots,0,\sqrt{a_{n+1}\ep/\pi}e^{\frac{2\pi i t}{a_{n+1}\ep}})$, which have period $Na_{n+1}\ep$ and are nondegenerate with Conley--Zehnder index\begin{equation}\label{cz0n} CZ(\gamma_{0,N})=n+2N+2\sum_{j=1}^{n}\left\lfloor \frac{-N\beta a_{n+1}}{a_j}\right\rfloor;\end{equation}
\item[(iii)] For $k\in \Z$ and $N\in \Z_+$, $j\in\{1,\ldots,n\}$ with $-Na_{n+1}< ka_j<N\beta a_{n+1}$, a collection of orbits $\gamma_{w,N}$ with periods $T_{w,N}$ obeying \begin{equation}\label{twnbound} T_{w,N}\geq Na_{n+1}\left(\ep-2\beta\delta\right)+(N\beta a_{n+1}-ka_j)\frac{1-\ep}{1+\beta}.\end{equation}
\end{itemize}
\end{lemma}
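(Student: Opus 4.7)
The overall approach is to apply Corollary \ref{neworbits} to the Liouville domain $(E, \lambda_0)$ with Hamiltonian $H := h_\delta \circ u$. Since $u \equiv 1$ on $\partial E$ and $h_\delta(1) = 0$, $H$ vanishes on $\partial E$; as in Example \ref{introex}, $X_H = -\sum_{j=1}^{n}(2\pi h_\delta'(u)/a_j)\partial_{\theta_j}$ and $\tau_{\lambda_0,H} = h_\delta(u) - u h_\delta'(u)$. Concavity of $h_\delta$ gives $(h_\delta(u) - u h_\delta'(u))' = -u h_\delta''(u) \geq 0$, so $\tau_{\lambda_0,H} \geq h_\delta(0) = a_{n+1}\ep > 0$, verifying the hypotheses of Proposition \ref{suspliouville}. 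The closed Reeb orbits on $\partial E_{h_\delta \circ u}$ therefore split into orbits $\gamma_c$ coming from closed Reeb orbits $c$ on $\partial E$ and orbits $\gamma_{w,N}$ with $w \in E^\circ$ satisfying $\phi_H^N(w) = w$. For item (i), the Reeb field on $\partial E(a_1,\ldots,a_n)$ is $R = 2\pi\sum_j(1/a_j)\partial_{\theta_j}$, so any closed Reeb orbit starting at $(z_1,\ldots,z_n)$ has period $T>0$ with $T/a_j \in \Z$ for every $j$ with $z_j \neq 0$; since $u = 1$ on $\partial E$ forces at least one such $j$, this gives $T \geq \min_j a_j$.

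For items (ii) and (iii), note that since $du(X_H) = 0$, the flow $\phi_H^t$ preserves $u$ and rotates the $j$-th complex coordinate on $\{u = s\}$ at constant angular velocity $-2\pi h_\delta'(s)/a_j$. So $\phi_H^N(w) = w$ requires $N h_\delta'(u(w))/a_j \in \Z$ for every $j$ with $z_j(w) \neq 0$. For $w = 0$ this is vacuous, yielding the orbit $\gamma_{0,N}$ with period $N\tau_{\lambda_0,H}(0) = Na_{n+1}\ep$. Since $H(z) = a_{n+1}\ep + \beta a_{n+1} u(z) + O(|z|^4)$ near $0$, the linearization of $\phi_H^N$ at $0$ rotates the $j$-th factor through angle $-2\pi N\beta a_{n+1}/a_j$; the irrationality of $\beta a_{n+1}/a_j$ makes $1$ not an eigenvalue, giving nondegeneracy, and the standard Conley--Zehnder rotation formula yields $CZ_{Ham}(0,N) = n + 2\sum_j \lfloor -N\beta a_{n+1}/a_j\rfloor$, so Proposition \ref{czwn} gives (\ref{cz0n}). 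For periodic $w \neq 0$, the values $h_\delta'(s) \in \{\beta a_{n+1}, -a_{n+1}\}$ attained on the linear regions $s \notin (s^*-\delta, s^*+\delta)$ (where $s^* := (1-\ep)/(1+\beta)$) would force one of $N\beta a_{n+1}/a_j$ or $Na_{n+1}/a_j$ to be an integer, contradicting the hypotheses; so any such $w$ has $s := u(w) \in [s^*-\delta, s^*+\delta]$ with $h_\delta'(s) = ka_j/N$ for some $k \in \Z$ and $j$, and the range $-Na_{n+1} < ka_j < N\beta a_{n+1}$ follows from $h_\delta'(s) \in [-a_{n+1}, \beta a_{n+1}]$ and irrationality.

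The period bound in (iii) then reduces to the elementary concavity estimate for $\tau := h_\delta(s) - s h_\delta'(s)$, namely $N$ times the $y$-intercept of the tangent line to the graph of $h_\delta$ at $s$ with slope $m := ka_j/N$. The idea is to compare this with the $y$-intercept $h_{pl}(s^*) - m s^*$ of the supporting line of slope $m$ through the corner of the piecewise-linear function $h_{pl}(s) := \min\{a_{n+1}(\ep + \beta s), a_{n+1}(1-s)\}$, which satisfies $h_{pl}(s^*) = a_{n+1}(\ep+\beta)/(1+\beta)$. Concavity of $h_\delta$ places its minimum on $[s^*-\delta, s^*+\delta]$ at an endpoint, giving $h_\delta(s) \geq h_{pl}(s^*) - \beta a_{n+1}\delta$ (using $\beta > 1$); the estimate $|m(s^* - s)| \leq \beta a_{n+1}\delta$ is immediate from $|m| \leq \beta a_{n+1}$ and $|s-s^*| \leq \delta$. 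Together these yield
\[ T_{w,N} = N\tau \geq N h_{pl}(s^*) - s^* k a_j - 2N\beta a_{n+1}\delta, \]
and substituting the values of $h_{pl}(s^*)$ and $s^*$ rearranges directly to (\ref{twnbound}). The main technical obstacle is precisely the two-sided estimate for the tangent's $y$-intercept near the smoothed corner; once this geometric picture is set up using concavity, the rest is routine.
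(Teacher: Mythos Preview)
Your proof is correct and follows essentially the same approach as the paper: apply Corollary \ref{neworbits}, identify the orbits through the origin via Proposition \ref{czwn}, and localize the remaining periodic points to the $\delta$-neighborhood of the corner using the irrationality hypotheses. The one noteworthy difference is in the period estimate for item (iii): the paper splits into the cases $h_{\delta}'(u(w))\geq 0$ and $h_{\delta}'(u(w))\leq 0$ and handles each separately, whereas your argument bounds $h_{\delta}(s)$ below by $h_{pl}(s^*)-\beta a_{n+1}\delta$ and $|m(s^*-s)|$ above by $\beta a_{n+1}\delta$ uniformly, avoiding the case split; this is a minor but genuine streamlining of the same geometric idea (comparing the tangent line's $y$-intercept to that of the supporting line through the corner).
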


\begin{proof}
We apply Corollary \ref{neworbits}, which asserts that the closed Reeb orbits on $\partial E_{h_{\delta}\circ u}$ comprise orbits $\gamma_c$ associated to the closed Reeb orbits $c$ on $\partial E$ together with orbits $\gamma_{w,N}$ for each $w\in E,N\in \Z_+$ such that $\phi_{h_{\delta}\circ u}^{N}(w)=w$.   Orbits of the first type are accounted for by (i) in the statement of the lemma.  As for the others, we find that, for $w=(w_1,\ldots,w_n)\in E$, \begin{equation}\label{hdeltaflow} \phi_{h_{\delta}\circ u}^{t}(w)=\left(e^{-2\pi i th'_{\delta}(u(w))/a_1}w_1,\ldots,
e^{-2\pi i th'_{\delta}(u(w))t/a_n}w_n\right).\end{equation} Thus the point $w=0$ is fixed under the flow, giving orbits $\gamma_{0,N}$ for each $N$.  We have $\tau_{\lambda_0,h_{\delta}\circ u}(0)=a_{n+1}\ep$, so these orbits have period $Na_{n+1}\ep$ and are parametrized as described in item (ii) in the statement of the lemma; moreover their nondegeneracy and the formula for their Conley--Zehnder indices in (\ref{cz0n}) follows using Lemma \ref{czwn} from the facts that $h'_{\delta}(s)=\beta a_{n+1}$ for $s$ near $0$ and that the various $\frac{\beta a_{n+1}}{a_j}$ were assumed irrational.

It remains to consider the case that $\phi_{h_{\delta}\circ u}^{N}(w)=w$ with $w\neq 0$.  Based on (\ref{hdeltaflow}) this forces $h'_{\delta}(u(w))/a_j=k/N$ for some $j\in \{1,\ldots,n\}$ and $k\in \Z$.  Now for $s<\frac{1-\ep}{1+\beta}-\delta$ we have $h'_{\delta}(s)=\beta a_{n+1}$, while for $s>\frac{1-\ep}{1+\beta}+\delta$ we have $h'_{\delta}(s)=-a_{n+1}$, so since $\frac{\beta a_{n+1}}{a_j},\frac{a_{n+1}}{a_j}$ are all assumed irrational for $j\in\{1,\ldots,n\}$ we can have  $h'_{\delta}(u(w))/a_j=k/N$ only if $\left|u(w)-\frac{1-\ep}{1+\beta}\right|\leq \delta$.  Furthermore since $h''_{\delta}\leq 0$, in this case we will have $\beta a_{n+1}>\frac{k a_j}{N}>-a_{n+1}$, as stated in item (iii) of the lemma.  

It remains only to check the lower bound (\ref{twnbound}) on the period $T_{w,N}$. The function $\tau_{\lambda_0,h_{\delta}\circ u}$ from (\ref{taudef}) is given by $\tau_{\lambda_0,h_{\delta}\circ u}(z)=h_{\delta}(u(z))-u(z)h'_{\delta}(u(z))$ and in particular is constant along the Hamiltonian flow of $h_{\delta}\circ u$, so  $T_{w,N}=N\tau_{\lambda_0,h_{\delta}\circ u}(w)$.  As noted in the previous paragraph, 
we have $\left|u(w)-\frac{1-\ep}{1+\beta}\right|\leq \delta$.  It's easy to check that the assumption that $\beta>1$ implies that $h_{\delta}\left(\frac{1-\ep}{1+\beta}-\delta\right)<h_{\delta}\left(\frac{1-\ep}{1+\beta}+\delta\right)$, and so the fact that $h''_{\delta}\leq 0$ implies that $h_{\delta}$ is minimized on the interval $\left[\frac{1-\ep}{1+\beta}-\delta,\frac{1-\ep}{1+\beta}+\delta\right]$ at $\frac{1-\ep}{1+\beta}-\delta$.  So since $h'_{\delta}(u(w))=\frac{ka_j}{N}\in \left(-a_{n+1},\beta a_{n+1}\right)$, if $h_{\delta}'(u(w))\geq 0$  we have \begin{align*} T_{w,N} &\geq N\left(h_{\delta}\left(\frac{1-\ep}{1+\beta}-\delta\right)-\left(\frac{1-\ep}{1+\beta}+\delta\right)\frac{ka_j}{N}\right) 
\\ &= Na_{n+1}\ep+N\beta a_{n+1}\left(\frac{1-\ep}{1+\beta}-\delta\right)-ka_j\left(\frac{1-\ep}{1+\beta}+\delta\right) 
\\ & \geq N\left(a_{n+1}\ep-\beta a_{n+1}\delta-\frac{ka_j}{N}\delta\right) + (N\beta a_{n+1}-ka_j)\frac{1-\ep}{1+\beta}, \end{align*} which is bounded below by the right-hand side of (\ref{twnbound}) since $\frac{ka_j}{N}<\beta a_{n+1}$.  On the other hand if $h_{\delta}'(u(w))\leq 0$ we obtain \begin{align*} T_{w,N} & \geq N\left(h_{\delta}\left(\frac{1-\ep}{1+\beta}-\delta\right)-\left(\frac{1-\ep}{1+\beta}-\delta\right)\frac{ka_j}{N}\right) \\ &= Na_{n+1}(\ep-\beta \delta)+ka_j\delta + (N\beta a_{n+1}-ka_j)\frac{1-\ep}{1+\beta} \end{align*} which is greater than the right-hand side of (\ref{twnbound}) since $\frac{ka_j}{N}>-a_{n+1}>-\beta a_{n+1}$.
\end{proof}

Provided that $\delta<\frac{\ep}{2\beta}$, the expression (\ref{twnbound}) is, for every $N$ and $k$ under consideration, a sum of two nonnegative terms; our intention is to choose our parameters $\beta,\ep$ in such a way that (for very small $\delta$) one or the other of these terms is large in comparison to the period of the orbit $\gamma_{0,1}$, namely $a_{n+1}\ep$. In particular our parameters $\beta,\ep$ will satisfy, among other properties, $\beta \gg 1$ and $\ep<\frac{1}{\beta^2}$, which implies that \begin{equation}\label{ube} \frac{1-\ep}{1+\beta}>\frac{1}{\beta}\frac{1-\beta^{-2}}{1+\beta^{-1}}=\frac{1-\beta^{-1}}{\beta}>(\beta-1)\ep,\end{equation} so that a lower bound on $(N\beta a_{n+1}-ka_j)$ will make the second term in (\ref{twnbound}) much larger than $a_{n+1}\ep$.  While it is not possible to give a universal positive lower bound for $N\beta a_{n+1}-ka_j$ for all $N,k\in \Z$ under consideration (namely all those with $-Na_{n+1}<ka_j<N\beta a_{n+1}$), we will see below that a judicious choice of $\beta$ does make it possible to give such a lower bound when $N$ is not too large, while in the case that $N$ is large the first term in (\ref{twnbound}) will be large.  This is easiest when $a_1=\cdots=a_{n}$ (\emph{e.g.} when $n=1$) in which case we can choose $\beta$ so that $\frac{\beta a_{n+1}}{a_j}$ is very close to but smaller than some integer, and then inequalities $0<N\beta a_{n+1}-ka_j<\frac{a_j}{2}$ with $N,k\in \Z$ can readily be seen to force $N$ to be large.  A little more effort is required when we have distinct values $a_1,\ldots,a_n$, but we carry this out presently.

\begin{lemma}\label{dirichlet}
Fix $n\in \Z_+$ and $a_1,\ldots,a_{n+1}\in (0,\infty)$. Then there is a constant $c>0$ (depending on $a_1,\ldots,a_{n+1}$) and an unbounded open set $B$ of positive real numbers $\beta$ with the following property.   For each $\beta\in B$ there are $p_1,\ldots,p_n\in \Z$ such that, for each $j\in\{1,\ldots,n\}$, \[ 0<p_j\frac{a_j}{a_{n+1}} - \beta < \left\{\begin{array}{ll} c\beta^{-2} & \mbox{if }n=1 \\ c\beta^{-1/(n-1)} & \mbox{if }n\geq 2\end{array}\right..\]
\end{lemma}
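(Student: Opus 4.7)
The plan is to prove the lemma by a direct construction when $n=1$ and by applying Dirichlet's simultaneous Diophantine approximation theorem when $n \geq 2$; in each case, I would realize $B$ as a union of open intervals of $\beta$-values indexed by an unbounded family of integer tuples $(p_1,\ldots,p_n)$. Throughout, write $\alpha_j = a_j/a_{n+1}$ and $M = \max_j \alpha_j$.

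The case $n=1$ is immediate: for each sufficiently large positive integer $p$, the open interval $I_p = (p\alpha_1 - (p\alpha_1)^{-2},\, p\alpha_1)$ consists of reals $\beta$ with $0 < p\alpha_1 - \beta < (p\alpha_1)^{-2} < \beta^{-2}$ (the last inequality because $\beta < p\alpha_1$), so $B := \bigcup_{p \gg 0} I_p$ works with $c=1$.

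For $n \geq 2$, I would apply Dirichlet's theorem to the ratios $\gamma_j = \alpha_1/\alpha_j$ ($j=2,\ldots,n$): for each positive integer $N$ it produces an integer $q$ with $1 \leq q \leq N$ and integers $p_2,\ldots,p_n$ satisfying $|q\gamma_j - p_j| \leq N^{-1/(n-1)}$. Setting $p_1 = q$ and multiplying by $\alpha_j$ then yields the key estimate
\[ |p_1\alpha_1 - p_j\alpha_j| \leq M p_1^{-1/(n-1)} \qquad (j=1,\ldots,n), \]
using $q \leq N$. As $N$ varies this yields infinitely many such tuples; to extract a subfamily with $p_1 \to \infty$, note that if $p_1$ were bounded along the sequence then some fixed value of $q$ would appear infinitely often, forcing each $\gamma_j = p_j/q$ to be rational, in which case the tuples $(kq, kp_2, \ldots, kp_n)$ for $k = 1, 2, \ldots$ satisfy the estimate with zero error on the left-hand side.

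For each such tuple let $m^* = \min_j p_j\alpha_j$ and $M^* = \max_j p_j\alpha_j$, so that $M^* - m^* \leq 2M p_1^{-1/(n-1)}$ and, for $p_1$ large, $m^* \in [\tfrac{1}{2}p_1\alpha_1,\, 2p_1\alpha_1]$. The condition $0 < p_j\alpha_j - \beta < c\beta^{-1/(n-1)}$ for all $j$ defines the open set
\[ U_{(p_1,\ldots,p_n)} = \{\beta > 0 : M^* < \beta + c\beta^{-1/(n-1)},\ \beta < m^*\}. \]
Evaluated at $\beta = m^*$ the first inequality becomes $M^* - m^* \leq c(m^*)^{-1/(n-1)}$, and the bound $(m^*)^{-1/(n-1)} \geq (2\alpha_1)^{-1/(n-1)} p_1^{-1/(n-1)}$ reduces this to a condition holding whenever $c > 2M(2\alpha_1)^{1/(n-1)}$ (a constant depending only on the $a_i$). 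Fixing such a $c$ guarantees that $U_{(p_1,\ldots,p_n)}$ contains a nonempty open interval lying just below $m^*$, and taking $B$ to be the union of these intervals over the unbounded family of tuples produces the required open unbounded set. The main subtlety is that Dirichlet's theorem bounds $q$ from above rather than from below, so obtaining an unbounded sequence of $p_1$ values requires the rationality dichotomy above rather than Dirichlet's theorem alone.
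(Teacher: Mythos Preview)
Your argument is correct and follows essentially the same route as the paper: handle $n=1$ by the obvious intervals just below $p\alpha_1$, and for $n\ge 2$ apply Dirichlet's simultaneous approximation theorem to obtain tuples $(p_1,\ldots,p_n)$ with all $p_j\alpha_j$ clustered within $O(p_1^{-1/(n-1)})$ of each other, then take $B$ to be a union of short intervals just below $\min_j p_j\alpha_j$. The only cosmetic differences are that the paper approximates the ratios $a_n/a_j$ (so $p_n$ plays the role of your $p_1=q$) and writes down the interval for $\beta$ explicitly rather than via your set $U_{(p_1,\ldots,p_n)}$; the rationality dichotomy you use to force $p_1\to\infty$ is exactly the argument the paper gives as well.
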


\begin{proof} For the $n=1$ case we can simply take
\[ B=\bigcup_{r\in\Z_+}\left(\frac{ra_1}{a_2}-\left(\frac{ra_1}{a_2}\right)^{-2},\frac{ra_1}{a_2}\right),\] since if $\frac{ra_1}{a_2}-\left(\frac{ra_1}{a_2}\right)^{-2}<\beta<\frac{ra_1}{a_2}$ the desired inequality holds with $p_1=r$ and $c=1$.  So assume that $n\geq 2$.

By Dirichlet's theorem on simultaneous Diophantine approximations (see \emph{e.g.} \cite[Theorem II.1A]{Sch}), for every integer $Q>1$ there are integers $p_1,\ldots,p_n$ such that \begin{equation}\label{schmidt} 1\leq p_n < Q^{n-1}\mbox{ and } \left|p_n\frac{a_n}{a_j}-p_j\right|\leq \frac{1}{Q}\mbox{ for all }j\in\{1,\ldots,n-1\}.\end{equation} Thus there are tuples $(p_1,\ldots,p_n)\in \Z_{+}^{n}$ with $p_n$ arbitrarily large and \begin{equation}\label{approx} \left|p_n\frac{a_n}{a_j}-p_j\right|\leq p_{n}^{-1/(n-1)}\mbox{ for each }j\in\{1,\ldots,n-1\}:\end{equation} indeed if any of the $\frac{a_n}{a_j}$ are irrational any upper bound $P$ on $p_n$ would induce a positive lower bound for the values $\left|p_n\frac{a_n}{a_j}-p_j\right|$ for natural numbers $p_j,p_n$ with $p_n<P$, making (\ref{schmidt}) impossible to satisfy for large $Q$; on the other hand if the $\frac{a_n}{a_j}$ are all rational then we could find a tuple $(p_1,\ldots,p_n)\in \Z_{+}^{n}$ with each $p_n\frac{a_n}{a_j}-p_j=0$, and then arbitrarily large integer multiples of $(p_1,\ldots,p_n)$ would satisfy this same property.

For any tuple $(p_1,\ldots,p_n)$ obeying (\ref{approx}) and each $j\in\{1,
\ldots,n-1\}$, we therefore have \[ \left|p_j\frac{a_j}{a_{n+1}}-p_n\frac{a_n
}{a_{n+1}}\right|=\frac{a_j}{a_{n+1}}\left|p_j-p_n\frac{a_n}{a_j}\right|<\frac
{a_j}{a_{n+1}}p_{n}^{-1/(n-1)}.\]  So if $A=\max\left\{\frac{a_1}{a_{n+1}},
\ldots,\frac{a_n}{a_{n+1}}\right\}$ we have \[ \left|p_j\frac{a_j}{a_{n+1}}-p_
k\frac{a_k}{a_{n+1}}\right|<2Ap_{n}^{-1/(n-1)}\mbox{ for }j,k\in\{1,\ldots,n\}
.\]  Consequently if we choose $\beta$ with the property that \begin{equation}
\label{betaint} 0<\min_{1\leq j\leq n}\frac{p_j a_j}{a_{n+1}} -\beta < A p_{n}
^{-1/(n-1)}\end{equation} then it will hold that, for each $j\in\{1,\ldots,n\}
$, $0<p_j\frac{a_j}{a_{n+1}}-\beta<3Ap_{n}^{-1/(n-1)}$.  The fact that $\beta<
p_n\frac{a_n}{a_{n+1}}$ implies that $3Ap_{n}^{-1/(n-1)}<3A\left(\frac{a_n}{a_
{n+1}}\right)^{1/(n-1)}\beta^{-1/(n-1)}$.  

So setting $c=3A\left(\frac{a_n}{a_{n+1}}\right)^{1/(n-1)}$ (which depends only on $a_1,\ldots,a_{n+1}$),  for every tuple $(p_1,\ldots,p_n)$ obeying (\ref{approx}), any number $\beta$ obeying (\ref{betaint}) has $0<p_j\frac{a_j}{a_{n+1}}-\beta<c\beta^{-1/(n-1)}$ for each $j\in\{1,\ldots,n\}$.  Since the numbers $p_n$ arising in such tuples are unbounded above, so too (by (\ref{betaint})) are the corresponding values of $\beta$.
\end{proof}

\begin{lemma} \label{strongTbound}
There is a constant $C>0$, depending only on $a_1,\ldots,a_{n+1}$, with the following property.  Suppose that $\beta>2$ lies in the set $B$ from Lemma \ref{dirichlet}, that $0<\ep<\beta^{-2}$, and $0<\delta<\frac{\ep}{3\beta}$. Then all of the orbits $\gamma_{w,N}$ from item (iii) in Lemma \ref{betaorbits} have period $T_{w,N}$ obeying \[ T_{w,N}>\left\{\begin{array}{ll} C\beta \ep & \mbox{if }n=1 \\ C\beta^{1/(n-1)}\ep & \mbox{if }n\geq 2 \end{array}\right.  .\]
\end{lemma}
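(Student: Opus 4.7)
My proof plan rests on the observation that the bound \eqref{twnbound} expresses $T_{w,N}$ as a sum of two nonnegative terms: a ``small $N$'' term $Na_{n+1}(\epsilon-2\beta\delta)$ and a ``close to $\beta$'' term $(N\beta a_{n+1}-ka_j)\frac{1-\epsilon}{1+\beta}$. Under the hypothesis $\delta<\epsilon/(3\beta)$, the first term is at least $\frac{Na_{n+1}\epsilon}{3}$, so it grows linearly in $N$. Meanwhile, the factor $\frac{1-\epsilon}{1+\beta}$ in the second term is bounded below using \eqref{ube}: since $\beta>2$, we have $\beta-1>\beta/2$, hence $\frac{1-\epsilon}{1+\beta}>\frac{\beta\epsilon}{2}$. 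Thus the whole task reduces to giving a positive lower bound on $N\beta a_{n+1}-ka_j$ for small $N$, which is where the Dirichlet approximations of Lemma \ref{dirichlet} enter.

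The algebraic identity I would exploit is
\[
N\beta a_{n+1}-ka_j = -N(p_ja_j-\beta a_{n+1})+(Np_j-k)a_j,
\]
where $p_1,\ldots,p_n$ are the integers furnished by Lemma \ref{dirichlet} with $0<p_ja_j-\beta a_{n+1}<ca_{n+1}\beta^{-1/(n-1)}$ (respectively $ca_{n+1}\beta^{-2}$ when $n=1$). The key integer observation is that $Np_j-k\geq 1$: indeed, from the hypothesis $ka_j<N\beta a_{n+1}<Np_ja_j$ and the fact that $k$, $Np_j$ are integers, we get $k\leq Np_j-1$. Setting $a_{\min}=\min\{a_1,\ldots,a_n\}$, this yields
\[
N\beta a_{n+1}-ka_j \;\geq\; a_{\min}-Nca_{n+1}\beta^{-1/(n-1)}
\]
(with $\beta^{-2}$ replacing $\beta^{-1/(n-1)}$ when $n=1$).

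Next I would split on the size of $N$ relative to the threshold $N_0:=\frac{a_{\min}}{2ca_{n+1}}\beta^{1/(n-1)}$ (resp.\ $\frac{a_{\min}}{2ca_{n+1}}\beta^{2}$ for $n=1$). When $N\leq N_0$, the above display gives $N\beta a_{n+1}-ka_j\geq a_{\min}/2$, so the second term in \eqref{twnbound} alone gives $T_{w,N}\geq \frac{a_{\min}}{2}\cdot\frac{\beta\epsilon}{2}=\frac{a_{\min}\beta\epsilon}{4}$. When $N>N_0$, I would discard the second term and use only the first: $T_{w,N}\geq \frac{Na_{n+1}\epsilon}{3}>\frac{a_{\min}}{6c}\beta^{1/(n-1)}\epsilon$ (resp.\ $\frac{a_{\min}}{6c}\beta^{2}\epsilon$ for $n=1$). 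For $n\geq 2$, since $\beta>2$ gives $\beta^{1/(n-1)}\leq\beta$, the weaker of these two lower bounds is the ``large $N$'' bound $C\beta^{1/(n-1)}\epsilon$ with $C=\frac{a_{\min}}{6c}$. For $n=1$, both bounds dominate $\beta\epsilon$ (the ``small $N$'' bound is exactly proportional to $\beta\epsilon$), so $C=a_{\min}/4$ works.

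The only real subtlety is the integer observation $Np_j-k\geq 1$, which is strict integrality rather than real-valued interpolation; everything else is bookkeeping of the two regimes. I would conclude by remarking that the constant $C$ depends only on $a_{\min}$ and on the constant $c$ from Lemma \ref{dirichlet}, both of which depend only on $a_1,\ldots,a_{n+1}$, as required.
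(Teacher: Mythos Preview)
Your proof is correct and follows essentially the same approach as the paper's: both use the bound \eqref{twnbound}, simplify via $\epsilon-2\beta\delta>\epsilon/3$ and $\frac{1-\epsilon}{1+\beta}>(\beta-1)\epsilon$, invoke the integer observation $Np_j-k\geq 1$ from the Dirichlet approximation, and split into small-$N$ and large-$N$ regimes. The only cosmetic differences are that the paper keeps track of $a_j$ rather than passing to $a_{\min}$, and defines the threshold in terms of the actual approximation error $\zeta$ rather than its upper bound; your claim that the large-$N$ bound is always the weaker one tacitly assumes $c\geq 2/3$, so to be safe take $C=\min\{a_{\min}/4,\,a_{\min}/(6c)\}$.
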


\begin{proof}
As noted in (\ref{ube}), the assumption that $0<\ep<\beta^{-2}$ implies that $\frac{1-\ep}{1+\beta}>(\beta-1)\ep$.  Also the assumption that $0<\delta<\frac{\ep}{3\beta}$ implies that $\ep-2\beta\delta>\frac{\ep}{3}$.  So (\ref{twnbound}) immediately gives \begin{equation}\label{twnbound2} T_{w,N}> \ep\left[\frac{a_{n+1}}{3}N+(\beta-1)(N\beta a_{n+1}-ka_j)\right].\end{equation}  Recall that $N\in \Z_+$ and $k\in \Z$ have $N\beta a_{n+1}-ka_j>0$. In particular both of the terms in parentheses are positive; we shall show that one or the other of them is always greater than $C\beta^{1/(n-1)}$ (or than $C\beta$ if $n=1$) for an appropriate constant $C$.

For this purpose, we use Lemma \ref{dirichlet} to write $\beta=\frac{p_ja_j}{a_{n+1}}-\zeta$ where \[ p_j\in \Z\mbox{ and } 0<\zeta<\left\{\begin{array}{cc} c\beta^{-2} & \mbox{if }n=1 \\ c\beta^{-1/(n-1)} & \mbox{if }n\geq 2\end{array}\right..\]  So \[ N\beta a_{n+1}-ka_j=(Np_j-k)a_j-N\zeta a_{n+1},\] so the fact that $N\beta a_{n+1}-ka_j>0$ implies that $Np_j-k>0$.  But each of $N,p_j,k$ is an integer, so this forces $Np_j-k\geq 1$ and hence proves that \[ N\beta a_{n+1}-ka_j\geq a_j-N\zeta a_{n+1}.\] 

So if $N\leq \frac{a_j}{2\zeta a_{n+1}}$ then we obtain $N\beta a_{n+1}-ka_j\geq \frac{a_j}{2}$ and so (\ref{twnbound2}) immediately gives \begin{equation}\label{twnbound3} T_{w,N}>\ep(\beta-1)\frac{a_j}{2}>\frac{a_j}{4}\beta\ep \quad\mbox{when }N\leq \frac{a_j}{2\zeta a_{n+1}}.\end{equation} (The last inequality follows from the assumption $\beta>2$.) On the other hand if $N>\frac{a_j}{2\zeta a_{n+1}}$ then our bound on $\zeta$ shows that \[ \frac{a_{n+1}}{3}N\geq \left\{\begin{array}{cc} \frac{a_j}{6c}\beta^2 & \mbox{if }n=1 \\ \frac{a_j}{6c}\beta^{1/(n-1)} & \mbox{if }n\geq 2\end{array}\right. \] and hence, by (\ref{twnbound2}), \begin{equation}\label{twnbound4} T_{w,N}>\left\{\begin{array}{cc} \frac{a_j}{6c}\beta^2\ep &\mbox{if }n=1 \\ \frac{a_j}{6c}\beta^{1/(n-1)} \ep &\mbox{if }n\geq 2\end{array}\right. \quad \mbox{when }N>\frac{a_j}{2\zeta a_{n+1}}.\end{equation} The lemma now follows directly from (\ref{twnbound3}) and (\ref{twnbound4}).
\end{proof}

\begin{prop} \label{eme} Assume that $\frac{a_{n+1}}{a_j}$ is irrational for each $j\in\{1,\ldots,n\}$. The  unbounded open set $B$ of Lemma \ref{dirichlet} and the constant $C$ of Lemma \ref{strongTbound} have the following property.  Let $M_{\beta}=\left\{\begin{array}{ll} C\beta & \mbox{if }n=1 \\ C\beta^{1/(n-1)} & \mbox{if }n\geq 2 \end{array}\right.$.  For all sufficiently large $\beta\in B$ such that each $\frac{\beta a_j}{a_{n+1}}$ is irrational, there is a grading $k_{\beta}\leq 1$ such that whenever $0<\ep<\beta^{-2}$ and $a_{n+1}\ep<s<t< M_{\beta}\ep$, the canonical map $\imath_{t,s}\co \mathring{\mathbb{CH}}_{k_{\beta}}^{s}(E_{H_{\ep,\beta,}}^{\circ},\lambda_0)\to \mathring{\mathbb{CH}}_{k_{\beta}}^{t}(E_{H_{\ep,\beta}}^{\circ},\lambda_0)$ has rank $1$.
\end{prop}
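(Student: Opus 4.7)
The plan is to apply Lemma \ref{exhaust} to the exhausting family $\bar{U}_m = E_{h_{\delta_m}\circ u}$ with $\delta_m\searrow 0$ and each $\delta_m<\ep/(3\beta)$; by Proposition \ref{suspliouville} these are Liouville domains, and Proposition \ref{toppres} ensures $H_1(\bar U_m;\Z)=\{0\}$ and $c_1(T\bar U_m)$ torsion (since the same holds for $E$). We take $k_\beta:=CZ(\gamma_{0,1})$. For $\beta\in B$ sufficiently large, set $\eta_j:=p_j-\beta a_{n+1}/a_j$ where the $p_j$ come from Lemma \ref{dirichlet}; the bound $\zeta_j<c\beta^{-1/(n-1)}$ (resp.\ $c\beta^{-2}$ when $n=1$) gives $\eta_j=\zeta_j a_{n+1}/a_j\in(0,1)$, and the irrationality hypothesis guarantees $\eta_j\notin\Z$, so $\lfloor-\beta a_{n+1}/a_j\rfloor=-p_j$. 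Then by Lemma \ref{betaorbits}(ii), $k_\beta=n+2-2P$ where $P:=\sum_j p_j$. Since $P\to\infty$ as $\beta\to\infty$ through $B$, we have $k_\beta\leq 1$ for all sufficiently large $\beta\in B$.

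Given the proposition's pair $(s,t)$, I would apply Lemma \ref{exhaust} with this $s$, with $k=k_\beta$, $r=1$, and with the lemma's upper parameter chosen slightly larger than $t$ but still below $M_\beta\ep$. There are three families of Reeb orbits on $\partial\bar U_m$ to control. First, orbits on $\partial E\times\{0\}$ (type (i) of Lemma \ref{betaorbits}) have periods bounded below by $\min_j a_j$, which exceeds $M_\beta\ep$ once $\beta$ is large (since $\ep<\beta^{-2}$ forces $M_\beta\ep\to 0$). Second, the orbits $\gamma_{w,N}$ of type (iii) have periods greater than $M_\beta\ep$ by Lemma \ref{strongTbound}. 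Both families thus satisfy the ``period greater than $t$'' alternative of Lemma \ref{exhaust}(ii) without any further constraint on their Conley--Zehnder indices.

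The remaining orbits are the $\gamma_{0,N}$ with $N\geq 2$ and $N\leq M_\beta/a_{n+1}$. Using the $\eta_j$ above and the irrationality of $\beta a_{n+1}/a_j$, one has $\lfloor-N\beta a_{n+1}/a_j\rfloor=-Np_j+\lfloor N\eta_j\rfloor$, so
\[ CZ(\gamma_{0,N})-k_\beta=-2(P-1)(N-1)+2\sum_{j=1}^{n}\lfloor N\eta_j\rfloor. \]
This difference is an even integer, so it suffices to show it is nonzero in order to exclude membership in $\{-1,0,1\}$. The bound on $\eta_j$ from Lemma \ref{dirichlet} combined with $N\leq(C/a_{n+1})\beta^{1/(n-1)}$ gives $\sum_j\lfloor N\eta_j\rfloor\leq N\sum_j\eta_j\leq K$ for a constant $K$ depending only on $a_1,\ldots,a_{n+1}$ (and with $K\to 0$ as $\beta\to\infty$ when $n=1$). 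On the other hand $(P-1)(N-1)\geq P-1$, and $P\to\infty$ through $B$, so for all $\beta$ large enough the first term dominates and $CZ(\gamma_{0,N})-k_\beta$ is a negative even integer for every such $N$.

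Nondegeneracy of each $\gamma_{0,N}$ is guaranteed by Lemma \ref{betaorbits}(ii), and the preceding analysis shows that no good Reeb orbit other than $\gamma_{0,1}$ has index $k_\beta$ and period at most $M_\beta\ep$; in particular none has period exactly $s$. The hypotheses of Lemma \ref{exhaust} are therefore met, yielding $\dim_\Q \mathring{\mathbb{CH}}^{s}_{k_\beta}(E_{H_{\ep,\beta}}^{\circ},\lambda_0)=1$ together with injectivity of $\imath_{ts}$, whence the map has rank one. The main obstacle is the uniform bookkeeping around $\lfloor-N\beta a_{n+1}/a_j\rfloor$: one must check that the constant $C$ in $M_\beta$ can be arranged (possibly by replacing the constant from Lemma \ref{strongTbound} with a smaller one) so that $N\eta_j$ stays bounded uniformly as $N$ ranges up to $M_\beta/a_{n+1}$, and that $P-1$ then dominates that bound for every such $N$ once $\beta$ is sufficiently large in $B$.
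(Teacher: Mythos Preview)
Your argument is correct, and it follows the same overall architecture as the paper: exhaust $E_{H_{\ep,\beta}}^{\circ}$ by the $E_{h_{\delta_m}\circ u}$, set $k_\beta=CZ(\gamma_{0,1})$, dispose of orbits of types (i) and (iii) via the period bounds, and verify Lemma~\ref{exhaust} for $r=1$ by checking that no $\gamma_{0,N}$ with $N\geq 2$ has index in $\{k_\beta-1,k_\beta,k_\beta+1\}$.

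The difference lies in how you handle the $\gamma_{0,N}$ and the bound $k_\beta\leq 1$. You invoke the Dirichlet integers $p_j$ from Lemma~\ref{dirichlet}, write $k_\beta=n+2-2P$ and
\[
CZ(\gamma_{0,N})-k_\beta=-2(P-1)(N-1)+2\sum_j\lfloor N\eta_j\rfloor,
\]
and then balance the two terms using the quantitative bounds on the $\eta_j$ together with the restriction $N\leq M_\beta/a_{n+1}$. This works, and your concern at the end is unfounded: with the constants exactly as stated one gets $\sum_j N\eta_j\leq Cc\sum_j a_j^{-1}=:K$ uniformly, while $P\to\infty$, so no shrinking of $C$ is needed. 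The paper, by contrast, avoids Lemma~\ref{dirichlet} entirely at this step. It observes directly that each $\lfloor-\beta a_{n+1}/a_j\rfloor\leq -1$, giving $k_\beta\leq 2-n\leq 1$ for all $\beta>1$; and that as soon as some $\beta a_{n+1}/a_j>2$, the quantity $N+\lfloor -N\beta a_{n+1}/a_j\rfloor$ is strictly decreasing in $N$ while the remaining floor terms are nonincreasing, so $CZ(\gamma_{0,N})\leq CZ(\gamma_{0,N-1})-2$ for \emph{every} $N\geq 2$, with no restriction on $N$ and no appeal to the Diophantine constants. The paper's route is shorter and makes clear that the only place Lemma~\ref{dirichlet} is genuinely needed is in Lemma~\ref{strongTbound}; your route is correct but entangles the index computation with the Diophantine input more than is necessary.
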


\begin{proof}
As mentioned just before Lemma \ref{betaorbits}, we can apply Lemma \ref{exhaust} with $U=E_{H_{\ep,\beta}}^{\circ}$ and $\bar{U}_m=E_{h_{\delta_m}\circ u}$ for any sequence $\delta_m\searrow 0$.  Consider the Reeb orbits on $\partial    E_{h_{\delta_m}\circ u}$ where we assume that $\delta_m<\frac{\ep}{3\beta}$.  
We have $M_{\beta}\ep=O(\beta^{-1})$, so for $\beta\in B$ sufficiently large all of the orbits in item (i) of Lemma \ref{betaorbits} will have period larger than $M_{\beta}\ep$.  The preceding lemma shows that all of the orbits in item (iii) also have period larger than $M_{\beta}\ep$.  As for those in item (ii), the orbit $\gamma_{0,1}$ has period $a_{n+1}\ep$ and Conley--Zehnder index $n+2+2\sum_{j=1}^{n}\left\lfloor\frac{-\beta a_{n+1}}{a_j}\right\rfloor$, which we define to be $k_{\beta}$.  Since each $\left\lfloor \frac{-\beta a_{n+1}}{a_j}\right\rfloor \leq -1$, we have $k_{\beta}\leq 2-n\leq 1$; if, as we henceforth assume, $\beta$ is large enough that there is some $j$ so that $\frac{\beta a_{n+1}}{a_j}>2$ then $N+\left\lfloor \frac{-N\beta a_{n+1}}{a_j}\right\rfloor$ will be a strictly decreasing function of the integer $N$ and so $CZ(\gamma_{0,N})\leq CZ(\gamma_{0,N-1})-2\leq k_{\beta}-2$ for all $N\geq 2$.  In particular in this case $\gamma_{0,1}$ is the only orbit from item (ii) whose Conley--Zehnder index lies in the  set $\{k_{\beta}-1,k_{\beta},k_{\beta}+1\}$.  Given this, the proposition follows directly from Lemma \ref{exhaust}.
\end{proof}

\begin{cor}\label{dellu}
Choose $a_1,\ldots,a_{n+1},\beta$ such that each $\frac{a_{n+1}}{a_j},\frac{\beta a_{n+1}}{a_j}$ is irrational, $\beta\in B$, and $\beta$ is large enough for the conclusion of Proposition \ref{eme} to hold.  Let $0<\ep<\beta^{-2}$. If $V\subset \C^{n+1}$ is the interior of any ellipsoid $E(b_1,\ldots,b_{n+1})$ we have $\delta_f((V,\lambda_0),(E_{H_{\ep,\beta}}^{\circ},\lambda_0))\geq M_{\beta}/a_{n+1}$.
\end{cor}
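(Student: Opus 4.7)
The plan is to argue by contradiction using Proposition \ref{implantdelta} together with Proposition \ref{eme} and Lemma \ref{exhaust}. Assume that $\delta_f\big((V,\lambda_0),(E_{H_{\ep,\beta}}^{\circ},\lambda_0)\big) < M_\beta/a_{n+1}$, and pick $b$ satisfying $\delta_f\big((V,\lambda_0),(E_{H_{\ep,\beta}}^{\circ},\lambda_0)\big) < b < M_\beta/a_{n+1}$.  Both $V$ and $E_{H_{\ep,\beta}}^{\circ}$ are contractible open subsets of $\C^{n+1}$ (the latter deformation retracts to the origin via the Liouville flow), so they have vanishing $H_1$ and $c_1$, and Proposition \ref{implantdelta} provides a $b^{1/2}$-implantation of the graded persistence module $\mathring{\mathbb{CH}}(E_{H_{\ep,\beta}}^{\circ},\lambda_0)$ into $\mathring{\mathbb{CH}}(V,\lambda_0)$.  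Writing $a = b^{1/2}$, so that $a^2 < M_\beta/a_{n+1}$, this implantation supplies, in any grading $k$, morphisms $\phi_s^k$ and $\psi_s^k$ whose compositions satisfy $\psi_{as}^k \circ \phi_s^k = \imath_{a^2 s,s}^k$, where $\imath$ is the structure map of $\mathring{\mathbb{CH}}(E_{H_{\ep,\beta}}^{\circ},\lambda_0)$.

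Let $k_\beta \leq 1$ be the grading supplied by Proposition \ref{eme}.  Because $a^2 < M_\beta/a_{n+1}$, one can choose $s$ just slightly larger than $a_{n+1}\ep$ so that $a_{n+1}\ep < s < a^2 s < M_\beta \ep$.  Proposition \ref{eme} then tells us that the structure map $\imath_{a^2 s, s}^{k_\beta}$ has rank one.  Since this map factors as $\psi_{as}^{k_\beta}\circ\phi_s^{k_\beta}$ through $\mathring{\mathbb{CH}}_{k_\beta}^{as}(V,\lambda_0)$, it will suffice for the contradiction to show that $\mathring{\mathbb{CH}}_{k_\beta}^{u}(V,\lambda_0) = 0$ for every $u > 0$.

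For this, exhaust $V = E(b_1,\ldots,b_{n+1})^{\circ}$ by a monotone sequence of closed ellipsoids $\bar{U}_m = E(b_1^{(m)},\ldots,b_{n+1}^{(m)}) \subset V$ with $b_j^{(m)} \nearrow b_j$ and with all ratios $b_i^{(m)}/b_j^{(m)}$ ($i \neq j$) irrational.  By the calculation reviewed in Example \ref{ellex}, every closed Reeb orbit on $\partial\bar{U}_m$ is nondegenerate with Conley--Zehnder index of the form $n + 2\sum_{j=1}^{n+1}\big\lfloor Nb_k^{(m)}/b_j^{(m)}\big\rfloor \geq n+2$.  Under the standing hypothesis $2n+2 \geq 4$, we have $n \geq 1$, so every such index is at least $3$, while $k_\beta + 1 \leq 2$; hence no Reeb orbit on any $\partial\bar{U}_m$ has Conley--Zehnder index in $\{k_\beta - 1, k_\beta, k_\beta + 1\}$.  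Lemma \ref{exhaust}, applied with $r = 0$ and any $0 < s < t$, therefore yields $\mathring{\mathbb{CH}}_{k_\beta}^{u}(V,\lambda_0) = 0$ for every $u > 0$.  This gives the desired contradiction and completes the proof.  The main conceptual step is the first paragraph, where the upper bound on $\delta_f$ must be correctly translated into an implantation and then into a factorization of a specific rank-one structure map; once that is in place, the vanishing of the relevant grading on any ellipsoid is a standard index count.
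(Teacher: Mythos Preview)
Your proof is correct and follows essentially the same argument as the paper: assume the bound fails, apply Proposition~\ref{implantdelta} to factor the rank-one structure map from Proposition~\ref{eme} through $CH_{k_\beta}^{as}(V,\lambda_0)$, and reach a contradiction by showing the latter vanishes via the standard index computation for ellipsoids. Your treatment is slightly more explicit in invoking Lemma~\ref{exhaust} with $r=0$ for the vanishing step, whereas the paper just appeals directly to the fact that all Reeb orbits on an irrational ellipsoid boundary have index at least $n+2$; both arguments are equivalent.
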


\begin{proof} If the corollary were false,  Proposition \ref{implantdelta} shows that, for some $a<(M_{\beta}/a_{n+1})^{1/2}$, there would be a $a^{1/2}$-implantation of $\mathring{\mathbb{CH}}(E_{H_{\ep,\beta}},\lambda_0)$ into $\mathring{\mathbb{CH}}(V,\lambda_0)$.  Choosing $s$ such that $a_{n+1}\ep<s<a^2s<M_{\beta}\ep$.  We then have a commutative diagram \[ \xymatrix{ CH_{k_{\beta}}^{s}(E_{H_{\ep,\beta}}^{\circ},\lambda_0) \ar[rr]\ar[rd] & & CH^{a^2s}_{k_{\beta}}(E_{H_{\ep,\beta}}^{\circ},\lambda_0) \\ & CH_{k_{\beta}}^{as}(V,\lambda_0)\ar[ru] & } \] where the top map has rank $1$ by Corollary \ref{eme}.  But $k_{\beta}\leq 1$, and the fact that $V$ is the interior of an ellipsoid implies that $CH_{k}^{L}(V,\lambda_0)=\{0\}$ for all $L>0$ whenever $k\leq 1$.  (Indeed, we can exhaust $V$  by ellipsoids $E(c_1,\ldots,c_{n+1})$ with all $\frac{c_j}{c_m}$ irrational $j\neq m$, and then Example \ref{ellex} shows that all Reeb orbits on the boundaries of these ellipsoids have index at least $n+2$.  Hence the ellipsoids being used to approximate $V$ have $CH_{k}^{L}=\{0\}$ for all $k\leq 1$ and $L>0$, and passing to the inverse limit shows that the same holds for $V$.) This contradiction proves the corollary.
\end{proof}
We have now completed the proof of Theorem \ref{elldist}, as all of the results mentioned in the proof outline in Section \ref{examples} have been established.
Truncated ellipsoids also give examples for which the first inequality in Proposition \ref{easyineq} can be strengthened to a strict inequality \begin{equation}\label{dcnotmin} d_c((U,\lambda),(V,\mu))<\min\{\delta_f((U,\lambda),(V,\mu)),\delta_f((V,\mu),(U,\lambda))\}.\end{equation}   For the remainder of this section choose a positive irrational number $\ep\ll 1$ and assume that the parameters $a_1,\ldots,a_n,a_{n+1}$ that were fixed at the start of the section are given by $a_1=\cdots=a_n=\pi$ and $a_{n+1}=\pi(1-\ep)$.  For $p\in \Z_+$ let $V_p=W_{H_{\ep,p}}$.  We will show that (\ref{dcnotmin}) holds with the domains $U,V$ given by $V_3$ and $V_4$ and with $\lambda=\mu=\lambda_0$.  

To see this, first note that Corollary \ref{coarsecvg} and the multiplicative triangle inequality show that \[ d_c((V_3,\lambda_0),(V_4,\lambda_0))\leq  \left(\frac{4}{3}\right)^2\cdot\left(\frac{5}{4}\right)^2=\frac{25}{9}.\]  We will show however that both $\delta_f((V_3,\lambda_0),(V_4,\lambda_0))$ and $\delta_f((V_4,\lambda_0),(V_3,\lambda_0))$ can be made arbitrarily large by taking $\ep$ small.

The proof of this is similar to that of Corollary \ref{dellu}, though the assumption that $a_1=\cdots=a_n$ simplifies matters somewhat.  Here is the key ingredient:

\begin{prop}\label{doubleknot} If $\ep$ is sufficiently small  then:
\begin{itemize} \item[(i)] The canonical maps $CH_{2-5n}^{s}(V_3,\lambda_0)\to CH_{2-5n}^{t}(V_3,\lambda_0)$ and $CH_{2-7n}^{s}(V_4,\lambda_0)\to CH_{2-7n}^{t}(V_4,\lambda_0)$ are each nonzero whenever $\ep\pi<s<t<\frac{\pi}{14}$; and \item[(ii)] $CH^{u}_{2-5n}(V_4,\lambda_0)=CH^{u}_{2-7n}(V_3,\lambda_0)=\{0\}$ for all $u<\frac{\pi}{14}$.
\end{itemize}
\end{prop}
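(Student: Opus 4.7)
The plan is to verify both parts by applying Lemma \ref{exhaust} to $V_3$ and $V_4$ after approximating each by the smooth Liouville domains $(E_{h_{\delta_m}\circ u},\lambda_0)$ introduced before Lemma \ref{betaorbits}. Since $\ep$ is irrational, both $a_{n+1}/a_j=1-\ep$ and $\beta a_{n+1}/a_j=p(1-\ep)$ are irrational for $p\in\{3,4\}$, so Lemma \ref{betaorbits} catalogues the closed Reeb orbits on the approximating boundaries, and Proposition \ref{toppres} supplies the topological hypotheses ($H_1=0$, $c_1$ torsion) needed for a $\mathbb{Z}$-grading. The type (i) orbits $\gamma_c$ have period at least $\pi>\pi/14$, so they are irrelevant to $CH^u$ below filtration $\pi/14$.

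First I would compute the Conley--Zehnder indices of the $\gamma_{0,N}$ using (\ref{cz0n}). For small $\ep$ one has $\lfloor-Np(1-\ep)\rfloor=-Np$ in each summand, whence $CZ(\gamma_{0,N})=n+2N(1-np)$. In particular $CZ(\gamma_{0,1})=2-5n$ when $p=3$ and $CZ(\gamma_{0,1})=2-7n$ when $p=4$; for $N\geq2$ the index jumps by $2(N-1)(1-np)\leq-4$, so no higher iterate lands within $\pm1$ of the index of $\gamma_{0,1}$. All these iterates have common parity $n\bmod 2$, so they are good.

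The key analytic input is to bound the periods of the type (iii) orbits $\gamma_{w,N}$ from below. Specializing (\ref{twnbound}) with all $a_j=\pi$ and letting $\delta_m\to 0$, one gets $T_{w,N}\geq N\pi(1-\ep)\ep+(Np(1-\ep)-k)\pi(1-\ep)/(1+p)$ with $k\in\mathbb{Z}$ obeying $-N(1-\ep)<k<Np(1-\ep)$. For $k\geq 0$ the integer estimate $Np(1-\ep)-k\geq 1-Np\ep$ makes the two nonnegative terms telescope:
\[ T_{w,N}\ \geq\ \frac{\pi(1-\ep)\,(1+N\ep)}{1+p}\ \geq\ \frac{\pi(1-\ep)}{1+p}, \]
while for $k\leq -1$ the second term already dominates this same bound. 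Since $1/(1+p)>1/14$ for $p\leq 4$, for sufficiently small $\ep$ every type (iii) orbit has period strictly greater than $\pi/14$.

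With these facts in hand, part (i) follows by invoking Lemma \ref{exhaust} on $V_p$ with $k=2-(2p-1)n$ and $r=1$: for any $\pi\ep<s<t<\pi/14$, the unique index-$k$ orbit of period less than $s$ is $\gamma_{0,1}$ (of period $\pi(1-\ep)\ep<\pi\ep$), and every other Reeb orbit either has period exceeding $\pi/14$ (types (i) and (iii)) or has Conley--Zehnder index outside $\{k-1,k+1\}$ (the higher iterates $\gamma_{0,N}$), with freedom to choose the lemma's parameter $t$ anywhere below $\pi(1-\ep)/(1+p)$. This gives $\dim CH^s_k(V_p,\lambda_0)=1$ and injective persistence maps up to level $t$, i.e. nonvanishing. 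For part (ii), the arithmetic equations $n+2N(1-4n)=2-5n$ and $n+2N(1-3n)=2-7n$ reduce to $N=(3n-1)/(4n-1)$ and $N=(4n-1)/(3n-1)$ respectively, neither of which is a positive integer for $n\geq 1$; hence no good Reeb orbit on $\partial V_4$ (resp.\ $\partial V_3$) of Conley--Zehnder index $2-5n$ (resp.\ $2-7n$) exists below filtration $\pi/14$, so applying Lemma \ref{exhaust} with $r=0$ yields the claimed vanishing. The only real obstacle is the telescoping period bound in the third paragraph; everything else is bookkeeping once the correct CZ indices and period estimates are in place.
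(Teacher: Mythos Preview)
Your argument is correct and follows the same route as the paper: catalogue the Reeb orbits on the smoothings via Lemma \ref{betaorbits}, bound the type-(iii) periods below by something exceeding $\pi/14$, compute the indices of the $\gamma_{0,N}$, and feed everything into Lemma \ref{exhaust}. Your telescoping bound $T_{w,N}\geq \pi(1-\ep)(1+N\ep)/(1+p)$ is in fact tidier than the paper's two-case split on $N\lessgtr 1/(7\ep)$; one small imprecision is that your formula $\lfloor -Np(1-\ep)\rfloor=-Np$ holds only for $Np\ep<1$, but the iterates with $N\geq 1/(p\ep)$ have period at least $\pi(1-\ep)/p>\pi/14$ and so fall under the period alternative in Lemma \ref{exhaust}(ii) anyway.
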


\begin{proof}
As was done with the general domains $W_{H_{\ep,\beta}}$ we use Lemma \ref{betaorbits} to control the periods and indices of the closed Reeb orbits on smooth Liouville domains that approximate $V_3$ and $V_4$, and then appeal to Lemma \ref{exhaust}.  We first claim that, for the smoothings of either $V_3$ or $V_4$, the periods of orbits of type (i) and type (iii) in Lemma \ref{betaorbits} always have period greater than $\frac{\pi}{14}$ provided that our parameter $\ep$, and also the smoothing parameter $\delta$, are sufficiently small. In what follows we always assume that $0<\delta\ll \ep$. For orbits of type (i) this is clear: indeed since $a_1=\cdots=a_n=\pi$ these orbits have period an integer multiple of $\pi$.  For those of type (iii), the bound on $T_{w,N}$ in Lemma \ref{betaorbits} becomes in our case \begin{equation}\label{twnv} T_{w,N}\geq \pi(1-\ep)\left[N(\ep-2\beta\delta)+\frac{N\beta(1-\ep)-k}{1+\beta}\right], \end{equation}  where $\beta=3$ for $V_3$ and $\beta=4$ for $V_4$, and $N\beta(1-\ep)-k>0$.  In particular both of the summands in brackets are positive since we assume $\delta\ll \ep$. If $\ep$ is sufficiently small, the term $\pi(1-\ep)N(\ep-2\beta\delta)$ may be bounded below by $\frac{N\ep\pi}{2}$.  Meanwhile, the fact that $N,\beta,k$ are integers with $N\beta(1-\ep)-k>0$ implies that $N\beta-k\geq 1$.  So (\ref{twnv}) implies, for $0<\delta\ll \ep\ll 1$, \[ T_{w,N}>\max\left\{\frac{N\ep\pi}{2},\frac{\pi(1-\ep)(1-N\beta\ep)}{1+\beta}\right\}.\]  If $N\leq\frac{1}{7\ep}$ then (since $\beta\in\{3,4\}$), the second expression in the maximum is bounded below by $\frac{3\pi(1-\ep)}{35}>\frac{\pi}{14}$ (for small $\ep$), while if $N>\frac{1}{7\ep}$ then the first expression is greater than $\frac{\pi}{14}$.  So (for appropriately small $\ep,\delta$) the only orbits of period at most $\frac{\pi}{14}$ are those of type (ii) in Proposition \ref{betaorbits}, namely the orbits $\gamma_{0,N}$ for $N\in\Z_+$.

Now the orbit $\gamma_{0,1}$ has period $\ep\pi(1-\ep)<\ep\pi$ and Conley--Zehnder index \[ CZ(\gamma_{0,1})=n+2+2n\left\lfloor -\beta(1-\ep)\right\rfloor = \left\{\begin{array}{ll} 2-5n & \beta=3 \\ 2-7n & \beta=4\end{array}\right. \] (we assume here that $\ep<\frac{1}{4}$).  In view of Lemma \ref{exhaust} and the fact that the $CZ(\gamma_{0,N})$ all have the same parity this is sufficient to prove item (i) in the proposition, as we have a nondegenerate orbit of the appropriate index with period less than $\ep\pi$, and the other orbits all either have period greater than $\frac{\pi}{14}$ or are nondegenerate with Conley--Zehnder index of the same parity.  Moreover to prove item (ii) in the proposition it is sufficient to check that if $\beta=4$ then for all $N\geq 1$ we have $CZ(\gamma_{0,N})\neq 2-5n$, and similarly that if $\beta=3$ then for all $N\geq 1$ we have $CZ(\gamma_{0,N})\neq 2-7n$.  

Assume from now on that $\ep<\frac{1}{6}$.  Then \[ CZ(\gamma_{0,N})=n+2N+2n\lfloor -\beta N(1-\ep)\rfloor \] is, for $\beta\in\{3,4\}$, a strictly decreasing function of $N$.  For $\beta=4$ we have $CZ(\gamma_{0,1})=2-7n<2-5n$, so since $N\mapsto CZ(\gamma_{0,N})$ is decreasing $CZ(\gamma_{0,N})$ indeed never takes the value $2-5n$.  For $\beta=3$, we have $CZ(\gamma_{0,1})=2-5n>2-7n$ and $CZ(\gamma_{0,2})=4-11n<2-7n$, so in this case the decreasing function $CZ(\gamma_{0,N})$ never takes the value $2-7n$. As noted earlier this suffices to establish (ii).
\end{proof}

\begin{cor}\label{v34} Assume that $\ep$ is sufficiently small.  Then
$\delta_f((V_3,\lambda_0),(V_4,\lambda_0))\geq \frac{1}{14\ep}$ and $\delta_f((V_4,\lambda_0),(V_3,\lambda_0))\geq \frac{1}{14\ep}$. Hence  $\min\{\delta_f((V_1,\lambda_0),(V_2,\lambda_0)),\delta_f((V_2,\lambda_0),(V_1,\lambda_0))\}>d_c((V_1,\lambda_0),(V_2,\lambda_0))$.
\end{cor}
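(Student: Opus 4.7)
The plan is to deduce both hemidistance lower bounds from Proposition \ref{implantdelta} combined with the filtered index information in Proposition \ref{doubleknot}. For the first bound, assume for contradiction that $1\leq b<\tfrac{1}{14\ep}$ with $\delta_f((V_3,\lambda_0),(V_4,\lambda_0))<b$. Each $V_p$ is a sublevel set in $E\times \C$ which deformation retracts onto $E^{\circ}\times\{0\}$, so $H_1(V_p;\Z)=\{0\}$ and $c_1(TV_p)=0$; Proposition \ref{implantdelta} therefore supplies a $b^{1/2}$-implantation of $\mathring{\mathbb{CH}}(V_4,\lambda_0)$ into $\mathring{\mathbb{CH}}(V_3,\lambda_0)$ in the graded setting. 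The third diagram in the definition of implantation then factors the persistence-module structure map
$$\imath_{bs,s}\co CH_{k}^{s}(V_4,\lambda_0)\to CH_{k}^{bs}(V_4,\lambda_0)$$
through $CH_{k}^{b^{1/2}s}(V_3,\lambda_0)$ for every $s>0$ and every degree $k$.

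Specialize to $k=2-7n$ and choose $s$ in the nonempty interval $(\ep\pi,\tfrac{\pi}{14b})$, which exists precisely because $b\ep\pi<\tfrac{\pi}{14}$. Then $\ep\pi<s<bs<\tfrac{\pi}{14}$, so by Proposition \ref{doubleknot}(i) the structure map $\imath_{bs,s}$ is nonzero. On the other hand $b\geq 1$ yields $b^{1/2}s\leq bs<\tfrac{\pi}{14}$, so Proposition \ref{doubleknot}(ii) gives $CH_{2-7n}^{b^{1/2}s}(V_3,\lambda_0)=\{0\}$, contradicting the factorization. Hence $\delta_f((V_3,\lambda_0),(V_4,\lambda_0))\geq\tfrac{1}{14\ep}$. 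The second bound is obtained by repeating the identical argument with the roles of $V_3$ and $V_4$ interchanged and with $k=2-5n$; the companion vanishing/non-vanishing statements in Proposition \ref{doubleknot} are exactly what is needed to run the same obstruction.

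Finally, the multiplicative triangle inequality for $d_c$ (Proposition \ref{triangle}) together with Corollary \ref{coarsecvg} gives
$$d_c((V_3,\lambda_0),(V_4,\lambda_0))\leq d_c((V_3,\lambda_0),(\hat{E}^{\circ},\lambda_0))\cdot d_c((\hat{E}^{\circ},\lambda_0),(V_4,\lambda_0))\leq \left(\tfrac{4}{3}\right)^2\left(\tfrac{5}{4}\right)^2=\tfrac{25}{9},$$
independently of $\ep$. Choosing $\ep$ small enough that $\tfrac{1}{14\ep}>\tfrac{25}{9}$, in addition to the smallness hypotheses of Proposition \ref{doubleknot}, then makes the minimum of the two $\delta_f$ values strictly larger than this $d_c$ value, yielding the desired strict inequality (reading the notation $V_1,V_2$ at the end of the corollary as $V_3,V_4$). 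The only substantive step is recognizing that the third diagram in the definition of an implantation is tailored to sandwich a nonzero structure map through a vanishing $\mathring{\mathbb{CH}}$-group of the other domain; everything else is bookkeeping with the bounds in Proposition \ref{doubleknot}.
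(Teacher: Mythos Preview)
Your proof is correct and follows essentially the same approach as the paper: assume the hemidistance is smaller than $\tfrac{1}{14\ep}$, invoke Proposition~\ref{implantdelta} to factor the persistence structure map of one domain through $CH^{b^{1/2}s}$ of the other in the appropriate grading, and derive a contradiction from Proposition~\ref{doubleknot}. Your added remarks (verifying the topological hypotheses for grading, explicitly invoking Corollary~\ref{coarsecvg} and the triangle inequality for the $d_c$ bound, and noting the $V_1,V_2$ typo) are welcome clarifications but do not alter the argument.
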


\begin{proof}
If first statement of the corollary were false then Proposition \ref{implantdelta} would allow us to find $a<\sqrt{\frac{1}{14\ep}}$ such that for each $s$ the canonical map $CH^{s}_{2-7n}(V_4,\lambda_0)\to CH^{a^2s}_{2-7n}(V_4,\lambda_0)$ factors through $CH^{as}_{2-7n}(V_3,\lambda_0)$.  If we choose $s$ such that $\ep\pi<s<a^2s<\frac{\pi}{14}$, then this is impossible by Proposition \ref{doubleknot} since the map in question is nontrivial but $CH^{as}_{2-7n}(V_3,\lambda_0)=\{0\}$.  Switching the roles of $V_3$ and $V_4$ in this argument and using grading $2-5n$ rather than $2-7n$ proves the second statement.  The last statement follows from the fact that, as noted earlier, $d_c((V_3,\lambda_0),(V_4,\lambda_0))\leq \frac{25}{9}$.
\end{proof}

\section{Sinkholes}\label{sink}

In this section we give the construction yielding Theorem \ref{quasiembed}, asserting that for any $D\in \N$ and $n\geq 1$ there is an embedding of $\Delta_D=\{x_1\geq\cdots\geq x_D\}\subset\R^D$ into $\mathcal{S}_{2n+2}$ that is quasi-isometric  with respect to the fine symplectic Banach-Mazur distance.

Fix $n,D\in \Z_+$.  Fix also $\eta>0$ and collection of Liouville embeddings $\phi_m\co B^{2n}(1+3\eta)\xhookrightarrow{L} \R^{2n}$ having disjoint images $B_m$ for $m=1,\ldots,D$.  Here for any $c>0$ we write \[ B^{2n}(c)=\left\{(z_1,\ldots,z_n)\in \C^n\left|\sum_{j=1}^{n
}\pi|z_j|^2\leq c\right.\right\} \] for the standard $2n$-dimensional ball of capacity $c$.  Additionally, fix an ellipsoid $W=E(a_1,\ldots,a_n)$ such that $\sqcup_{m=1}^{D}B_m\subset (1-\eta)W$. 
 
Let $\chi\co B^{2n}(1+3\eta)\to [0,1]$ be a smooth function such that $\chi|_{B^{2n}(1+\eta)}\equiv 0$ and $\chi|_{B^{2n}(1+3\eta)\setminus B^{2n}(1+2\eta)}\equiv 1$.  For each $m\in\{1,\ldots,D\}$ we have $\phi_{m}^{*}\lambda_0-\lambda_0=df_m$ for some smooth function $f_m$; the form $\lambda_0+d(\chi f_m)$ then coincides with $\lambda_0$ on $B^{2n}(1+\eta)$ and with $\phi_{m}^{*}\lambda_0$ outside of $B^{2n}(1+2\eta)$.  We accordingly define $\lambda\in \Omega^1(W)$ by \[ \lambda=\left\{\begin{array}{ll} \phi_{m}^{-1*}(\lambda_0+d(\chi f_m)) & \mbox{on }B_m\,(m=1,\ldots,D) \\ \lambda_0 &\mbox{elsewhere}\end{array}\right.\]  Thus $\lambda$ is a primitive for the standard symplectic form on $W\subset \C^n$, and coincides with the usual primitive $\lambda_0$ outside of $\cup_mB_m\subset (1-\eta)W$. In particular $(W,\lambda)$ is a Liouville domain.

Having fixed these data $W,\phi_m,\lambda$, we will now associate to every $\vec{\ep}=(\ep_1,\ldots,\ep_D)$ with $0<\ep_1\leq \ep_2\leq\cdots\leq \ep_D\leq 2$ a continuous function $H_{\vec{\ep}}\co W\to \R$ and hence a subset $W_{H_{\vec{\ep}}}=\{(z_1,\ldots,z_{n+1})|\pi|z_{n+1}|^{2}\leq H_{\vec{\ep}}(z_1,\ldots,z_n)\}\subset \C^{n+1}$.  Let $\xi\co [0,1]\to [0,2]$ be a smooth function such that $\xi(s)=2$ for $s\leq 1-\eta$, $\xi(1)=0$, and $\xi''(s)\leq 0$ for all $s$.  If $0<t\leq 2$ write $K_t\co B^{2n}(1+3\eta)\to \R$ for the function \[ K_t(z_1,\ldots,z_n)=\left\{\begin{array}{ll}  t+(2-t)\sum_{j=1}^{n}\pi|z_j|^2 & \mbox{if }\sum_{j=1}^{n}\pi|z_j|^2\leq 1 \\ 2 & \mbox{otherwise}\end{array}\right.\]  The promised function $H_{\vec{\ep}}$ is then given by \begin{equation}\label{hepdef} H_{\vec{\ep}}=\left\{\begin{array}{ll} K_{\ep_m}\circ\phi_{m}^{-1}& \mbox{on }B_m=\phi_m(B^{2n}(1+3\eta)) \\ \xi\circ u &\mbox{elsewhere} \end{array}\right.\end{equation} where as in Section \ref{trunc} the function $u\co W\to \R$ is defined by $u(z_1,\ldots,z_n)=\sum_{j=1}^{n}\frac{\pi|z_j|^2}{a_j}$.  Thus $H\equiv 2$ throughout $(1-\eta)W\setminus\cup_mB_m$, while at the origin of the $m$th ``sinkhole'' $B_m$ the value of $H_{\vec{\ep}}$ falls to the value $\ep_m$.

Evidently $H_{\vec{\ep}}$ is smooth away from $\cup_m\phi_m(\partial B^{2n}(1))$.  It will sometimes be useful to approximate $H_{\vec{\ep}}$ by smooth functions $H_{\vec{\ep},\delta}$ where $0<\delta<\eta$ constructed similarly to the smooth approximations used in Section \ref{trunc}: we define $H_{\vec{\ep},\delta}$ by replacing the function $K_{\ep_m}\co B^{2n}(1+3\eta)\to\R$ in (\ref{hepdef}) by a function $K_{\ep_m,\delta}(z_1\ldots,z_n)=k_{\ep_m,\delta}\left(\sum_j\pi|z_j|^2\right)$, where $k_{\ep_m,\delta}\co [0,\infty)\to [0,2]$ is smooth with $k_{\ep_m,\delta}(s)=2$ for $s\geq 1+\delta$, $k_{\ep_m,\delta}(s)=\ep_m+(2-\ep_m)s$ for $s\leq 1-\delta$, and $k''_{\ep_m,\delta}(s)\leq 0$ everywhere. (This automatically implies that $K_{\ep_m,\delta}\leq K_{\ep_m}$ everywhere.) We may and do also require that $k_{\ep_m,\delta_0}\geq k_{\ep_m,\delta_1}$ when $\delta_0<\delta_1$.  

\begin{prop}\label{hepstar}
Let $\hat{\lambda}=\lambda+\rho_{n+1}d\theta_{n+1}\in \Omega^1(\C^{n+1})$. For any choice of $\vec{\ep}=(\ep_1,\ldots,\ep_m)\in (0,2]^{D}$ and $\delta\in (0,\eta)$, $(W_{H_{\vec{\ep},\delta}},\hat{\lambda})$ is a Liouville domain, and $(W_{H_{\vec{\ep}}}^{\circ},\hat{\lambda})$ is an open Liouville domain.  Moreover if $\vec{\ep},\vec{\zeta}\in (0,2]^D$ and if $C:=\max_{1\leq m\leq D}\frac{\ep_m}{\zeta_m}\geq 1$ we have $C^{-1}W_{H_{\vec{\ep}}}^{\circ}\subset W_{H_{\vec{\zeta}}}^{\circ}$.
\end{prop}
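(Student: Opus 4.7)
My approach handles the three assertions in sequence.

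For the Liouville domain property of $(W_{H_{\vec{\ep},\delta}},\hat{\lambda})$, I would apply Proposition~\ref{suspliouville} to $(W,\lambda)$ and $H_{\vec{\ep},\delta}$. The vanishing $H_{\vec{\ep},\delta}|_{\partial W}\equiv 0$ is immediate since $H_{\vec{\ep},\delta}=\xi\circ u$ near $\partial W$ with $u|_{\partial W}=1$ and $\xi(1)=0$. For the positivity of $\tau_{\lambda,H_{\vec{\ep},\delta}}=\iota_{X_H}\lambda+H$ the plan is to partition $W$ into three regions. Outside the balls $B_m$, where $\lambda=\lambda_0$, the Section~\ref{trunc} style computation together with $\xi(0)=2$ and $\xi''\le 0$ gives $\tau=\xi(u)-u\xi'(u)\ge 2$. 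On $\phi_m(B^{2n}(1+\delta))$, the conditions $\chi\equiv 0$ on $B^{2n}(1+\eta)$ and $\delta<\eta$ yield $\phi_m^*\lambda=\lambda_0$, and an analogous concavity argument applied to $k_{\ep_m,\delta}$ gives $\tau\ge k_{\ep_m,\delta}(0)=\ep_m$. On the outer annulus $B_m\setminus\phi_m(B^{2n}(1+\delta))$, the function $H_{\vec{\ep},\delta}\equiv 2$ is constant so $X_H=0$ and $\tau=H=2$.

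For the open Liouville property of $(W_{H_{\vec{\ep}}}^\circ,\hat{\lambda})$, I would arrange the smoothings (using the $\delta$-monotonicity) so that $W_{H_{\vec{\ep}}}^\circ=\bigcup_{\delta>0}W_{H_{\vec{\ep},\delta}}^\circ$ is an increasing exhaustion by compact Liouville subdomains produced by Part~(1). Then any compact $K\subset W_{H_{\vec{\ep}}}^\circ$ sits inside some $W_{H_{\vec{\ep},\delta}}$; since $W_{H_{\vec{\ep},\delta}}$ is a Liouville domain the Liouville field $\hat{\mathcal{L}}$ points outward along $\partial W_{H_{\vec{\ep},\delta}}$, so the backward flow is defined on $K$ for all $t\le 0$ and lands inside $W_{H_{\vec{\ep},\delta'}}$ (for $\delta'<\delta$), a compact subset of $W_{H_{\vec{\ep}}}^\circ$, verifying both conditions in Definition~\ref{opendef}.

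The scaling containment is the main content. Computing $\hat{\mathcal{L}}=\mathcal{L}+\tfrac12(x_{n+1}\partial_{x_{n+1}}+y_{n+1}\partial_{y_{n+1}})$ and changing variables, the inclusion rewrites as the pointwise inequality $C^{-1}H_{\vec{\ep}}(\mathcal{L}^{\log C}(w'))\le H_{\vec{\zeta}}(w')$ on $C^{-1}W$, which I would verify for $H_{\vec{\ep},\delta},H_{\vec{\zeta},\delta}$ and then pass to the limit $\delta\searrow 0$. The key tool is the identity $dH(\mathcal{L})=-\iota_{X_H}\lambda=H-\tau_{\lambda,H}$ (from $\iota_{\mathcal{L}}d\lambda=\lambda$), which as a linear ODE along the Liouville trajectory integrates to
\[C^{-1}H(\mathcal{L}^{\log C}(w))=H(w)-\int_0^{\log C}e^{-r}\tau_{\lambda,H}(\mathcal{L}^r(w))\,dr.\]
The inequality thus reduces to $H_{\vec{\ep},\delta}(w')-H_{\vec{\zeta},\delta}(w')\le\int_0^{\log C}e^{-r}\tau_{\lambda,H_{\vec{\ep},\delta}}(\mathcal{L}^r(w'))\,dr$. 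In the limit, the left side is nonpositive unless $w'\in\phi_m(B^{2n}(1))$ with $\ep_m>\zeta_m$, in which case it equals $(\ep_m-\zeta_m)(1-v(w'))$. On such a starting point, because $\chi\equiv 0$ on $B^{2n}(1+\eta)$ the Liouville field is standard in sinkhole coordinates there, so $v(\mathcal{L}^r(w'))=e^rv(w')$ as long as the trajectory remains in $\phi_m(B^{2n}(1+\eta))$; combined with the global lower bound $\tau\ge\ep_m$ on $B_m$ coming from Part~(1), the integral is at least $\ep_m\min(1-C^{-1},1-v(w'))$. The elementary comparison $(\ep_m-\zeta_m)(1-v)\le\ep_m\min(1-C^{-1},1-v)$ for $v\in[0,1]$ then follows by endpoint verification using $\zeta_m\ge\ep_m/C$.

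The main obstacle that this approach sidesteps is a direct case analysis of the Liouville flow on the transition annulus $B_m\setminus\phi_m(B^{2n}(1+\eta))$ and across distinct sinkholes, where $\lambda$ differs from $\lambda_0$ and the flow could in principle traverse several balls; the integral formula reduces everything to the contribution from the initial segment of the trajectory inside the $m$-th deep sinkhole, where the flow is easy to describe explicitly and $\tau$ is bounded below by the easy pointwise estimate.
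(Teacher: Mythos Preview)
Your treatment of Part~(1) (that $(W_{H_{\vec{\ep},\delta}},\hat{\lambda})$ is a Liouville domain) matches the paper's: both compute $df_{\vec{\ep},\delta}(\hat{\mathcal{L}})=f_{\vec{\ep},\delta}+\tau_{\lambda,H_{\vec{\ep},\delta}}$ and verify $\tau\ge 2$ outside the inner sinkholes and $\tau\ge\ep_m$ inside.

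Your argument for Part~(2), however, has a gap. Knowing that each $W_{H_{\vec{\ep},\delta}}$ is a Liouville domain tells you that the backward flow maps $W_{H_{\vec{\ep},\delta}}$ into itself, but this only gives $\mathcal{L}_{\hat{\lambda}}^{t}(W_{H_{\vec{\ep}}}^{\circ})\subset W_{H_{\vec{\ep}}}^{\circ}$ for $t<0$, not that the image has compact closure. Your remark that the flow ``lands inside $W_{H_{\vec{\ep},\delta'}}$ for $\delta'<\delta$'' does not help: smaller $\delta$ gives a \emph{larger} domain, and in any case no uniformity in $\delta$ is established. The paper obtains the required uniform shrinking by the differential inequality $(f_{\vec{\ep},\delta}\circ\gamma)'\le -\bigl(f_{\vec{\ep},\delta}\circ\gamma+\min_m\ep_m\bigr)$ and Gronwall, which forces $\mathcal{L}_{\hat{\lambda}}^{-T}(W_{H_{\vec{\ep}}}^{\circ})\subset\{f_{\vec{\ep}}<-(1-e^{-T})\min_m\ep_m\}$. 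You could equally well repair this using your own integral formula from Part~(3), taking $\vec{\zeta}=\vec{\ep}$ and the global bound $\tau\ge\min_m\ep_m$.

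For Part~(3) your approach is correct and genuinely different from the paper's. The paper stays in $(2n{+}2)$ dimensions: it follows a flowline $\gamma$ of $-\hat{\mathcal{L}}$ starting in $W_{H_{\vec{\ep}}}^{\circ}$, observes that $\phi_m(B^{2n}(1+\eta))\times\C$ is invariant under negative-time $\hat{\mathcal{L}}$-flow (so the trajectory can intersect at most one such set and, if so, ends there), and then applies the Gronwall inequality $g'\le -(g+\ep_m)$ with $g=f_{\vec{\ep},\delta}\circ\gamma$ to get $f_{\vec{\ep}}(\gamma(T))<\zeta_m-\ep_m$; the comparison $f_{\vec{\zeta}}-f_{\vec{\ep}}\le\ep_m-\zeta_m$ on that sinkhole finishes it. You instead reduce to the $2n$-dimensional pointwise inequality $C^{-1}H_{\vec{\ep}}(\mathcal{L}^{\log C}(w'))\le H_{\vec{\zeta}}(w')$ and derive the exact integral identity $C^{-1}H(\mathcal{L}^{\log C}(w'))=H(w')-\int_0^{\log C}e^{-r}\tau(\mathcal{L}^r(w'))\,dr$ from $dH(\mathcal{L})=H-\tau$. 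The clean feature of your route is that you only need the contribution from the initial segment of the forward $\mathcal{L}$-trajectory while it remains in $\phi_m(B^{2n}(1))$ (where the flow is explicitly $v\mapsto e^r v$ and $\tau\ge\ep_m$); this sidesteps any analysis of the flow on the transition annuli or across sinkholes, whereas the paper handles that issue via the backward-invariance observation. Both arguments rely on the same pointwise lower bound for $\tau$, and the endpoint arithmetic $(\ep_m-\zeta_m)(1-v)\le\ep_m\min(1-C^{-1},1-v)$ you use is essentially dual to the paper's comparison $f_{\vec{\zeta}}-f_{\vec{\ep}}\le\ep_m-\zeta_m$.
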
 

\begin{proof} Let $\hat{\mathcal{L}}$ be the Liouville vector field associated to $\hat{\lambda}$, and define $ f_{\vec{\ep},\delta}\co \C^{n+1}\to\R$ by \[ f_{\vec{\ep},\delta}(z_1,\ldots,z_{n+1})=\pi|z_{n+1}|^{2}-H_{\vec{\ep},\delta}(z_1,\ldots,z_n)\] and similarly $f_{\vec{\ep}}(z_1,\ldots,z_{n+1})=\pi|z_{n+1}|^2-H_{\vec{\ep}}(z_1,\ldots,z_n)$. Thus $W_{H_{\vec{\ep},\delta}}=\{f_{\vec{\ep},\delta}\leq 0\}$, and $W_{H_{\vec{\ep}}}^{\circ}=\{f_{\vec{\ep}}<0\}$.

Just as in (\ref{dfL}) we have \[ (df_{\vec{\ep},\delta})_{(z_1,\ldots,z_{n+1})}(\hat{\mathcal{L}})=f_{\vec{\ep},\delta}(z_1,\ldots,z_{n+1})+\tau_{\lambda,H_{\vec{\ep},\delta}}(z_1,\ldots,z_n)\] where $\tau_{\lambda,H_{\vec{\ep},\delta}}=H_{\vec{\ep},\delta}+\iota_{X_{H_{\vec{\ep},\delta}}}\lambda$.  Now on $(1-\eta)W\setminus \cup_m\phi_m(B^{2n}(1+\eta))$, the function  $H_{\vec{\ep},\delta}$ is identically equal to $2$, and so $\tau_{\lambda,H_{\vec{\ep},\delta}}=2$.  On $W\setminus (1-\eta)W$ one easily checks that (because $\xi''\leq 0$) we have $\tau_{\lambda,H_{\vec{\ep},\delta}}\geq 2$.

On $\phi_m(B^{2n}(1+\eta))$, the form $\lambda$ coincides with $\phi_{m}^{-1*}\lambda_0$, and so if $z=\phi_{m}(w_1,\ldots,w_n)$ with $\sum_{j=1}^{n}\pi|w_j|^{2}=s$ we readily find that \[ \tau_{\lambda,H_{\vec{\ep},\delta}}(z)=k_{\ep_m,\delta}(s)-sk'_{\ep_m,\delta}(s)\geq k_{\ep_m,\delta}(0)=\ep_m \] where the  inequality follows from the fact that $\frac{d}{ds}\left(k_{\ep_m,\delta}(s)-sk'_{\ep_m,\delta}(s)\right)=-sk''_{\ep_m,\delta}(s)\geq 0$.  

The above calculations show that \begin{equation}\label{dfLbound} df_{\vec{\ep},\delta}(\hat{\mathcal{L}})\left\{\begin{array}{ll} \geq f_{\vec{\ep},\delta}+ 2 & \mbox{on }\left(W\setminus\cup_m\phi_m(B^{2n}(1+\eta))\right) \\ \geq f_{\vec{\ep},\delta}+ \ep_m & \mbox{on }\phi_m(B^{2n}(1+\eta))\end{array}\right.. \end{equation}

In particular this shows that $\hat{\mathcal{L}}$ points outward along the boundary of $W_{H_{\vec{\ep},\delta}}$, whence $(W_{H_{\vec{\ep},\delta}},\hat{\lambda})$ is a Liouville domain.  For the other statements, for $T\in [0,\infty)$ let us consider $e^{-T}W_{H_{\vec{\ep}},\delta}^{\circ}$.  By definition this set consists of values $\gamma(T)$ where $\gamma\co [0,T]\to \C^{n+1}$ is a flowline of $-\hat{\mathcal{L}}$ having $f_{\vec{\ep}}(\gamma(0))<0$.  Now $f_{\vec{\ep},\delta}\searrow f_{\vec{\ep}}$ as $\delta\searrow 0$, so for some $\delta>0$ we also have $f_{\vec{\ep},\delta}(\gamma(0))<0$.      Writing $g=f_{\vec{\ep},\delta}\circ\gamma$, we see that \[ g'(t)=-(df_{\vec{\ep},\delta})_{\gamma(t)}(\hat{\mathcal{L}})\leq -\left(f_{\vec{\ep},\delta}(\gamma(t))+\min_m\ep_m\right)=-(g+\min_m\ep_m).\]  Thus writing $\ep=\min_m\ep_m$, the function $g+\ep$ obeys a differential inequality $(g+\ep)'\leq -(g+\ep)$, which (by an easy case of Gronwall's inequality) implies that $g(t)+\ep\leq (g(0)+\ep)e^{- t}$ for all $t\geq 0$.  Thus \[ f_{\vec{\ep}}(\gamma(T))\leq f_{\vec{\ep},\delta}(\gamma(T))\leq f_{\vec{\ep},\delta}(\gamma(0))e^{-T}-(1-e^{-T})\ep.\]

Thus for $T>0$, $e^{-T}W_{H_{\vec{\ep}}}$ is contained in $\{f_{\vec{\ep}}<-(1-e^{-T})\ep\}$, which has compact closure in  $W_{H_{\vec{\ep}}}^{\circ}=\{f_{\vec{\ep}}<0\}$.  This implies that $(W^{\circ}_{H_{\vec{\ep}}},\hat{\lambda})$ is an open Liouville domain.

Finally we compare $W_{H_{\vec{\ep}}}^{\circ}$ to $W_{H_{\vec{\zeta}}}^{\circ}$ for $\vec{\ep},\vec{\zeta}\in (0,2]^D$.  We are to show that if $T\geq 0$ with $e^T\geq \frac{\ep_m}{\zeta_m}$ for every $m$, then every flowline $\gamma\co [0,T]\to \C^{n+1}$ for $-\hat{\mathcal{L}}$ such that $f_{\vec{\ep}}(\gamma(0))<0$ has $f_{\vec{\zeta}}(\gamma(T))<0$.  As before choose $\delta>0$ so that $f_{\vec{\ep},\delta}(\gamma(0))<0$, and write $g=f_{\vec{\ep},\delta}\circ\gamma$.  

On $\phi_m(B^{2n}(1+\eta))\times \C$, the Liouville vector field $\hat{\mathcal{L}}$ is given by $\phi_{m*}\left(\sum_{j=1}^{n}\rho_j\partial_{\rho_j}\right)+\rho_{n+1}\partial_{\rho_{n+1}}$, in view of which the set $\phi_m(B^{2n}(1+\eta))\times \C$ is invariant under the negative-time flow of $\hat{\mathcal{L}}$.  Hence if our flowline $\gamma$ for $-\hat{\mathcal{L}}$ ever enters $\phi_m(B^{2n}(1+\eta))\times\C$, then it holds that $\gamma(T)\in \phi_m(B^{2n}(1+\eta))\times \C$; in particular the image of $\gamma$ intersects at most one of the $\phi_m(B^{2n}(1+\eta))\times\C$.  If the image of $\gamma$ does not intersect \emph{any} of the $\phi_m(B^{2n}(1+\eta))$, then clearly $\gamma(T)\in W_{H_{\vec{\zeta}}}^{\circ}$ since we have already seen that $W_{H_{\vec{\ep}}}^{\circ}$ is an open Liouville domain and, in this case, $\gamma$ maps to a region whose intersection with 
$W_{H_{\vec{\ep}}}^{\circ}$ is the same as its intersection with $W_{H_{\vec{\zeta}}}^{\circ}$.  

The remaining case is that the image of $\gamma$ intersects some (necessarily unique) $\phi_m(B^{2n}(1+\eta))\times\C$, in which case $\gamma(T)\in \phi_m(B^{2n}(1+\eta))\times \C$.  Clearly if $\zeta_m\geq \ep_m$ then $W_{H_{\vec{\ep}}}^{\circ}\cap\left(\phi_m(B^{2n}(1+\eta))\times \C\right) \subset W_{H_{\vec{\zeta}}}^{\circ}\cap \left(\phi_m(B^{2n}(1+\eta))\times \C\right)$, so the conclusion that $\gamma(T)\in  W_{H_{\vec{\zeta}}}^{\circ}$ follows from what we have already done.  So assume that $\zeta_m<\ep_m$.  Then (\ref{dfLbound}) shows that we have $g'(t)\leq -(g(t)+\ep_m)$, and so \begin{equation}\label{epzeta} f_{\vec{\ep}}(\gamma(T))\leq f_{\vec{\ep},\delta}(\gamma(T))\leq f_{\vec{\ep},\delta}(\gamma(0))e^{-T}-(1-e^{-T})\ep_m<\zeta_m-\ep_m \end{equation} since $f_{\vec{\ep},\delta}(\gamma(0))<0$ and $e^T\geq \frac{\ep_m}{\zeta_m}$.
But (since $\zeta_m<\ep_m$) it is clear from the definitions of $H_{\vec{\ep}},H_{\vec{\zeta}}$ that $H_{\vec{\ep}}-H_{\vec{\zeta}}\leq \ep_m-\zeta_m$ on $\phi_m(B^{2n}(1+\eta))$, and hence $f_{\vec{\zeta}}-f_{\vec{\ep}}\leq \ep_m-\zeta_m$ on $\phi_m(B^{2n}(1+\eta))\times\C$.  Thus (\ref{epzeta}) implies that $f_{\vec{\zeta}}(\gamma(T))<0$, as desired.
\end{proof}

\begin{cor}\label{makess}
For each $\vec{\ep}=(\vec{\ep}_1,\ldots,\vec{\ep}_D)$ with each $0<\ep_j\leq 2$, there is a symplectomorphism $F\co \C^{n+1}\to \C^{n+1}$ such that $\left(F(W_{H_{\vec{\ep}}}^{\circ}),\hat{\lambda}_0\right)$ is an open Liouville domain.
\end{cor}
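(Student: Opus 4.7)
Here is the plan.

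The proof will use Lemma~\ref{symplecto} as its main engine, applied to a one-parameter family of Liouville primitives on $\C^{n+1}$ interpolating between $\hat{\lambda}$ and $\hat{\lambda}_0$. The goal is to produce $F$ for which $F^{*}\hat{\lambda}_0$ and $\hat{\lambda}$ agree on a neighborhood of $W_{H_{\vec{\ep}}}^{\circ}$, since then $F$ intertwines the two Liouville vector fields on $W_{H_{\vec{\ep}}}^{\circ}$ and the open Liouville domain structure of $(W_{H_{\vec{\ep}}}^{\circ},\hat{\lambda})$ from Proposition~\ref{hepstar} transfers to make $(F(W_{H_{\vec{\ep}}}^{\circ}),\hat{\lambda}_0)$ an open Liouville domain.

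First I would note that $\hat{\lambda}-\hat{\lambda}_0 = \lambda-\lambda_0$ is a closed $1$-form on $\C^n$ that, by the construction of $\lambda$, vanishes identically outside the disjoint union $\sqcup_m \phi_m(B^{2n}(1+2\eta))\subset (1-\eta)W$. Since $\C^n$ is simply connected and the support of $\lambda-\lambda_0$ has compact complement components all isomorphic to each other through $\C^n\setminus\overline{(1-\eta)W}$, I can choose a primitive $G\co \C^n\to\R$ with $dG=\lambda-\lambda_0$ whose support is contained in $\sqcup_m\phi_m(B^{2n}(1+2\eta))$ (adjusting by a locally constant function). Extending $G$ trivially in $z_{n+1}$ gives a function on $\C^{n+1}$, still denoted $G$, with $\hat{\lambda}-\hat{\lambda}_0 = dG$.

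Next I would define $\lambda_t = \hat{\lambda} - t\,dG$ for $t\in[0,1]$, so $\lambda_0=\hat{\lambda}$, $\lambda_1=\hat{\lambda}_0$, and $d\lambda_t$ equals the standard symplectic form $\omega$ for every $t$. The Liouville field is $\mathcal{L}_{\lambda_t} = \mathcal{L}_0 + (1-t)V_G$, where $V_G$ is characterized by $\iota_{V_G}\omega = dG$. Since $G$ is independent of $z_{n+1}$, the vector field $V_G$ has vanishing $z_{n+1}$-component; since $G$ is bounded with bounded derivatives on $\C^n$, $V_G$ is a bounded vector field on $\C^{n+1}$, so each $\mathcal{L}_{\lambda_t}$ is complete. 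To apply Lemma~\ref{symplecto} I would choose the compact set $W$ to be a smoothing of $W_{H_{\vec{\ep}}}$; checking the hypotheses reduces to verifying that $\mathcal{L}_0$ is positively transverse to a suitable smoothed boundary (transversality for $\mathcal{L}_{\hat{\lambda}}$ is given by Proposition~\ref{hepstar}, and transversality for all intermediate $t$ then follows by a convex combination argument once both endpoints are checked), together with the standard observation that every $\C^{n+1}$ point lies on a flowline reaching $W$ since $r^{2}$ is monotone along $\mathcal{L}_{\lambda_t}$ outside a large ball.

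The hard part is the transversality of $\mathcal{L}_0$ on $\partial W$. Rather than attempt this for $\partial W_{H_{\vec{\ep}}}$ itself (where $\mathcal{L}_0$ need not point outward on the sinkholes, since $\mathcal{L}_0\cdot H_{\vec{\ep}}$ involves Poisson brackets with $G$ that are not sign-controlled), I would enlarge $W$ to a compact set of the form $W_{H_{\vec{\ep}}}\cup\bigl(\sqcup_m\phi_m(B^{2n}(1+2\eta))\times \overline{D^{2}}\bigr)$ (smoothed), chosen so that (a) $W$ contains both $W_{H_{\vec{\ep}}}^{\circ}$ and the entire support of $V_G$ in a neighborhood of $W_{H_{\vec{\ep}}}$, and (b) on $\partial W$ the field $V_G$ vanishes so $\mathcal{L}_{\lambda_t}=\mathcal{L}_0$ is automatically outward. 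Lemma~\ref{symplecto} then produces a symplectomorphism $F\co \C^{n+1}\to\C^{n+1}$ with $F^{*}\hat{\lambda}_0-\hat{\lambda}$ supported in $W^{\circ}$. Since by construction $\hat{\lambda}_0=\hat{\lambda}$ throughout $W^{\circ}\setminus\text{supp}(dG)$ and the Moser modification in the proof of Lemma~\ref{symplecto} can be chosen to respect this identity, one concludes $F^{*}\hat{\lambda}_0=\hat{\lambda}$ on a neighborhood of $W_{H_{\vec{\ep}}}^{\circ}$. This intertwining transports the Liouville field $\mathcal{L}_{\hat{\lambda}}$ to $\mathcal{L}_0$ on $F(W_{H_{\vec{\ep}}}^{\circ})$, so the image is bounded, open, and invariant (together with the compactness of time-negative flow closures) under the radial contraction $\mathcal{L}_0^{t}$, hence strongly star-shaped and an open Liouville domain with respect to $\hat{\lambda}_0$.
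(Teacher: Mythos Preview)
Your overall strategy---interpolate $\hat{\lambda}$ and $\hat{\lambda}_0$ and apply Lemma~\ref{symplecto}---matches the paper, but the implementation has a genuine gap in the choice of auxiliary domain and in the final deduction.

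The transversality check for your enlarged domain fails. Your set $W_{H_{\vec{\ep}}}\cup\bigl(\sqcup_m\phi_m(B^{2n}(1+2\eta))\times \overline{D^{2}}\bigr)$ (with $\overline{D^2}$ of capacity larger than $2$) has boundary that includes ``side walls'' $\partial\bigl(\phi_m(B^{2n}(1+2\eta))\bigr)\times\{2\le \pi|z_{n+1}|^2\le c\}$. On these walls $V_G$ does vanish, so $\mathcal{L}_{\lambda_t}=\mathcal{L}_0$; but the standard radial field $\mathcal{L}_0$ has no reason to point \emph{out of} $\phi_m(B^{2n}(1+2\eta))$, since $\phi_m$ is an arbitrary Liouville embedding (think of a ball translated far from the origin: the radial field from the origin is inward on one hemisphere). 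So claim~(b) does not give what you need. Separately, on the ``caps'' $\phi_m(B^{2n}(1+2\eta))\times \partial D^2$ the field $V_G$ is not zero (only tangent), so the literal statement of~(b) is also false there.

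Even if you could fix the domain, your conclusion does not follow. Lemma~\ref{symplecto} only yields $\mathrm{supp}(F^*\hat{\lambda}_0-\hat{\lambda})\subset W^{\circ}$; since $W_{H_{\vec{\ep}}}\subset W^{\circ}$, that support can (and will) meet $W_{H_{\vec{\ep}}}$, so you do not get $F^*\hat{\lambda}_0=\hat{\lambda}$ on (a neighborhood of) $W_{H_{\vec{\ep}}}$. Your appeal to ``the Moser modification can be chosen to respect this identity'' would require the region $\{\hat{\lambda}=\hat{\lambda}_0\}$ to be invariant under all the Liouville flows used in the proof of Lemma~\ref{symplecto}, which it is not.

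The paper avoids both issues by going in the opposite direction: it takes the auxiliary domain to be the \emph{smaller} set $W_{\delta\xi\circ u}\subset W_{H_{\vec{\ep}}}$, with $\delta<\tfrac{1}{2}\min_m\ep_m$. Because $\delta\xi\circ u$ is constant on $(1-\eta)W$, precisely the region where $\lambda_t$ varies, one has $\tau_{\lambda_t,\delta\xi\circ u}$ independent of $t$ and positive, so transversality holds for all $t$ by Proposition~\ref{suspliouville} with no case analysis. Lemma~\ref{symplecto} then gives $\mathrm{supp}(F^*\hat{\lambda}_0-\hat{\lambda})\subset W_{\delta\xi\circ u}^{\circ}$, which is a \emph{compact} subset of $W_{H_{\vec{\ep}}}^{\circ}$. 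One does not try to get $F^*\hat{\lambda}_0=\hat{\lambda}$ there; instead one uses the general (easy) fact that if $(V,\mu)$ is an open Liouville domain and $\mu'-\mu$ is closed with compact support in $V$, then $(V,\mu')$ is also an open Liouville domain. Applying this to $V=F(W_{H_{\vec{\ep}}}^{\circ})$, $\mu=F^{-1*}\hat{\lambda}$, $\mu'=\hat{\lambda}_0$ finishes the proof.
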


\begin{proof}
Recalling the definition of $\xi$ and $u$ from the beginning of the section, given $\delta>0$ consider $W_{\delta\xi\circ u}$.  Provided that $\delta\leq \frac{1}{2}\min_m\ep_m$, we have $\delta\xi\circ u\leq H_{\vec{\ep}}$ everywhere on $W$, and hence $W_{\delta\xi\circ u}\subset W_{H_{\vec{\ep}}}$.  Let $\lambda_t=\lambda_0+t(\lambda-\lambda_0)\in\Omega^1(W)$.  In particular $\lambda_t$ is independent of $t$ outside of $(1-\eta)W$, while on $(1-\eta)W$ it holds that $\delta\xi\circ u$ is constant.  Thus $\tau_{\lambda_t,\delta\xi\circ u}=\iota_{X_{\delta\xi\circ u}}\lambda_t+\delta\xi\circ u$ is independent of $t$; moreover for $t=0$ we can see that this function is positive, in view of which Propositon \ref{suspliouville} shows that $(W_{\delta\xi\circ u},\hat{\lambda}_t)$ is a Liouville domain for all $t$.  Again because $\lambda_t$ coincides with $\lambda_0$ outside of $(1-\eta)W$, the other hypotheses of Lemma \ref{symplecto} are easily seen to be satisfied with $X=\C^{n+1}$, $W=W_{\delta\xi\circ u}$, and $\hat{\lambda}_{1-t}$ in the role of the one-forms denoted there by $\lambda_t$. So we obtain a symplectomorphism $F\co \C^{n+1}\to\C^{n+1}$ such that $F^*\hat{\lambda}_0-\hat{\lambda}$ is supported inside $W^{\circ}_{\delta\xi\circ u}$ and hence also inside $W_{H_{\vec{\ep}}}^{\circ}$.  It immediately follows that $\left(F(W_{H_{\vec{\ep}}}^{\circ}),F^{-1*}\hat{\lambda}\right)$ is an open Liouville domain.  Moreover it holds quite generally that if $(V,\mu)$ is an open Liouville domain then so too is $(V,\mu')$ for any $\mu'$ with $d\mu'=d\mu$ such that $\mu'-\mu$ has support contained in a compact subset of $V$, whence it follows that $\left(F(W_{H_{\vec{\ep}}}^{\circ}),\hat{\lambda}_0\right)$ is an open Liouville domain.
\end{proof}

\begin{cor}\label{uppersink}
If $\vec{\ep},\vec{\zeta}\in (0,2]^D$ then \[ \log d_f\left((W_{H_{\vec{\ep}}}^{\circ},\hat{\lambda}),( W_{H_{\vec{\zeta}}}^{\circ},\hat{\lambda})\right) \leq \max_m\left|\log\left(\frac{\ep_m}{\zeta_m}\right)^2\right|.\]
\end{cor}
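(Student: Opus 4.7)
The plan is to derive the bound directly from Proposition \ref{hepstar}, which already supplies inclusions of scaled sinkhole domains. Since all of the relevant domains carry the one-form $\hat\lambda$ inherited from $\mathbb{C}^{n+1}$, any such set inclusion is automatically a Liouville embedding, so what remains is just to choose the scaling factor compatibly with the form of the inequality claimed.

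First I would set $A=\max_m(\ep_m/\zeta_m)$, $B=\max_m(\zeta_m/\ep_m)$, and $a=\max\{A,B\}^2\geq 1$. The key numerical fact is that
\[ \log a = 2\max_m|\log(\ep_m/\zeta_m)| = \max_m\left|\log(\ep_m/\zeta_m)^2\right|, \]
which is precisely the quantity appearing in the statement. Next I would invoke Proposition \ref{hepstar} (applied to whichever of $A,B$ is at least $1$, and combined with the fact that flowing for a more negative time along the Liouville field of $\hat\lambda$ on $\mathbb{C}^{n+1}$ only shrinks a set further) to obtain the pair of inclusions
\[ a^{-1/2}W_{H_{\vec\ep}}^{\circ}\subset W_{H_{\vec\zeta}}^{\circ} \qquad\text{and}\qquad a^{-1/2}W_{H_{\vec\zeta}}^{\circ}\subset W_{H_{\vec\ep}}^{\circ}. \]
Pushing the second inclusion forward by the time-$(-\tfrac{1}{2}\log a)$ flow of the Liouville field of $\hat\lambda$ on $\mathbb{C}^{n+1}$ (which is complete and so preserves inclusions) then yields $a^{-1}W_{H_{\vec\zeta}}^{\circ}\subset a^{-1/2}W_{H_{\vec\ep}}^{\circ}$.

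Consequently the inclusion $h\colon a^{-1/2}W_{H_{\vec\ep}}^{\circ}\hookrightarrow W_{H_{\vec\zeta}}^{\circ}$ is a Liouville embedding whose image satisfies $a^{-1}W_{H_{\vec\zeta}}^{\circ}\subset h(a^{-1/2}W_{H_{\vec\ep}}^{\circ})\subset W_{H_{\vec\zeta}}^{\circ}$, and Definition \ref{maindef} then gives $\delta_f((W_{H_{\vec\ep}}^{\circ},\hat\lambda),(W_{H_{\vec\zeta}}^{\circ},\hat\lambda))\leq a$. Since $a$ is symmetric in $\vec\ep$ and $\vec\zeta$, rerunning the argument with their roles reversed yields the same bound for the reverse hemidistance, so $d_f\leq a$ and taking logarithms gives the desired inequality. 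There is no substantial obstacle here; the only point requiring vigilance is that Proposition \ref{hepstar} requires $C\geq 1$, which is why one must choose $a^{1/2}=\max\{A,B\}$ rather than simply $A$ or $B$, since the former is guaranteed to be at least $1$ while either of the latter may not be.
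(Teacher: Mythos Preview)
Your proof is correct and follows essentially the same route as the paper's: both use Proposition \ref{hepstar} to obtain the two inclusions $a^{-1/2}W_{H_{\vec\ep}}^{\circ}\subset W_{H_{\vec\zeta}}^{\circ}$ and $a^{-1/2}W_{H_{\vec\zeta}}^{\circ}\subset W_{H_{\vec\ep}}^{\circ}$, then scale the second by $a^{-1/2}$ to produce the chain $a^{-1}W_{H_{\vec\zeta}}^{\circ}\subset a^{-1/2}W_{H_{\vec\ep}}^{\circ}\subset W_{H_{\vec\zeta}}^{\circ}$ witnessing $\delta_f\leq a$, and finish by symmetry. Your treatment of the hypothesis $C\geq 1$ in Proposition \ref{hepstar} is in fact slightly more explicit than the paper's, though in the case where (say) $A<1$ one should note that then $H_{\vec\ep}\leq H_{\vec\zeta}$ pointwise, so $W_{H_{\vec\ep}}^{\circ}\subset W_{H_{\vec\zeta}}^{\circ}$ directly and the desired inclusion is immediate.
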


\begin{proof} Indeed if $r=r_{\vec{\ep},\vec{\zeta}}$ denotes the right-hand side of the above inequality, the value $C$ of Proposition \ref{hepstar} has $C\leq e^{r/2}$ and so we have an inclusion $e^{-r/2}W_{H_{\vec{\ep}}}^{\circ}\subset W_{H_{\vec{\zeta}}}^{\circ}$.  But by construction $r_{\vec{\zeta},\vec{\ep}}=r_{\vec{\ep},\vec{\zeta}}$, so we likewise have an inclusion $e^{-r/2}W_{H_{\vec{\zeta}}}^{\circ}\subset W_{H_{\vec{\ep}}}^{\circ}$.  This leads to chains of inclusions \[ e^{-r}W_{H_{\vec{\zeta}}}^{\circ}\subset e^{-r/2}W_{H_{\vec{\ep}}}^{\circ}\subset W_{H_{\vec{\zeta}}}^{\circ}  \] and \[  e^{-r}W_{H_{\vec{\ep}}}^{\circ}\subset e^{-r/2}W_{H_{\vec{\zeta}}}^{\circ}\subset W_{H_{\vec{\ep}}}^{\circ}   \] which demonstrate that $d_f\left((W_{H_{\vec{\ep}}}^{\circ},\hat{\lambda}),( W_{H_{\vec{\zeta}}}^{\circ},\hat{\lambda})\right) \leq e^r$.
\end{proof}

\begin{prop}\label{hepper}
Let $\frac{1}{2}<b<1$, and assume that $0\leq \ep_1\leq \ep_2\leq\cdots\leq \ep_D<\frac{1}{2}$, that $\delta\in (0,\eta)$ is sufficiently small (depending on $b$ and the $\ep_m$), and that each $\ep_m$ is irrational.  Then the collection of closed Reeb orbits for $\hat{\lambda}$ on $\partial W_{H_{\vec{\ep},\delta}}$ has the following properties:
\begin{itemize} 
\item[(i)] Every such orbit having period at most $b$ is nondegenerate, and has Conley--Zehnder index congruent to $n$ modulo $2$.
\item[(ii)] There are precisely $D$ such orbits that have period at most $b$ and Conley--Zehnder index $2-3n$; these orbits are each simple and their periods are $\ep_1,\ldots,\ep_D$. \end{itemize}
\end{prop}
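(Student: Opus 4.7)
The plan is to invoke Corollary \ref{neworbits} to enumerate all closed Reeb orbits of $\hat\lambda$ on $\partial W_{H_{\vec\ep,\delta}}$, bound their periods, and compute their Conley--Zehnder indices via Proposition \ref{czwn} (whose topological hypotheses are supplied by Proposition \ref{toppres}). The corollary splits these orbits into orbits $\gamma_c$ contained in $\partial W\times\{0\}$ (from closed Reeb orbits $c$ on $\partial W$) and orbits $\gamma_{w,N}$ from $N$-periodic points $w\in W^{\circ}$ of the Hamiltonian flow $\phi_{H_{\vec\ep,\delta}}$. Orbits of the first type are disposed of immediately: since each sinkhole is contained in $(1-\eta)W$, the form $\lambda|_{\partial W}$ agrees with $\lambda_0|_{\partial W}$, so these are the standard Reeb orbits of the ellipsoid $E(a_1,\ldots,a_n)$ of periods $Na_j$; Gromov non-squeezing applied to the embedding $B^{2n}(1+3\eta)\hookrightarrow (1-\eta)W$ forces $\min_j a_j\geq (1+3\eta)/(1-\eta)>1>b$, so all these have period exceeding $b$.

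For orbits $\gamma_{w,N}$, Corollary \ref{neworbits} gives $T_{w,N}=N\tau_{\lambda,H_{\vec\ep,\delta}}(w)$, and I will proceed by case analysis on the location of $w$. On the plateau $(1-\eta)W\setminus\sqcup_m B_m$, $H_{\vec\ep,\delta}\equiv 2$, so $X_H=0$ and $\tau\equiv 2$. On the shell $W\setminus (1-\eta)W$, where $H_{\vec\ep,\delta}=\xi\circ u$, the concavity of $\xi$ ensures that the tangent line to the graph of $\xi$ at any $u_0$ lies above the graph, and evaluating this tangent line at $u=0$ gives $\tau(w)=\xi(u_0)-u_0\xi'(u_0)\geq \xi(0)=2$. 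Within a sinkhole $B_m$ the chart $\phi_m$ identifies $\lambda$ with $\lambda_0$ (because $\chi\equiv 0$ on $B^{2n}(1+\eta)$) and $H_{\vec\ep,\delta}$ with $k_{\ep_m,\delta}(\rho)$, $\rho=\sum_j\pi|z_j|^2$: in the linear region $\rho<1-\delta$ the flow rotates each $z_j$ at the irrational rate $-2\pi(2-\ep_m)$, so the only $N$-periodic point is the origin $0_m$, producing the orbits $\gamma_{0_m,N}$ of period $N\tau(0_m)=N\ep_m$; the constant region $\rho\geq 1+\delta$ again has $X_H=0$ and $\tau=2$; and in the transition region $1-\delta\leq\rho\leq 1+\delta$, an $N$-periodic point requires $k'_{\ep_m,\delta}(\rho)=k/N$ for an integer $k$ with $1\leq k\leq\lfloor N(2-\ep_m)\rfloor\leq 2N-1$, and concavity of $k_{\ep_m,\delta}$ (whose tangent line at $\rho$ has slope $k/N$ and $y$-intercept $\tau$) yields the two estimates $N\tau\geq N\ep_m$ (from evaluating the tangent at $\rho=0$) and $N\tau\geq 2N-(1+\delta)k\geq 1-(2N-1)\delta$ (from $\rho=1+\delta$); the maximum of these two estimates exceeds $b$ for every $N\in\Z_+$ provided $\delta<\ep_1(1-b)/(2b)$. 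Thus the only orbits of period at most $b$ are the iterates $\gamma_{0_m,N}$ with $N\ep_m\leq b$.

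Finally, at $0_m$ the linearization of $\phi_{H_{\vec\ep,\delta}}^t$ is rotation by $-2\pi(2-\ep_m)t$ in each of the $n$ coordinate planes; at $t=N$ no eigenvalue equals $1$ (since $N(2-\ep_m)\notin\Z$), so $\gamma_{0_m,N}$ is nondegenerate, and a direct Maslov index computation combined with Proposition \ref{czwn} gives $CZ(\gamma_{0_m,N})=n+2N-2n\lceil N(2-\ep_m)\rceil$, which is congruent to $n\pmod 2$ for every $N$, proving (i). For orbits of period $N\ep_m\leq b<1$, the bound $N\ep_m<1$ forces $\lceil N(2-\ep_m)\rceil=2N$, so $CZ(\gamma_{0_m,N})=n+2N(1-2n)$, which equals $2-3n$ if and only if $N=1$; the $D$ orbits $\gamma_{0_m,1}$ are simple with respective periods $\ep_1,\ldots,\ep_D$, proving (ii). The main difficulty lies in the transition-region analysis inside the sinkholes, where one has to combine a ``linear'' lower bound $N\ep_m$ (useful for large $N$) with a ``concavity'' lower bound $1-(2N-1)\delta$ (useful for small $N$) to produce a period estimate that is simultaneously uniform in $N$ and strictly larger than $b$.
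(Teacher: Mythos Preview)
Your proof is correct and follows essentially the same route as the paper's: classify orbits via Corollary \ref{neworbits}, eliminate those on $\partial W\times\{0\}$ by non-squeezing, bound $\tau$ below by $2$ outside the sinkholes, and inside each sinkhole combine a concavity estimate with the integer gap $2N-k\geq 1$ to push transition-region periods above $b$, after which the index computation via Proposition \ref{czwn} is identical (your $\lceil N(2-\ep_m)\rceil$ is just $-\lfloor -N(2-\ep_m)\rfloor$). The only cosmetic difference is that you package the transition-region period bound as the maximum of two tangent-line estimates (at $s=0$ and at $s=1+\delta$), whereas the paper writes a single combined lower bound $N\tau\geq N(\ep_m-4\delta)+\bigl((2-\ep_m)N-r\bigr)$ and then splits on whether $N\geq 1/\ep_m$; both arguments encode the same dichotomy and yield the same conclusion.
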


\begin{proof} The closed Reeb orbits on $\partial W_{H_{\vec{\ep},\delta}}$ are the orbits $\gamma_c,\gamma_{w,N}$ as described in Corollary \ref{neworbits}, as $c$ ranges over Reeb orbits on $\partial W$ and $(w,N)\in W^{\circ}\times\Z_+$ ranges over pairs with $\phi_{H_{\vec{\ep},\delta}}^{N}(w)=w$.  Now $W$ is an ellipsoid $E(a_1,\ldots,a_n)$ that contains a nonempty union of embedded balls of capacity $1+3\eta$, so by the non-squeezing theorem we automatically have each $a_j\geq 1+3\eta>b$.  So since the Reeb orbits on $\partial W$ have periods given by positive integer multiples of $a_j$, none of the $\gamma_c$ have period at most $b$.  So we can restrict attention to the $\gamma_{w,N}$.

The Hamiltonian flow of $H_{\vec{\ep},\delta}$ is easily seen to preserve the function $\tau_{\lambda,H_{\vec{\ep},\delta}}=H_{\vec{\ep},\delta}+\iota_{X_{H_{\vec{\ep},\delta}}}\lambda$, so by Corollary \ref{neworbits}, the orbit $\gamma_{w,N}$ has period $N\tau_{\lambda,H_{\vec{\ep},\delta}}(w)$.  As noted in the proof of Proposition \ref{hepstar}, we have $\tau_{\lambda,H_{\vec{\ep},\delta}}(w)\geq 2$ for $w\notin \cup_m\phi_m(B^{2n}(1+\eta))$, so it suffices to consider those $\gamma_{w,N}$ for $w\in \phi_m(B^{2n}(1+\eta))$ for some $m$.

Now on $\phi_m(B^{2n}(1+\eta))$, the Hamiltonian flow of $H_{\vec{\ep},\delta}$ is conjugated by $\phi_m$ to the Hamiltonian flow of the function $K_{\ep_m,\delta}(z_1,\ldots,z_m)=k_{\ep_m,\delta}(\pi\sum_{j=1}^{n}|z_j|^2)$ that was introduced just before Proposition \ref{hepstar}, and we have $\tau_{\lambda,H_{\vec{\ep},\delta}}=\tau_{\lambda_0,K_{\ep_m,\delta}}\circ\phi_{m}^{-1}$.  The Hamiltonian flow of $K_{\ep_m,\delta}$ is given by \[ \phi_{K_{\ep_m,\delta}}^{t}(\vec{z})=e^{-2\pi i k'_{\ep_m,\delta}(\pi\|\vec{z}\|^2)t}\vec{z}.\]  So for $\phi_{K_{\ep_m,\delta}}^{N}(\vec{z})=\vec{z}$ it must hold either that $\vec{z}=\vec{0}$ or that $k'_{\ep_m,\delta}(\pi\|\vec{z}\|^2)=\frac{r}{N}$ for some $r\in \Z$.  The key fact that we we will now demonstrate is that, provided that $\delta$ is sufficiently small, in the latter case we will always have $N\tau_{\lambda_0,K_{\ep_m,\delta}}(\vec{z})>b$.

To prove this fact, write $s_0=\pi\|\vec{z}\|^2$, and recall the now-familiar formula $\tau_{\lambda_0,K_{\ep_m,\delta}}(\vec{z})=k_{\ep_m,\delta}(s_0)-s_0k'_{\ep_m,\delta}(s_0)$.  Suppose that $k'_{\ep_m,\delta}(s_0)=\frac{r}{N}$ where $r\in \Z,N\in \Z_+$.  Recall that $k''_{\ep_m,\delta}(s)\leq 0$, and that $k'_{\ep_m,\delta}(s)=2-\ep_m$ for $s\leq 1-\delta$ and $k'_{\ep_m,\delta}(s)=0$ for $s\geq 1+\delta$. Recall also that we are assuming that $\ep_m$ is irrational, so our hypothesis implies that $s_0\geq 1-\delta$.  Also if $r=0$ then since $k_{\ep_m,\delta}(s)=2$ whenever $k'_{\ep_m,\delta}(s)=0$ we have $\tau_{\lambda_0,K_{\ep_m,\delta}}(\vec{z})=2>b$, so we may as well assume that $r\neq 0$, which forces $s_0<1+\delta$ and $k'_{\ep_m,\delta}(s_0)>0$.  So since $k'_{\ep_m,\delta}$ is a monotone function it follows that \begin{align*} N\tau_{\lambda_0,K_{\ep_m,\delta}}(\vec{z})&=N\left(k_{\ep_m,\delta}(s_0)-s_0k'_{\ep_m,\delta}(s_0)\right)\geq N\left(\ep_m+(2-\ep_m)(1-\delta)-\frac{r}{N}(1+\delta)\right)
\\&= N\ep_m-\delta(N(2-\ep_m)+r)+\left((2-\ep_m)N-r\right)>N(\ep_m-4\delta)+\left((2-\ep_m)N-r\right). \end{align*} Here the last inequality follows from the fact that $k'_{\ep_m,\delta}$ is monotone decreasing, so that $0<\frac{r}{N}<2-\ep_m$ and $N(2-\ep_m)+r< 4N$.  So provided that we choose $\delta<(1-b)\min_m\ep_m/4$, we obtain   \[ N\tau_{\lambda_0,K_{\ep_m,\delta}}(\vec{z})>Nb\ep_m+\left((2-\ep_m)N-r\right),\] where $(2-\ep_m)N-r>0$.  If $N\geq \frac{1}{\ep_m}$ this immediately yields $N\tau_{\lambda_0,K_{\ep_m,\delta}}(\vec{z})>b$.  On the other hand if $N<\frac{1}{\ep_m}$, we observe that since $2N>r$ where $N$ and $r$ are both integers we must have $2N-r\geq 1$, and so the above yields \[ N\tau_{\lambda_0,K_{\ep_m,\delta}}(\vec{z})>1+(b-1)N\ep_m>b.\] So the inequality $N\tau_{\lambda_0,K_{\ep_m,\delta}}(\vec{z})>b$ holds in any case.

The foregoing implies that (provided that $\delta<(1-b)\min_m\ep_m/4$) any closed Reeb orbit on $\partial W_{H_{\vec{\ep},\delta}}$ having period at most $b$ must be of the form $\gamma_{\phi_m(\vec{0}),N}$ for some $m\in\{1,\ldots,D\}$ and $N\in \Z_+$.  The period of such an orbit is $N\ep_m$.  The linearized Hamiltonian flow of $H_{\vec{\ep},\delta}$ along such an orbit acts (in terms of the obvious trivialization of $TW$) by rotation of each factor of $\C$ through an angle $-2\pi N(2-\ep_m)$.  So since $\ep_m$ is irrational, Proposition \ref{czwn} shows that $\gamma_{\phi_m(\vec{0}),N}$ is nondegenerate with Conley--Zehnder index \[ 
CZ(\gamma_{\phi_m(\vec{0}),N})=n+2N+2n\left\lfloor -N(2-\ep_m)\right\rfloor=n+2\left\lfloor -N(1-\ep_m)\right\rfloor+2(n-1)\left\lfloor -N(2-\ep_m)\right\rfloor.\]  This is evidently always congruent to $n$ modulo $2$.  We also clearly have $CZ(\gamma_{\phi_m(\vec{0}),N+1})\leq CZ(\gamma_{\phi_m(\vec{0}),N})$, with \[ CZ(\gamma_{\phi_m(\vec{0}),1})=n+2-4n=2-3n \] and (because $0<\ep_m<\frac{1}{2}$, so that $3<2(2-\ep_m)<4$) \[ CZ(\gamma_{\phi_m(\vec{0}),2})=n+4-8n=4-7n<CZ(\gamma_{\phi_m(\vec{0}),1}).\]  So the closed Reeb orbits having period at most $b$ and Conley--Zehnder index $2-3n$ are precisely the $CZ(\gamma_{\phi_m(\vec{0}),1})$, each of which is obviously simple.
\end{proof}

\begin{cor}\label{sinkrank}
Let $0<\ep_1\leq \ep_2\leq \cdots\leq \ep_D\leq \frac{1}{2}$ and suppose that $s<t<1$ and  either that $d\in\{1,\ldots,D-1\}$ with $\ep_d<s<\ep_{d+1}$, or that $d=0$ and $s<\ep_1$, or that $d=D$ and $s>\ep_D$.  Then $CH^{s}_{2-3n}(W_{H_{\vec{\ep}}}^{\circ},\hat{\lambda})$ has dimension $d$, and the canonical map $CH^{s}_{2-3n}(W_{H_{\vec{\ep}}}^{\circ},\hat{\lambda})\to CH^{t}_{2-3n}(W_{H_{\vec{\ep}}}^{\circ},\hat{\lambda})$ is injective.
\end{cor}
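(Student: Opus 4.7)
The plan is to apply Lemma~\ref{exhaust} to $(W_{H_{\vec{\ep}}}^{\circ}, \hat{\lambda})$, with Proposition~\ref{hepper} providing the orbit-counting input. First I would fix $b$ with $\max(t,1/2) < b < 1$. Because Proposition~\ref{hepper} requires irrational sinkhole depths strictly less than $1/2$, I would choose, for each $m' \in \N$, a tuple $\vec{\ep}^{(m')} = (\ep_1^{(m')},\ldots,\ep_D^{(m')})$ converging componentwise from below to $\vec{\ep}$, with each $\ep_j^{(m')}$ irrational, $0 < \ep_1^{(m')} \leq \cdots \leq \ep_D^{(m')} < 1/2$, and such that $\ep_j^{(m')} < s$ if and only if $\ep_j < s$ for every $j$. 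Since the strict inequalities in the hypothesis force each $\ep_j \neq s$, such a choice exists once the perturbations are small enough. Then I would pick $\delta_{m'} \searrow 0$ small enough that Proposition~\ref{hepper} applies to each $\bar{U}_{m'} := W_{H_{\vec{\ep}^{(m')}, \delta_{m'}}}$ with the fixed $b$, and set $U_{m'} := \bar{U}_{m'}^\circ$.

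Next I would verify the hypotheses of Lemma~\ref{exhaust}.  The inclusion $U_{m'} \subset U_{m'+1}$ follows from the pointwise monotonicity of $H_{\vec{\ep},\delta}$ (nondecreasing in $\vec{\ep}$, nonincreasing as $\delta$ grows). The $W$-projection of any point in $W_{H_{\vec{\ep}}}^{\circ}$ lies in $W^{\circ}$, on which $H_{\vec{\ep}^{(m')}, \delta_{m'}} \to H_{\vec{\ep}}$ uniformly on compact subsets, so $\bigcup_{m'} U_{m'} = W_{H_{\vec{\ep}}}^{\circ}$. The topological conditions $H_1(\bar{U}_{m'};\Z) = \{0\}$ and $c_1(T\bar{U}_{m'})$ torsion follow from Proposition~\ref{toppres}(i,ii), since the ellipsoid $W$ is contractible. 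By Proposition~\ref{hepper}, every Reeb orbit on $\partial \bar{U}_{m'}$ of period at most $b$ is nondegenerate with Conley--Zehnder index congruent to $n \pmod{2}$, and exactly $D$ such orbits carry index $2-3n$, namely the simple orbits $\gamma_{\phi_m(\vec{0}),1}$ with periods $\ep_1^{(m')}, \ldots, \ep_D^{(m')}$. By construction exactly $d$ of these periods are less than $s$, and none equals $s$. Since $(2-3n) \pm 1 \not\equiv n \pmod{2}$, no Reeb orbit of period at most $b$ has index in $\{2-3n-1, 2-3n+1\}$, and orbits of period greater than $b > t$ are handled by the second alternative in hypothesis~(ii) of Lemma~\ref{exhaust}.

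Invoking Lemma~\ref{exhaust} with $k = 2-3n$, $r = d$, the given $s$, and $b$ in place of the lemma's $t$ would then yield $\dim_{\Q} CH^s_{2-3n}(W_{H_{\vec{\ep}}}^{\circ}, \hat{\lambda}) = d$ together with injectivity of $\imath_{u s}$ for every $u \in [s, b)$; specializing to $u = t$ gives the claimed injectivity. The most delicate step will be arranging the approximating tuples $\vec{\ep}^{(m')}$ so that Proposition~\ref{hepper} yields the same count of orbits with period below $s$ for every $m'$; however, the openness of the strict inequalities separating $s$ from the various $\ep_j$ reduces this to straightforward bookkeeping, and the remainder of the argument is a direct assembly of the cited results.
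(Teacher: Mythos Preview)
Your proposal is correct and follows essentially the same approach as the paper: approximate $W_{H_{\vec{\ep}}}^{\circ}$ by the Liouville domains $W_{H_{\vec{\ep}^{(m')},\delta_{m'}}}$ with irrational perturbed depths and shrinking smoothing parameter, then feed the Reeb orbit information from Proposition~\ref{hepper} into Lemma~\ref{exhaust}. You have in fact been more explicit than the paper on several points (the choice of $b$, the parity argument excluding indices $(2-3n)\pm 1$, and the care that the perturbation preserves which depths lie below $s$ and stays strictly below $\tfrac{1}{2}$), all of which are implicit in the paper's two-sentence sketch.
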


\begin{proof}
If the $\ep_m$ are each irrational this follows directly from Proposition \ref{hepper} and Lemma \ref{exhaust}, using the sequence of Liouville domains $\bar{U}_k=W_{H_{\vec{\ep},\delta_k}}$ for a sequence $\delta_k\searrow 0$.  If some of the $\ep_m$ are rational we can instead use $\bar{U}_k=W_{H_{\vec{\ep}^{(k)},\delta_k}}$ for a suitable sequence $\vec{\ep}^{(k)}$ of tuples of irrational numbers that converges to $\vec{\ep}$ with all $\vec{\ep}^{(k)}_{m}\leq\vec{\ep}^{(k+1)}_{m}$.
\end{proof}

The final ingredient necessary to complete the outline of the proof of Theorem \ref{quasiembed} in Section \ref{examples} is now the following:
\begin{cor}\label{quasicor}
If $0<\ep_1\leq \cdots\leq \ep_D\leq \frac{1}{2}$ and $0\leq\zeta_1\leq\cdots\leq \zeta_D\leq \frac{1}{2}$ then \[ \delta_f\left((W_{H_{\vec{\zeta}}}^{\circ},\hat{\lambda}),(W_{H_{\vec{\ep}}}^{\circ},\hat{\lambda})\right)\geq \max_{1\leq m\leq D}\left(\min\left\{\frac{1}{\ep_m},\left(\frac{\zeta_m}{\ep_m}\right)^2\right\}\right).\]
\end{cor}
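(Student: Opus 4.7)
The plan is to apply Proposition \ref{implantdelta} combined with Corollary \ref{sinkrank}. Suppose that $\delta_f((W_{H_{\vec{\zeta}}}^{\circ},\hat{\lambda}),(W_{H_{\vec{\ep}}}^{\circ},\hat{\lambda}))<b$. By Proposition \ref{implantdelta}, together with Proposition \ref{toppres} (which provides vanishing $H_1$ and torsion $c_1$ for both $W_{H_{\vec\ep}}^\circ$ and $W_{H_{\vec\zeta}}^\circ$, so that we may work with $\Z$-graded persistence modules), there will be a $b^{1/2}$-implantation of $\mathring{\mathbb{CH}}(W_{H_{\vec{\ep}}}^{\circ},\hat{\lambda})$ into $\mathring{\mathbb{CH}}(W_{H_{\vec{\zeta}}}^{\circ},\hat{\lambda})$. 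In grading $2-3n$ this yields maps $\phi_s\co CH^{s}_{2-3n}(W_{H_{\vec{\ep}}}^{\circ},\hat{\lambda})\to CH^{b^{1/2}s}_{2-3n}(W_{H_{\vec{\zeta}}}^{\circ},\hat{\lambda})$ and $\psi_s\co CH^{s}_{2-3n}(W_{H_{\vec{\zeta}}}^{\circ},\hat{\lambda})\to CH^{b^{1/2}s}_{2-3n}(W_{H_{\vec{\ep}}}^{\circ},\hat{\lambda})$ with the property that $\psi_{b^{1/2}s}\circ \phi_s$ equals the structure map of $\mathring{\mathbb{CH}}(W_{H_{\vec{\ep}}}^{\circ},\hat{\lambda})$ from filtration level $s$ to filtration level $bs$. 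It then suffices to show, for each fixed $m\in\{1,\ldots,D\}$, that the existence of such an implantation forces $b\geq\min\{1/\ep_m,(\zeta_m/\ep_m)^2\}$.

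Fix $m$. If $b\geq 1/\ep_m$ there is nothing to prove, so assume $b\ep_m<1$ and pick $s\in(\ep_m,1/b)$ avoiding the finite exceptional set $\{\ep_1,\ldots,\ep_D\}\cup\{b^{-1/2}\zeta_1,\ldots,b^{-1/2}\zeta_D\}$; this is possible since $(\ep_m,1/b)$ is a nontrivial open interval. Corollary \ref{sinkrank} applied to $\vec{\ep}$ will then yield $\dim_{\Q}CH^{s}_{2-3n}(W_{H_{\vec{\ep}}}^{\circ},\hat{\lambda})=\#\{j:\ep_j<s\}\geq m$, and (since $s<bs<1$) it will also assert that the structure map into $CH^{bs}_{2-3n}(W_{H_{\vec{\ep}}}^{\circ},\hat{\lambda})$ is injective. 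Because that structure map factors as $\psi_{b^{1/2}s}\circ\phi_s$, the map $\phi_s$ must itself be injective, and so $\dim_{\Q}CH^{b^{1/2}s}_{2-3n}(W_{H_{\vec{\zeta}}}^{\circ},\hat{\lambda})\geq m$. Applying Corollary \ref{sinkrank} again, this time to $\vec{\zeta}$ (valid because $b^{1/2}s<bs<1$), one gets $\#\{j:\zeta_j<b^{1/2}s\}\geq m$, i.e.\ $\zeta_m<b^{1/2}s$. Letting $s\searrow \ep_m$ will yield $\zeta_m\leq b^{1/2}\ep_m$, equivalently $b\geq(\zeta_m/\ep_m)^2$, as required.

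The main technical constraint is keeping all three relevant filtration levels—$s$ and $bs$ on the $\vec{\ep}$ side, and $b^{1/2}s$ on the $\vec{\zeta}$ side—strictly below $1$ so that the dimension counts and injectivity statements of Corollary \ref{sinkrank} apply on both sides; this is exactly what necessitates the case split on whether $b<1/\ep_m$ and is the source of the $1/\ep_m$ term in the minimum. A minor edge case is $\zeta_m=0$ (in which $W_{H_{\vec{\zeta}}}^{\circ}$ may fail to be an open Liouville domain in the strict sense and Corollary \ref{sinkrank} does not directly apply to $\vec{\zeta}$): there $\min\{1/\ep_m,(\zeta_m/\ep_m)^2\}=0$, so the bound is trivial and one may proceed throughout under the assumption that every $\zeta_j$ under consideration is strictly positive.
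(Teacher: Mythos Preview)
Your argument is correct and follows essentially the same route as the paper's own proof: both use Proposition \ref{implantdelta} to factor the structure map $CH^{s}_{2-3n}(W_{H_{\vec\ep}}^{\circ})\to CH^{bs}_{2-3n}(W_{H_{\vec\ep}}^{\circ})$ through $CH^{b^{1/2}s}_{2-3n}(W_{H_{\vec\zeta}}^{\circ})$ and then compare dimensions via Corollary \ref{sinkrank}. The only cosmetic differences are that the paper phrases it as a proof by contradiction with a single well-chosen $s$, whereas you argue directly and let $s\searrow\ep_m$; your version is slightly more explicit about invoking Proposition \ref{toppres} for the grading and about the degenerate case $\zeta_m=0$.
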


\begin{proof}
If the corollary were false, then we could find $m\in\{1,\ldots D\}$, $s>\ep_m$ and $b>\delta_f\left((W_{H_{\vec{\zeta}}}^{\circ},\hat{\lambda}),(W_{H_{\vec{\ep}}}^{\circ},\hat{\lambda})\right)$ such that $b^{1/2}s<\zeta_m$ and $bs<1$. Without loss of generality we can assume that $s$ is distinct from each $\ep_j,\zeta_j$. By Proposition \ref{implantdelta} this would yield a commutative diagram \begin{equation}\label{sinkdiag}\xymatrix{ CH_{2-3n}^{s}(W_{H_{\vec{\ep}}}^{\circ},\hat{\lambda})\ar[rr]\ar[rd] & & CH_{2-3n}^{bs}(W_{H_{\vec{\ep}}}^{\circ},\hat{\lambda}) \\ & CH_{2-3n}^{b^{1/2}s}(W_{H_{\vec{\zeta}}}^{\circ},\hat{\lambda})\ar[ru] & } \end{equation}  Since $s>\ep_m$ and $bs<1$, the top map is injective with rank at least $m$.  But since $b^{1/2}s<\zeta_m$, another application of Corollary \ref{sinkrank} shows that $\dim CH_{2-3n}^{b^{1/2}s}(W_{H_{\vec{\zeta}}}^{\circ},\hat{\lambda})<m$, and so the commutativity of (\ref{sinkdiag}) leads to a contradiction.
\end{proof}

\end{document}